 \newtheorem{theorem}{Theorem}[section]
 \newtheorem{lemma}[theorem]{Lemma}
 \newtheorem{proposition}[theorem]{Proposition}
 \newtheorem{corollary}[theorem]{Corollary}
 \newtheorem*{theorem*}{Theorem}
\newtheorem*{proposition*}{Proposition}
\newtheorem*{lemma*}{Lemma}
\theoremstyle{definition}
 \newtheorem{definition}[theorem]{Definition}
 \theoremstyle{remark}
 \newtheorem{example}[theorem]{Example}
 \newtheorem{remark}[theorem]{Remark}
   \newtheorem*{claim*}{Claim}
\newcommand{\op}[1]{\operatorname{#1}}
\newcommand{\cf}{\emph{cf}.}
\newcommand{\norm}[1]{\ensuremath{\left\|#1\right\|}}
\newcommand{\NormN}[1]{\left\vvvert #1\right\vvvert_N}
\newcommand{\acou}[2]{\ensuremath{\left\langle #1 , #2 \right\rangle}}
\newcommand{\brak}[1]{\ensuremath{\langle #1\rangle}}
\newcommand{\acoup}[2]{\ensuremath{\left(#1|#2\right)}}
\newcommand{\Tr}{\ensuremath{\op{Tr}}}
\def\XXint#1#2#3{{\setbox0=\hbox{$#1{#2#3}{\int}$}
\vcenter{\hbox{$#2#3$}}\kern-.5\wd0}}
\newcommand{\C}{\ensuremath{\mathbb{C}}} 
\newcommand{\N}{\ensuremath{\mathbb{N}}} 
\newcommand{\Q}{\ensuremath{\mathbb{Q}}} 
\newcommand{\R}{\ensuremath{\mathbb{R}}} 
\newcommand{\bS}{\ensuremath{\mathbb{S}}} 
\newcommand{\T}{\ensuremath{\mathbb{T}}} 
\newcommand{\Z}{\ensuremath{\mathbb{Z}}}
\newcommand{\Rn}{\ensuremath{\R^{n}}}
\newcommand{\fS}{\ensuremath{\mathfrak{S}}}
\newcommand{\Ca}[1]{\ensuremath{\mathscr{#1}}}
\newcommand{\cA}{\mathscr{A}}
\newcommand{\cB}{\Ca{B}}
\newcommand{\cH}{\ensuremath{\mathscr{H}}}
\newcommand{\cL}{\ensuremath{\mathscr{L}}}
\newcommand{\cR}{\Ca{R}}
\newcommand{\cS}{\ensuremath{\mathscr{S}}}
\newcommand{\psido}{$\Psi$DO} 
\newcommand{\psidos}{$\Psi$DOs}
\newcommand{\stS}{\mathbb{S}}
\def\dba{{\mathchar'26\mkern-12mu d}}
\newcommand{\dbar}{{\, \dba}}
\newcommand{\Sp}{\op{Sp}}
\numberwithin{equation}{section}
\begin{document}

\title{Pseudodifferential Calculus on Noncommutative Tori, I. Oscillating Integrals}

\author{Hyunsu Ha}
 \address{Department of Mathematical Sciences, Seoul National University, Seoul, South Korea}
 \email{mamaps@snu.ac.kr}

 \author{Gihyun Lee}
 \address{Department of Mathematical Sciences, Seoul National University, Seoul, South Korea}
 \email{gihyun.math@gmail.com}

\author{Rapha\"el Ponge}
 \address{School of Mathematics, Sichuan University, Chengdu, China}
 \email{ponge.math@icloud.com}

\thanks{The research for this article was partially supported by 
  NRF grants 2013R1A1A2008802 and 2016R1D1A1B01015971 (South Korea).}
 
\keywords{Noncommutative geometry, noncommutative tori, pseudodifferential operators}
 
\subjclass[2010]{58B34, 58J40}
 
\begin{abstract}
This paper is the first part of a two-paper series whose aim is to give a thorough account on Connes' pseudodifferential calculus on noncommutative tori. This pseudodifferential calculus has been used in numerous recent papers, but a detailed description is still missing. In this paper, we focus on constructing an oscillating integral for noncommutative tori and laying down the main functional analysis ground for understanding Connes' pseudodifferential calculus. In particular, this allows us to give a precise explanation of the definition of pseudodifferential operators on noncommutative tori. More generally, this paper introduces the main technical tools that are used in the 2nd part of the series to derive the main properties of these operators. 
\end{abstract}

\maketitle 

\section{Introduction}
Noncommutative tori are important examples of noncommutative spaces which occur in numerous parts of mathematics and mathematical physics.  For instance, searching for ``noncommutative torus" or ``noncommutative tori" on Google Scholar produces nearly 2,700 hits.  In particular, Connes~\cite{Co:CRAS80, Co:IHES85, Co:NCG} obtained a reformulation of the Atiyah-Singer index formula for noncommutative tori. This was an important impetus for the noncommutative geometry approaches to the quantum Hall effect~\cite{BES:JMP94} and topological insulators~\cite{BCR:RMP16, PS:Springer16}. In addition, noncommutative tori have been considered in the context of string theory (see, e.g., \cite{CDS:JHEP98, SW:JHEP99}).

The noncommutative 2-torus arises from the action of $\Z$ on the circle $\bS^1$ generated by a rotation of angle $2\pi \theta$ with $\theta \in \R\setminus \Q$ (see~\cite{Co:NCG, Ri:PJM81}). 
For such an action the orbits are dense, and so the orbit space is not even Hausdorff. Following the \emph{motto} of noncommutative geometry~\cite{Co:NCG} we trade the orbit space for the crossed-product algebra $C^\infty(\bS^1)\rtimes \Z$ associated with this action. We then get an algebra generated by two unitaries $U$ and $V$ with the relations,  
\begin{equation*}
 UV = e^{2i\pi \theta} VU. 
\end{equation*}
More generally, we can consider noncommutative $n$-tori associated with any anti-symmetric $n\times n$-matrix $\theta=(\theta_{jl})\in M_n(\R)$ with $n\geq 2$. The corresponding $C^*$-algebra $A_\theta$ is generated by unitaries $U_1,\ldots, U_n$ satisfying the relations, 
\begin{equation*}
 U_lU_j=e^{2i\pi \theta_{jl}} U_jU_l, \qquad j,l=1,\ldots, n. 
\end{equation*}
A dense spanning linearly independent set of $A_\theta$ is provided by the unitaries, 
\begin{equation*}
 U^k=U_1^{k_1}\cdots U_n^{k_n}, \qquad k=(k_1,\ldots, k_n)\in \Z^n.
\end{equation*}
We may think of series $\sum u_k U^k$, $u_k\in \C$,  as analogues of the Fourier series on the ordinary $n$-torus 
$\T^n=\R^n\slash 2\pi \Z^n$. The analogue of the integral is provided by the normalized state $\tau:A_\theta \rightarrow \C$ such that $\tau(1)=1$ and $\tau(U^k)=0$ for $k\neq 0$. The associated GNS representation provides us with a unital $*$-representation of $A_\theta$ into a Hilbert space $\cH_\theta$, which is the analogue of the Hilbert space $L^2(\T^n)$ (see Section~\ref{section:NCtori}). In particular, for $\theta=0$ we recover the representation of continuous functions on $\T^n$ by multiplication operators on $L^2(\T^n)$. 

There is a natural periodic $C^*$-action $(s,u)\rightarrow \alpha_s(u)$ of $\R^n$ on $A_\theta$, so that we obtain a  $C^*$-dynamical system. We are interested in the dense subalgebra $\cA_\theta$ of smooth elements of this action. These are elements of the form $u=\sum_{k\in \Z^n} u_k U^k$, where the sequence $(u_k)_{k\in \Z^n}$ has rapid decay (see Section~\ref{section:NCtori}). When $\theta=0$ we recover the algebra of smooth functions on $\T^n$.  In general, $\cA_\theta$ is a Fr\'echet $*$-algebra which is stable under holomorphic functional calculus.  In addition, the action of $\R^n$ induces a smooth action on $\cA_\theta$ which is infinitesimally generated  by 
the derivations $\delta_1, \ldots, \delta_n$ such that $\delta_j(U_k)=\delta_{jk}U_k$, $j,k=1,\ldots, n$. 
They are the analogues of the partial derivatives $\frac{1}{i} \partial_{x_1},\ldots, \frac{1}{i} \partial_{x_n}$ on  $\T^n$. Differential operators on $\cA_\theta$ are then defined as operators of the form $\sum_{|\alpha|\leq N} a_\alpha \delta^\alpha$, $a_\alpha \in \cA_\theta$, where $\delta^\alpha=\delta_1^{\alpha_1} \cdots \delta_n^{\alpha_n}$ (see~\cite{Co:CRAS80, Co:NCG}). For instance, the Laplacian $\Delta=\delta_1^2+\cdots +\delta_n^2$ is such an operator. 

In his famous note~\cite{Co:CRAS80} Connes introduced a pseudodifferential calculus for $C^*$-dynamical systems. In particular, this provides us with a natural pseudodifferential calculus on noncommutative tori. 
Further details of the calculus were announced in Baaj's notes~\cite{Ba:CRAS88} (by using a slightly different point of view). 
About two decades later Connes-Tretkoff~\cite{CT:Baltimore11} used this pseudodifferential calculus to prove a version of the Gauss-Bonnet theorem for noncommutative 2-tori (see also~\cite{FK:JNCG12}). 
The paper of Connes-Tretkoff  sparked a vast surge of research activity on applying pseudodifferential techniques to the differential geometry study of noncommutative tori. The main directions of research include reformulations of the Gauss-Bonnet and Hirzebruch-Riemann-Roch theorems for noncommutative tori and similar noncommutative manifolds~\cite{CT:Baltimore11, DS:JMP13, DS:SIGMA15, FG:SIGMA16, FK:JNCG12, KM:JGP14, KS:arXiv17}, constructions of scalar and Ricci curvatures for conformal deformations of noncommutative tori~\cite{CF:arXiv16, DGK:arXiv18, Fa:JMP15, FK:JNCG13, FK:JNCG15, FGK:arXiv16, Liu:JGP17, Liu:ArXiv18a, Liu:ArXiv18b}, and construction and study of noncommutative residue, zeta functions and log-determinants of elliptic operators~\cite{CM:JAMS14, FGK:MPAG17, FK:LMP13, FW:JPDOA11, LM:GAFA16, LNP:TAMS16, Si:JPDOA14}. There is also a construction of a Ricci flow for noncommutative 2-tori~\cite{BM:LMP12}. 

In view of the recent amount of papers using the pseudodifferential calculus on noncommutative tori, it would seem sensible to have a detailed account on this calculus.  This paper is part of a series of two papers whose aim is precisely to produce such an account.  This includes a precise definition of pseudodifferential operators (\psidos) and full proofs of all the properties of \psidos\ surveyed in~\cite{CT:Baltimore11}, which have been used in the subsequent papers mentioned above. In addition, we include further results regarding action of \psidos\ on Sobolev spaces, regularity properties of elliptic operators, spectral theory of elliptic operators and Schatten-class properties of \psidos. 
In particular, this shows that \psidos\ on noncommutative tori satisfy the same properties as \psidos\ on ordinary (closed) manifolds. 

The present paper focuses on constructing an oscillating integral for $\cA_\theta$-valued maps and laying down the functional analysis ground for the study of \psidos\ on nocommutative tori. Incidentally, we give a precise definition of \psidos\ and derive a few immediate properties of these operators. In the sequel~\cite{HLP:Part2} (referred throughout the paper as Part~II) we shall deal with the main properties of \psidos\ on noncommutative tori (including the properties alluded to above). 

Given an $n$-dimensional noncommutative torus $\cA_\theta$, Connes~\cite{Co:CRAS80} defined a \psido\ on $\cA_\theta$ as a linear operator $P:\cA_\theta \rightarrow \cA_\theta$ of the form, 
\begin{equation}
 Pu = (2\pi)^{-n} \iint e^{is\cdot \xi} \rho(\xi) \alpha_{-s}(u) ds d\xi, \qquad u\in \cA_\theta, 
 \label{eq:Intro.integral-Pu}
\end{equation}
where $\rho:\R^n \rightarrow \cA_\theta$ is an $\cA_\theta$-valued symbol.  This means that there is $m\in \R$ such that every partial derivative 
$\delta^\alpha \partial_\xi^\beta \rho(\xi)$ is $\op{O}(|\xi|^{m-|\beta|})$ at infinity (see Section~\ref{section:Symbols} for the precise definition). As the action $s\rightarrow \alpha_{-s}(u)$ is periodic, the integral on the r.h.s.~does not make sense as a usual integral. In fact, this situation is even different from the situation in the usual definition of \psidos\ on an open set of $\R^n$, where $\alpha_{-s}(u)$ is replaced by a Schwartz-class function (see, e.g., \cite{Sh:Springer01}).  An important part of this paper is devoted to give a precise meaning for this integral. 
At the exception of the original references~\cite{Ba:CRAS88, Co:CRAS80}, and of~\cite{LM:GAFA16, Ta:JPCS18}, most papers on pseudodifferential operators on noncommutative tori do not really address the precise meaning of this integral. Therefore, it seems sensible to take some care to deal with this question.   

An important tool in the study of \psidos\ on ordinary manifolds is the oscillating integral (see, e.g., \cite{AG:AMS07, Ho:Springer85}). The existence of an $\cA_\theta$-valued oscillating integral is pointed out in~\cite{Ba:CRAS88, Co:CRAS80} (see also~\cite{LM:GAFA16}). 
The oscillating integrals that we consider are of the form, 
\begin{equation}
 I(a)=  (2\pi)^{-n}\iint e^{is\cdot \xi} a(s,\xi) ds d \xi,
 \label{eq:Intro.oscillating-int}
\end{equation}
where $a:\R^n\times \R^n \rightarrow \cA_\theta$ is an $\cA_\theta$-valued amplitude. This means this is a smooth map and there is some $m\in \R$ such that all the derivatives $\delta^\alpha \partial_s^\beta \partial_\xi^\gamma a(s,\xi)$ are $\op{O}((|s|+|\xi|)^m)$ at infinity (see Section~\ref{section:Amplitudes} for the precise definition). 
The classes of amplitudes that we consider are natural generalizations of the classes of scalar-valued amplitudes considered in~\cite{AG:AMS07, Ho:Springer85}.  If $\rho(\xi)$ is a symbol, then we observe that $\rho(\xi) \alpha_{-s}(u)$ is an amplitude (see Lemma~\ref{lem:PsiDOs.amplitudes-alpha-product-continuity}), and so the integral in~(\ref{eq:Intro.integral-Pu}) makes sense as an oscillating integral. More generally if $a(s,\xi)$ is an amplitude, then $a(s,\xi)\alpha_{-s}(u)$ is an amplitude, and so we even can define \psidos\ associated with amplitudes (see Section~\ref{sec:PsiDOs}). We thus obtain a larger class of \psidos\ than the class originally considered in~\cite{Ba:CRAS88, Co:CRAS80}. The interest of using this larger class of \psidos\ will become more apparent in Part~II. 

The integral in~(\ref{eq:Intro.oscillating-int}) makes sense when $a(s,\xi)$ decays fast enough to be integrable. In particular, it makes sense as a Riemann integral when $a(s,\xi)$  has compact support. The spaces of amplitudes (and the spaces of symbols as well) carry natural Fr\'echet-space topologies (see Section~\ref{section:Amplitudes}).  
 The compactly supported amplitudes are  dense among the other amplitudes. By means of similar integration by parts arguments as with scalar-valued amplitudes it can be shown that the linear map $a\rightarrow I(a)$ has a unique continuous extension to the space of all amplitudes (see Proposition~\ref{prop:Amplitudes.extension-J0} for the precise statement). As a consequence of this continuity property we see that the \psidos\ are continuous linear operators and depend continuously on their symbols or amplitudes (see Proposition~\ref{prop:PsiDOs.pdos-continuity} and Proposition~\ref{prop:PsiDOs.symbols-to-pdos-continuity}).  

One salient feature of the noncommutative torus is the existence of an orthonormal basis provided by the unitaries, 
\begin{equation*}
   U^k=U_1^{k_1} \cdots U_n^{k_n}, \qquad k\in \Z^n. 
\end{equation*}
The corresponding decomposition along this basis is the analogue of the Fourier series decomposition for functions on the ordinary torus $\T^n$. We can take advantage of this decomposition. Given any \psido\ $P$ associated with a symbol $\rho(\xi)$, we have 
\begin{equation}
 Pu = \sum_{k\in \Z^n} u_k\rho(k) U^k \qquad \text{for any ${\displaystyle u=\sum_{k\in \Z^n} u_k U^k}$ in $\cA_\theta$}. 
 \label{eq:Intro.toroidal}
\end{equation}
 The above formula was mentioned in~\cite{CT:Baltimore11}. A proof is given in Section~\ref{sec:PsiDOs}.  This formula allows us to show that differential operators are \psidos\ on noncommutative tori (see Section~\ref{sec:PsiDOs}). 
 
The formula~(\ref{eq:Intro.toroidal}) further allows us to relate our class of \psidos\ associated with standard symbols to the \psidos\ associated with toroidal symbols as considered in~\cite{GJP:MAMS17, LNP:TAMS16}. The toroidal symbols are discrete versions of the standard symbols considered in this paper (see Section~\ref{sec:toroidal} for their precise definition). As it turns out, the formula~(\ref{eq:Intro.toroidal}) continues to define an operator on $\cA_\theta$ if we replace the sequence $(\rho(k))_{k\in\Z^n}$ by a sequence arising from a toroidal symbol (see Section~\ref{sec:toroidal}). In Section~\ref{sec:toroidal} we clarify the relationships between the classes of standard \psidos\ and toroidal \psidos. In particular, we show that the two classes actually agree (Proposition~\ref{prop:toroidal.toroidal=standard}). As an application we are able to characterize smoothing operators as precisely the \psidos\ with Schwartz-class symbols (Proposition~\ref{prop:PsiDos.smoothing-condition}). 

As mentioned above, the construction of the oscillating integral for $\cA_\theta$-valued amplitudes is carried out by means of similar integration by parts arguments as those in the construction of the oscillating integral for scalar-valued amplitudes. The main difference is the fact that we consider amplitudes with values in the locally convex space $\cA_\theta$. Therefore, the bulk of the construction is to justify that the manipulations of integrals and the differentiation arguments  can be carried through for maps with values in locally convex spaces.  For this reason we have included two appendices on integration and differentiation of maps with values in locally convex spaces.  

It is well known how to extend the Riemann integral to maps with values in locally convex spaces (see Section~\ref{subsec:Riemann}). There are several extensions of Lebesgue integrals to maps with values in locally convex spaces. We have followed the approach of~\cite{Th:TAMS75}. This provides us with a notion of integral which mediates between the Gel'fand-Pettis integral and the Bochner integral. In fact, as $\cA_\theta$ is a nuclear space there is no real distinction between the aforementioned integrals for $\cA_\theta$-valued maps. Nevertheless, the exposition is given in the general setting of maps with values in quasi-complete Suslin locally convex spaces (see Section~\ref{subsec:Lebesgue}). This setting is large enough to include integration of maps with values in $\cA_\theta'$ or $\cL(\cA_\theta)$, which should be useful for further her purposes. 

It is fairly straightforward to deal with differentiation of maps on an open set $U\subset \R^d$ with values in a locally convex space, but we have attempted to give a detailed account for the sake of reader's convenience. 
Some care is devoted to the differentiation under the integral sign, for which the integral of~\cite{Th:TAMS75} proves to be especially useful (see Section~\ref{subec:diff-int}). Ultimately, this allows us to deal with the Fourier and inverse Fourier transform of Schwartz-class maps with values in quasi-complete Suslin locally convex spaces (see Section~\ref{subsec:Fourier-Schwartz}).  

Although this paper and the 2nd part~\cite{HLP:Part2} focus exclusively on noncommutative tori, most of the results of the papers continue hold for $C^*$-dynamical systems associated with the action of $\R^n$ on a unital $C^*$-algebras. In fact, all the results that that do not rely on Fourier series decompositions hold \emph{verbatim} in this general setting. 

There are various approaches to noncommutative tori. Accordingly, there are various types of pseudodifferential calculi depending on the perspective on noncommutative tori. In this paper and in the 2nd part we regard noncommutative tori as $C^*$-dynamical systems, which is natural from the point of view of noncommutative geometry. 
We refer to Section~\ref{sec:toroidal} for the equivalence of our class of \psidos\ of this paper with the toroidal \psidos\ of~\cite{LNP:TAMS16}. A pseudodifferential calculus for Heisenberg modules over noncommutative tori is given in~\cite{LM:GAFA16}. Noncommutative tori can also be interpreted as twisted crossed-products (see, e.g., \cite{Ri:CM90}). We refer to~\cite{BLM:JOT13, DL:RMP11, LMR:RIMS10, MPR:Bucharest05} for pseudodifferential calculi for twisted crossed-products with coefficients in $C^*$-algebras.  Another approach is to look at pseudodifferential operators on noncommutative tori as periodic pseudodifferential operators on noncommutative Euclidean spaces (see~\cite{GJP:MAMS17}). We also refer to~\cite{Liu:JNCG18} for an asymptotic pseudodifferential calculus on noncommutative toric manifolds. 

This paper is organized as follows. In Section~\ref{section:NCtori}, we survey the main facts regarding noncommutative tori. 
In Section~\ref{section:Symbols}, we introduce and derive the main properties of the classes of symbols that are used to define \psidos. 
In Section~\ref{section:Amplitudes}, we construct the oscillating integral for $\cA_\theta$-valued amplitudes and derive several of its properties. 
In Section~\ref{sec:PsiDOs}, we define the classes of \psidos\ on noncommutative tori associated with symbols and amplitudes, and derive some of their properties. In Section~\ref{sec:toroidal}, we clarify the relationship between the standard \psidos\ and toroidal \psidos\ and characterize smoothing operators. 
We also include three appendices. 
In Appendix~\ref{appendix:cAtheta-basic-properties}, we include proofs of some of the results on noncommutative tori mentioned in Section~\ref{section:NCtori}. 
In Appendix~\ref{app:LCS-int}, we gather a few facts on the integration of maps with values in locally convex spaces. In Appendix~\ref{app:LCS-diff}, we also gather some facts on the differentiation of maps with values in locally convex spaces. This includes a description of the Fourier transform for this type of maps. 

Jim Tao~\cite{Ta:JPCS18} has independently announced a detailed account on Connes' pseudodifferential calculus on noncommutative tori. The full details of his approach were not available at the time of completion of our paper series. 

This paper is part of the PhD dissertations of the first two named authors under the guidance of the third named author at Seoul National University (South Korea). 

\subsection*{Acknowledgements}
The authors would like to thank Edward McDonald, Hanfeng Li, Masoud Khalkhali, Max Lein, Franz Luef, Henri Moscovici, Javier Parcet, Fedor Sukochev, Jim Tao, Xiao Xiong, and Dmitriy Zanin for for useful discussions related to the subject matter of this paper. R.P.\ also wishes to thank McGill University (Montr\'eal, Canada) for its hospitality during the preparation of this paper.  

\section{Noncommutative Tori} \label{section:NCtori}
In this section, we review the main definitions and properties of noncommutative $n$-tori, $n\geq 2$. We refer to~\cite{Co:NCG, Ri:PJM81, Ri:CM90}, and the references therein, for a more comprehensive account. We also postpone to Appendix~\ref{appendix:cAtheta-basic-properties} the proofs of some of the results stated in this section.  

 \subsection{Noncommutative tori}  
In what follows we let $\T^n=\R^n\slash 2\pi \Z^n$ be the ordinary $n$-torus. In addition, we equip $L^2(\T^n)$ with the  inner product, 
\begin{equation} \label{eq:NCtori.innerproduct-L2}
 \acoup{\xi}{\eta}= \int_{\T^n} \xi(x)\overline{\eta(x)}\dbar x, \qquad \xi, \eta \in L^2(\T^n), 
\end{equation}
 where we have set $\dbar x= (2\pi)^{-n} dx$. 
 
 Given any $\theta \in \R$, the rotation of angle $\theta$ generates an action of $\Z$ on the unit circle $\bS^1\subset \C$ given by 
\begin{equation*}
k\cdot z = e^{2i\pi\theta k} z, \qquad k\in \Z, \ z\in \bS^1.  
\end{equation*}
 Upon identifying $\bS^1$ with $\T$, the corresponding crossed-product $C^*$-algebra $C^{0}(\bS^1)\rtimes \Z$ embeds into the $C^*$-algebra of operators on $L^2(\T)$ generated by the unitary operators $U$ and $V$ given by 
 \begin{equation*}
 (U\xi) (x)= e^{ix} \xi(x), \quad (V\xi)(x)=\xi\left( x+2\pi \theta\right), \quad \xi\in L^2(\T). 
\end{equation*}
This embedding is actually an isomorphism when $\theta \in \R\setminus \Q$. In any case, we have the relation, 
\begin{equation*}
 VU=e^{2i\pi \theta}UV. 
\end{equation*}

An equivalent representation of the above $C^*$-algebra is the $C^*$-algebra of operators on $L^2(\T^2)$ generated by the unitary operators $U_1$ and $U_2$ given by 
 \begin{equation*}
 (U_1\xi)(x)= e^{ix_1}\xi\left(x_1,x_2-\pi\theta\right), \quad (U_2\xi)(x)= e^{ix_2}\xi\left(x_1+\pi\theta,x_2\right), \quad \xi \in L^2(\T^2). 
\end{equation*}
Note that $U_2U_1=e^{2i\pi \theta}U_1U_2$. Moreover, for $\theta=0$ we obtain the $C^*$-algebra generated by $e^{ix_1}$ and $e^{ix_2}$. This is precisely the $C^*$-algebra $C^{0}(\T^2)$ of continuous functions on $\T^2$ represented by multiplication operators on $L^2(\T^2)$. 
 
 More generally, let $\theta =(\theta_{jk})$ be a real anti-symmetric $n\times n$-matrix ($n\geq 2$). We denote by $\theta_1, \ldots, \theta_n$ its column vectors. For $j=1,\ldots, n$ let $U_j:L^2(\T^n)\rightarrow L^2(\T^n)$ be the unitary operator given by 
 \begin{equation*}
 \left( U_j\xi\right)(x)= e^{ix_j} \xi\left( x+\pi \theta_j\right), \qquad \xi \in L^2(\T^n). 
\end{equation*}
 We then have the relations, 
 \begin{equation} \label{eq:NCtori.unitaries-relations}
 U_kU_j = e^{2i\pi \theta_{jk}} U_jU_k, \qquad j,k=1, \ldots, n. 
\end{equation}

\begin{definition}
 $A_\theta$ is the $C^*$-algebra generated by the unitary operators $U_1, \ldots, U_n$. 
\end{definition}
 
\begin{remark}
 For $\theta=0$ we obtain the $C^*$-algebra $C^{0}(\T^n)$ of continuous functions on the ordinary $n$-torus $\T^n$. When $\theta\neq 0$ the relations~(\ref{eq:NCtori.unitaries-relations}) imply that $A_\theta$ is a noncommutative algebra. In this case $A_\theta$ is called the \emph{noncommutative torus} associated with $\theta$. 
\end{remark}
 
\begin{remark}
  When $n=2$ simple characterizations of isomorphism classes and Morita equivalence classes of noncommutative tori have been established by Rieffel~\cite{Ri:PJM81}. 
  The extension of Rieffel's results to higher dimensional noncommutative tori is non-trivial (see~\cite{EL:ActaM07, EL:MathA08, Li:Crelle04, RS:IJM99}). 
 \end{remark}
  
The relations~(\ref{eq:NCtori.unitaries-relations}) imply that $A_\theta$ is the closure of the span of the unitary operators, 
 \begin{equation*}
 U^k:=U_1^{k_1} \cdots U_n^{k_n}, \qquad k=(k_1,\ldots, k_n)\in \Z^n. 
\end{equation*}
 For $k,l\in \Z^n$ set 
\begin{equation*}
c(k,l)=\sum_{q<p}k_p\theta_{pq}l_q. 
\end{equation*}
As $\theta$ is an anti-symmetric matrix, we have $c(k,l)-c(l,k)=k\cdot (\theta l)$. A calculation shows that 
 \begin{equation} \label{eq:NCtori.U^k-formula}
 \left( U^k\xi\right)(x)= e^{i\pi c(k,k)} e^{i k\cdot x}\xi\left( x+\pi \theta k\right), \qquad \xi \in L^2(\T^n). 
\end{equation}
Using this we obtain
\begin{equation} \label{eq:NCtori.U^kU^l-formula}
 U^k U^l= e^{-2i\pi c(k,l)} U^{k+l}, \qquad k,l \in \Z^n.
\end{equation}
 In particular, we get the relations, 
 \begin{equation} \label{eq:NCtori.U^k-inverse}
 \left(U^k\right)^{-1}= \left(U^k\right)^{*}=e^{-2i\pi c(k,k)}U^{-k}, \qquad U^lU^k=e^{2i\pi k\cdot (\theta l)} U^k U^l,  \qquad k,l \in \Z^n.
\end{equation}
 
 Let $\tau:\cL(L^2(\T^n))\rightarrow \C$ be the state defined by the constant function $1$, i.e., 
 \begin{equation*}
 \tau (T)= \acoup{T1}{1}=\int_{\T^n} (T1)(x) \dbar x, \qquad T\in \cL\left(L^2(\T^n)\right).
\end{equation*}
 In particular, as by~(\ref{eq:NCtori.U^k-formula}) we have $U^k1=e^{i\pi c(k,k)} e^{i k\cdot x}$, we obtain
 \begin{equation} \label{eq:NCtori.tauU^k}
 \tau\left( U^k\right) = \left\{
\begin{array}{ll}
 1 &  \text{if $k=0$},  \\
 0 &  \text{otherwise.}
 \end{array}\right. 
\end{equation}
We stress out that $\tau$ is a continuous linear form on $\cL(L^2(\T^n))$ of norm~$1$ (since this is a state). 

\begin{lemma}
 The functional $\tau$ defines a tracial state on $A_\theta$. 
\end{lemma}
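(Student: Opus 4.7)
The state property is already essentially in hand: $\tau$ is the restriction to $A_\theta$ of the vector state $T\mapsto \acoup{T1}{1}$, hence it is positive, continuous, and normalized, with $\tau(1)=\acoup{1}{1}=1$. The only nontrivial content of the lemma is therefore the trace property $\tau(ab)=\tau(ba)$ for all $a,b\in A_\theta$.

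My strategy is to reduce this identity to the basis and then exploit antisymmetry of $\theta$. Since $\tau$ is continuous on $A_\theta$, multiplication is jointly norm-continuous, and the linear span of $\{U^k\}_{k\in\Z^n}$ is dense in $A_\theta$, it suffices by bilinearity to verify that $\tau(U^kU^l)=\tau(U^lU^k)$ for every pair $k,l\in\Z^n$. By the product formula~(\ref{eq:NCtori.U^kU^l-formula}) both $U^kU^l$ and $U^lU^k$ are scalar multiples of $U^{k+l}$, so by~(\ref{eq:NCtori.tauU^k}) both traces automatically vanish unless $k+l=0$. The only case left to examine is $l=-k$.

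On that diagonal I would invoke the second commutation relation in~(\ref{eq:NCtori.U^k-inverse}), which reads $U^{-k}U^k=e^{2i\pi k\cdot(\theta(-k))}U^kU^{-k}=e^{-2i\pi k\cdot(\theta k)}U^kU^{-k}$. Because $\theta$ is antisymmetric, $k\cdot(\theta k)=0$, so the phase factor is $1$ and $\tau(U^{-k}U^k)=\tau(U^kU^{-k})$, completing the verification.

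I do not anticipate any real obstacle: once the identity is reduced to the basis, antisymmetry of $\theta$ forces the offending phase to be trivial. The one step that deserves a moment of care is the initial density/continuity reduction, which relies on $\tau$ being a bounded linear functional on $A_\theta$ (as recorded just before the statement) together with the definition of $A_\theta$ as the norm-closure of $\operatorname{span}\{U^k\}_{k\in\Z^n}$.
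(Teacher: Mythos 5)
Your proposal is correct and follows essentially the same approach as the paper: reduce to the basis $\{U^k\}$ by continuity and density, note that both traces vanish when $k+l\neq 0$, and handle the diagonal $l=-k$ separately. The only (cosmetic) difference is in that last step: the paper computes directly from~(\ref{eq:NCtori.U^k-inverse}) that $U^kU^{-k}=U^{-k}U^{k}=e^{2i\pi c(k,k)}$, whereas you invoke the commutation relation $U^{-k}U^k=e^{-2i\pi k\cdot(\theta k)}U^kU^{-k}$ and observe that antisymmetry of $\theta$ kills the phase; both are immediate from the same formula and amount to the same fact.
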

 \begin{proof}
 We only need to show that $\tau$ is a trace on $A_\theta$, i.e., $\tau (uv)=\tau(vu)$ for all $u,v\in A_\theta$. As $\tau$ is continuous and the unitaries $U^k$, $k\in \Z^n$, span a dense subspace of $A_\theta$, it is enough to check that
 \begin{equation}\label{eq:NCtori.tau-tracial}
 \tau\left( U^k U^l\right) = \tau\left( U^l U^k\right) \qquad \text{for all $k,l\in \Z^n$}.
\end{equation}
When $k+l\neq 0$ it follows from~(\ref{eq:NCtori.U^kU^l-formula}) and~(\ref{eq:NCtori.tauU^k}) that $\tau(U^k U^l)=\tau(U^l U^k)=0$. Moreover, it follows from~(\ref{eq:NCtori.U^k-inverse}) that $U^kU^{-k}=U^{-k}U^{k}=e^{2i\pi c(k,k)}$, and so $\tau(U^kU^{-k})=\tau(U^{-k}U^{k})=e^{2i\pi c(k,k)}$. This proves~(\ref{eq:NCtori.tau-tracial}). The proof is complete. 
\end{proof}

\begin{remark}
 Let $\Lambda_\theta$ be the lattice generated by the columns of the matrix $\theta$ in $\R^n$. We say that $\theta$ is \emph{very irrational} when $\Lambda_\theta +\Z^n$ is dense in $\R^n$. In this case it can be shown that the algebra $A_\theta$ is simple and $\tau$ is its unique (normalized) trace (see~\cite{Gr:ActaM78, OPT:JOT80, Sl:CMP72}). 
\end{remark}

The GNS construction allows us to associate with $\tau$ a $*$-representation of $A_\theta$ as follows. Let $\acoup{\cdot}{\cdot}$ be the sesquilinear form on $A_\theta$ defined by
\begin{equation}
 \acoup{u}{v} = \tau\left( u v^*\right), \qquad u,v\in A_\theta. 
 \label{eq:NCtori.cAtheta-innerproduct}
\end{equation}
Given $k,l\in \Z^n$, using~(\ref{eq:NCtori.U^kU^l-formula}) and~(\ref{eq:NCtori.U^k-inverse}) we get
 \begin{equation*}
 \acoup{U^k}{U^l}=\tau\left( U^k(U^l)^*\right) = e^{-2i\pi c(l,l)}\tau\left (U^k U^{-l}\right)=e^{2i\pi \left(c(k,l)-c(l,l)\right)}\tau\left( U^{k-l}\right). 
\end{equation*}
Combining this with~(\ref{eq:NCtori.tauU^k}) then shows that $\{ U^k; k \in \Z^n\}$ is an orthonormal family. In particular, we see that we have a pre-inner product on the dense subalgebra, 
\begin{equation*}
 \cA_\theta^0:=\op{Span}\{ U^k; \ k\in \Z^n\}.
\end{equation*}

\begin{definition}
 $\cH_\theta$ is the Hilbert space arising from the completion of $\cA_\theta^0$ with respect to the pre-inner product~(\ref{eq:NCtori.cAtheta-innerproduct}). 
\end{definition}

When $\theta=0$ we recover the Hilbert space $L^2(\T^n)$ with the inner product~(\ref{eq:NCtori.innerproduct-L2}). In what follows we shall denote by $\|\cdot\|_0$ the norm of $\cH_\theta$. This notation allows us to distinguish it from the norm of $A_\theta$, which we denote by $\|\cdot\|$.

By construction $(U^k)_{k \in \Z^n}$ is an orthonormal basis of $\cH_\theta$. Thus, every $u\in \cH_\theta$ can be uniquely written as 
\begin{equation} \label{eq:NCtori.Fourier-series-u}
 u =\sum_{k \in \Z^n} u_k U^k, \qquad u_k=\acoup{u}{U^k}, 
\end{equation}
where the series converges in $\cH_\theta$. When $\theta =0$ we recover the Fourier series decomposition in  $L^2(\T^n)$. By analogy with the case $\theta=0$ we shall call the series $\sum_{k \in \Z^n} u_k U^k$ in~(\ref{eq:NCtori.Fourier-series-u}) the Fourier series of $u\in \cH_\theta$. Note that the Fourier series makes sense for all $u\in A_\theta$. 

\begin{proposition}\label{prop:NCTori.GNS-representation}
The following holds.
\begin{enumerate}
\item The multiplication of $\cA_\theta^0$ uniquely extends to a continuous bilinear map $A_\theta\times \cH_\theta \rightarrow \cH_\theta$. This provides us with a 
         $*$-representation of $A_\theta$. In particular, we have
          \begin{equation} \label{eq:NCtori.nrom0<norm-u}
                      \left\| u \right\| = \sup_{\|v\|_0=1} \|uv\|_0 \qquad \forall u \in A_\theta. 
           \end{equation}
\item The inclusion of $\cA_\theta^0$ into $\cH_\theta$ uniquely extends to a continuous embedding of $A_\theta$ into $\cH_\theta$. 
\end{enumerate}
\end{proposition}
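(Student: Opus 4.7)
The plan is to realise $\cH_\theta$ concretely as $L^2(\T^n)$, to transfer the action of $A_\theta$ along this identification, and to deduce both parts from the corresponding properties on $L^2(\T^n)$.

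The first step is to introduce the map $V:\cA_\theta^0 \to L^2(\T^n)$ defined by $V(u)=u\cdot 1$. The tracial property of $\tau$ gives
\[
 \|V(u)\|^2_{L^2(\T^n)} = \acoup{u^*u\cdot 1}{1} = \tau(u^*u) = \tau(uu^*) = \|u\|_0^2,
\]
so $V$ is an isometry. By~(\ref{eq:NCtori.U^k-formula}) one has $V(U^k)=e^{i\pi c(k,k)}e^{ik\cdot x}$, and such vectors span a dense subspace of $L^2(\T^n)$, so $V$ extends uniquely to a unitary $V:\cH_\theta \to L^2(\T^n)$.

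For part~(1), I would first use the tracial property to verify the bilinear estimate
\[
 \|uv\|_0^2 = \tau(uvv^*u^*) = \tau(v^*u^*uv) \leq \|u\|^2\,\tau(v^*v) = \|u\|^2\,\|v\|_0^2 \qquad \text{for } u\in A_\theta,\ v\in \cA_\theta^0,
\]
which gives the required unique continuous bilinear extension $A_\theta \times \cH_\theta \to \cH_\theta$, with the $*$-representation property inherited by continuity from $\cA_\theta^0$. The identity $V(uv)=u\,V(v)$ for $v\in\cA_\theta^0$ shows that the resulting action of $u\in A_\theta$ on $\cH_\theta$ coincides with $V^{-1}uV$, where on the right-hand side $u$ denotes its action as an operator on $L^2(\T^n)$. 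Since $V$ is unitary and the $C^*$-norm on $A_\theta$ is by definition the operator norm on $L^2(\T^n)$, this yields the norm identity~(\ref{eq:NCtori.nrom0<norm-u}).

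For part~(2), the estimate $\|u\|_0^2 = \tau(uu^*)\leq \|uu^*\|\,\tau(1) = \|u\|^2$ together with density provides a unique continuous linear extension $j:A_\theta \to \cH_\theta$ of the inclusion. The nontrivial content is the injectivity of $j$, which under the unitary $V$ is equivalent to the vector $1$ being separating for the action of $A_\theta$ on $L^2(\T^n)$. This is the main obstacle. My plan is to introduce the unitaries $\tilde U_1,\ldots,\tilde U_n$ on $L^2(\T^n)$ defined by $(\tilde U_j\xi)(x) = e^{ix_j}\xi(x-\pi\theta_j)$, to verify using the antisymmetry of $\theta$ that each $\tilde U_j$ commutes with every $U_k$, and to observe that $\tilde U^k\cdot 1$ is a nonzero scalar multiple of $e^{ik\cdot x}$. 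Consequently $1$ is cyclic for the commutant of $A_\theta$ in the bounded operators on $L^2(\T^n)$, and the standard cyclic–separating duality for von Neumann algebras then forces $1$ to be separating for $A_\theta$, yielding the injectivity of $j$.
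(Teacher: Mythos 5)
Your proof is correct, and it takes a genuinely different route from the paper's. The paper works intrinsically in $\cH_\theta$: it establishes the bound $\|uv\|_0\leq\|u\|\,\|v\|_0$ exactly as you do and verifies the $*$-representation from the tracial property, but it obtains~(\ref{eq:NCtori.nrom0<norm-u}) by appealing to the fact that unital $*$-representations of $C^*$-algebras are isometric, and then proves the injectivity in part~(2) by showing that $u_k=0$ for all $k$ forces $\tau(u^*v)=0$ for all $v\in\cA_\theta^0$, hence $\acoup{w}{uv}=0$ for all $v,w\in\cH_\theta$, whence~(\ref{eq:NCtori.nrom0<norm-u}) gives $\|u\|=0$. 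You instead construct an explicit unitary $V\colon\cH_\theta\to L^2(\T^n)$, $u\mapsto u\cdot 1$, which intertwines the GNS action on $\cH_\theta$ with the defining representation of $A_\theta$ on $L^2(\T^n)$; this makes~(\ref{eq:NCtori.nrom0<norm-u}) immediate from unitarity, since the norm on $A_\theta$ is by definition the operator norm on $L^2(\T^n)$, and yields the injectivity of $j$ through the commuting unitaries $\tilde U_j$ (whose commutation with the $U_k$ indeed hinges on the antisymmetry of $\theta$) together with the standard cyclic-for-the-commutant implies separating argument. Your route has the advantage of making the isometry assertion self-contained: a unital $*$-representation of a $C^*$-algebra is isometric precisely when it is faithful, and faithfulness is exactly what your unitary $V$ delivers at the outset, whereas the paper's phrasing takes it as given. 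The paper's route, on the other hand, stays entirely within the abstract tracial framework and so transfers unchanged to any unital $C^*$-algebra equipped with a faithful tracial state, without reference to the concrete realization on $L^2(\T^n)$.
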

\begin{proof}
 See Appendix~\ref{appendix:cAtheta-basic-properties}. 
\end{proof}

\begin{remark}
 The 2nd part  allows us to identify any $u \in A_\theta$ with the sum of its Fourier series in $\cH_\theta$. In general the Fourier series need not converge in $A_\theta$, although it does converge when $\sum |u_k|<\infty$. In addition, the injectivity of the embedding of $A_\theta$ into $\cH_\theta$ implies that the trace $\tau$ is faithful. 
\end{remark}

\subsection{Dynamics on $A_\theta$} The natural action of $\R^n$ on $\T^n$ by translation gives rise to a unitary representation $s\rightarrow V_s$ of $\R^n$ given by
\begin{equation*}
\left( V_s \xi\right)(x)=\xi(x+s), \qquad \xi \in L^2(\T^n), \ s\in \R^n. 
\end{equation*}
We then get an action $(s,T)\rightarrow \alpha_s(T)$ of $\R^n$ on $\cL(L^2(\T^n))$ given by 
\begin{equation} \label{eq:NCtori.action-on-T}
 \alpha_s(T)=V_s TV_s^{-1}, \qquad T\in \cL(L^2(\T^n)), \ s\in \R^n. 
\end{equation}
Note also that, for all $s\in \R^n$ and $T\in  \cL(L^2(\T^n))$, we have 
\begin{equation}
\alpha_s(T^*)=\alpha_s(T)^*, \qquad \|\alpha_s(T)\|= \|T\|, \qquad \tau\left[ \alpha_s(T)\right]=\tau\left[T\right].  
\label{eq:NCtori.cAtheta-groupaction}
\end{equation}
In addition, using~(\ref{eq:NCtori.action-on-T}) we get 
\begin{equation*}
 \alpha_s(U_j)=e^{is_j}U_j, \qquad j=1,\ldots, n. 
\end{equation*}
Thus,
\begin{equation} \label{eq:NCtori.action-on-U^k}
\alpha_s(U^k)= e^{is\cdot k} U^k, \qquad  k\in \Z^n. 
\end{equation}
This last property implies that the $C^*$-algebra $A_\theta$ is preserved by the action of $\R^n$. 

\begin{proposition} \label{prop:NCtori.continuity-action}
 The action of $\R^n$ on $A_\theta$ is continuous, i.e., $(s,u)\rightarrow \alpha_s(u)$ is a continuous map from $\R^n\times A_\theta$ to $A_\theta$. In particular, the triple $(A_\theta, \R^n, \alpha)$ is a $C^*$-dynamical system. 
\end{proposition}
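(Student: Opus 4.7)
My plan is to reduce the joint continuity statement to strong continuity of the action at $s=0$ on individual elements, and then exploit density of $\cA_\theta^0$ together with the explicit formula~(\ref{eq:NCtori.action-on-U^k}).

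First, I would record two essential features of $\alpha_s$ that are already available: by~(\ref{eq:NCtori.cAtheta-groupaction}) each $\alpha_s$ acts isometrically on $\cL(L^2(\T^n))$ (hence on $A_\theta$), and $s\mapsto \alpha_s$ is a group homomorphism, so $\alpha_s \alpha_{s_0}^{-1} = \alpha_{s-s_0}$. Given $(s_0,u_0)\in \R^n\times A_\theta$ and $(s,u)\in \R^n\times A_\theta$, I split
\begin{equation*}
\|\alpha_s(u)-\alpha_{s_0}(u_0)\| \leq \|\alpha_s(u-u_0)\| + \|\alpha_s(u_0)-\alpha_{s_0}(u_0)\|
= \|u-u_0\| + \|\alpha_{s-s_0}(u_0)-u_0\|,
\end{equation*}
where I used isometry twice (writing $\alpha_s(u_0)-\alpha_{s_0}(u_0)=\alpha_{s_0}(\alpha_{s-s_0}(u_0)-u_0)$). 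Thus everything comes down to proving that for each fixed $u\in A_\theta$ one has $\lim_{s\to 0}\|\alpha_s(u)-u\|=0$.

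To prove this strong continuity, I would use the density of $\cA_\theta^0 = \op{Span}\{U^k;\ k\in \Z^n\}$ in $A_\theta$. On a single unitary $U^k$, formula~(\ref{eq:NCtori.action-on-U^k}) gives $\|\alpha_s(U^k)-U^k\| = |e^{is\cdot k}-1|$, which tends to $0$ as $s\to 0$. By linearity the same holds for any finite linear combination $v\in\cA_\theta^0$. For general $u\in A_\theta$ and $\epsilon>0$, pick $v\in\cA_\theta^0$ with $\|u-v\|<\epsilon/3$; then the isometry property yields
\begin{equation*}
\|\alpha_s(u)-u\| \leq \|\alpha_s(u-v)\| + \|\alpha_s(v)-v\| + \|v-u\|
\leq \tfrac{2\epsilon}{3} + \|\alpha_s(v)-v\|,
\end{equation*}
which is $<\epsilon$ for $s$ small enough. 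Combined with the earlier splitting this gives joint continuity on $\R^n\times A_\theta$, and the $C^*$-dynamical system assertion follows immediately since all the other axioms (group homomorphism, $*$-automorphism of $A_\theta$) have already been established.

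I do not expect any serious obstacle: the argument is the standard reduction used for unitary representations and $C^*$-dynamical systems, with isometry of $\alpha_s$ and density of trigonometric polynomials doing all the work. The only point requiring a sentence of care is the reduction to $s_0=0$, which uses both the group law and the isometry property, both of which are already recorded in the preceding discussion.
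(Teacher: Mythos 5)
Your proof is correct and rests on exactly the same pillars as the paper's: density of $\cA_\theta^0$ in $A_\theta$, isometry of each $\alpha_s$, and continuity of $s\mapsto\alpha_s(U^k)$ read off from~(\ref{eq:NCtori.action-on-U^k}). The only difference is organizational — you first use the group law to reduce joint continuity to strong continuity at $s=0$ and verify the latter via the explicit estimate $\|\alpha_s(U^k)-U^k\|=|e^{is\cdot k}-1|$, whereas the paper runs the three-term split directly at $(s,u)$ and cites smoothness of $t\mapsto\alpha_t(u_0)$ for $u_0\in\cA_\theta^0$ — so this is essentially the same argument.
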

\begin{proof}
 As above let $\cA_\theta^0$ be the subspace of $A_\theta$ spanned by the unitaries $U^k$, $k\in \Z^n$. It follows from~(\ref{eq:NCtori.action-on-U^k}) that $\alpha_s(u)\in C^\infty(\R^n; A_\theta)$ for all $u\in \cA_\theta^0$. Bearing this in mind, let $s,t\in \R^n$ and $u,v\in A_\theta$. In addition, let $\epsilon>0$. As $\cA_\theta^0$ is dense in $A_\theta$, there is $u_0\in \cA_\theta^0$ such that $\| u-u_0\|<\epsilon$. We have
 \begin{align*}
 \left\| \alpha_t(v)-\alpha_s(u)\right\|  & \leq \left\|\alpha_t(v-u_0)\right\|  + \left\| \alpha_t(u_0)-\alpha_s(u_0)\right\|  +\left\| \alpha_s(u-u_0)\right\|  \\
 & \leq \left\| v-u_0\right\|  + \left\| \alpha_t(u_0)-\alpha_s(u_0)\right\|  +\left\| u-u_0\right\| . 
\end{align*}
Here $\| u-u_0\|<\epsilon$. Moreover, as $\alpha_t(u_0)\in C^\infty(\R^n; A_\theta)$ we see that $\left\| \alpha_t(u_0)-\alpha_s(u_0)\right\| <\epsilon$ as soon as $t$ is close enough to $s$. It then follows that $\| \alpha_t(v)-\alpha_s(u)\| <3\epsilon$ as soon as $(t,v)$ is close enough to $(s,u)$. This shows that $(s,u)\rightarrow \alpha_s(u)$ is a continuous map from $\R^n\times A_\theta$ to $A_\theta$. The proof is complete. 
\end{proof}

We are especially interested in the subalgebra of smooth elements of the $C^*$-dynamical system $(A_\theta, \R^n, \alpha)$, i.e., the \emph{smooth noncommutative torus}, 
\begin{equation*}
 \cA_\theta:=\left\{ u \in A_\theta; \ \alpha_s(u) \in C^\infty(\R^n; A_\theta)\right\}. 
\end{equation*}
As mentioned in the proof of Proposition~\ref{prop:NCtori.continuity-action}, all the unitaries $U^k$, $k\in \Z^n$, are contained in $\cA_\theta$, and so $\cA_\theta$ is a dense subalgebra of $A_\theta$. When $\theta=0$ we recover the algebra $C^\infty(\T^n)$ of smooth functions on the ordinary torus $\T^n$. 

For $N\geq 1$, we also define 
\begin{equation*}
 A_\theta^{(N)}:=\left\{ u \in A_\theta; \ \alpha_s(u) \in C^N(\R^n; A_\theta)\right\}. 
\end{equation*}
We also set $A_\theta^{(0)}=A_\theta$. We note that $\cA_\theta$ and $A^{(N)}_{\theta}$, $N\geq 1$, are involutive subalgebras of $A_\theta$ that are preserved by the action of $\R^n$. 

For $j=1,\ldots, n$ let $\delta_j:A_\theta^{(1)}\rightarrow A_\theta$ be the  derivation defined by 
\begin{equation*}
 \delta_j(u) = D_{s_j} \alpha_s(u)|_{s=0}, \qquad u\in A_\theta^{(1)}, 
\end{equation*}
where we have set $D_{s_j}=\frac{1}{i}\partial_{s_j}$. We have the following properties: 
\begin{gather}
 \delta_j(uv)=\delta_j(u)v+u\delta_j(v), \qquad u,v\in A_\theta^{(1)},\label{eq:NCtori.derivation-Leibniz}\\
  \delta_j(u^*)=-\delta_j(u)^*, \qquad u\in A_\theta^{(1)},
   \label{eq:NCtori.derivation-involution} \\
    D_{s_j} \alpha_s(u) = \delta_j \left( \alpha_s(u)\right) =  \alpha_s\left( \delta_j(u)\right), \qquad u \in A_\theta^{(1)}, \ s\in \R^n,
    \label{eq:NCtori.derivation-groupaction}\\
     \delta_j \delta_l(u) = \delta_l \delta_j(u), \qquad u\in A_\theta^{(2)}, \   j,l=1,\ldots, n.  \nonumber
\end{gather}
When $\theta=0$ the derivation $\delta_j$ is just the derivation $D_{x_j}=\frac{1}{i}\frac{\partial}{\partial x_j}$ on $C^1(\T^n)$. In general, for $j,l=1,\ldots, n$, we have
\begin{equation*}
 \delta_j(U_l) = \left\{ 
 \begin{array}{ll}
 U_j & \text{if $l=j$},\\
 0 & \text{if $l\neq j$}. 
\end{array}\right.
\end{equation*}
In addition, we have the following result. 

\begin{lemma}[\cite{Ro:APDE08}]
 For $j=1,\ldots, n$, we have 
 \begin{gather}
 \tau\left[ \delta_j(u)\right] = 0 \qquad \forall u\in A_\theta^{(1)},\nonumber \\
 \tau\left[ u\delta_j(v)\right] =- \tau\left[ \delta_j(u)v\right] \qquad \forall u,v\in A_\theta^{(1)}. 
 \label{eq:NCtori.integration-by-parts}
\end{gather}
\end{lemma}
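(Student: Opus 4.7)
The plan is to derive both identities from a single source: the fact that the trace $\tau$ is invariant under the action $\alpha_s$, as already recorded in~(\ref{eq:NCtori.cAtheta-groupaction}). Once the first identity is in hand, the second follows from Leibniz's rule~(\ref{eq:NCtori.derivation-Leibniz}) applied to $uv$, together with the remark (quoted just before the lemma) that $A_\theta^{(1)}$ is a subalgebra of $A_\theta$.

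For the first identity, I would start with $u\in A_\theta^{(1)}$, so that $s\mapsto \alpha_s(u)$ is a $C^1$-map from $\R^n$ to the Banach space $A_\theta$. Composing with the continuous linear functional $\tau$ preserves $C^1$-regularity, and the chain rule for continuous linear maps acting on Banach-space-valued differentiable functions gives
\begin{equation*}
 \partial_{s_j} \tau\bigl[\alpha_s(u)\bigr] = \tau\bigl[\partial_{s_j}\alpha_s(u)\bigr] = \tau\bigl[\alpha_s(\delta_j u)\bigr],
\end{equation*}
where in the last equality I used~(\ref{eq:NCtori.derivation-groupaction}). However, the $\tau$-invariance $\tau[\alpha_s(u)]=\tau[u]$ from~(\ref{eq:NCtori.cAtheta-groupaction}) forces the left-hand side to vanish identically in $s$, and in particular at $s=0$ we get $\tau[\delta_j(u)]=0$, which is the first identity.

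For the second identity, let $u,v \in A_\theta^{(1)}$. Since $A_\theta^{(1)}$ is a subalgebra of $A_\theta$, we have $uv\in A_\theta^{(1)}$, so the first identity applies to the product. Combining it with the Leibniz rule~(\ref{eq:NCtori.derivation-Leibniz}) yields
\begin{equation*}
 0 = \tau\bigl[\delta_j(uv)\bigr] = \tau\bigl[\delta_j(u) v\bigr] + \tau\bigl[u\, \delta_j(v)\bigr],
\end{equation*}
which is exactly~(\ref{eq:NCtori.integration-by-parts}).

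The only step requiring any care is the interchange of $\tau$ with $\partial_{s_j}$, but this is a textbook fact: if $f:\R^n\to E$ is $C^1$ into a Banach (more generally locally convex) space $E$ and $\varphi\in E'$, then $\varphi\circ f$ is $C^1$ with $\partial_{s_j}(\varphi\circ f)=\varphi\circ \partial_{s_j}f$. In our setting $E=A_\theta$ is a Banach space and $\tau$ is a continuous linear form (being a state, of norm one), so no issue arises. Thus no genuine obstacle is expected; the whole proof rests on the invariance of $\tau$ under $\alpha_s$ together with Leibniz's rule.
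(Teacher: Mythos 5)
Your proof is correct and follows essentially the same route as the paper's: both derive the first identity by differentiating the constant function $s\mapsto\tau[\alpha_s(u)]$ and using the continuity of $\tau$ to interchange it with $D_{s_j}$, and both deduce the second identity by applying the first to $uv$ together with the Leibniz rule~(\ref{eq:NCtori.derivation-Leibniz}). The only cosmetic difference is that you write $\partial_{s_j}$ where the paper uses $D_{s_j}=\frac{1}{i}\partial_{s_j}$, which is harmless since the relevant quantity is zero either way.
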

\begin{proof}
Let $u\in A_\theta^{(1)}$ and $j\in \{1, \ldots, n\}$. The continuity of $\tau$ and its $\R^n$-invariance as stated in~(\ref{eq:NCtori.cAtheta-groupaction}) imply that 
$\tau\left[ D_{s_j}\alpha_s(u)\right] = D_{s_j}\tau\left[ \alpha_s(u)\right] =0$. 
In particular, for $s=0$ we get $\tau\left[ \delta_j(u)\right] = 0$. In addition, by combining this with~(\ref{eq:NCtori.derivation-Leibniz}) we see that, given any $v\in A_\theta^{(1)}$, we have
\begin{equation*}
 \tau\left[ u\delta_j(v)\right] =\tau\left[ \delta_j(uv)\right] - \tau\left[ \delta_j(u)v\right]=- \tau\left[ \delta_j(u)v\right].
\end{equation*}
The lemma is proved. 
\end{proof}

More generally, given $u\in A_\theta^{(N)}$, $N\geq 2$, and $\beta \in \N_0^n$, $|\beta|=N$, we define 
\begin{equation*}
 \delta^\beta(u) = D_s^\beta \alpha_s(u)|_{s=0} = \delta_1^{\beta_1} \cdots \delta_n^{\beta_n}(u). 
\end{equation*}
We have the following properties:
\begin{gather}
\delta^\beta(uv)= \sum_{\beta'+\beta''=\beta} \binom{\beta}{\beta'} \delta^{\beta'}(u) \delta^{\beta''}(v), \qquad u,v\in A^{(N)}_\theta, \label{eq:NCtori-uv-Leibniz}\\
 \delta^\beta(u^*)=(-1)^{|\beta|}\delta^\beta(u)^*, \qquad u\in A^{(N)}_\theta, \label{eq:NCtori-involution-delta-compatibility}\\ 
D_s^\beta \alpha_s(u)= \alpha_s\left[\delta^\beta (u)\right]=\delta^\beta \left[\alpha_s(u)\right], \qquad u\in A^{(N)}_\theta,
\label{eq:NCtori.cAtheta-groupaction-differentiation}\\
\delta^\beta(U^k)= k^\beta U^k, \qquad k\in \Z^n. \label{eq:NCtori.delta-U^k}
\end{gather}

In what follows we equip $\cA_\theta$ with the locally convex topology defined by the semi-norms, 
\begin{equation*}
 \cA_\theta \ni u \longrightarrow \left\|\delta^\beta (u)\right\| ,  \qquad \beta\in \N_0^n. 
\end{equation*}
In particular, a sequence $(u_\ell)_{\ell \geq 0} \subset \cA_\theta$ converges to $u$ with respect to this topology if and only if 
\begin{equation}\label{eq:NCtori.convergence-u_l}
 \|\delta^\beta(u_\ell -u)\| \longrightarrow 0 \qquad \text{for all $\beta \in \N_0^n$}.
\end{equation}
In addition,  every multi-order derivation $\delta^\beta$ induces a continuous linear map $\delta^\beta: \cA_\theta\rightarrow \cA_\theta$.   

\begin{proposition} \label{prop:NCtori.cAtheta-Frechet}
The following holds.
\begin{enumerate}
 \item $\cA_\theta$ is a unital  Fr\'echet $*$-algebra.
 
 \item The action of $\R^n$ on $\cA_\theta$ is continuous, i.e., $(s,u)\rightarrow \alpha_s(u)$ is a continuous map from $\R^n\times \cA_\theta$ to $\cA_\theta$. 
 
 \item For every $u\in \cA_\theta$, the map $s\rightarrow \alpha_s(u)$ is a smooth map from $\R^n$ to $\cA_\theta$. 
\end{enumerate}
\end{proposition}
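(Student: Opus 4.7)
The plan is to establish the three items in the order stated, since later parts build on the Fréchet structure. Throughout, the essential technical input is the commutation formula~(\ref{eq:NCtori.cAtheta-groupaction-differentiation}), which turns every statement about $\delta^\beta$ into a statement about the orbit map $s\mapsto \alpha_s(u)$.

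For the Fréchet $*$-algebra part~(1), the topology is generated by the countable family of semi-norms $u\mapsto \|\delta^\beta(u)\|$. It is Hausdorff because already $\|\cdot\|=\|\delta^0(\cdot)\|$ is a norm. The Leibniz rule~(\ref{eq:NCtori-uv-Leibniz}) gives $\|\delta^\beta(uv)\|\leq \sum_{\beta'+\beta''=\beta}\binom{\beta}{\beta'}\|\delta^{\beta'}u\|\,\|\delta^{\beta''}v\|$, which is bilinear continuity of the product, and~(\ref{eq:NCtori-involution-delta-compatibility}) yields continuity of the involution. The main thing to verify is completeness. Given a Cauchy sequence $(u_\ell)\subset \cA_\theta$, the sequences $(\delta^\beta u_\ell)_\ell$ are Cauchy in $A_\theta$ for every $\beta$, so they converge to some $u^{(\beta)}\in A_\theta$; set $u=u^{(0)}$. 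Since $\alpha_s$ is isometric on $A_\theta$ (cf.~(\ref{eq:NCtori.cAtheta-groupaction})), the formula $D_s^\beta \alpha_s(u_\ell)=\alpha_s(\delta^\beta u_\ell)$ shows that all the derivatives of the smooth $A_\theta$-valued maps $s\mapsto \alpha_s(u_\ell)$ converge uniformly on $\R^n$ to $s\mapsto \alpha_s(u^{(\beta)})$. The standard Banach-space term-by-term differentiation theorem then implies that $s\mapsto \alpha_s(u)$ is $C^\infty$ with $D_s^\beta\alpha_s(u)=\alpha_s(u^{(\beta)})$. Evaluating at $s=0$ gives $u\in \cA_\theta$ and $\delta^\beta u=u^{(\beta)}$, hence $u_\ell\to u$ in $\cA_\theta$.

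For the continuity of the action~(2), I would estimate for each multi-index $\beta$, using~(\ref{eq:NCtori.cAtheta-groupaction-differentiation}) and the isometry property of $\alpha_s$,
\begin{equation*}
\bigl\|\delta^\beta\!\bigl(\alpha_s(u)-\alpha_{s_0}(u_0)\bigr)\bigr\|
=\bigl\|\alpha_s(\delta^\beta u)-\alpha_{s_0}(\delta^\beta u_0)\bigr\|
\leq \bigl\|\delta^\beta(u-u_0)\bigr\|+\bigl\|\alpha_s(\delta^\beta u_0)-\alpha_{s_0}(\delta^\beta u_0)\bigr\|.
\end{equation*}
As $(s,u)\to (s_0,u_0)$ in $\R^n\times \cA_\theta$, the first term tends to $0$ by definition of the topology on $\cA_\theta$, and the second tends to $0$ by the already established continuity of the action on $A_\theta$ (Proposition~\ref{prop:NCtori.continuity-action}).

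For the smoothness of the orbit map~(3), fix $u\in \cA_\theta$. Since $\alpha_s$ preserves $\cA_\theta$, the map $s\mapsto \alpha_s(u)$ takes values in $\cA_\theta$. To show it is $C^1$ as a $\cA_\theta$-valued map, I must show that for each $\gamma$ the difference quotient converges in the semi-norm $\|\delta^\gamma(\cdot)\|$; applying $\delta^\gamma$ and using~(\ref{eq:NCtori.cAtheta-groupaction-differentiation}) reduces the problem to the convergence in $A_\theta$ of $h^{-1}\bigl(\alpha_{s+he_j}(\delta^\gamma u)-\alpha_s(\delta^\gamma u)\bigr)$ to $\alpha_s(\delta_j\delta^\gamma u)$, which holds because $\delta^\gamma u\in\cA_\theta\subset A_\theta^{(1)}$. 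Thus $D_{s_j}\alpha_s(u)=\alpha_s(\delta_j u)\in \cA_\theta$, and part~(2) guarantees that this derivative is continuous in $s$ as a $\cA_\theta$-valued map. Iterating the argument inductively gives smoothness of all orders. The main obstacle throughout is the completeness step in~(1), where one has to bootstrap from $A_\theta$-norm convergence of the $\delta^\beta u_\ell$ to smoothness of the $A_\theta$-valued orbit map of the limit; the isometry of $\alpha_s$ is exactly what makes the uniform-convergence-of-derivatives argument work, and everything else then follows by routine commutation with $\delta^\beta$.
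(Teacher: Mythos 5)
Your proof is correct and follows essentially the same route as the paper's. The completeness argument in~(1) is the same mechanism the paper uses, just unfolded: the paper first isolates the fact that each $A_\theta^{(N)}$ is a Banach algebra by translating a Cauchy sequence into a Cauchy sequence in $C^N(\T^n;A_\theta)$ via the isometric orbit map, and then observes completeness of $\cA_\theta$ follows; you appeal directly to term-by-term differentiation of the uniformly convergent orbit maps $s\mapsto\alpha_s(u_\ell)$, which is precisely what completeness of $C^N(\T^n;A_\theta)$ packages. Parts~(2) and~(3) also match: both you and the paper reduce everything to the $A_\theta$-valued statement by commuting $\delta^\beta$ past $\alpha_s$ via~(\ref{eq:NCtori.cAtheta-groupaction-differentiation}) and invoking the isometry of the action, with~(3) bootstrapped inductively from the $C^1$-case. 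The only stylistic difference is that the paper establishes differentiability by showing $\Delta_h\alpha_s(u)=\op{o}(|h|)$ in each $\|\delta^\beta\cdot\|$-semi-norm, while you verify existence of partial derivatives and continuity directly; these are interchangeable given the paper's Appendix~\ref{app:LCS-diff}.
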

\begin{proof}
 See Appendix~\ref{appendix:cAtheta-basic-properties}. 
\end{proof}

In the following we denote by $\cS(\Z^n)$ the space of rapid-decay sequences $(a_k)_{k\in \Z^n}\subset \C$, i.e., sequences such that $\sup |k^\beta a_k|<\infty$ for all $\beta \in \N_0^n$. We equip it with the semi-norms,
\begin{equation*}
\cS(\Z^n)\ni (a_k)_{k\in \Z^n} \longrightarrow \sup_{k\in \Z^n} |k^\beta a_k|, \qquad  \beta \in \N_0^n. 
\end{equation*}
This turns $\cS(\Z^n)$ into a nuclear Fr\'echet-Montel space. The Fr\'echet-Montel property is a consequence of Tychonoff theorem. A proof of the nuclearity of $\cS(\Z^n)$ is given in~\cite{Tr:AP67}. Recall also that every Montel space is reflexive, i.e., the canonical embedding into its topological bidual is a linear homeomorphism (see~\cite{Tr:AP67}).

We have the following characterization of the elements of $\cA_\theta$. 

\begin{proposition}[\cite{Co:Foliations82}]\label{prop:NCtori.condition-cAtheta}
 The following holds. 
 \begin{enumerate}
\item  Let $u\in \cH_\theta$ have Fourier series $\sum u_k U^k$. Then $u\in \cA_\theta$ if and only if the sequence $(u_k)_{k \in \Z^n}$ is contained in $\cS(\Z^n)$. Moreover, in this case the Fourier series converges to $u$ in $\cA_\theta$. 

\item  The map $(u_k)\rightarrow \sum u_k U^k$ is a linear homeomorphism from $\cS(\Z^n)$ onto $\cA_\theta$.
\end{enumerate}
\end{proposition}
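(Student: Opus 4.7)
The plan is to prove the two implications of part (1) and then deduce part (2) from the two-sided bound that the proof produces.

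First, for the forward implication in (1), I would compute the Fourier coefficients of the derivatives $\delta^\beta u$ for $u\in\cA_\theta$. Using the integration by parts formula~(\ref{eq:NCtori.integration-by-parts}) iteratively, together with $(U^k)^*=e^{-2i\pi c(k,k)}U^{-k}$ and $\delta^\beta(U^{-k})=(-k)^\beta U^{-k}$ from~(\ref{eq:NCtori.delta-U^k}), I would show
\begin{equation*}
\acoup{\delta^\beta u}{U^k}=\tau\bigl(\delta^\beta(u)(U^k)^*\bigr)=(-1)^{|\beta|}\tau\bigl(u\,\delta^\beta((U^k)^*)\bigr)=k^\beta u_k.
\end{equation*}
Because $\tau$ is a state, $|\tau(v)|\le\|v\|$ for every $v\in A_\theta$; applied to $v=\delta^\beta(u)(U^k)^*$, and using that $U^k$ is unitary, this yields $|k^\beta u_k|\le \|\delta^\beta u\|$ for every $k\in\Z^n$ and every $\beta\in\N_0^n$. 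In particular $(u_k)\in\cS(\Z^n)$, with each seminorm of $(u_k)$ controlled by a single Fréchet seminorm of $u\in\cA_\theta$.

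Second, for the backward implication, assume $(u_k)\in\cS(\Z^n)$. For any $\beta$, rapid decay gives $|k^\beta u_k|\le C_\beta(1+|k|)^{-n-1}$, so $\sum_k |u_k k^\beta|<\infty$. Since $\|U^k\|=1$, the series $\sum_k u_k k^\beta U^k$ converges absolutely in $A_\theta$. In particular (taking $\beta=0$) $\sum u_k U^k$ converges to some $\tilde u\in A_\theta$, and by continuity of the embedding $A_\theta\hookrightarrow\cH_\theta$ (Proposition~\ref{prop:NCTori.GNS-representation}) this forces $\tilde u=u$. Now $\alpha_s(u)=\sum_k u_k e^{is\cdot k}U^k$, and each formal derivative $\sum_k u_k (ik)^\beta e^{is\cdot k}U^k$ converges absolutely and uniformly in $s\in\R^n$ in the Banach space $A_\theta$. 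The standard theorem on termwise differentiation of uniformly convergent series then gives $\alpha_s(u)\in C^\infty(\R^n;A_\theta)$, whence $u\in\cA_\theta$ and $\delta^\beta u=\sum_k u_k k^\beta U^k$. Finally, the tail estimate
\begin{equation*}
\Bigl\|\delta^\beta\Bigl(u-\sum_{|k|\le N}u_k U^k\Bigr)\Bigr\|\le \sum_{|k|>N}|u_k k^\beta|\longrightarrow 0
\end{equation*}
shows that the Fourier series converges to $u$ in the topology of $\cA_\theta$.

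Third, for (2), the map $\Phi:(u_k)\mapsto\sum u_k U^k$ is well-defined and bijective by part (1). Continuity of $\Phi^{-1}$ is exactly the estimate $\sup_k|k^\beta u_k|\le\|\delta^\beta u\|$ obtained in the first step. For continuity of $\Phi$, the identity $\delta^\beta\Phi((u_k))=\sum u_k k^\beta U^k$ combined with
\begin{equation*}
\|\delta^\beta\Phi((u_k))\|\le \sum_k |u_k k^\beta|\le\Bigl(\sum_k(1+|k|)^{-n-1}\Bigr)\sup_k(1+|k|)^{n+1}|k^\beta u_k|
\end{equation*}
bounds a generating seminorm of $\cA_\theta$ by a finite sum of seminorms of $\cS(\Z^n)$. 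Hence $\Phi$ is a linear homeomorphism.

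The main technical point is the Fourier coefficient computation $\acoup{\delta^\beta u}{U^k}=k^\beta u_k$: one must carefully track the phase $e^{-2i\pi c(k,k)}$ from $(U^k)^*$ and the sign $(-1)^{|\beta|}$ from the integration by parts, and see that they combine to give exactly $k^\beta u_k$. Everything else—absolute convergence of the series, termwise differentiation in the Banach space $A_\theta$, and the seminorm comparisons—is routine once this identity is established.
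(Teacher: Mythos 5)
Your proof is correct and follows essentially the same strategy as the paper: the key identity $\acoup{\delta^\beta(u)}{U^k}=k^\beta u_k$ via repeated integration by parts in the forward direction, and absolute/normal convergence of $\sum_k k^\beta u_k U^k$ in $A_\theta$ in the backward direction, with the two-sided seminorm comparison giving part (2). The only cosmetic differences are that you bound $|k^\beta u_k|$ directly by $|\tau(v)|\leq\|v\|$ rather than by Cauchy--Schwarz in $\cH_\theta$ together with the embedding inequality, and that you establish smoothness of $\alpha_s(u)$ by termwise differentiation of a uniformly convergent series in the Banach space $A_\theta$ rather than invoking completeness of the Banach spaces $A_\theta^{(N)}$; both routes are equivalent in substance.
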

\begin{proof}
 See Appendix~\ref{appendix:cAtheta-basic-properties}. 
\end{proof}

\begin{remark}
We refer to~\cite{Ri:CM90}  for the explicit description of the product of $\cA_\theta$ in terms of Fourier series. In particular, this implies that $\cA_\theta$ is a strict deformation quantization of $C^\infty(\T^n)$ in the sense of~\cite{Ri:CMP89}. 
\end{remark}

As an immediate consequence of the 2nd part of Proposition~\ref{prop:NCtori.condition-cAtheta} we obtain the following result. 

\begin{corollary}\label{cor:NCTori.reflexive-NFM}
 $\cA_\theta$ is a nuclear Fr\'echet-Montel space. In particular, it is  reflexive. 
\end{corollary}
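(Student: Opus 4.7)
The plan is to transfer the stated properties of $\cS(\Z^n)$ across the linear homeomorphism furnished by the second part of Proposition~\ref{prop:NCtori.condition-cAtheta}. Recall that just before that proposition the paper records that $\cS(\Z^n)$ is a nuclear Fr\'echet-Montel space (Fr\'echet-Montel by Tychonoff's theorem, nuclear by~\cite{Tr:AP67}), and that every Montel locally convex space is reflexive. Since the properties ``nuclear'', ``Fr\'echet'' and ``Montel'' are all invariants of the category of locally convex spaces with linear homeomorphisms, the strategy is purely a transport-of-structure argument.

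Concretely, I would proceed as follows. First, let $\Phi:\cS(\Z^n)\to\cA_\theta$ be the linear homeomorphism $(u_k)\mapsto \sum u_k U^k$ provided by the second part of Proposition~\ref{prop:NCtori.condition-cAtheta}. Second, since $\cS(\Z^n)$ is complete and metrizable and $\Phi$ is a homeomorphism of topological vector spaces, $\cA_\theta$ is also complete and metrizable, hence Fr\'echet. Third, $\Phi$ sends bounded sets to bounded sets and relatively compact sets to relatively compact sets (and the same for $\Phi^{-1}$); as $\cS(\Z^n)$ is Montel (every closed bounded set is compact), the same property passes to $\cA_\theta$. Fourth, nuclearity is also preserved under linear homeomorphism of locally convex spaces since it can be formulated in terms of the defining seminorms and their associated local Banach spaces; alternatively one may note that a space isomorphic to a nuclear space is nuclear. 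Together these three transports give the first assertion.

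For the ``in particular'' clause, I invoke the standard fact (recalled in the excerpt) that every Montel space is reflexive, which immediately yields reflexivity of $\cA_\theta$.

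I do not anticipate any real obstacle: the only minor point worth checking carefully is that nuclearity is indeed a topological-vector-space invariant, so it transports under $\Phi$ rather than requiring an independent verification on $\cA_\theta$. If one prefers an intrinsic argument, the nuclearity of $\cA_\theta$ can be checked directly by exhibiting, for every continuous seminorm $\|\delta^\beta(\cdot)\|$, a larger continuous seminorm $\|\delta^{\beta'}(\cdot)\|$ (with $|\beta'|$ sufficiently larger than $|\beta|$) such that the canonical map between the associated local Banach spaces is nuclear, using the rapid decay of Fourier coefficients in Proposition~\ref{prop:NCtori.condition-cAtheta}. Either route completes the proof in a few lines.
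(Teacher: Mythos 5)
Your proof is correct and is exactly the argument the paper intends: the corollary is stated as an immediate consequence of Proposition~\ref{prop:NCtori.condition-cAtheta}(2), i.e., the linear homeomorphism $\cS(\Z^n)\cong\cA_\theta$, together with the facts recorded just before that proposition that $\cS(\Z^n)$ is nuclear Fr\'echet-Montel and that Montel spaces are reflexive. You have simply spelled out the transport-of-structure step that the paper leaves implicit.
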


Let us now turn to the group of invertible elements of $\cA_\theta$. We shall denote this group by $\cA_\theta^{-1}$. We 
will also denote by $A_\theta^{-1}$ the  invertible group of $A_\theta$. 

\begin{proposition}[\cite{Co:AdvM81}]\label{prop:NCtori.invertibility-cAtheta}
The following holds. 
\begin{enumerate}
 \item We have $\cA_\theta^{-1}= A_\theta^{-1} \cap \cA_\theta$. 
 
 \item $\cA_\theta^{-1}$ is an open set of $\cA_\theta$ and $u\rightarrow u^{-1}$ is a continuous map from $\cA_\theta^{-1}$ to itself. 
\end{enumerate}
\end{proposition}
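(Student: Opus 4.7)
\smallskip

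The plan is to prove the two items separately, relying on two key facts: the Neumann-series smoothness of inversion in the Banach algebra $A_\theta$, and the explicit Leibniz-type formula for derivatives of $u^{-1}$.

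For part~(1), the inclusion $\cA_\theta^{-1}\subset A_\theta^{-1}\cap\cA_\theta$ is trivial since $\cA_\theta\subset A_\theta$. For the reverse inclusion, let $u\in A_\theta^{-1}\cap \cA_\theta$. Because each $\alpha_s$ is a unital $*$-automorphism of $A_\theta$, we have $\alpha_s(u^{-1})=\alpha_s(u)^{-1}$ in $A_\theta$. By Proposition~\ref{prop:NCtori.cAtheta-Frechet}(3) the map $s\mapsto \alpha_s(u)$ is smooth from $\R^n$ into the Banach algebra $A_\theta$ and takes values in the open set $A_\theta^{-1}$. As inversion $v\mapsto v^{-1}$ is smooth (indeed real-analytic, via a Neumann series expansion $v^{-1}=v_0^{-1}\sum_{k\geq 0}(-1)^k ((v-v_0)v_0^{-1})^k$) on $A_\theta^{-1}$, the composition $s\mapsto \alpha_s(u)^{-1}=\alpha_s(u^{-1})$ is smooth from $\R^n$ to $A_\theta$. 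This exhibits $u^{-1}$ as an element of $\cA_\theta$, proving the equality.

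For part~(2), openness is immediate: the semi-norm $\|\cdot\|=\|\delta^0(\cdot)\|$ is among the defining semi-norms of $\cA_\theta$, so the inclusion $\cA_\theta\hookrightarrow A_\theta$ is continuous. Since $A_\theta^{-1}$ is open in $A_\theta$, its preimage $\cA_\theta^{-1}=A_\theta^{-1}\cap \cA_\theta$ is open in $\cA_\theta$. For continuity of inversion, I would proceed by induction on $|\beta|$. Differentiating the identity $u\cdot u^{-1}=1$ with the Leibniz rule~(\ref{eq:NCtori.derivation-Leibniz}) yields $\delta_j(u^{-1})=-u^{-1}\delta_j(u)u^{-1}$, and iterating via~(\ref{eq:NCtori-uv-Leibniz}) gives that, for every $\beta\in \N_0^n$, $\delta^\beta(u^{-1})$ is a finite sum of terms of the form
\begin{equation*}
 c_{\gamma_1,\ldots,\gamma_r}\, u^{-1}\delta^{\gamma_1}(u)u^{-1}\delta^{\gamma_2}(u)u^{-1}\cdots u^{-1}\delta^{\gamma_r}(u)u^{-1},
\end{equation*}
where $\gamma_1+\cdots+\gamma_r=\beta$ with $|\gamma_i|\geq 1$ and $c_{\gamma_1,\ldots,\gamma_r}\in \Z$.

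Now, the map $u\mapsto u^{-1}$ is continuous from $A_\theta^{-1}$ to $A_\theta$ (a standard Banach-algebra fact, again via Neumann series), and by definition of the Fréchet topology on $\cA_\theta$ each $\delta^\gamma:\cA_\theta\to \cA_\theta\subset A_\theta$ is continuous. Hence every factor in the displayed expression depends $\|\cdot\|$-continuously on $u\in \cA_\theta^{-1}$, and by submultiplicativity of $\|\cdot\|$ so does the whole sum. Thus $u\mapsto \|\delta^\beta(u^{-1})\|$ is continuous for every $\beta$, which in view of~(\ref{eq:NCtori.convergence-u_l}) yields continuity of $u\mapsto u^{-1}$ from $\cA_\theta^{-1}$ into $\cA_\theta$. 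The main point to be careful about is the bookkeeping of the inductive formula above and the fact that no bootstrap issue arises, because once we know $u^{-1}\in \cA_\theta$ from part~(1), all factors on the right-hand side are already elements of $A_\theta$ that depend continuously on $u\in\cA_\theta^{-1}$.
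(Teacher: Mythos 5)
Your proof is correct. For part~(1) and for the openness claim in part~(2) your argument coincides with the paper's: the paper too simply observes that $\alpha_s(u^{-1})=\alpha_s(u)^{-1}$ is a smooth $A_\theta$-valued map (you correctly supply the missing detail that this needs smoothness of inversion in the Banach algebra $A_\theta$ on the open set $A_\theta^{-1}$), and then deduces openness of $\cA_\theta^{-1}$ from the continuity of $\cA_\theta\hookrightarrow A_\theta$.

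For the continuity of $u\mapsto u^{-1}$, however, you take a genuinely different route. The paper introduces the Banach algebras $A_\theta^{(N)}$ with norms $\NormN{\cdot}=\sup_{|\beta|\leq N}\|\delta^\beta(\cdot)\|$ (Lemma~\ref{lem:cATheta-appendix.A^N-Banach}), invokes the general Banach-algebra fact that inversion is continuous on the invertible group of $A_\theta^{(N)}$ for each $N$, and concludes since these norms generate the topology of $\cA_\theta$. You instead differentiate $uu^{-1}=1$ repeatedly to obtain an explicit Leibniz-type expression for $\delta^\beta(u^{-1})$ as a finite sum of words $u^{-1}\delta^{\gamma_1}(u)u^{-1}\cdots\delta^{\gamma_r}(u)u^{-1}$ and then estimate term by term using continuity of $A_\theta$-inversion, continuity of each $\delta^\gamma:\cA_\theta\to A_\theta$, and submultiplicativity of $\|\cdot\|$. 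In effect you re-derive concretely, for these particular derivations, the continuity that the paper obtains abstractly from the Banach-algebra structure of $A_\theta^{(N)}$; the paper's route is shorter but depends on a lemma you did not need, and your route makes the combinatorics explicit. One small phrasing slip: near the end you write that ``$u\mapsto\|\delta^\beta(u^{-1})\|$ is continuous,'' but what your argument actually (and correctly) shows, and what you need, is that $u\mapsto\delta^\beta(u^{-1})$ is continuous from $\cA_\theta^{-1}$ into $A_\theta$ — i.e.\ that $\|\delta^\beta(u_\ell^{-1}-u^{-1})\|\to 0$, not merely that the norms converge.
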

\begin{proof}
See Appendix~\ref{appendix:cAtheta-basic-properties}. 
\end{proof}

\begin{remark}
 The first part is a special case of a more general result of Schweitzer~\cite[Corollary~7.16]{Sc:IJM04} (see also~\cite{Bo:InvM90, Co:AdvM81, GVF:Birkh01, Po:PJM06}).  
\end{remark}

\begin{remark}
 The 2nd part means that $\cA_\theta$ is a good Fr\'echet algebra in the sense of~\cite[A.1.2]{Bo:InvM90}.
\end{remark}

Given any $u\in \cA_\theta$ we shall denote by $\Sp(u)$ its \emph{spectrum}, i.e., 
\begin{equation*}
 \Sp(u)=\left\{\lambda \in \C;\  u-\lambda \not\in \cA_\theta^{-1}\right\}. 
\end{equation*}
The first part of Proposition~\ref{prop:NCtori.invertibility-cAtheta} asserts there is no distinction between being invertible in $\cA_\theta$ or $A_\theta$. This implies that the spectrum of $u$ 
 relatively to $\cA_\theta$ agrees with its spectrum relatively to $A_\theta$, i.e., $\cA_\theta$ is spectral invariant in $A_\theta$. As $A_\theta$ is a $C^*$-algebra, it then follows that $\cA_\theta$ is spectral invariant in any $C^*$-algebra containing $\cA_\theta$. In particular, as we have a $*$-representation of $A_\theta$ in $\cH_\theta$, we obtain the following result. 
 
\begin{corollary}\label{cor:NCtori.spectrum-cHtheta}
For all $u\in \cA_\theta$, we have
\begin{equation*}
 \Sp (u) =\left\{\lambda \in \C; \ \text{$u-\lambda:\cH_\theta\rightarrow \cH_\theta$ is not a bijection}\right\}. 
\end{equation*}
\end{corollary}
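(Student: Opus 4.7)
The plan is to chain three spectral identifications together. First, by the first part of Proposition~\ref{prop:NCtori.invertibility-cAtheta}, we have $\cA_\theta^{-1}= A_\theta^{-1} \cap \cA_\theta$. This means that for $u\in \cA_\theta$ and $\lambda\in \C$, $u-\lambda$ is invertible in $\cA_\theta$ if and only if it is invertible in $A_\theta$. In particular, the spectrum of $u$ computed in $\cA_\theta$ agrees with its spectrum computed in the $C^*$-algebra $A_\theta$.

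Second, I would bring in the $*$-representation $\pi:A_\theta\rightarrow \cL(\cH_\theta)$ provided by the first part of Proposition~\ref{prop:NCTori.GNS-representation}. Formula~(\ref{eq:NCtori.nrom0<norm-u}) shows that $\pi$ is isometric, hence injective with norm-closed image. Thus $\pi(A_\theta)$ is a unital $C^*$-subalgebra of $\cL(\cH_\theta)$. By spectral permanence for $C^*$-subalgebras (an injective unital $*$-homomorphism between $C^*$-algebras preserves spectrum, and a $C^*$-subalgebra of a $C^*$-algebra is spectral-invariant), we obtain
\begin{equation*}
 \Sp_{A_\theta}(u)=\Sp_{\pi(A_\theta)}\bigl(\pi(u)\bigr)=\Sp_{\cL(\cH_\theta)}\bigl(\pi(u)\bigr).
\end{equation*}

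Third, I would translate the condition ``$\pi(u)-\lambda$ is not invertible in $\cL(\cH_\theta)$'' into the condition ``$\pi(u)-\lambda$ fails to be a bijection of $\cH_\theta$''. These are equivalent by the bounded inverse theorem: a bounded bijective linear map between Banach spaces automatically has a bounded inverse, so invertibility in $\cL(\cH_\theta)$ is the same as set-theoretic bijectivity on $\cH_\theta$. Since by construction $\pi(u)$ acts on $\cH_\theta$ as left multiplication by $u$, this is precisely the operator $u-\lambda:\cH_\theta\rightarrow \cH_\theta$ appearing in the statement. Combining the three steps gives the desired description of $\Sp(u)$.

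There is no serious obstacle here; the corollary is essentially a bookkeeping combination of Proposition~\ref{prop:NCtori.invertibility-cAtheta} (spectral invariance of $\cA_\theta$ in $A_\theta$), Proposition~\ref{prop:NCTori.GNS-representation} (existence of a faithful, isometric $*$-representation on $\cH_\theta$), standard spectral permanence for $C^*$-subalgebras, and the open mapping theorem. The only mild point to check is that the representation of Proposition~\ref{prop:NCTori.GNS-representation} really is the multiplication action on $\cH_\theta$, which is explicit in its statement.
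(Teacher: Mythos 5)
Your argument follows the same path as the paper's: spectral invariance of $\cA_\theta$ in $A_\theta$ via Proposition~\ref{prop:NCtori.invertibility-cAtheta}, then spectral permanence through the isometric $*$-representation of Proposition~\ref{prop:NCTori.GNS-representation}, then (implicitly in the paper, explicitly in yours) the bounded inverse theorem to pass from invertibility in $\cL(\cH_\theta)$ to bijectivity on $\cH_\theta$. Your write-up is a correct and slightly more detailed unpacking of the paper's two-sentence justification; there is no substantive difference in approach.
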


There is a holomorphic functional calculus on good Fr\'echet algebras which is defined in the same way as for Banach algebras (see~\cite[A.1.5]{Bo:InvM90}). As we  have a continuous inclusion of $\cA_\theta$ into $A_\theta$, for elements of $\cA_\theta$ the holomorphic functional calculus on $A_\theta$ agrees with the functional calculus on $\cA_\theta$. Therefore, we arrive at the following statement. 

\begin{corollary}[\cite{Co:AdvM81}]
$\cA_\theta$ is stable under holomorphic functional calculus. 
\end{corollary}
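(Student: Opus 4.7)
The plan is to assemble the holomorphic functional calculus for $\cA_\theta$ directly from the two parts of Proposition~\ref{prop:NCtori.invertibility-cAtheta}, following the standard Cauchy-integral construction. First, I would invoke the second part of that proposition: since $\cA_\theta^{-1}$ is open in $\cA_\theta$ and inversion $u\mapsto u^{-1}$ is continuous on $\cA_\theta^{-1}$, the Fr\'echet $*$-algebra $\cA_\theta$ is a good Fr\'echet algebra in the sense of~\cite[A.1.2]{Bo:InvM90}. By the general construction recalled in~\cite[A.1.5]{Bo:InvM90}, any good Fr\'echet algebra supports a holomorphic functional calculus defined by the very same Cauchy-integral formula as for Banach algebras.

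Next, given $u\in \cA_\theta$ and $f$ holomorphic on a neighborhood $\Omega$ of $\Sp_{\cA_\theta}(u)$, I would define, for a suitable contour $\Gamma\subset \Omega$ enclosing $\Sp_{\cA_\theta}(u)$,
\begin{equation*}
f(u)\ :=\ \frac{1}{2\pi i}\oint_\Gamma f(\lambda)(\lambda-u)^{-1}d\lambda,
\end{equation*}
the integral being a Riemann integral of a continuous $\cA_\theta$-valued map (the continuity of $\lambda\mapsto (\lambda-u)^{-1}$ on $\Omega\setminus \Sp_{\cA_\theta}(u)$ follows from part~(2) of the proposition). By general nonsense, this $f(u)$ lies in $\cA_\theta$ and depends only on the germ of $f$ near $\Sp_{\cA_\theta}(u)$.

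The key point to check is that this \emph{a priori} intrinsic construction agrees with the holomorphic functional calculus computed inside the ambient $C^*$-algebra $A_\theta$. For this I would use the first part of Proposition~\ref{prop:NCtori.invertibility-cAtheta}, which gives $\cA_\theta^{-1}=A_\theta^{-1}\cap \cA_\theta$. This equality of invertible groups translates immediately into
\begin{equation*}
\Sp_{\cA_\theta}(u)\ =\ \Sp_{A_\theta}(u),
\end{equation*}
so the same contour $\Gamma$ works for both calculi. Because the inclusion $\cA_\theta\hookrightarrow A_\theta$ is continuous and linear, it commutes with the Riemann integral, so the $\cA_\theta$-valued Cauchy integral above maps under the inclusion to the corresponding $A_\theta$-valued Cauchy integral. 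Hence the element $f(u)$ computed in $A_\theta$ already lies in $\cA_\theta$, which is precisely the spectral-invariance statement for holomorphic functional calculus.

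There is essentially no hard step: the substantive work has already been done in establishing Proposition~\ref{prop:NCtori.invertibility-cAtheta}. The only mild technicality to keep an eye on is checking that $\lambda\mapsto (\lambda-u)^{-1}$ is continuous as an $\cA_\theta$-valued map on the resolvent set, which is guaranteed by part~(2) of the proposition, and that the Riemann integral of this $\cA_\theta$-valued map can be evaluated either in $\cA_\theta$ or in $A_\theta$ with the same answer, which is immediate from the continuity of the inclusion. These are addressed by the general locally-convex integration machinery collected in Appendix~\ref{app:LCS-int}.
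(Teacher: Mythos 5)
Your proposal is correct and follows essentially the same route as the paper: part~(2) of Proposition~\ref{prop:NCtori.invertibility-cAtheta} makes $\cA_\theta$ a good Fr\'echet algebra so the Cauchy-integral calculus of~\cite[A.1.5]{Bo:InvM90} applies, part~(1) gives the spectral invariance $\Sp_{\cA_\theta}(u)=\Sp_{A_\theta}(u)$, and the continuous inclusion $\cA_\theta\hookrightarrow A_\theta$ (commuting with the Riemann integral) identifies the two calculi. You merely spell out the Cauchy-integral formula and the agreement argument in slightly more detail than the paper, which simply records these observations.
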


\subsection{Distributions on $\cA_\theta$} \label{subsection:NCtori.Distributions}
Let $\cA_\theta'$ be the topological dual of $\cA_\theta$. We equip it with its {strong topology}. This is the locally convex topology generated by the semi-norms,
\begin{equation*}
v\longrightarrow\sup_{u\in B}|\acou{v}{u}| , \qquad \text{$B\subset \cA_\theta$ bounded}. 
\end{equation*}
It is tempting  to think of elements of $\cA_\theta'$ as distributions on $\cA_\theta$. This is consistent with the definition of distributions on $\mathbb{T}^{n}$ as continuous linear forms on $C^\infty(\mathbb{T}^n)$. 
 Any $u \in \cA_\theta$ defines a linear form on $\cA_\theta$ by
\begin{equation*}
 \acou{u}{v} =\tau(uv) \qquad \text{for all $v\in \cA_\theta$}.  
\end{equation*}
 Note that, for all $u,v \in \cA_\theta$, we have 
 \begin{equation} \label{eq:NCtori.distrb-innerproduct-eq}
  \acou{u}{v} =\acoup{v}{u^*}=\acoup{u}{v^*}.
\end{equation}
 In particular, given any $u \in \cA_\theta$, the map $v\rightarrow \acou{u}{v}$ is a continuous linear form on $\cA_\theta$. Moreover, as $\acou{u}{u^*}=\| u\|_0^2\neq 0$ if $u\neq 0$, we get a natural embedding of $\cA_\theta$ into $\cA_\theta'$. This allows us to identify $\cA_\theta$ with a subspace of $\cA_\theta'$. Furthermore, given any 
 bounded set $B\subset \cA_\theta$, we have 
\begin{equation*}
 \sup_{v \in B} |\acou{u}{v}| = \sup_{v \in B} |\acoup{u}{v^*}| \leq  \| u\|  \sup_{v \in B}\| v\|.
\end{equation*}
Therefore, we see that the embedding of $\cA_\theta$ into $\cA_\theta'$ is continuous. It is immediate from~(\ref{eq:NCtori.distrb-innerproduct-eq}) that this embedding uniquely extends to a continuous embedding of $\cH_\theta$ into $\cA_\theta$. 

Combining~(\ref{eq:NCtori.distrb-innerproduct-eq}) with the above embedding allows us to extend the definition of Fourier series to $\cA_\theta'$. Namely, given any $v\in \cA_\theta'$ its Fourier series is the series, 
\begin{equation}
 \sum_{k\in \Z^n} v_k U^k, \qquad \text{where}\ v_k:= \acou{v}{(U^k)^*}. 
 \label{eq:NCtori.Fourier-series}
\end{equation}
Here the unitaries $U^k$, $k\in \Z^n$, are regarded as elements of $\cA_\theta'$. 

In what follows we denote by $\cS'(\Z^n)$ the (topological) dual of $\cS(\Z^n)$. It is naturally identified 
with the space of sequences $(v_k)_{k\in\Z^n}\subset \C$ for which there are $N\geq 0$ and $C_N>0$ such that 
\begin{equation*} 
|v_k|\leq C_N(1+|k|)^N \qquad \text{for all $k\in\Z^n$}.
\end{equation*}

\begin{proposition}\label{prop:NCtori.distributions-Fourier-series}
Let $v\in \cA_\theta'$ have Fourier series $ \sum_{k\in \Z^n} v_k U^k$. Then $(v_k)_{k\in \Z^n}\in \cS'(\Z^n)$ and $v$ is equal to the sum of its Fourier series in $\cA_\theta'$.  
\end{proposition}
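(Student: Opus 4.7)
The plan has three steps: (i) use continuity of $v$ on the Fr\'echet space $\cA_\theta$ to derive a polynomial bound on $v_k$; (ii) expand $\acou{v}{u}$ as an explicit series in the Fourier coefficients of $u$; (iii) obtain strong convergence from a uniform tail estimate on bounded sets. For (i), since the topology of $\cA_\theta$ is defined by the seminorms $u \mapsto \|\delta^\beta u\|$, continuity of $v$ yields $N\geq 0$ and $C>0$ such that $|\acou{v}{u}| \leq C\sum_{|\beta|\leq N}\|\delta^\beta u\|$. Applied to $u = (U^k)^*$, and using the identities $\delta^\beta(u^*) = (-1)^{|\beta|}\delta^\beta(u)^*$ and $\delta^\beta(U^k) = k^\beta U^k$, the norms $\|\delta^\beta((U^k)^*)\|$ equal $|k^\beta|$, giving $|v_k| \leq C'(1+|k|)^N$. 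Hence $(v_k)\in\cS'(\Z^n)$.

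For step (ii), for any $u\in\cA_\theta$, Proposition~\ref{prop:NCtori.condition-cAtheta} ensures that the Fourier series $u = \sum_l u_l U^l$ converges in $\cA_\theta$, so continuity of $v$ lets us bring $v$ inside the sum. Using the identity $(U^k)^* = e^{-2i\pi c(k,k)}U^{-k}$ together with $c(-l,-l)=c(l,l)$, one rewrites $\acou{v}{U^l}$ in terms of $v_{-l}$, producing the explicit formula $\acou{v}{u} = \sum_k e^{2i\pi c(k,k)}v_k u_{-k}$. A parallel computation, using the product rule $U^k U^l = e^{-2i\pi c(k,l)}U^{k+l}$ and the vanishing of $\tau(U^m)$ for $m\neq 0$, shows that the pairing with the partial sum $v^{(N)} := \sum_{|k|\leq N} v_k U^k$, viewed as an element of $\cA_\theta\subset\cA_\theta'$, equals the corresponding truncation of that series.

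For step (iii), let $B\subset\cA_\theta$ be bounded. The linear homeomorphism $\cA_\theta \cong \cS(\Z^n)$ of Proposition~\ref{prop:NCtori.condition-cAtheta} maps $B$ onto a bounded subset of $\cS(\Z^n)$; hence for every $M\geq 0$ there is $C_{B,M}>0$ with $|u_k| \leq C_{B,M}(1+|k|)^{-M}$ uniformly on $B$. Combined with the bound from step~(i), the remainder $|\acou{v-v^{(N)}}{u}|$ is controlled by $CC_{B,M}\sum_{|k|>N}(1+|k|)^{N_0-M}$, which, for $M$ chosen large enough, tends to $0$ as $N\to\infty$ uniformly in $u\in B$. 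This yields $v^{(N)}\to v$ in the strong topology of $\cA_\theta'$. The only real subtlety of the argument lies in step~(ii), where one must carefully track the phase factors $e^{2i\pi c(k,k)}$ coming from the noncommutative product; once these are in place, the convergence reduces to a routine tail estimate exactly parallel to the proof that truncations converge in $\cS'(\Z^n)$.
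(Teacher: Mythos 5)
Your proof is correct, and the broad shape of step~(i) is the same as the paper's. The other two steps take a genuinely different and more elementary route. For the convergence, the paper does not expand $\acou{v}{u}$ as an explicit bilinear sum with the phase factors $e^{2i\pi c(k,k)}$; instead it uses the identity $u=(u^*)^*=\sum_k \acou{U^k}{u}\,(U^k)^*$ (valid with convergence in $\cA_\theta$) to obtain directly that $\acou{v}{u}=\sum_k v_k\acou{U^k}{u}$, giving weak-$*$ convergence of the partial sums to $v$ without any bookkeeping of the cocycle $c(\cdot,\cdot)$. It then upgrades weak-$*$ to strong convergence in one stroke via the Banach--Steinhaus theorem, using that $\cA_\theta$ is a Fr\'echet space (and implicitly its Montel property from Corollary~\ref{cor:NCTori.reflexive-NFM}), rather than verifying the uniform tail estimate on bounded sets by hand as you do in step~(iii).

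The trade-off is instructive: your version is entirely self-contained and quantitative -- you chase the polynomial bound $|v_k|\lesssim(1+|k|)^{N}$ against the rapid decay $|u_k|\lesssim_{B,M}(1+|k|)^{-M}$ uniformly over a bounded set $B$ (supplied by the homeomorphism $\cA_\theta\cong\cS(\Z^n)$ of Proposition~\ref{prop:NCtori.condition-cAtheta}) and sum the geometric tail -- so no appeal to equicontinuity or the Montel property is needed. The paper's argument is shorter and cleaner, but leans on functional-analytic machinery. One small caveat: in step~(iii) you switch from calling the exponent $N$ in step~(i) to $N_0$; this is only notational, but in a final write-up you should keep the name consistent and also distinguish it from the truncation index $N$ in $v^{(N)}$, otherwise the inequality $\sum_{|k|>N}(1+|k|)^{N_0-M}\to 0$ is hard to parse.
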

\begin{proof}
As $v$ is a continuous linear form on $\cA_\theta$ there are $N\in \N_0$ and $C_N>0$ such that 
\begin{equation*}
\left|\acou{v}{u}\right|\leq C_N \sup_{|\alpha|\leq N} \|\delta^\alpha (u)\| \qquad \text{for all $u \in \cA_\theta$}. 
\end{equation*}
In particular, for all $k\in \Z^n$, we have 
\begin{equation*} 
 |v_k|= \left|\acou{v}{(U^{k})^*}\right|\leq C_N \sup_{|\alpha|\leq N} \left\|\delta^\alpha \left(U^{k}\right)^*\right\|  =  C_N \sup_{|\alpha|\leq N}  \left\| k^\alpha U^{k}\right\| 
 \leq C_N(1+|k|)^N. 
\end{equation*}
It then follows that $(v_k)_{k\in\Z^n}\in \cS'(\Z^n)$. 

Let $u\in \cA_\theta$. Using~(\ref{eq:NCtori.distrb-innerproduct-eq}) we see that $ \overline{\acoup{u^*}{U^k}}=\acoup{U^k}{u^*}=\acou{U^k}{u}$ for all $k \in \Z^n$. Thus, 
\begin{equation*}
 u=(u^*)^*= \sum_{k\in \Z^n} \overline{\acoup{u^*}{U^k}} \left(U^k\right)^*=  \sum_{k\in \Z^n}\acou{U^k}{u} \left(U^k\right)^*,
 \end{equation*}
 where the series converge in $\cA_\theta$. As $v$ is  a continuous linear form on $\cA_\theta$, we get
\begin{equation*}
 \acou{v}{u}= \sum_{k \in \Z^n} \acou{U^k}{u} \acou{v}{\left(U^k\right)^*}= \sum_{k \in \Z^n} v_{k}\acou{U^{k}}{u}.
\end{equation*}
This shows that the series $\sum_{k\in \Z^n} v_kU^{k}$ converges weakly to $v$. As $\cA_\theta$ is a Fr\'echet space, the Banach-Steinhaus theorem ensures us that we have convergence with respect to the strong topology of $\cA_\theta'$. The proof is complete.
\end{proof}

\begin{remark}
The above result implies that we have a natural linear map $v \rightarrow (v_k)_{k\in \Z^n}$ from $\cA_\theta'$ to $\cS'(\Z^n)$. This is actually the transpose of the linear homeomorphism of $\cS(\Z^n)\ni (u_k) \rightarrow \sum u_k U^k\in \cA_\theta $ of Proposition~\ref{prop:NCtori.condition-cAtheta}. 
Therefore, if we equip $\cS'(\Z^n)$ with its strong topology then we obtain a linear homeomorphism from $\cA_\theta'$ onto $\cS'(\Z^n)$.
\end{remark}

We observe that in any Fourier series~(\ref{eq:NCtori.Fourier-series}) every summand is an element of $\cA_\theta$. Therefore, as an immediate consequence of Proposition~\ref{prop:NCtori.distributions-Fourier-series} we obtain the following density result. 

\begin{corollary}
 The inclusion of $\cA_\theta$ into $\cA_\theta'$ has dense range. 
\end{corollary}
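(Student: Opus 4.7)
The plan is to deduce this density statement directly from Proposition~\ref{prop:NCtori.distributions-Fourier-series}, which provides the convergence of the Fourier series of any distribution in the strong topology of $\cA_\theta'$.

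First, I would take an arbitrary $v\in\cA_\theta'$ and form its Fourier series $\sum_{k\in\Z^n} v_k U^k$ as in~(\ref{eq:NCtori.Fourier-series}). For each integer $N\geq 0$, consider the partial sum
\begin{equation*}
 v_N := \sum_{|k|\leq N} v_k U^k.
\end{equation*}
Since each $U^k$ lies in $\cA_\theta^0\subset\cA_\theta$, the partial sum $v_N$ is an element of $\cA_\theta$, which is identified with a subspace of $\cA_\theta'$ via the embedding $u\mapsto\tau(u\,\cdot\,)$ discussed before~(\ref{eq:NCtori.Fourier-series}).

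Next, Proposition~\ref{prop:NCtori.distributions-Fourier-series} asserts precisely that $v_N \to v$ in the strong topology of $\cA_\theta'$ as $N\to\infty$. This exhibits $v$ as a strong limit of elements of $\cA_\theta$, so $\cA_\theta$ is strongly dense in $\cA_\theta'$, which is the desired conclusion. There is no serious obstacle here; the entire content of the corollary is already packed into the previous proposition, and the only thing to observe is that the truncated Fourier series lies in $\cA_\theta$ itself rather than merely in $\cA_\theta'$.
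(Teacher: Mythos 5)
Your proof is correct and matches the paper's argument: the paper likewise deduces the density directly from Proposition~\ref{prop:NCtori.distributions-Fourier-series}, noting that each summand (hence each partial sum) of the Fourier series lies in $\cA_\theta$ while the series converges to $v$ in the strong topology of $\cA_\theta'$.
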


\subsection{Differential operators}  In this subsection, we review a few facts on differential operators on noncommutative tori. 

\begin{definition}[\cite{Co:CRAS80, Co:NCG}]
 A differential operator of order~$m$ on $\cA_\theta$ is a linear operator $P: \cA_\theta \rightarrow \cA_\theta$ of the form, 
\begin{equation} \label{eq:NCtori.differential-operator}
 P= \sum_{|\alpha|\leq m} a_\alpha \delta^\alpha, \qquad a_\alpha \in \cA_\theta. 
\end{equation}
\end{definition}

\begin{remark}
 In~(\ref{eq:NCtori.differential-operator}) each coefficient $a_\alpha$, $|\alpha|\leq m$,  is identified with the operator of left-multiplication by $a_\alpha$. Thus, (\ref{eq:NCtori.differential-operator}) means that
\begin{equation*}
 Pu= \sum_{|\alpha|\leq m} a_\alpha \delta^\alpha u \qquad \text{for all $u\in \cA_\theta$}. 
\end{equation*}
\end{remark}

\begin{remark}
 Any differential operator is a continuous linear operator on $\cA_\theta$. 
\end{remark}

\begin{remark} \label{rem:NCtori.differential-op}
 Let $ P= \sum_{|\alpha|\leq m} a_\alpha \delta^\alpha$ be a differential operator of order~$m$. The \emph{symbol} of $P$ is the polynomial map $\rho: \R^n \rightarrow \cA_\theta$ defined by 
\begin{equation*}
 \rho(\xi)  =  \sum_{|\alpha|\leq m} a_\alpha \xi^\alpha, \qquad \xi\in \R^n. 
\end{equation*}
Its $m$-th degree component $\rho_m(\xi)=   \sum_{|\alpha|= m} a_\alpha \xi^\alpha$ is called the \emph{principal symbol} of $P$. 
\end{remark}

\begin{example}
 The (flat) Laplacian of $\cA_\theta$ is the 2nd order differential operator, 
\begin{equation} \label{eq:NCtori.flat-Laplacian}
 \Delta = \delta_1^2+ \cdots + \delta_n^2. 
\end{equation}
Its symbol is $\rho(\xi)= \xi_1^2 + \cdots + \xi_n^2= |\xi|^2$. 
\end{example}

\begin{example}[\cite{HP:Laplacian}] \label{ex:NCtori.Laplacian-Riemannian}
 In~\cite{Ro:SIGMA13} a Riemannian metric on $\cA_\theta$ is given by a positive invertible matrix $g=(g_{ij})\in M_n(\cA_\theta)$ whose entries are selfadjoint elements of $\cA_\theta$. Its determinant is defined by
\begin{equation*}
 \det (g):=\exp\big( \Tr[ \log(g)]\big), 
\end{equation*}
where $\log(g) \in M_n(\cA_\theta)$ is defined by holomorphic functional calculus and $\Tr$ is the matrix trace (see~\cite{HP:Laplacian}). The determinant $\det(g)$ is a positive invertible element of $\cA_\theta$, and so $\nu(g):= \sqrt{\det(g)}$ is a positive invertible element of $\cA_\theta$. Let $g^{-1}=(g^{ij})$ be the inverse matrix of $g$. In~\cite{HP:Laplacian} the Laplace-Beltrami operator associated with $g$ is the 2nd order differential operator $\Delta_g:\cA_\theta \rightarrow \cA_\theta$ given by
\begin{equation*}
 \Delta_g u =  \nu(g)^{-1} \sum_{1\leq i,j\leq n} \delta_i \left( \sqrt{\nu(g)} g^{ij} \sqrt{\nu(g)}\delta_j(u)\right), \qquad u\in \cA_\theta. 
\end{equation*}
When $g_{ij}=\delta_{ij}$ we recover the flat Laplacian~(\ref{eq:NCtori.flat-Laplacian}). When $\theta=0$ we have $\sqrt{\nu(g)} g^{ij} \sqrt{\nu(g)}=g^{ij}\nu(g)$, and so we recover the usual expression for the Laplace-Beltrami operator in Euclidean coordinates with $\delta_j=\frac{1}{\sqrt{-1}} \partial_{j}$. 
\end{example}

The following result shows that differential operators form a graded algebra. 

\begin{proposition}
 Suppose that $P$ and $Q$ are differential operators on $\cA_\theta$ of respective orders $m$ and $m'$. Then $PQ$ is a differential operator of order~$\leq m+m'$. 
\end{proposition}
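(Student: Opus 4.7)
The plan is to compute $PQ$ explicitly and rewrite it in the standard form $\sum_{|\gamma|\leq m+m'} c_\gamma \delta^\gamma$ with coefficients $c_\gamma\in\cA_\theta$. Writing $P=\sum_{|\alpha|\leq m} a_\alpha \delta^\alpha$ and $Q=\sum_{|\beta|\leq m'} b_\beta \delta^\beta$, linearity reduces the problem to showing that, for each multi-index $\alpha$ with $|\alpha|\leq m$ and each $b\in \cA_\theta$, the operator $\delta^\alpha \circ b$ (where $b$ denotes left-multiplication by $b$) can be expressed as $\sum_{|\alpha''|\leq |\alpha|} c_{\alpha''}\delta^{\alpha''}$ for some coefficients $c_{\alpha''}\in\cA_\theta$.

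The key ingredient is the generalized Leibniz rule~(\ref{eq:NCtori-uv-Leibniz}), which gives, for every $u\in\cA_\theta$,
\begin{equation*}
 \delta^\alpha(b u) = \sum_{\alpha'+\alpha''=\alpha} \binom{\alpha}{\alpha'} \delta^{\alpha'}(b)\, \delta^{\alpha''}(u).
\end{equation*}
Since each $\delta^{\alpha'}(b)$ lies in $\cA_\theta$ (because $\cA_\theta$ is stable under every $\delta^{\alpha'}$), this identity exhibits $\delta^\alpha \circ b$ as the differential operator $\sum_{\alpha'+\alpha''=\alpha}\binom{\alpha}{\alpha'} \delta^{\alpha'}(b)\,\delta^{\alpha''}$ of order $\leq|\alpha|$. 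Combining this with the decomposition of $Q$ yields
\begin{equation*}
 \delta^\alpha \circ Q \; =\; \sum_{|\beta|\leq m'}\ \sum_{\alpha'+\alpha''=\alpha} \binom{\alpha}{\alpha'}\, \delta^{\alpha'}(b_\beta)\, \delta^{\alpha''+\beta},
\end{equation*}
which is a differential operator of order $\leq |\alpha|+m'$.

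Finally, composing on the left with $a_\alpha$ (left-multiplication, which preserves the form of a differential operator because $\cA_\theta$ is an algebra) and summing over $\alpha$ with $|\alpha|\leq m$ gives
\begin{equation*}
 PQ \;=\; \sum_{|\alpha|\leq m}\ \sum_{|\beta|\leq m'}\ \sum_{\alpha'+\alpha''=\alpha} \binom{\alpha}{\alpha'}\, a_\alpha\, \delta^{\alpha'}(b_\beta)\,\delta^{\alpha''+\beta},
\end{equation*}
in which every coefficient $a_\alpha \delta^{\alpha'}(b_\beta)$ belongs to $\cA_\theta$ and every total order $|\alpha''+\beta|\leq |\alpha|+|\beta|\leq m+m'$. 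Regrouping the terms by the combined multi-index $\gamma=\alpha''+\beta$ exhibits $PQ$ in the required form.

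There is no serious obstacle here; the argument is essentially a bookkeeping exercise on the Leibniz rule. The only point worth checking carefully is that the coefficients $\delta^{\alpha'}(b_\beta)$ indeed remain in $\cA_\theta$ (not merely in $A_\theta$), which is guaranteed by Proposition~\ref{prop:NCtori.cAtheta-Frechet} and the smoothness of the action of $\R^n$ on $\cA_\theta$.
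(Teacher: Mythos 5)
Your proof is correct and uses the same essential argument as the paper: both reduce to the Leibniz rule~(\ref{eq:NCtori-uv-Leibniz}) to rewrite $\delta^\alpha(b\,\delta^\beta(u))$ as $\sum_{\alpha'+\alpha''=\alpha}\binom{\alpha}{\alpha'}\delta^{\alpha'}(b)\,\delta^{\alpha''+\beta}(u)$ and read off the order bound. The paper first reduces by linearity to the monomial case $P=a\delta^\alpha$, $Q=b\delta^\beta$, whereas you carry the full sums through, but that is only a cosmetic difference.
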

\begin{proof}
It is enough to prove the result when $P=a\delta^\alpha$ and $Q=b\delta^\beta$ with $a,b\in \cA_\theta$ and $|\alpha|\leq m$ and $|\beta|\leq m'$. In fact by the Leibniz formula, for all $u\in \cA_\theta$, we have 
\begin{equation*}
 PQu=a\delta^\alpha \left(b \delta^\beta(u)\right)= \sum_{\alpha'+\alpha''=\alpha} \binom{\alpha}{\alpha'} a \delta^{\alpha'}(b) \delta^{\alpha''+\beta}(u). 
\end{equation*}
This shows that $PQ$ is a differential operator of order~$\leq |\alpha|+|\beta|\leq m+m'$. The result is proved. 
\end{proof}

In order to motivate the definition of pseudodifferential operators on $\cA_\theta$ we would like to outline an integral representation of differential operators. At this point we shall not worry about convergence issues or integration by part justifications. We will see later that all the arguments can be made rigorous  by interpreting the integrals at stake as oscillating integrals. 

Let $P=  \sum_{|\alpha|\leq m} a_\alpha \delta^\alpha$ be a differential operator of order $m$. Let $u\in \cA_\theta$. Note that $\delta^\alpha(u)= \left. (i\partial_t)^\alpha\alpha_{-t}(u)\right|_{t=0}$. Therefore, by using Fourier's inversion formula and integrating by parts we see that $\delta^\alpha(u) $ is equal to 
\begin{align*}
 \iint e^{is\cdot \xi}  (i\partial_s)^\alpha\alpha_{-s}(u)ds\dbar \xi & = \iint (-i\partial_s)^\alpha \left[ e^{is\cdot \xi}\right]  \alpha_{-s}(u)ds\dbar \xi \\
 & = \iint  e^{is\cdot \xi} \xi^\alpha \alpha_{-s}(u)ds\dbar \xi.
\end{align*}
Therefore, by linearity we get 
\begin{equation*}
 Pu =  \sum_{|\alpha|\leq m} a_\alpha \delta^\alpha(u) =   \sum_{|\alpha|\leq m}  \iint  e^{is\cdot \xi} a_\alpha \xi^\alpha \alpha_{-s}(u)ds\dbar \xi. 
\end{equation*}
Thus, if we let  $\rho(\xi)  =  \sum_{|\alpha|\leq m} a_\alpha \xi^\alpha$ be the symbol of $P$, then we have 
\begin{equation}
 Pu=  \iint  e^{is\cdot \xi}  \rho(\xi) \alpha_{-s}(u)ds\dbar \xi \qquad \text{for all $u\in \cA_\theta$}. 
 \label{eq:NCtori.diff-op-integral}
\end{equation}

The above formula is the main impetus for the definition of pseudodifferential operators on noncommutative tori. In Section~\ref{section:Symbols} and Section~\ref{section:Amplitudes} 
we shall explain how to give sense to the integrals considered above. This will allow us to define pseudodifferential operators on noncommutative tori associated with the classes of symbols that are introduced in the next section. 

\section{Classes of Symbols on Noncommutative Tori} \label{section:Symbols}
In this section, we review the main classes of symbols on noncommutative tori. 

\subsection{Standard symbols} 

\begin{definition}[\cite{Ba:CRAS88, Co:CRAS80}]
$\stS^m (\Rn ; \cA_\theta)$, $m\in\R$, consists of maps $\rho(\xi)\in C^\infty (\Rn ; \cA_\theta)$ such that, for all multi-orders $\alpha$ and $\beta$, there exists $C_{\alpha \beta} > 0$ such that
\begin{equation*} 
\norm{\delta^\alpha \partial_\xi^\beta \rho(\xi)} \leq C_{\alpha \beta} \left( 1 + | \xi | \right)^{m - | \beta |} \qquad \forall \xi \in \R^n .
\end{equation*}
\end{definition}

\begin{remark}\label{rmk:Symbols.amplitudes-intersection}
 We have 
\begin{equation*}
 \bigcap_{m\in\R}\stS^m( \R^n;\cA_\theta)  = \cS(\R^n; \cA_\theta), 
\end{equation*}\
where $\cS(\R^n; \cA_\theta)$ is the space of Schwartz-class maps $\rho:\R^n\rightarrow \cA_\theta$ (\cf\ Appendix~\ref{app:LCS-diff}). 
\end{remark}

In what follows, we endow each space $\stS^m(\Rn;\cA_\theta)$, $m\in \R$, with the locally convex topology generated by the semi-norms,
\begin{equation} \label{eq:Symbols.standard-semi-norms}
p_N^{(m)}(\rho):=\sup_{|\alpha|+|\beta|\leq N} \sup_{\xi\in\Rn}(1+|\xi|)^{-m+|\beta|}\norm{\delta^\alpha\partial_\xi^\beta\rho(\xi)}, \qquad N\in\N_0 .
\end{equation}

\begin{proposition}[\cite{Ba:CRAS88}] \label{prop:Symbols.standard-Frechetspace}
$\stS^m(\R^n ;\cA_\theta)$, $m\in \R$,  is a Fr\'{e}chet space.
\end{proposition}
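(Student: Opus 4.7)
The plan is to verify the three defining properties of a Fr\'echet space: local convexity, metrizability, and completeness. Local convexity is immediate since the topology is defined by a family of seminorms. Metrizability follows because the family $\{p_N^{(m)}\}_{N\in\N_0}$ is countable, provided the topology is Hausdorff; the latter is clear, since $p_0^{(m)}(\rho)=0$ forces $\rho(\xi)=0$ for every $\xi$. Thus the real content is completeness.

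To prove completeness I would start with a Cauchy sequence $(\rho_k)_{k\geq 0}$ in $\stS^m(\R^n;\cA_\theta)$. Unwinding the definition of the seminorms~(\ref{eq:Symbols.standard-semi-norms}), for each pair of multi-orders $(\alpha,\beta)$ the sequence $(\delta^\alpha \partial_\xi^\beta \rho_k(\xi))_{k\geq 0}$ is Cauchy in $\cA_\theta$ uniformly in $\xi\in\R^n$, with the weight $(1+|\xi|)^{m-|\beta|}$. Since $\cA_\theta$ is itself a Fr\'echet space by Proposition~\ref{prop:NCtori.cAtheta-Frechet}, pointwise limits $\rho_{\alpha\beta}(\xi)\in\cA_\theta$ exist, and $\delta^\alpha\partial_\xi^\beta \rho_k\to \rho_{\alpha\beta}$ uniformly on $\R^n$ in the weighted sense.

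Setting $\rho:=\rho_{00}$, the next step is to identify $\rho_{\alpha\beta}$ with $\delta^\alpha\partial_\xi^\beta\rho$. Since each $\delta^\alpha:\cA_\theta\to\cA_\theta$ is a continuous linear map, it commutes with pointwise limits in $\cA_\theta$, so it suffices to show that $\rho$ is $C^\infty$ with $\partial_\xi^\beta\rho = \rho_{0\beta}$. This is a version of the classical theorem that a sequence of $C^\infty$ maps into a Fr\'echet space which converges uniformly on compacts together with all partial derivatives has a $C^\infty$ limit whose derivatives are the limits of the derivatives; the form valid for maps into locally convex spaces is recorded in Appendix~\ref{app:LCS-diff}. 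The weighted uniform convergence on $\R^n$ certainly implies uniform convergence on compacts, so the theorem applies.

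It then remains to check that $\rho\in\stS^m(\R^n;\cA_\theta)$ and that $\rho_k\to\rho$ in the topology of $\stS^m$. As $(\rho_k)$ is Cauchy, the seminorms $p_N^{(m)}(\rho_k)$ are bounded in $k$; passing to the limit in $k$ in the uniform bound $\|\delta^\alpha\partial_\xi^\beta\rho_k(\xi)\|\leq C_{\alpha\beta}(1+|\xi|)^{m-|\beta|}$ yields the symbol estimates for $\rho$. Finally, given $N$ and $\epsilon>0$, pick $K$ such that $p_N^{(m)}(\rho_k-\rho_\ell)<\epsilon$ for $k,\ell\geq K$; letting $\ell\to\infty$ pointwise inside the supremum gives $p_N^{(m)}(\rho_k-\rho)\leq\epsilon$ for $k\geq K$. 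The only delicate ingredient in the argument is the differentiation-under-limit step in the Fr\'echet-space-valued setting; everything else is routine bookkeeping.
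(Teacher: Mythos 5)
Your proof is correct and takes essentially the same approach as the paper's: reduce to completeness, show the Cauchy sequence has a smooth limit, then pass to the limit in the weighted Cauchy estimate. The only cosmetic difference is that the paper abbreviates the smooth-limit step by observing that the sequence is also Cauchy in $C^\infty(\R^n;\cA_\theta)$ and invoking the completeness of that space (established in Appendix~\ref{app:LCS-diff}), whereas you unpack the same fact via the convergence-of-derivatives lemma from the same appendix; the underlying content is identical.
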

\begin{proof} As the semi-norm family $(p_N^{(m)})_{N\geq 0}$ is countable, we only need to check that every Cauchy sequence  in  $\stS^m(\Rn;\cA_\theta)$ is convergent. Thus, let $\left(\rho_j(\xi)\right)_{j \geq 0}$ be a Cauchy sequence in $\stS^m(\Rn ; \cA_\theta )$. As the inclusion of $\stS^m(\Rn;\cA_\theta)$ into $C^\infty ( \Rn ; \cA_\theta )$ is continuous, this gives rise to a Cauchy sequence in $C^\infty (\Rn ; \cA_\theta )$. Since $C^\infty  (\Rn ; \cA_\theta )$ is a Fr\'echet space (\cf\ Appendix~\ref{app:LCS-diff}), it then follows that there is $\rho(\xi)\in C^\infty (\Rn ; \cA_\theta )$ such that $\rho_j(\xi) \rightarrow \rho(\xi)$ in $C^\infty (\Rn ; \cA_\theta )$ as $j\rightarrow \infty$. 

Moreover, the fact that $\left(\rho_j(\xi)\right)_{j \geq 0}$ is a Cauchy sequence in $\stS^m(\Rn ; \cA_\theta )$ means that, given any multi-orders $\alpha$, $\beta$, for every $\epsilon >0$, there is $N\in \N$ such that, for all $j,l\geq N$ and $\xi\in \R^n$,  we have
\begin{equation*}
 \left\| \delta^\alpha \partial_\xi^\beta \left(\rho_j(\xi) -  \rho_l(\xi)\right)  \right\| \leq \epsilon (1 +  |\xi| )^{m-|\beta|}.
\end{equation*}
Letting $l\rightarrow \infty$ then shows that, for all $j\geq N$ and $\xi\in \R^n$,  we have
\begin{equation*}
 \left\| \delta^\alpha \partial_\xi^\beta \left(\rho_j(\xi) -  \rho(\xi)\right)  \right\| \leq \epsilon (1 +  |\xi| )^{m-|\beta|}. 
\end{equation*}
It then follows that $\rho_j(\xi) \rightarrow \rho(\xi)$ in $\stS^m(\Rn;\cA_\theta)$.  This shows that every Cauchy sequence  in  
$\stS^m(\Rn;\cA_\theta)$ is convergent, and so $\stS^m(\Rn;\cA_\theta)$ is a Fr\'{e}chet space. 
\end{proof}

\begin{remark}
 It follows from the very definition of the spaces $\stS^m(\R^n ;\cA_\theta)$, $m\in \R$,  that, given any multi-orders $\alpha$ and $\beta$, the partial differentiation $ \delta^\alpha\partial_\xi^\beta$ gives rise to a continuous linear operator from $\stS^m(\R^n ;\cA_\theta)$ to $\stS^{m-|\beta|}(\R^n ;\cA_\theta)$ for every $m\in \R$.  
\end{remark}

\begin{lemma} \label{lem:Symbols.standard-product}
Let $m_1, m_2\in\R$. Then the product of $\cA_\theta$ gives rise to a continuous bilinear map from $\stS^{m_1}(\Rn;\cA_\theta)\times\stS^{m_2}(\Rn;\cA_\theta)$ to $\stS^{m_1+m_2}(\Rn;\cA_\theta)$. 
\end{lemma}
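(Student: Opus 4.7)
The plan is to verify the two assertions in turn: that pointwise multiplication sends the product of symbol spaces into $\stS^{m_1+m_2}(\R^n;\cA_\theta)$, and that the resulting bilinear map is jointly continuous with respect to the Fr\'echet topologies defined by the semi-norms~(\ref{eq:Symbols.standard-semi-norms}).

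First, given $\rho_1\in \stS^{m_1}(\R^n;\cA_\theta)$ and $\rho_2\in \stS^{m_2}(\R^n;\cA_\theta)$, the product $\rho_1(\xi)\rho_2(\xi)$ is a smooth map from $\R^n$ to $\cA_\theta$ because $\cA_\theta$ is a Fr\'echet algebra (Proposition~\ref{prop:NCtori.cAtheta-Frechet}) and differentiation of a product of smooth maps with values in a locally convex algebra obeys the usual rules (see Appendix~\ref{app:LCS-diff}). In particular, combining the Leibniz rule of Appendix~\ref{app:LCS-diff} with the formula~(\ref{eq:NCtori-uv-Leibniz}) for $\delta^\alpha$ yields
\begin{equation*}
 \delta^\alpha \partial_\xi^\beta\bigl( \rho_1(\xi)\rho_2(\xi)\bigr) = \sum_{\alpha'+\alpha''=\alpha}\sum_{\beta'+\beta''=\beta} \binom{\alpha}{\alpha'}\binom{\beta}{\beta'}  \delta^{\alpha'}\partial_\xi^{\beta'}\rho_1(\xi)\cdot \delta^{\alpha''}\partial_\xi^{\beta''}\rho_2(\xi).
\end{equation*}

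Next, I would use the submultiplicativity of the $C^*$-norm on $A_\theta$ (which restricts to $\cA_\theta$) to estimate each summand. Writing $(1+|\xi|)^{m_1+m_2-|\beta|} = (1+|\xi|)^{m_1-|\beta'|}(1+|\xi|)^{m_2-|\beta''|}$ since $|\beta'|+|\beta''|=|\beta|$, one finds
\begin{equation*}
 (1+|\xi|)^{-m_1-m_2+|\beta|}\bigl\|\delta^{\alpha'}\partial_\xi^{\beta'}\rho_1(\xi)\bigr\| \bigl\|\delta^{\alpha''}\partial_\xi^{\beta''}\rho_2(\xi)\bigr\| \leq p_N^{(m_1)}(\rho_1)\, p_N^{(m_2)}(\rho_2)
\end{equation*}
as soon as $|\alpha'|+|\beta'|\leq N$ and $|\alpha''|+|\beta''|\leq N$. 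Since $|\alpha'|+|\beta'|\leq |\alpha|+|\beta|$, this condition is automatic whenever $|\alpha|+|\beta|\leq N$. Summing over the decompositions of $\alpha$ and $\beta$, I obtain an estimate of the form
\begin{equation*}
 p_N^{(m_1+m_2)}(\rho_1\rho_2) \leq C_N\, p_N^{(m_1)}(\rho_1)\, p_N^{(m_2)}(\rho_2),
\end{equation*}
where $C_N$ depends only on $N$ (through the binomial coefficients and the number of decompositions). This simultaneously shows that $\rho_1\rho_2$ belongs to $\stS^{m_1+m_2}(\R^n;\cA_\theta)$ and that the bilinear map $(\rho_1,\rho_2)\mapsto \rho_1\rho_2$ is jointly continuous.

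There is no real obstacle; the only point that deserves a moment of care is that differentiation of $\cA_\theta$-valued maps in $\xi$ commutes with the action of $\delta^\alpha$ on $\cA_\theta$ and that the Leibniz rule holds for $\cA_\theta$-valued products, both of which are consequences of the fact that the product of $\cA_\theta$ is (jointly) continuous and that $\delta_j$ is a continuous derivation on $\cA_\theta$. Once these pointwise/differentiation identities are in hand, the estimate above is routine and delivers both the algebraic and topological statements in one stroke.
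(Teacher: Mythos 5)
Your proof is correct and takes essentially the same approach as the paper's: expand $\delta^\alpha\partial_\xi^\beta(\rho_1\rho_2)$ by the Leibniz rule, bound each summand via submultiplicativity of the $C^*$-norm and the factorization $(1+|\xi|)^{m_1+m_2-|\beta|}=(1+|\xi|)^{m_1-|\beta'|}(1+|\xi|)^{m_2-|\beta''|}$, and sum over decompositions to obtain $p_N^{(m_1+m_2)}(\rho_1\rho_2)\leq C_N\,p_N^{(m_1)}(\rho_1)\,p_N^{(m_2)}(\rho_2)$ (the paper takes $C_N=2^N$). Your additional remark justifying smoothness of the product and the validity of the Leibniz rule for $\cA_\theta$-valued maps is a sound point of care that the paper leaves implicit.
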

\begin{proof}
Let $N\in \N_0$ and $\alpha, \beta\in \N_0^n$ be such that $|\alpha|+|\beta|\leq N$. In addition, let $\rho_j(\xi)\in \stS^{m_j}(\Rn;\cA_\theta)$, $j=1,2$. 
By the Leibniz rule we have
\begin{equation*} 
\delta^\alpha\partial_\xi^\beta \left[ \rho_1(\xi)\rho_2(\xi)\right]=\sum \binom \alpha{\alpha'} \binom \beta{\beta'}  \delta^{\alpha'}\partial_\xi^{\beta'}
\rho_1(\xi)\delta^{\alpha''}\partial_\xi^{\beta''}\rho_2(\xi) ,
\end{equation*}
where the sum ranges over all multi-orders $\alpha',\alpha'',\beta',\beta'$ such that $\alpha'+\alpha''=\alpha$ and $\beta'+\beta''=\beta$. Note that in this case, 
for all $\xi \in \R^n$,  we have 
\begin{align*}
\left\| \delta^{\alpha'}\partial_\xi^{\beta'}\rho_1(\xi) \delta^{\alpha''}\partial_\xi^{\beta''} \rho_2(\xi)\right\|  
& \leq \left\| \delta^{\alpha'}\partial_\xi^{\beta'}\rho_1(\xi)\right\|  \left\| \delta^{\alpha''}\partial_\xi^{\beta''} \rho_2(\xi) \right\|  \\
& \leq p_N^{(m_1)}(\rho_1)(1+|\xi|)^{m_1-|\beta'|} p_N^{(m_2)}(\rho_2) (1+|\xi|)^{m_2-|\beta''|}\\
& \leq p_N^{(m_1)}(\rho_1)p_N^{(m_2)}(\rho_2)(1+|\xi|)^{m_1+m_2-|\beta|}. 
\end{align*}
Thus,  for all $\xi \in \R^n$,  we have 
\begin{equation*}
 \left\| \delta^\alpha\partial_\xi^\beta \left[ \rho_1(\xi) \rho_2(\xi)\right]\right\|  \leq 2^{|\alpha|+|\beta|} p_N^{(m_1)}(\rho_1) p_N^{(m_2)}(\rho_2)
  (1+|\xi|)^{m_1+m_2-|\beta|}. 
\end{equation*}
Therefore, we see that $\rho_1(\xi)\rho_2(\xi)$ belongs to $\stS^{m_1+m_2}(\Rn;\cA_\theta)$, and we have 
\begin{equation*}
 p_N^{(m_1+m_2)}(\rho_1\rho_2) \leq 2^N p_N^{(m_1)}(\rho_1) p_N^{(m_2)}(\rho_2). 
\end{equation*}
This shows that the product of $\cA_\theta$ gives rise to a continuous bilinear map from $\stS^{m_1}(\Rn;\cA_\theta)\times\stS^{m_2}(\Rn;\cA_\theta)$ to $\stS^{m_1+m_2}(\Rn;\cA_\theta)$. The proof is complete. 
\end{proof}

\begin{lemma}[{\cite[Prop.~18.1.2]{Ho:Springer85}}]\label{lem:Symbols.approximate-unit} 
Let $\chi(\xi)\in \cS(\Rn)$ be such that $\chi(0)=1$. For $0<\epsilon\leq 1$, let $\chi_\epsilon(\xi)\in \cS( \R^n; \cA_\theta)$ be defined by 
\begin{equation*}
 \chi_\epsilon(\xi)= \chi(\epsilon \xi)\cdot 1, \qquad \xi \in \R^n. 
\end{equation*}
 Then the family $(\chi_\epsilon)_{0<\epsilon\leq 1}$ is bounded in $\stS^0(\Rn; \cA_\theta)$ and, as $\epsilon \rightarrow 0^+$, it converges to $1$  in 
$\stS^m(\Rn; \cA_\theta)$ for every $m>0$. 
\end{lemma}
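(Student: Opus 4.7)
The plan is to exploit that $\chi_\epsilon(\xi)=\chi(\epsilon\xi)\cdot 1$ is a scalar multiple of the unit of $\cA_\theta$, so that $\delta^\alpha\chi_\epsilon=0$ whenever $|\alpha|\geq 1$. Hence in every seminorm $p_N^{(m)}$ only the terms with $\alpha=0$ contribute, and
\begin{equation*}
\left\|\partial_\xi^\beta\chi_\epsilon(\xi)\right\|=\epsilon^{|\beta|}\left|(\partial^\beta\chi)(\epsilon\xi)\right|,
\end{equation*}
so that every estimate reduces to a scalar estimate on a single Schwartz function.

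For the boundedness in $\stS^0$, I would combine the Schwartz bound $|(\partial^\beta\chi)(\eta)|\leq C_\beta(1+|\eta|)^{-|\beta|}$ with the elementary inequality
\begin{equation*}
\epsilon(1+|\xi|)\leq 1+\epsilon|\xi|, \qquad 0<\epsilon\leq 1,
\end{equation*}
rewritten as $(1+\epsilon|\xi|)^{-|\beta|}\leq \epsilon^{-|\beta|}(1+|\xi|)^{-|\beta|}$. Multiplying by the $\epsilon^{|\beta|}$ produced by the chain rule gives the $\epsilon$-independent bound $\|\partial_\xi^\beta\chi_\epsilon(\xi)\|\leq C_\beta(1+|\xi|)^{-|\beta|}$, from which $\sup_{0<\epsilon\leq 1}p_N^{(0)}(\chi_\epsilon)<\infty$ follows for every $N$.

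To show $\chi_\epsilon\to 1$ in $\stS^m$ for $m>0$, I would treat the contributions of $|\beta|\geq 1$ and $|\beta|=0$ separately. For $|\beta|\geq 1$ we have $\partial_\xi^\beta(\chi_\epsilon-1)=\partial_\xi^\beta\chi_\epsilon$, and I would sharpen the previous estimate through the interpolated inequality
\begin{equation*}
\epsilon^{|\beta|}(1+\epsilon|\xi|)^{-|\beta|}\leq \epsilon^{s}(1+|\xi|)^{s-|\beta|}, \qquad s\in[0,|\beta|],
\end{equation*}
which is again a rearrangement of $\epsilon(1+|\xi|)\leq 1+\epsilon|\xi|$ raised to the non-negative power $|\beta|-s$. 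The choice $s=\min(m,|\beta|)$ absorbs the weight $(1+|\xi|)^{-m+|\beta|}$ in $p_N^{(m)}$ and leaves the decay $\epsilon^{\min(m,|\beta|)}$. For $|\beta|=0$, the normalization $\chi(0)=1$ and the mean value theorem give $|\chi(\epsilon\xi)-1|\leq C\min(\epsilon|\xi|,1)\leq C(\epsilon|\xi|)^{\sigma}$ for every $\sigma\in(0,1]$; the choice $\sigma=\min(m,1)$ matches the weight $(1+|\xi|)^{-m}$ and yields the decay $\epsilon^{\min(m,1)}$. Combining both cases, $p_N^{(m)}(\chi_\epsilon-1)=O(\epsilon^{\min(m,1)})\to 0$.

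The main subtlety is that the crude estimate used for the boundedness step loses all $\epsilon$-decay, so the interpolation parameter $s$ is exactly what recovers a positive power of $\epsilon$ once $m>0$ allows a mismatch between the weight and the differentiation order. The $|\beta|=0$ range must be handled separately because no $\epsilon$ factor emerges from differentiation there, and this is precisely where the normalization $\chi(0)=1$ enters the argument.
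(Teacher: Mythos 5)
The paper does not prove this lemma itself: it cites Hörmander [Prop.~18.1.2], which handles the scalar-valued case. Your proof is correct and supplies the details, including the one observation needed to pass from the scalar to the $\cA_\theta$-valued setting, namely that $\delta^\alpha(\chi(\epsilon\xi)\cdot 1)=0$ for $|\alpha|\geq 1$ since the $\delta_j$ are derivations killing the unit, so every seminorm reduces to a scalar estimate on $\chi$. The interpolation $\epsilon^{|\beta|}(1+\epsilon|\xi|)^{-|\beta|}\leq\epsilon^{s}(1+|\xi|)^{s-|\beta|}$ for $s\in[0,|\beta|]$ (which, as you note, uses $\epsilon(1+|\xi|)\leq 1+\epsilon|\xi|$, together with the trivial $(1+\epsilon|\xi|)^{-s}\leq 1$) and the separate treatment of $|\beta|=0$ via $\chi(0)=1$ are exactly the standard mechanism behind Hörmander's statement; your argument is an accurate, self-contained rendering of it.
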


By combining Lemma~\ref{lem:Symbols.standard-product} and Lemma~\ref{lem:Symbols.approximate-unit} we obtain the following approximation result. 

\begin{proposition} \label{prop:Symbols.standard-density}
Let $\rho(\xi)\in \stS^m(\Rn;\cA_\theta)$, $m\in\R$, and let $\chi(\xi)\in \cS(\Rn)$ be such that $\chi(0)=1$.  For $0<\epsilon\leq 1$, let $\rho_\epsilon(\xi)\in \cS(\Rn; \cA_\theta)$ be defined by 
\begin{equation*}
 \rho_\epsilon(\xi)= \chi(\epsilon \xi) \rho(\xi), \qquad \xi \in \R^n. 
\end{equation*}
 Then the family $(\rho_\epsilon(\xi))_{0<\epsilon\leq 1}$ is bounded in $\stS^m(\Rn;\cA_\theta)$ and, as $\epsilon \rightarrow 0^+$, it converges to $\rho(\xi)$  in $\stS^{m'}(\Rn;\cA_\theta)$ for every $m'>m$. 
\end{proposition}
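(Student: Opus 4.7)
The plan is to reduce the proposition directly to the combination of Lemma~\ref{lem:Symbols.standard-product} and Lemma~\ref{lem:Symbols.approximate-unit}. Observe that with $\chi_\epsilon(\xi) = \chi(\epsilon\xi)\cdot 1$ as in Lemma~\ref{lem:Symbols.approximate-unit}, we have $\rho_\epsilon(\xi) = \chi_\epsilon(\xi)\rho(\xi)$, so everything is an application of the continuous product on symbol spaces.

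For the boundedness claim, first I would invoke Lemma~\ref{lem:Symbols.approximate-unit} to conclude that $(\chi_\epsilon)_{0<\epsilon\leq 1}$ is a bounded family in $\stS^0(\R^n;\cA_\theta)$. Then I would use Lemma~\ref{lem:Symbols.standard-product}, which gives a continuous bilinear map $\stS^0(\R^n;\cA_\theta)\times \stS^m(\R^n;\cA_\theta)\to \stS^m(\R^n;\cA_\theta)$. Since continuous bilinear maps between locally convex spaces send products of bounded sets to bounded sets, the family $\{\chi_\epsilon\rho:0<\epsilon\leq 1\}$ is bounded in $\stS^m(\R^n;\cA_\theta)$, which is the first assertion.

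For the convergence claim, fix $m'>m$ and set $m'':=m'-m>0$. I would write
\begin{equation*}
\rho_\epsilon(\xi)-\rho(\xi)=\bigl(\chi_\epsilon(\xi)-1\bigr)\rho(\xi).
\end{equation*}
By the second part of Lemma~\ref{lem:Symbols.approximate-unit}, $\chi_\epsilon\to 1$ in $\stS^{m''}(\R^n;\cA_\theta)$ as $\epsilon\to 0^+$, so $\chi_\epsilon-1\to 0$ in $\stS^{m''}(\R^n;\cA_\theta)$. Lemma~\ref{lem:Symbols.standard-product} provides a continuous bilinear product $\stS^{m''}(\R^n;\cA_\theta)\times \stS^m(\R^n;\cA_\theta)\to \stS^{m''+m}(\R^n;\cA_\theta)=\stS^{m'}(\R^n;\cA_\theta)$. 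Fixing the second argument at $\rho$, we get a continuous linear map $\stS^{m''}(\R^n;\cA_\theta)\to \stS^{m'}(\R^n;\cA_\theta)$, and applying it to $\chi_\epsilon-1$ yields $(\chi_\epsilon-1)\rho\to 0$ in $\stS^{m'}(\R^n;\cA_\theta)$, completing the proof.

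There is no real obstacle here since the two auxiliary lemmas do the heavy lifting; the one subtlety worth flagging is that the constraint $m'>m$ is forced precisely so that $m''=m'-m>0$ falls into the regime where Lemma~\ref{lem:Symbols.approximate-unit} asserts actual convergence (as opposed to mere boundedness at $m''=0$). Writing the proof this way also makes transparent that the result is a general fact about any Fréchet algebra equipped with compatible symbol-type filtrations, not anything special to noncommutative tori.
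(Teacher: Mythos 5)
Your proof is correct and takes exactly the route the paper intends: the paper states Proposition~\ref{prop:Symbols.standard-density} as an immediate consequence of combining Lemma~\ref{lem:Symbols.standard-product} and Lemma~\ref{lem:Symbols.approximate-unit}, and you have simply supplied the details of that combination, with the right observation that one needs $m'-m>0$ to invoke the convergence part of Lemma~\ref{lem:Symbols.approximate-unit}.
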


\begin{remark}\label{rmk:Symbols.reduction-cS}
In the rest of the paper, we will often use Proposition~\ref{prop:Symbols.standard-density} to reduce the proof of equalities for continuous functionals on standard symbols to proving them for maps in $C^\infty_c(\R^n; \cA_\theta)$ or in $\cS(\R^n; \cA_\theta)$.   
\end{remark}

\begin{definition}[\cite{Ba:CRAS88}] \label{def:Symbols.standard-asymptotic}
Let $\rho(\xi)\in\stS^m(\Rn;\cA_\theta)$, $m\in\R$, and, for $j=0, 1, \ldots $, let $\rho_j(\xi)\in\stS^{m-j}(\Rn;\cA_\theta)$. We shall write 
$\rho(\xi)\sim\sum_{j\geq 0}\rho_j(\xi)$ when 
\begin{equation*} 
\rho(\xi)-\sum_{j<N}\rho_j(\xi)\in\stS^{m-N}(\Rn;\cA_\theta) \qquad \text{for all $N\geq 1$}. 
\end{equation*}
\end{definition}

\begin{lemma}[see also~\cite{FW:JPDOA11}] \label{lem:Symbols.standard-construction}
Let $m\in \R$ and, for $j=0,1,\ldots$, let $\rho_{m-j}(\xi)\in\stS^{m-j}(\Rn;\cA_\theta)$. Then there is a symbol  $\rho(\xi)\in\stS^m(\Rn;\cA_\theta)$ such that $\rho(\xi)\sim\sum_{j\geq 0}\rho_{m-j}(\xi)$. Moreover, $\rho(\xi)$ is unique modulo the addition of a symbol in $\cS(\Rn;\cA_\theta)$. 
\end{lemma}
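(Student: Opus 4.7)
The plan is to establish existence by a Borel-summation construction with rescaled cutoffs, and uniqueness by invoking the identity $\bigcap_{m\in\R}\stS^m(\R^n;\cA_\theta)=\cS(\R^n;\cA_\theta)$ from Remark~\ref{rmk:Symbols.amplitudes-intersection}. This is the standard strategy for a ``Borel lemma'' in a symbol calculus; the only novelty here is that the values are in the Fr\'echet algebra $\cA_\theta$, but since the seminorms of $\stS^m(\R^n;\cA_\theta)$ are built from the $\cA_\theta$-norms of $\delta^\alpha\partial_\xi^\beta\rho$, the scalar argument transposes verbatim once one has Proposition~\ref{prop:Symbols.standard-Frechetspace} at hand.

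For existence, I would fix a cutoff $\psi\in C^\infty(\R^n)$ with $\psi\equiv 0$ on $\{|\xi|\leq 1\}$ and $\psi\equiv 1$ on $\{|\xi|\geq 2\}$, pick a sequence $0<\epsilon_j\downarrow 0$ to be specified, and set
\begin{equation*}
\rho(\xi):=\sum_{j\geq 0}\psi(\epsilon_j\xi)\,\rho_{m-j}(\xi),\qquad \xi\in\R^n.
\end{equation*}
The $j$-th summand vanishes on $\{|\xi|<1/\epsilon_j\}$, so the series is locally finite and yields a smooth $\cA_\theta$-valued function. Writing
\begin{equation*}
\rho(\xi)-\sum_{j<N}\rho_{m-j}(\xi)=\sum_{j<N}\bigl[\psi(\epsilon_j\xi)-1\bigr]\rho_{m-j}(\xi)+\sum_{j\geq N}\psi(\epsilon_j\xi)\rho_{m-j}(\xi),
\end{equation*}
the first finite sum is compactly supported in $\xi$, hence in $\cS(\R^n;\cA_\theta)\subset \stS^{m-N}(\R^n;\cA_\theta)$. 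The task reduces to placing the tail in $\stS^{m-N}(\R^n;\cA_\theta)$ for every $N$.

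The core estimate comes from Leibniz,
\begin{equation*}
\delta^\alpha\partial_\xi^\beta\!\left[\psi(\epsilon_j\xi)\rho_{m-j}(\xi)\right]=\sum_{\gamma\leq\beta}\binom{\beta}{\gamma}\epsilon_j^{|\gamma|}(\partial^\gamma\psi)(\epsilon_j\xi)\,\delta^\alpha\partial_\xi^{\beta-\gamma}\rho_{m-j}(\xi),
\end{equation*}
combined with the key trade $(1+|\xi|)^{-1}\leq 2\epsilon_j$ valid on the support of each term (since $\epsilon_j|\xi|\geq 1$ there). Using this trade to absorb the excess growth $(1+|\xi|)^{m-j-|\beta|+|\gamma|}$ coming from the symbol seminorm of $\rho_{m-j}\in\stS^{m-j}(\R^n;\cA_\theta)$, one arrives at a bound of the form
\begin{equation*}
\bigl\|\delta^\alpha\partial_\xi^\beta\!\left[\psi(\epsilon_j\xi)\rho_{m-j}(\xi)\right]\bigr\|\leq C_{\alpha,\beta,j}\,\epsilon_j\,(1+|\xi|)^{m-j+1-|\beta|}\quad\text{for}\quad|\alpha|+|\beta|\leq j-1,
\end{equation*}
where $C_{\alpha,\beta,j}$ depends only on $\psi$ and on finitely many seminorms of $\rho_{m-j}$. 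I would then choose $\epsilon_j$ so small that $C_{\alpha,\beta,j}\epsilon_j\leq 2^{-j}$ holds for all $(\alpha,\beta)$ with $|\alpha|+|\beta|\leq j-1$ (finitely many conditions for each $j$). The tail series is then absolutely Cauchy in each seminorm of $\stS^{m-N}(\R^n;\cA_\theta)$ for every $N$, and its sum lies in $\stS^{m-N}(\R^n;\cA_\theta)$ by Proposition~\ref{prop:Symbols.standard-Frechetspace}. This gives $\rho\sim\sum \rho_{m-j}$.

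Uniqueness is immediate: if $\rho$ and $\tilde\rho$ both realize the expansion, then $\rho-\tilde\rho\in\stS^{m-N}(\R^n;\cA_\theta)$ for every $N\geq 1$, so by Remark~\ref{rmk:Symbols.amplitudes-intersection} the difference lies in $\cS(\R^n;\cA_\theta)$. The main technical obstacle is purely bookkeeping: verifying carefully that the trade $(1+|\xi|)^{-1}\leq 2\epsilon_j$ on the support of $(\partial^\gamma\psi)(\epsilon_j\,\cdot)$ absorbs both the loss $(1+|\xi|)^{|\gamma|}$ from differentiating the cutoff and the growth $(1+|\xi|)^{m-j-|\beta|+|\gamma|}$ coming from $\rho_{m-j}$, with enough powers of $\epsilon_j$ left over to drive the diagonal choice $C_{\alpha,\beta,j}\epsilon_j\leq 2^{-j}$.
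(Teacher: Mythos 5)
Your proposal is correct and follows essentially the same route as the paper: both use a Borel-type summation $\sum_j \psi(\epsilon_j\xi)\rho_{m-j}(\xi)$ with rescaled cutoffs vanishing near the origin (your $\psi$ is the paper's $1-\chi$), a diagonal choice of $\epsilon_j\downarrow 0$ making the tail seminorms summable, and the identity $\bigcap_m \stS^m=\cS$ for uniqueness. The only cosmetic difference is that you rederive the key tail estimate directly via Leibniz and the trade $(1+|\xi|)^{-1}\leq 2\epsilon_j$, whereas the paper reuses the already-proved approximate-unit result (Proposition~\ref{prop:Symbols.standard-density}) to arrange $p_j^{(m-j+1)}([1-\chi(\epsilon_j\xi)]\rho_{m-j})\leq 2^{-(j+1)}$ in one stroke.
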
\begin{proof}
The proof is similar to the standard proof of the Borel lemma for symbols (see, e.g., \cite{AG:AMS07, Ho:Springer85}). Let $\chi(\xi) \in C^\infty_c (\Rn)$ be such that $\chi(\xi) = 1$ for $|\xi| \leq 1$. 
By Proposition~\ref{prop:Symbols.standard-density} we know that, for all $j\geq 0$, the family $(\chi(\epsilon \xi) \rho_{m-j}(\xi))_{0<\epsilon<1}$ converges to $\rho_{m-j}(\xi)$ in $\stS^{m-j+1}(\R^n; \cA_\theta)$ as $\epsilon \rightarrow 0^+$. Therefore,  we can  recursively construct a sequence $(\epsilon_j)_{j \geq 0}\subset (0,1)$ such that
\begin{equation} \label{eq:Symbols.cutoff-standard-estimates}
\epsilon_{j+1} \leq \frac{1}2 \epsilon_j \qquad \text{and}\qquad  p_{j}^{(m-j+1)}\left( \left[1-\chi(\epsilon_j \xi) \right]\rho_{m-j}\right)\leq 2^{-(j+1)} \qquad \text{for all $j\geq 0$}. 
\end{equation}
Note that the sequence $(\epsilon_j)_{j \geq 0}$ is decreasing and converges to $0$. 

For $j=0,1, \ldots$, set $\tilde{\rho}_{m-j}(\xi)=(1-\chi(\epsilon_j \xi)) \rho_{m-j} (\xi)$, 
$\xi\in \R^n$. Note that $\tilde{\rho}_{m-j}(\xi)=0$ for $|\xi|\leq \epsilon_j^{-1}$. As $\epsilon_j^{-1} \rightarrow \infty$ as $j\rightarrow \infty$ we see that the sum $\rho(\xi):=\sum_{j\geq 0} \tilde{\rho}_{m-j}(\xi)$ is locally finite, and so this gives rise to a map in $C^\infty(\R^n; \cA_\theta)$.  We also observe that 
$\tilde{\rho}_{m-j}(\xi)$ differs from  $\rho_{m-j}(\xi)$  by a map in $C^\infty_c(\R^n;\cA_\theta)$. Thus $\tilde{\rho}_{m-j}(\xi)$ is a symbol in 
$\stS^{m-j}(\R^n; \cA_\theta)$, and, for every integer $N\geq 1$, we have 
\begin{align}
 \rho(\xi) - \sum_{j<N} \rho_{m-j}(\xi) & =  \rho(\xi) - \sum_{j<N} \tilde{\rho}_{m-j}(\xi) \quad \bmod \cS(\R^n; \cA_\theta)\nonumber \\ 
  & = \sum_{j\geq N} \tilde{\rho}_{m-j}(\xi) \quad \bmod \cS(\R^n; \cA_\theta). 
   \label{eq:Symbols.rho-N-estimates}
\end{align}

Let $N\in \N_0$ and set $\rho^{(N)}(\xi) = \sum_{j\geq N}  \tilde{\rho}_{m-j}(\xi)$, $\xi\in \R^n$. Let $\alpha$ and $\beta$ be multi-orders, and set $N_1=\op{max}(N+1,|\alpha|+|\beta|)$. For all $\xi \in \R^n$, we have 
\begin{equation*}
 \left\|\delta^\alpha  \partial_\xi^\beta \rho^{(N)}(\xi)\right\|  \leq \sum_{j\geq N} p^{(m-N)}_{N_1}\left( \tilde{\rho}_{m-j}\right) (1+|\xi|)^{m-N-|\beta|}. 
\end{equation*}
Note that if $j \geq N_1$, then $j\geq N+1$. Therefore, using~(\ref{eq:Symbols.cutoff-standard-estimates}) we get 
\begin{equation*}
 \sum_{j\geq N_1} p^{(m-N)}_{N_1}\left( \tilde{\rho}_{m-j}\right) \leq  \sum_{j\geq N_1} p^{(m-j+1)}_{j}\left( \tilde{\rho}_{m-j}\right)\leq 2^{-N_1}. 
\end{equation*}
It then follows that there is a constant $C_{N\alpha\beta}>0$ such that, for all $\xi \in \R^n$, we have
\begin{equation*}
  \left\|\delta^\alpha  \partial_\xi^\beta \rho^{(N)}(\xi)\right\|  \leq C_{N\alpha\beta}  (1+|\xi|)^{m-N-|\beta|}. 
\end{equation*}
This shows that $ \rho^{(N)}(\xi)\in \stS^{m-N}(\R^n;\cA_\theta)$. In particular, for $N=0$ we see that $\rho(\xi)=\rho^{(0)}(\xi)$ is contained in $\stS^m(\R^n; \cA_\theta)$. In addition, by combining this with~(\ref{eq:Symbols.rho-N-estimates}) we see that $ \rho(\xi) - \sum_{j<N} \rho_{m-j}(\xi)$ is contained in $\stS^{m-N}(\R^n;\cA_\theta)$ for all $N\geq 1$. That is, $\rho(\xi) \sim \sum_{j\geq 0} \rho_{m-j}(\xi)$. 

Finally, if $\sigma(\xi) \in \stS^m(\R^n; \cA_\theta)$ is another symbol such that $\sigma(\xi) \sim \sum_{j\geq 0} \rho_{m-j}(\xi)$, then we have  $\sigma(\xi)\sim \rho(\xi)$, i.e., $\sigma(\xi)- \rho(\xi)$ is contained in $\cS(\R^n; \cA_\theta)$. Thus, the symbol $\rho(\xi)$ is unique modulo the addition of a symbol in $\cS(\Rn;\cA_\theta)$. The proof is complete. 
\end{proof}

\subsection{Homogeneous and classical symbols} 

\begin{definition}[Homogeneous Symbols]
$S_q (\R^n; \cA_\theta )$, $q \in \C$, consists of  maps $\rho(\xi) \in C^\infty(\R^n\backslash 0;\cA_\theta)$ that are homogeneous of degree $q$, i.e., 
\begin{equation*}
\rho( \lambda \xi ) = \lambda^q \rho(\xi) \qquad \text{for all $\xi \in \R^n \backslash 0$ and $\lambda > 0$}. 
\end{equation*}
\end{definition}

\begin{remark} \label{rem:Symbols.homogeneous-differentiation}
 Let $\rho(\xi) \in S_q (\R^n; \cA_\theta )$, $q\in \C$. Then $\delta^\alpha \partial_\xi^\beta \rho(\xi)\in S_{q-|\beta|} (\R^n; \cA_\theta )$ for all $\alpha,\beta \in \N_0^n$. \end{remark}

\begin{remark} \label{rem:Symbols.homogeneous-involution}
 Let $\rho(\xi) \in S_q (\R^n; \cA_\theta )$, $q\in \C$. Then $\rho(\xi)^*$ is homogeneous of degree $\overline{q}$, and so $\rho(\xi)^*\in S_{\overline{q}} (\R^n; \cA_\theta )$. 
\end{remark}

\begin{definition}[Classical Symbols; \emph{cf}.~\cite{Ba:CRAS88}]\label{def:Symbols.classicalsymbols}
$S^q (\R^n; \cA_\theta )$, $q \in \C$, consists of maps $\rho(\xi)\in C^\infty(\R^n;\cA_\theta)$ that admit an asymptotic expansion, 
\begin{equation*}
\rho(\xi) \sim \sum_{j \geq 0} \rho_{q-j} (\xi),  \qquad \rho_{q-j} \in S_{q-j} (\R^n; \cA_\theta ). 
\end{equation*}
Here $\sim$ means that, for all integers $N$ and multi-orders $\alpha$, $\beta$, there exists $C_{N\alpha\beta} >0$ such that, for all $\xi \in \R^n$, $| \xi | \geq 1$, we have
\begin{equation} \label{eq:Symbols.classical-estimates}
\bigg\| \delta^\alpha \partial_\xi^\beta \biggl( \rho - \sum_{j<N} \rho_{q-j} \biggr)(\xi) \biggr\| \leq C_{N\alpha\beta} | \xi |^{\Re{q}-N-| \beta |} .
\end{equation}
\end{definition}

\begin{remark} \label{rem:Symbols.classical-uniqueness}
 The symbol $\rho_{q-j}(\xi)$ in~(\ref{eq:Symbols.classical-estimates}) is called the homogeneous symbol of degree $q-j$ of $\rho(\xi)$. The symbol $\rho_q(\xi)$ is called the \emph{principal symbol} of $\rho(\xi)$. 
 These homogeneous symbols are uniquely determined by~$\rho(\xi)$ since~(\ref{eq:Symbols.classical-estimates}) implies that, for all $\xi \in \R^n\setminus 0$, we have 
\begin{gather*}
 \rho_q(\xi) = \lim_{\lambda \rightarrow \infty} \lambda^{-q} \rho(\lambda \xi), \\
   \rho_{q-j}(\xi) = \lim_{\lambda \rightarrow \infty} \lambda^{-q+j}\biggl( \rho(\lambda \xi)- \sum_{\ell <j} \lambda^{q-\ell} \rho_{q-\ell}(\xi)\biggl), \qquad j\geq 1. 
\end{gather*}
\end{remark}

\begin{example}
 Every polynomial map $\rho(\xi)=\sum_{|\alpha|\leq m} a_\alpha \xi^\alpha$, $a_\alpha\in \cA_\theta$, is a classical symbol of order $m$. Its principal part is 
 $\rho_m(\xi):=\sum_{|\alpha|= m} a_\alpha \xi^\alpha$. 
\end{example}

\begin{example} \label{ex:Symbols.example-symbol}
 For $\xi\in \R^n$ set $\brak{\xi}=(1+|\xi|^2)^{\frac12}$. Given any $s\in \C$, the function $\brak{\xi}^s$ is a classical symbol of order~$s$. This can be seen by using the binomial expansion, 
\begin{equation*}
 \brak{\xi}^s =|\xi|^s\left(1+|\xi|^{-2}\right)^{\frac{s}2} = \sum_{j \geq 0} \binom{\frac{s}2}{j} |\xi|^{s-2j}, \qquad |\xi|>1. 
\end{equation*}
In particular, the principal symbol of $ \brak{\xi}^s$ is equal to $|\xi|^s$. 
\end{example}

\begin{remark} \label{rmk:Symbols.classical-inclusion}
Let $q\in \C$. Then we have an inclusion, 
\begin{equation*}
 S^q(\R^n;\cA_\theta)\subset \stS^{\Re{q}}(\R^n;\cA_\theta). 
\end{equation*}
In addition, if $\rho(\xi)\in S_q(\R^n;\cA_\theta)$ and $\chi(\xi)\in C^\infty_c(\R^n)$ is such that $\chi(\xi)=1$ near $\xi=0$, then $(1-\chi(\xi))\rho(\xi)\in S^q(\R^n;\cA_\theta)$ and $(1-\chi(\xi))\rho(\xi)\sim \rho(\xi)$ in the sense of~(\ref{eq:Symbols.classical-estimates}). In particular, we see that $(1-\chi(\xi))\rho(\xi)\in  \stS^{\Re{q}}(\R^n;\cA_\theta)$. 
\end{remark}

\begin{remark}\label{rmk:Symbols.derivatives-classical}
 Given $q\in\C$, let $\rho(\xi)\in S^q(\R^n;\cA_\theta)$, $\rho(\xi) \sim \sum \rho_{q-j}(\xi)$. Then~(\ref{eq:Symbols.classical-estimates}) and Remark~\ref{rem:Symbols.homogeneous-differentiation} imply that, for all multi-orders $\alpha$ and $\beta$, the partial derivative $\delta^\alpha\partial_\xi^\beta \rho(\xi)$ is a symbol in $S^{q-|\beta|}(\Rn;\cA_\theta)$ and $\delta^\alpha\partial_\xi^\beta \rho(\xi)\sim\sum \delta^\alpha\partial_\xi^\beta \rho_{q-j}(\xi)$.
\end{remark}

\begin{remark} \label{rem:Symbols.classical-involution}
 Given $q\in\C$, let $\rho(\xi)\in S^q(\R^n;\cA_\theta)$, $\rho(\xi) \sim \sum \rho_{q-j}(\xi)$. Then ~(\ref{eq:Symbols.classical-estimates}) and Remark~\ref{rem:Symbols.homogeneous-involution} imply that 
 $ \rho(\xi)^* \in S^{\overline{q}}(\Rn;\cA_\theta)$ and we have $ \rho(\xi)^*\sim\sum \rho_{q-j}(\xi)^*$.
\end{remark}

\begin{remark} \label{rem:Symbols.classical-standard-relation}
 Given $q\in\C$, for $j=0,1,\ldots$ let $\rho_{q-j}(\xi)\in S_{q-j}(\Rn;\cA_\theta)$. We also let 
$\chi(\xi) \in C_c^\infty(\Rn)$ be such that $\chi(\xi)=1$ near $\xi=0$. We know by Remark~\ref{rmk:Symbols.classical-inclusion} that 
$(1-\chi(\xi))\rho_{q-j}(\xi)$ is contained in $\stS^{\Re q-j}(\Rn;\cA_\theta)$ for all $j\geq 0$. Furthermore, given any $\rho(\xi)\in C^\infty(\Rn;\cA_\theta)$, the following are equivalent:
\begin{enumerate}
\item[(i)] $\rho(\xi)\sim\sum_{j\geq 0}\rho_{q-j}(\xi)$ in the sense of~(\ref{eq:Symbols.classical-estimates}).
\item[(ii)] $\rho(\xi)\sim\sum_{j\geq 0}(1-\chi(\xi))\rho_{q-j}(\xi)$ in the sense of Definition~\ref{def:Symbols.standard-asymptotic}. 
\end{enumerate}
In particular, if (ii) is satisfied, then $\rho(\xi)\in S^q(\Rn;\cA_\theta)$ and $\rho(\xi)\sim\sum_{j\geq 0} \rho_{q-j}(\xi)$. 
\end{remark}

Combining Lemma~\ref{lem:Symbols.standard-construction} with Remark~\ref{rem:Symbols.classical-standard-relation} we obtain the following result. 

\begin{proposition} \label{prop:Symbols.classical-construction}
Let $q\in\C$ and, for $j=0,1,\ldots$ let $\rho_{q-j}(\xi) \in S_{q-j}(\Rn;\cA_\theta)$. Then there exists a symbol $\rho(\xi)\in S^q (\Rn ; \cA_\theta)$ such that $\rho(\xi) \sim \sum_{j \geq 0} \rho_{q-j} (\xi)$. Moreover, such a symbol is unique modulo $\cS(\R^n;\cA_\theta)$. 
\end{proposition}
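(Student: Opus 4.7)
The plan is to reduce the proposition to the Borel-type construction for standard symbols (Lemma~\ref{lem:Symbols.standard-construction}) by exploiting the translation between classical and standard asymptotic expansions recorded in Remark~\ref{rem:Symbols.classical-standard-relation}. Uniqueness will then follow from the identity $\bigcap_{m\in\R}\stS^m(\R^n;\cA_\theta)=\cS(\R^n;\cA_\theta)$ stated in Remark~\ref{rmk:Symbols.amplitudes-intersection}.

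For existence, I would first fix a cutoff $\chi(\xi)\in C_c^\infty(\R^n)$ with $\chi(\xi)=1$ in a neighborhood of the origin, and set $\tilde{\rho}_{q-j}(\xi):=(1-\chi(\xi))\rho_{q-j}(\xi)$ for each $j\geq 0$. By Remark~\ref{rmk:Symbols.classical-inclusion}, each $\tilde{\rho}_{q-j}(\xi)$ is a standard symbol in $\stS^{\Re q - j}(\R^n;\cA_\theta)$. Applying Lemma~\ref{lem:Symbols.standard-construction} with base order $m=\Re q$ then produces a symbol $\rho(\xi)\in\stS^{\Re q}(\R^n;\cA_\theta)$ such that $\rho(\xi)\sim\sum_{j\geq 0}\tilde{\rho}_{q-j}(\xi)$ in the sense of Definition~\ref{def:Symbols.standard-asymptotic}. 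Remark~\ref{rem:Symbols.classical-standard-relation} translates this into $\rho(\xi)\sim\sum_{j\geq 0}\rho_{q-j}(\xi)$ in the classical sense~(\ref{eq:Symbols.classical-estimates}), and also yields $\rho(\xi)\in S^q(\R^n;\cA_\theta)$.

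For uniqueness, suppose $\rho_1(\xi),\rho_2(\xi)\in S^q(\R^n;\cA_\theta)$ both admit the expansion $\sum_{j\geq 0}\rho_{q-j}(\xi)$, and set $r(\xi):=\rho_1(\xi)-\rho_2(\xi)$. Fixing multi-orders $\alpha,\beta$ and $N\geq 1$, the splitting $r=(\rho_1-\sum_{j<N}\rho_{q-j})-(\rho_2-\sum_{j<N}\rho_{q-j})$ together with~(\ref{eq:Symbols.classical-estimates}) gives an estimate $\|\delta^\alpha\partial_\xi^\beta r(\xi)\|\leq C_{N\alpha\beta}|\xi|^{\Re q-N-|\beta|}$ for all $|\xi|\geq 1$. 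Since $|\xi|^{\Re q-N-|\beta|}$ and $(1+|\xi|)^{\Re q-N-|\beta|}$ are comparable on $\{|\xi|\geq 1\}$ while $r(\xi)$ is smooth and hence, together with all its derivatives, bounded on the compact set $\{|\xi|\leq 1\}$, we conclude that $r(\xi)\in\stS^{\Re q-N}(\R^n;\cA_\theta)$ for every $N\geq 1$. Remark~\ref{rmk:Symbols.amplitudes-intersection} then forces $r(\xi)\in\cS(\R^n;\cA_\theta)$, as required. The entire argument is essentially formal once these prior results are in place, so there is no genuine obstacle; the only minor care needed is the comparison of the two asymptotic-expansion conventions and the weight comparison above.
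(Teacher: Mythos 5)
Your proof is correct and follows essentially the same route as the paper: the paper simply notes that the result follows by combining Lemma~\ref{lem:Symbols.standard-construction} with Remark~\ref{rem:Symbols.classical-standard-relation}, which is exactly the cutoff reduction you spell out. Your uniqueness argument amounts to unfolding the uniqueness clause of Lemma~\ref{lem:Symbols.standard-construction} through Remark~\ref{rem:Symbols.classical-standard-relation} into a direct estimate on $\rho_1-\rho_2$, which is mathematically the same thing made explicit.
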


\begin{remark} \label{rem:Symbols.classical-homogeneouspart}
 Let $\rho(\xi) \in C^\infty(\Rn;\cA_\theta)$ be such that $\rho(\xi)\sim \sum_{\ell \geq 0}\rho^{(\ell)}(\xi)$, where $\rho^{(\ell)}(\xi)\in S^{q-\ell}(\Rn;\cA_\theta)$ and $\sim$ is taken in the sense of Definition~\ref{def:Symbols.standard-asymptotic}. Then $\rho(\xi)$ is a symbol in $S^{q}(\Rn;\cA_\theta)$, and we have $\rho(\xi) \sim \sum_{j \geq 0} \rho_{q-j}(\xi)$ in the sense of~(\ref{eq:Symbols.classical-estimates}), where 
\begin{equation*} 
\rho_{q-j}(\xi)= \sum_{\ell \leq j} \rho_{q-j}^{(\ell)}(\xi) , \qquad j \geq 0. 
\end{equation*}
Here $\rho_{q-j}^{(\ell)}(\xi)$ is the symbol of degree $q-j$ of $\rho^{(\ell)}(\xi)$.
\end{remark}

Finally, the following result shows that the product of $\cA_\theta$ gives rise to a (graded) bilinear map on classical symbols. 

\begin{proposition} \label{prop:Symbols.classical-product}
Let $\rho(\xi) \in S^{q}(\Rn;\cA_\theta)$ and $\sigma(\xi) \in S^{q'}(\Rn;\cA_\theta)$, $q,q'\in \C$. Then $\rho(\xi)\sigma(\xi)$ is in $S^{q+q'}(\Rn;\cA_\theta)$, and we have $\rho(\xi)\sigma(\xi)\sim\sum_{j\geq 0} (\rho\sigma)_{q+q'-j}(\xi)$, where
\begin{equation} \label{eq:Symbols.classical-product}
           (\rho\sigma)_{q+q'-j}(\xi)=  \sum_{p+r=j}\rho_{q-p}(\xi)\sigma_{q'-r}(\xi) , \qquad j\geq 0. 
\end{equation}
\end{proposition}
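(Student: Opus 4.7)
The plan is to reduce the claim about classical symbols to the corresponding statement for standard symbols, where we already have Lemma~\ref{lem:Symbols.standard-product} to control products. The main ingredient is the bridge provided by Remark~\ref{rem:Symbols.classical-standard-relation}, together with a careful bookkeeping of remainders.

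First I would verify that the formal pieces make sense: for each $j \geq 0$ the sum $(\rho\sigma)_{q+q'-j}(\xi) = \sum_{p+r=j}\rho_{q-p}(\xi)\sigma_{q'-r}(\xi)$ is in $C^\infty(\R^n\!\setminus\!0;\cA_\theta)$, and a direct computation using the homogeneity of each factor shows that $(\rho\sigma)_{q+q'-j}(\lambda\xi) = \lambda^{q+q'-j}(\rho\sigma)_{q+q'-j}(\xi)$ for $\lambda > 0$ and $\xi \neq 0$, so each $(\rho\sigma)_{q+q'-j}$ belongs to $S_{q+q'-j}(\R^n;\cA_\theta)$.

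Next, fix $\chi \in C^\infty_c(\R^n)$ with $\chi=1$ near $\xi=0$. By Remark~\ref{rem:Symbols.classical-standard-relation}, we have standard asymptotic expansions $\rho(\xi) \sim \sum_{p\geq 0}(1-\chi(\xi))\rho_{q-p}(\xi)$ and $\sigma(\xi) \sim \sum_{r\geq 0}(1-\chi(\xi))\sigma_{q'-r}(\xi)$ in the sense of Definition~\ref{def:Symbols.standard-asymptotic}. Given $N \geq 1$, I would write $\rho(\xi) = \sum_{p<N}(1-\chi(\xi))\rho_{q-p}(\xi) + R^\rho_N(\xi)$ with $R^\rho_N \in \stS^{\Re q - N}(\R^n;\cA_\theta)$, and similarly decompose $\sigma(\xi)$ with remainder $R^\sigma_N \in \stS^{\Re q' - N}$. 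Expanding the product $\rho(\xi)\sigma(\xi)$, Lemma~\ref{lem:Symbols.standard-product} shows that the three mixed terms $R^\rho_N \sum_{r<N}(1-\chi)\sigma_{q'-r}$, $\sum_{p<N}(1-\chi)\rho_{q-p}\cdot R^\sigma_N$, and $R^\rho_N R^\sigma_N$ all lie in $\stS^{\Re(q+q')-N}(\R^n;\cA_\theta)$. The surviving double sum is $\sum_{p,r<N}(1-\chi(\xi))^2 \rho_{q-p}(\xi)\sigma_{q'-r}(\xi)$.

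It remains to compare this double sum with $\sum_{j<N}(1-\chi(\xi))(\rho\sigma)_{q+q'-j}(\xi)$. Two observations handle this. The identity $(1-\chi)^2 - (1-\chi) = -\chi(1-\chi)$ is compactly supported, so replacing $(1-\chi)^2$ by $(1-\chi)$ in each term costs only an element of $C^\infty_c(\R^n;\cA_\theta) \subset \cS(\R^n;\cA_\theta) \subset \stS^{\Re(q+q')-N}$. Then the ``off-diagonal'' terms with $p+r \geq N$ (but $p,r<N$) each belong to $\stS^{\Re(q-p)+\Re(q'-r)}(\R^n;\cA_\theta) \subset \stS^{\Re(q+q')-N}$ by Lemma~\ref{lem:Symbols.standard-product} (after applying Remark~\ref{rmk:Symbols.classical-inclusion} to each $(1-\chi)\rho_{q-p}$ and $(1-\chi)\sigma_{q'-r}$). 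Assembling everything yields
\begin{equation*}
\rho(\xi)\sigma(\xi) - \sum_{j<N}(1-\chi(\xi))(\rho\sigma)_{q+q'-j}(\xi) \in \stS^{\Re(q+q')-N}(\R^n;\cA_\theta)
\end{equation*}
for every $N \geq 1$. Applying the implication (ii)$\Rightarrow$(i) of Remark~\ref{rem:Symbols.classical-standard-relation} gives $\rho(\xi)\sigma(\xi) \in S^{q+q'}(\R^n;\cA_\theta)$ with the asymptotic expansion~(\ref{eq:Symbols.classical-product}).

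The main obstacle is purely organizational: keeping track of the two different notions of asymptotic expansion (classical versus standard) and making sure every cutoff-factor mismatch is absorbed either by the Schwartz ideal or by Lemma~\ref{lem:Symbols.standard-product}. No new analytic estimate is needed beyond those already established in Section~\ref{section:Symbols}.
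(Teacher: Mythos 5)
Your proof is correct and follows essentially the same route as the paper: reduce to standard asymptotic expansions via Remark~\ref{rem:Symbols.classical-standard-relation}, multiply with Lemma~\ref{lem:Symbols.standard-product}, rearrange the double sum diagonally, and convert back. The only cosmetic difference is in handling the cutoff mismatch: the paper introduces the new cutoff $\widetilde{\chi}=1-(1-\chi)^2$ and applies the remark with $\widetilde{\chi}$, whereas you keep $\chi$ and absorb $(1-\chi)^2-(1-\chi)=-\chi(1-\chi)$ into the Schwartz ideal, which is equally valid.
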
\begin{proof}
 Let $\chi(\xi)\in C_c^\infty(\Rn)$ be such that $\chi(\xi)=1$ near $\xi=0$. Given any $N \in \N$, it follows from Remark~\ref{rem:Symbols.classical-standard-relation} that 
 \begin{align*}
\rho(\xi) &= \sum_{p<N}(1-\chi(\xi))\rho_{q-p}(\xi) \quad \bmod \stS^{\Re{q}-N}(\Rn;\cA_\theta), \\
 \sigma(\xi) &= \sum_{r<N}(1-\chi(\xi))\sigma_{q'-r}(\xi) \quad \bmod \stS^{\Re{q'}-N}(\Rn;\cA_\theta) .
\end{align*}
Combining this with Lemma~\ref{lem:Symbols.standard-product} we get 
\begin{equation*}
 \rho(\xi)\sigma(\xi)= \sum_{p,r<N}(1-\chi(\xi))^2\rho_{q-p}(\xi)\sigma_{q'-r}(\xi) \quad \bmod \ \stS^{\Re{(q+q')}-N}(\Rn;\cA_\theta). 
\end{equation*}
For $j=0,1,\ldots$ set
\begin{equation} \label{eq:Symbols.classical-product-j-term}
 (\rho\sigma)_{q+q'-j}(\xi)=  \sum_{p+r=j}\rho_{q-p}(\xi)\sigma_{q'-r}(\xi)\in S_{q+q'-j}(\Rn;\cA_\theta). 
\end{equation}
In addition, set $\widetilde{\chi}(\xi)=1-(1-\chi(\xi))^2$. Note that $\widetilde{\chi}(\xi)\in C_c^\infty(\Rn)$ and $\widetilde{\chi}(\xi)=1$ near $\xi=0$. 

We know by Remark~\ref{rmk:Symbols.classical-inclusion} that $(1-\chi(\xi))\rho_{q-p}(\xi) \in \stS^{\Re{q}-p}(\Rn;\cA_\theta)$ and $(1-\chi(\xi))\sigma_{q'-r}(\xi) \in \stS^{\Re{q'}-r}(\Rn;\cA_\theta)$. Therefore, using Lemma~\ref{lem:Symbols.standard-product} we see that $(1-\chi(\xi))^2\rho_{q-p}(\xi)\sigma_{q'-r}(\xi)$ is a symbol in $\stS^{\Re(q+q')-p-r}(\Rn;\cA_\theta)$, and hence it is contained in $\stS^{\Re(q+q')-N}(\Rn;\cA_\theta)$ when $p+r\geq N$. Combining this with~(\ref{eq:Symbols.classical-product-j-term}) we see that, for all $N\geq 1$, we have
\begin{align*}
  \rho(\xi)\sigma(\xi)& = \sum_{j<N}\sum_{p+r=j}(1-\chi(\xi))^2\rho_{q-p}(\xi)\sigma_{q'-r}(\xi) \quad \bmod \ \stS^{\Re{(q+q')}-N}(\Rn;\cA_\theta)\\ 
  & = \sum_{j<N}(1-\widetilde{\chi}(\xi))(\rho\sigma)_{q+q'-j}(\xi) \quad \bmod \ \stS^{\Re{(q+q')}-N}(\Rn;\cA_\theta).  
\end{align*}
This shows that $ \rho(\xi)\sigma(\xi)\sim \sum (1-\widetilde{\chi}(\xi))(\rho\sigma)_{q+q'-j}(\xi)$ in the sense of Definition~\ref{def:Symbols.standard-asymptotic}. Combining this with Remark~\ref{rem:Symbols.classical-standard-relation} gives the result.
\end{proof}

\section{Amplitudes and Oscillating Integrals} \label{section:Amplitudes}
In this section, we construct the oscillating integral for $\cA_\theta$-valued amplitudes.  We refer to Appendix~\ref{app:LCS-int} for background on the integration of maps with values in locally convex spaces. In Appendix~\ref{app:LCS-int} the extension of Lebesgue's integral is carried out for maps with values in quasi-complete Suslin locally convex spaces. The smooth noncommutative torus $\cA_\theta$ is such a space, since this is a separable Fr\'echet space. 

We refer to~\cite[Chapter 1]{Ri:MAMS93} for an alternative construction of the oscillating integral for zeroth order amplitudes with values in Fr\'echet spaces. The oscillating integral of this section is defined for $\cA_\theta$-valued amplitudes of any order.  

\subsection{Spaces of amplitudes} \label{subsection:Amplitudes.amplitudes} Let us first recall the definition of scalar-valued amplitudes. 

\begin{definition}[{\cite{AG:AMS07}}] 
$A^m ( \Rn \times \Rn )$, $m \in \R$, consists of functions $a(s,\xi)$ in $C^{\infty} (\Rn \times \Rn)$ such that, for all multi-orders $\beta$, $\gamma$, there is $C_{ \beta \gamma} > 0$ such that
\begin{equation*}
\left|\partial_s^\beta \partial_\xi^\gamma  a(s,\xi)\right| \leq C_{ \beta \gamma} \left( 1 + |s| + |\xi| \right)^m \quad \forall (s,\xi) \in \Rn \times \Rn .
\end{equation*}
\end{definition}

Throughout this paper we will make use of the following classes of $\cA_\theta$-valued amplitudes. 

\begin{definition}
$A^m ( \Rn \times \Rn ; \cA_\theta )$, $m \in \R$, consists of maps $a(s,\xi)$ in $C^{\infty} (\Rn \times \Rn ; \cA_\theta )$ such that, for all multi-orders $\alpha$, $\beta$, $\gamma$, there is $C_{\alpha \beta \gamma} > 0$ such that
\begin{equation} \label{eq:Amplitudes.amplitudes-estimates}
\norm{\delta^\alpha \partial_s^\beta \partial_\xi^\gamma  a(s,\xi)} \leq C_{\alpha \beta \gamma} \left( 1 + |s| + |\xi| \right)^m \quad \forall (s,\xi) \in \Rn \times \Rn .
\end{equation}
\end{definition}

\begin{remark}
 In the same way as in Remark~\ref{rmk:Symbols.amplitudes-intersection} we have
 \begin{equation*}
\bigcap_{m\in\R}A^m(\Rn\times\Rn;\cA_\theta)= \cS(\Rn\times\Rn;\cA_\theta).  
\end{equation*}
\end{remark}

We also define 
\begin{equation*} 
 A^{+\infty}(\Rn\times\Rn;\cA_\theta):=\bigcup_{m\in\R} A^m(\Rn\times\Rn;\cA_\theta) .
\end{equation*}

In what follows, we endow the space $A^m(\Rn\times\Rn;\cA_\theta)$, $m\in \R$, with the locally convex topology generated by the semi-norms,
\begin{equation} \label{eq:Amplitudes.amplitudes-semi-norms}
q_N^{(m)} (a) := \sup_{|\alpha|+|\beta|+|\gamma| \leq N}\sup_{(s,\xi) \in \Rn \times \Rn} (1 + |s| + |\xi| )^{-m} \norm{\delta^\alpha \partial_s^\beta \partial_\xi^\gamma a(s,\xi)}, \quad N\in\N_0 .
\end{equation}
In particular, the inclusion of $\C$ into the center of $\cA_\theta$ gives rise to a continuous embedding of $A^m(\Rn\times\Rn)$ into $A^m(\Rn\times\Rn;\cA_\theta)$. In addition, the natural inclusion of $A^m(\Rn\times\Rn;\cA_\theta)$ into $C^\infty (\Rn \times \Rn ; \cA_\theta )$ is continuous. Furthermore, by arguing along similar lines as that of the proof of Proposition~\ref{prop:Symbols.standard-Frechetspace} we obtain the following result. 

\begin{proposition} \label{prop:Amplitudes.amplitudes-Frechetspace} 
$A^m(\Rn\times\Rn;\cA_\theta)$, $m\in \R$,  is a Fr\'{e}chet space.
\end{proposition}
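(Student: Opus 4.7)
The plan is to mimic closely the argument used for Proposition~\ref{prop:Symbols.standard-Frechetspace}, replacing the variable $\xi\in\R^n$ by $(s,\xi)\in\R^n\times\R^n$ and the weight $(1+|\xi|)^{m-|\beta|}$ by $(1+|s|+|\xi|)^m$. Since the family of semi-norms $\{q_N^{(m)}\}_{N\geq 0}$ defined in~(\ref{eq:Amplitudes.amplitudes-semi-norms}) is countable, the locally convex topology of $A^m(\R^n\times\R^n;\cA_\theta)$ is metrizable, and to establish the Fr\'echet property it remains only to check sequential completeness.

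First I would take an arbitrary Cauchy sequence $\bigl(a_j(s,\xi)\bigr)_{j\geq 0}$ in $A^m(\R^n\times\R^n;\cA_\theta)$. Since the inclusion $A^m(\R^n\times\R^n;\cA_\theta)\hookrightarrow C^\infty(\R^n\times\R^n;\cA_\theta)$ is continuous (the weight $(1+|s|+|\xi|)^m$ is bounded on any compact set), the sequence $(a_j)$ is also Cauchy in $C^\infty(\R^n\times\R^n;\cA_\theta)$. The latter space is Fr\'echet (see Appendix~\ref{app:LCS-diff}), so there is a limit $a(s,\xi)\in C^\infty(\R^n\times\R^n;\cA_\theta)$ with $a_j\to a$ in $C^\infty(\R^n\times\R^n;\cA_\theta)$; in particular every derivative $\delta^\alpha\partial_s^\beta\partial_\xi^\gamma a_j(s,\xi)$ converges pointwise in $\cA_\theta$ to the corresponding derivative of $a(s,\xi)$.

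Next I would fix multi-orders $\alpha,\beta,\gamma$ and $\epsilon>0$, and pick $N\geq|\alpha|+|\beta|+|\gamma|$. The Cauchy condition gives some $j_0$ such that for all $j,\ell\geq j_0$ and all $(s,\xi)\in\R^n\times\R^n$,
\begin{equation*}
\bigl\|\delta^\alpha\partial_s^\beta\partial_\xi^\gamma\bigl(a_j(s,\xi)-a_\ell(s,\xi)\bigr)\bigr\|\leq \epsilon\,(1+|s|+|\xi|)^m.
\end{equation*}
Letting $\ell\to\infty$ and using the pointwise convergence (together with the continuity of the norm on $\cA_\theta$) yields
\begin{equation*}
\bigl\|\delta^\alpha\partial_s^\beta\partial_\xi^\gamma\bigl(a_j(s,\xi)-a(s,\xi)\bigr)\bigr\|\leq \epsilon\,(1+|s|+|\xi|)^m
\end{equation*}
for all $j\geq j_0$ and all $(s,\xi)$. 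Applying this for $j=j_0$ and using the estimates on $a_{j_0}$ already shows that $a(s,\xi)$ satisfies the amplitude bounds~(\ref{eq:Amplitudes.amplitudes-estimates}), so $a\in A^m(\R^n\times\R^n;\cA_\theta)$. The same display, applied for arbitrary $\epsilon>0$, shows that $q_N^{(m)}(a_j-a)\to 0$ for every $N$; hence $a_j\to a$ in $A^m(\R^n\times\R^n;\cA_\theta)$.

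There is no real obstacle here: the only point that needs a tiny bit of care is the passage to the pointwise limit in the estimate, which uses that the norm of $\cA_\theta$ is continuous and that in $A^m$ convergence is taken uniformly in $(s,\xi)$ against the weight $(1+|s|+|\xi|)^m$. Everything else is a direct transcription of the proof of Proposition~\ref{prop:Symbols.standard-Frechetspace}.
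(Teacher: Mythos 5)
Your proposal is correct and is exactly the argument the paper intends: indeed, the paper explicitly states that Proposition~\ref{prop:Amplitudes.amplitudes-Frechetspace} is obtained ``by arguing along similar lines as that of the proof of Proposition~\ref{prop:Symbols.standard-Frechetspace},'' and your transcription (countable semi-norm family, continuous inclusion into the Fr\'echet space $C^\infty(\R^n\times\R^n;\cA_\theta)$ to obtain a candidate limit, then passing to the limit $\ell\to\infty$ in the pointwise Cauchy estimate against the weight $(1+|s|+|\xi|)^m$) matches that blueprint step for step.
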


Any map $ \R^n \ni \xi \rightarrow \rho(\xi)\in \cA_\theta$ can be seen as a map $\R^n\times \R^n \ni (s,\xi)\rightarrow \rho(\xi)\in \cA_\theta$ that does not depend on the variable $s$. In particular, this allows us to regard $C^\infty(\R^n; \cA_\theta)$ as a subspace of $C^\infty(\R^n\times \R^n; \cA_\theta)$. Keeping in mind this identification, we have the following relationship between (standard) symbols and amplitudes. 

\begin{lemma} \label{lem:Amplitudes.symbol-inclusion}
 Let $m \in \R$, and set $m_+=\op{max}(m,0)$. Then we have a continuous inclusion, 
 \begin{equation*}
 \stS^m(\R^n; \cA_\theta)\subset A^{m_+}(\R^n\times \R^n; \cA_\theta). 
\end{equation*}
\end{lemma}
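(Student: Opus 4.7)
The plan is to observe that a symbol $\rho(\xi) \in \stS^m(\R^n;\cA_\theta)$, regarded as a function of $(s,\xi)$, is independent of $s$, so all its $s$-derivatives of positive order vanish. It then suffices to estimate $\|\delta^\alpha \partial_\xi^\gamma \rho(\xi)\|$ in terms of $(1+|s|+|\xi|)^{m_+}$, and compare the relevant semi-norms.

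First I would fix $\rho(\xi)\in \stS^m(\R^n;\cA_\theta)$ and note that, by the very definition of the semi-norms $p_N^{(m)}$ in~\eqref{eq:Symbols.standard-semi-norms}, for all multi-orders $\alpha,\gamma$ with $|\alpha|+|\gamma|\le N$ we have
\begin{equation*}
\bigl\|\delta^\alpha \partial_\xi^\gamma \rho(\xi)\bigr\|\le p_N^{(m)}(\rho)\,(1+|\xi|)^{m-|\gamma|}, \qquad \xi\in\R^n.
\end{equation*}
For any multi-order $\beta\neq 0$ we have $\partial_s^\beta \rho(\xi)=0$ since $\rho$ is constant in $s$, so the estimate~\eqref{eq:Amplitudes.amplitudes-estimates} is trivially satisfied in that case. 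Thus I only need to control the case $\beta=0$.

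Next I would split into two cases according to the sign of $m-|\gamma|$. If $m-|\gamma|\ge 0$, then using $1+|\xi|\le 1+|s|+|\xi|$ and $m-|\gamma|\le m\le m_+$ together with $1+|s|+|\xi|\ge 1$, we get
\begin{equation*}
(1+|\xi|)^{m-|\gamma|}\le (1+|s|+|\xi|)^{m-|\gamma|}\le (1+|s|+|\xi|)^{m_+}.
\end{equation*}
If instead $m-|\gamma|<0$, then $(1+|\xi|)^{m-|\gamma|}\le 1\le (1+|s|+|\xi|)^{m_+}$ since $m_+\ge 0$ and $1+|s|+|\xi|\ge 1$. Combining these two cases gives
\begin{equation*}
\bigl\|\delta^\alpha \partial_s^\beta \partial_\xi^\gamma \rho(\xi)\bigr\|\le p_N^{(m)}(\rho)\,(1+|s|+|\xi|)^{m_+}
\end{equation*}
for all $|\alpha|+|\beta|+|\gamma|\le N$, which shows that $\rho\in A^{m_+}(\R^n\times\R^n;\cA_\theta)$.

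Finally, the semi-norm inequality $q_N^{(m_+)}(\rho)\le p_N^{(m)}(\rho)$ follows directly from the bound just obtained by dividing by $(1+|s|+|\xi|)^{m_+}$ and taking the supremum, and this gives the continuity of the inclusion. There is no genuine obstacle here; the only point requiring care is the case analysis that distinguishes whether $\gamma$-differentiation drives the decay exponent below zero, which is what forces the appearance of $m_+$ rather than $m$ in the target space.
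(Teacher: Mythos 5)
Your proof is correct and takes essentially the same route as the paper: note that $s$-derivatives of positive order vanish, then bound $(1+|\xi|)^{m-|\gamma|}$ by $(1+|s|+|\xi|)^{m_+}$ to obtain $q_N^{(m_+)}(\rho)\le p_N^{(m)}(\rho)$. Your explicit case split on the sign of $m-|\gamma|$ fills in a step the paper treats as obvious, but there is no substantive difference in approach.
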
\begin{proof}
 Let $\rho(\xi) \in \stS^m(\R^n; \cA_\theta)$. Given $N\geq 0$ and multi-orders $\beta$ and $\gamma$ with $|\beta|+|\gamma|\leq N$, we have  
\begin{equation*}
 \left\| \partial_\xi^\beta \delta^\gamma \rho(\xi)\right\|  \leq p^{(m)}_N(\rho) \left( 1+|\xi|\right)^{m-|\beta|} \leq p^{(m)}_N(\rho) \left( 1+|s|+|\xi|\right)^{m_+}. 
\end{equation*}
This shows that $\rho(\xi) \in A^{m_+}(\R^n\times \R^n; \cA_\theta)$, and we have the semi-norm estimates, 
\begin{equation*}
 q_N^{(m_+)} (\rho) \leq p^{(m)}_N(\rho) \qquad \text{for all $N\in \N_0$}. 
\end{equation*}
This shows that the inclusion of $ \stS^m(\R^n; \cA_\theta)$ into $A^{m_+}(\R^n\times \R^n; \cA_\theta)$ is continuous. The result is proved. 
\end{proof}

By arguing along similar lines as that of the proof of Lemma~\ref{lem:Symbols.standard-product} we also get the following result. 

\begin{lemma} \label{lem:Amplitudes.amplitudes-product}
Let  $m_1, m_2\in\R$. Then the product of $\cA_\theta$ gives rise to a continuous bilinear map from $A^{m_1}(\Rn\times\Rn;\cA_\theta)\times A^{m_2}(\Rn\times\Rn;\cA_\theta)$ to $A^{m_1+m_2}(\Rn\times\Rn;\cA_\theta)$. 
\end{lemma}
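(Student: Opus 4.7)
The plan is to mimic the proof of Lemma~\ref{lem:Symbols.standard-product} almost verbatim, with the only difference that the weight function is $(1+|s|+|\xi|)^m$ rather than $(1+|\xi|)^{m-|\beta|}$, which actually simplifies matters since no $|\beta|$-dependence needs to be tracked on the right-hand side.

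First I would fix $a_j(s,\xi)\in A^{m_j}(\R^n\times\R^n;\cA_\theta)$ for $j=1,2$, an integer $N\geq 0$, and multi-orders $\alpha,\beta,\gamma$ with $|\alpha|+|\beta|+|\gamma|\leq N$. Using the Leibniz rule for the derivations $\delta^\alpha$ in $\cA_\theta$ stated in~(\ref{eq:NCtori-uv-Leibniz}), together with the ordinary Leibniz rules for $\partial_s^\beta$ and $\partial_\xi^\gamma$, I would expand
\begin{equation*}
\delta^\alpha \partial_s^\beta \partial_\xi^\gamma \bigl[a_1(s,\xi) a_2(s,\xi)\bigr]
= \sum \binom{\alpha}{\alpha'} \binom{\beta}{\beta'} \binom{\gamma}{\gamma'}
\bigl(\delta^{\alpha'}\partial_s^{\beta'}\partial_\xi^{\gamma'} a_1(s,\xi)\bigr) \bigl(\delta^{\alpha''}\partial_s^{\beta''}\partial_\xi^{\gamma''} a_2(s,\xi)\bigr),
\end{equation*}
where the sum is over $\alpha'+\alpha''=\alpha$, $\beta'+\beta''=\beta$, $\gamma'+\gamma''=\gamma$.

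Next I would estimate each summand using submultiplicativity of the $C^*$-norm on $A_\theta$ (and hence on $\cA_\theta$), together with the defining estimates~(\ref{eq:Amplitudes.amplitudes-estimates}) for $a_1$ and $a_2$, to obtain
\begin{equation*}
\left\|\bigl(\delta^{\alpha'}\partial_s^{\beta'}\partial_\xi^{\gamma'} a_1\bigr)\bigl(\delta^{\alpha''}\partial_s^{\beta''}\partial_\xi^{\gamma''} a_2\bigr)\right\|
\leq q_N^{(m_1)}(a_1)\, q_N^{(m_2)}(a_2)\, (1+|s|+|\xi|)^{m_1+m_2},
\end{equation*}
since the weight exponents simply add. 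Summing over the finitely many $(\alpha',\beta',\gamma')$ and bounding the binomial coefficients crudely by $2^{|\alpha|+|\beta|+|\gamma|}\leq 2^N$ yields
\begin{equation*}
\left\|\delta^\alpha \partial_s^\beta \partial_\xi^\gamma(a_1 a_2)\right\|
\leq 8^{N} q_N^{(m_1)}(a_1)\, q_N^{(m_2)}(a_2)\, (1+|s|+|\xi|)^{m_1+m_2}.
\end{equation*}

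Taking the supremum over $(s,\xi)\in\R^n\times\R^n$ and over $|\alpha|+|\beta|+|\gamma|\leq N$, this shows that $a_1 a_2\in A^{m_1+m_2}(\R^n\times\R^n;\cA_\theta)$ with the semi-norm estimate $q_N^{(m_1+m_2)}(a_1 a_2)\leq 8^N q_N^{(m_1)}(a_1) q_N^{(m_2)}(a_2)$ for every $N\geq 0$. Since these semi-norms generate the topology of $A^{m_1+m_2}(\R^n\times\R^n;\cA_\theta)$, this jointly continuous estimate on a bilinear map between Fr\'echet spaces yields the claim. There is essentially no obstacle here; the only minor subtlety is invoking the correct Leibniz rule~(\ref{eq:NCtori-uv-Leibniz}) for the noncommutative derivations $\delta_j$, but this is immediate from the paper's setup.
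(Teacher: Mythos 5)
Your proof is correct and is exactly the approach the paper intends: the paper states Lemma~\ref{lem:Amplitudes.amplitudes-product} without proof, remarking only that it follows "by arguing along similar lines as that of the proof of Lemma~\ref{lem:Symbols.standard-product}," and you carry out precisely that adaptation (Leibniz rule via~(\ref{eq:NCtori-uv-Leibniz}), submultiplicativity of the $C^*$-norm, and the semi-norm estimates~(\ref{eq:Amplitudes.amplitudes-estimates})). The only quibble is cosmetic: since $\sum_{\alpha'+\alpha''=\alpha}\sum_{\beta'+\beta''=\beta}\sum_{\gamma'+\gamma''=\gamma}\binom{\alpha}{\alpha'}\binom{\beta}{\beta'}\binom{\gamma}{\gamma'}=2^{|\alpha|+|\beta|+|\gamma|}\leq 2^N$, the sharper constant is $2^N$ rather than $8^N$, matching the constant in the proof of Lemma~\ref{lem:Symbols.standard-product}, though of course this does not affect the continuity conclusion.
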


We also have the following version of Proposition~\ref{prop:Symbols.standard-density}. 

\begin{proposition} \label{prop:Amplitudes.amplitudes-density}
Let $a(s,\xi)\in A^m(\Rn\times\Rn;\cA_\theta)$, $m\in\R$, and let $\chi(s,\xi)\in C^\infty_c(\Rn\times\Rn)$ be such that $\chi(0,0)=1$.  For $0<\epsilon\leq 1$, define $a_\epsilon(s,\xi)\in C^\infty_c(\Rn\times\Rn;\cA_\theta)$ by 
\begin{equation*}
 a_\epsilon(s,\xi)= \chi(\epsilon s,\epsilon \xi) a(s,\xi), \qquad (s,\xi) \in \R^n\times \R^n. 
\end{equation*}
 Then the family $(a_\epsilon(s,\xi))_{0<\epsilon\leq 1}$ is contained in $C^\infty_c(\Rn\times\Rn; \cA_\theta)$, it is bounded in $A^m(\Rn\times\Rn;\cA_\theta)$ and, as $\epsilon \rightarrow 0^+$, it converges to $a(s,\xi)$  in $A^{m'}(\Rn\times\Rn;\cA_\theta)$ for every $m'>m$. 
\end{proposition}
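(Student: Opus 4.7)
The plan is to mimic the argument of Proposition~\ref{prop:Symbols.standard-density}, with the only substantive change being that the symbol weight $(1+|\xi|)$ is replaced throughout by the amplitude weight $(1+|s|+|\xi|)$. Everything works because dilation by $\epsilon \in (0,1]$ leaves $\chi$ and its derivatives uniformly bounded, while each differentiation of $\chi(\epsilon s,\epsilon\xi)$ produces an $\epsilon$ factor.

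First, since $\chi \in C^{\infty}_c(\R^n\times\R^n)$, the function $(s,\xi)\mapsto \chi(\epsilon s,\epsilon\xi)$ is compactly supported for every $\epsilon>0$, and multiplying by the smooth map $a$ yields $a_\epsilon \in C^{\infty}_c(\R^n\times\R^n;\cA_\theta)$. Second, to obtain boundedness in $A^m$, I would apply the Leibniz rule to write
\begin{equation*}
\delta^\alpha\partial_s^\beta\partial_\xi^\gamma a_\epsilon(s,\xi) = \sum_{\beta'+\beta''=\beta,\,\gamma'+\gamma''=\gamma}\binom{\beta}{\beta'}\binom{\gamma}{\gamma'}\epsilon^{|\beta'|+|\gamma'|}(\partial_s^{\beta'}\partial_\xi^{\gamma'}\chi)(\epsilon s,\epsilon\xi)\,\delta^\alpha\partial_s^{\beta''}\partial_\xi^{\gamma''}a(s,\xi).
\end{equation*}
Using $\epsilon^{|\beta'|+|\gamma'|}\leq 1$ and $\|\partial_s^{\beta'}\partial_\xi^{\gamma'}\chi\|_{\infty}<\infty$, each summand is dominated by a constant times $q_N^{(m)}(a)\,(1+|s|+|\xi|)^m$, which gives a bound $q_N^{(m)}(a_\epsilon)\leq C_N\, q_N^{(m)}(a)$ uniformly in $\epsilon\in(0,1]$.

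Third, for the convergence statement, I would split $\delta^\alpha\partial_s^\beta\partial_\xi^\gamma(a_\epsilon-a)$ into the ``main term'' obtained for $\beta'=\gamma'=0$ and the ``remainder terms'' for which $|\beta'|+|\gamma'|\geq 1$. Each remainder term carries an explicit factor $\epsilon^{|\beta'|+|\gamma'|}\leq\epsilon$, hence is bounded by $C\epsilon(1+|s|+|\xi|)^m\leq C\epsilon(1+|s|+|\xi|)^{m'}$. The main term equals $(\chi(\epsilon s,\epsilon\xi)-1)\,\delta^\alpha\partial_s^\beta\partial_\xi^\gamma a(s,\xi)$, and here the key estimate is
\begin{equation*}
|\chi(\epsilon s,\epsilon\xi)-1|\leq C\min\bigl(1,\epsilon(|s|+|\xi|)\bigr)\leq C_t\bigl(\epsilon(|s|+|\xi|)\bigr)^t\qquad \text{for every }t\in[0,1],
\end{equation*}
which follows from $\chi(0,0)=1$ together with the mean value theorem on the region $\epsilon(|s|+|\xi|)\leq 1$ and the trivial bound on the complementary region. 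Choosing $t=\min(1,m'-m)>0$ then yields a bound of order $\epsilon^t(1+|s|+|\xi|)^{t+m}\leq \epsilon^t(1+|s|+|\xi|)^{m'}$ for the main term. Combining the two estimates gives $q_N^{(m')}(a_\epsilon-a)=\op{O}(\epsilon^{\min(1,m'-m)})$, which proves both the boundedness and the convergence assertions.

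The mildly delicate step is the interpolation estimate on $\chi(\epsilon s,\epsilon\xi)-1$: since $m'-m$ can be arbitrarily small, one cannot simply use the mean value bound $C\epsilon(|s|+|\xi|)$, and picking the correct power $t\in(0,1]$ to convert the $\min(1,\cdot)$ into a clean power of $\epsilon$ is what makes convergence in $A^{m'}$ for \emph{every} $m'>m$ work out. Everything else is bookkeeping via the Leibniz rule.
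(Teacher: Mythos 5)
Your proof is correct, but it takes a genuinely different route from the paper's. The paper argues by reduction: it observes that there is a continuous inclusion $\stS^{m'}(\R^{2n};\cA_\theta)\hookrightarrow A^{m'}(\Rn\times\Rn;\cA_\theta)$ for $m'\geq 0$, applies Lemma~\ref{lem:Symbols.approximate-unit} (the H\"ormander approximate-unit lemma, viewing $\chi$ as a function on $\R^{2n}$) to conclude that $(\chi(\epsilon s,\epsilon\xi))$ is bounded in $A^0$ and converges to $1$ in $A^{\mu}$ for every $\mu>0$, and then invokes the product lemma (Lemma~\ref{lem:Amplitudes.amplitudes-product}) to transport both conclusions from $\chi(\epsilon s,\epsilon\xi)$ to $a_\epsilon=\chi(\epsilon s,\epsilon\xi)\,a$. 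You instead give a direct, self-contained argument: Leibniz to isolate the main term $(\chi(\epsilon s,\epsilon\xi)-1)\,\delta^\alpha\partial^\beta_s\partial^\gamma_\xi a$ from remainder terms carrying a visible $\epsilon$, and the interpolation bound $|\chi(\epsilon s,\epsilon\xi)-1|\leq C\min(1,\epsilon(|s|+|\xi|))\leq C_t\,\epsilon^t(|s|+|\xi|)^t$ with $t=\min(1,m'-m)$ to absorb the main term into $(1+|s|+|\xi|)^{m'}$. In effect you have inlined (a variant of) the proof of Lemma~\ref{lem:Symbols.approximate-unit}, which the paper treats as a black box. The paper's route is shorter and factors the argument through already-established lemmas; yours is more elementary and also yields the explicit rate $q_N^{(m')}(a_\epsilon-a)=\op{O}(\epsilon^{\min(1,m'-m)})$, which the reduction argument does not make visible. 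Both are sound.
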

\begin{proof}
 It is immediate that we have a continuous inclusion of $\stS^{m'}(\R^{2n};\cA_\theta)$ into $A^{m'}(\Rn\times\Rn; \cA_\theta)$ for every $m'\geq 0$. Using Lemma~\ref{lem:Symbols.approximate-unit} we then deduce that the family $(\chi(\epsilon s, \epsilon \xi))_{0<\epsilon\leq 1}$ is bounded in $A^0(\Rn\times\Rn; \cA_\theta)$ and, as $\epsilon \rightarrow 0^+$, it converges to $1$  in 
$A^{m'}(\Rn\times\Rn; \cA_\theta)$ for all $m'>0$. Combining this with Lemma~\ref{lem:Amplitudes.amplitudes-product} gives the result. 
\end{proof}

\begin{remark}
Similarly to Remark~\ref{rmk:Symbols.reduction-cS},  Proposition~\ref{prop:Amplitudes.amplitudes-density} allows us to reduce the proof of equalities for continuous functionals on amplitudes to checking them for maps in $C^\infty_c(\Rn\times\Rn; \cA_\theta)$.  
\end{remark}

Let $\varphi \in \cA_\theta'$. By Proposition~\ref{prop:LCS.smooth-Phi}  the composition with $\varphi$ gives rise to a linear map, 
\begin{equation} \label{eq:Amplitudes.amplitudes-evaluation}
 C^\infty(\R^n\times \Rn; \cA_\theta)\ni a(s,\xi) \longrightarrow \varphi\left[ a(s,\xi)\right] \in  C^\infty(\R^n\times \R^n). 
\end{equation}
Moreover, given any $ a(s,\xi)\in C^\infty(\R^n\times \Rn; \cA_\theta)$, for all multi-orders $\beta$ and $\gamma$, we have 
\begin{equation} \label{eq:Amplitudes.amplitudes-evaluation-differentiation}
 \partial_s^\beta \partial_\xi^\gamma \left( \varphi\left[ a(s,\xi)\right] \right)= \varphi\left[   \partial_s^\beta \partial_\xi^\gamma a(s,\xi)\right]. 
\end{equation}

\begin{lemma} \label{lem:Amplitudes.amplitudes-evaluation}
 Let $\varphi \in \cA_\theta'$. Then, for every $m\in \R$, the linear map~(\ref{eq:Amplitudes.amplitudes-evaluation}) induces a continuous linear map from $A^m(\R^n\times \R^n;\cA_\theta)$ to $A^m(\R^n\times \R^n)$. 
\end{lemma}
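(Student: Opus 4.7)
The plan is to exploit two ingredients: the continuity of $\varphi$ as a linear form on the Fréchet algebra $\cA_\theta$ (which gives a semi-norm bound in terms of finitely many $\|\delta^\alpha(\cdot)\|$), and the differentiation-under-$\varphi$ formula (\ref{eq:Amplitudes.amplitudes-evaluation-differentiation}), which already tells us that partial derivatives commute with the action of $\varphi$. Combining these two facts will convert the amplitude estimates~(\ref{eq:Amplitudes.amplitudes-estimates}) for $a$ into the corresponding scalar amplitude estimates for $\varphi[a]$.

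First, since $\cA_\theta$ is a Fréchet space whose topology is defined by the semi-norms $u \mapsto \|\delta^\alpha u\|$, continuity of $\varphi \in \cA_\theta'$ yields an integer $N_0 \in \N_0$ and a constant $C_\varphi > 0$ such that
\begin{equation*}
|\varphi(u)| \leq C_\varphi \sup_{|\alpha| \leq N_0} \|\delta^\alpha u\| \qquad \text{for all } u \in \cA_\theta.
\end{equation*}
Next, I would fix $a(s,\xi) \in A^m(\R^n\times\R^n;\cA_\theta)$ and apply~(\ref{eq:Amplitudes.amplitudes-evaluation-differentiation}) to compute, for arbitrary multi-orders $\beta,\gamma$,
\begin{equation*}
\partial_s^\beta \partial_\xi^\gamma \bigl(\varphi[a(s,\xi)]\bigr) = \varphi\bigl[\partial_s^\beta \partial_\xi^\gamma a(s,\xi)\bigr].
\end{equation*}
Plugging the preceding bound into this identity and invoking the amplitude estimates~(\ref{eq:Amplitudes.amplitudes-estimates}) for the derivatives $\delta^\alpha \partial_s^\beta \partial_\xi^\gamma a$ with $|\alpha|\leq N_0$, I obtain
\begin{equation*}
\bigl|\partial_s^\beta \partial_\xi^\gamma \bigl(\varphi[a(s,\xi)]\bigr)\bigr| \leq C_\varphi \sup_{|\alpha| \leq N_0} \bigl\|\delta^\alpha \partial_s^\beta \partial_\xi^\gamma a(s,\xi)\bigr\| \leq C_\varphi\, q^{(m)}_{N_0+|\beta|+|\gamma|}(a)\,(1+|s|+|\xi|)^m.
\end{equation*}
This proves simultaneously that $\varphi[a] \in A^m(\R^n\times\R^n)$ and, for each $N\in\N_0$, gives the semi-norm estimate $q_N^{(m)}(\varphi[a]) \leq C_\varphi\, q_{N+N_0}^{(m)}(a)$, from which continuity of the induced linear map is immediate.

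No step here is genuinely hard; the only point that deserves a moment's thought is the justification of~(\ref{eq:Amplitudes.amplitudes-evaluation-differentiation}), but this has already been recorded as a consequence of Proposition~\ref{prop:LCS.smooth-Phi} in the preceding discussion, so it can simply be invoked. Smoothness of $\varphi[a]$ on $\R^n\times\R^n$ is likewise part of that same statement, and the displayed bound then provides the missing growth estimate that upgrades smoothness to membership in the amplitude class $A^m$.
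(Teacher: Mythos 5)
Your proof is correct and follows essentially the same approach as the paper's: use the Fréchet continuity of $\varphi$ to get a bound by finitely many semi-norms $\|\delta^\alpha(\cdot)\|$, then commute $\varphi$ with $\partial_s^\beta\partial_\xi^\gamma$ via~(\ref{eq:Amplitudes.amplitudes-evaluation-differentiation}) and feed in the amplitude estimates to obtain exactly the semi-norm inequality $q_N^{(m)}(\varphi[a]) \leq C_\varphi\, q_{N+N_0}^{(m)}(a)$. No differences worth noting.
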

\begin{proof}
 As $\varphi$ is a continuous linear form on $\cA_\theta$ there are $N_0\in \N_0$ and $C>0$ such that
 \begin{equation*}
 \left| \varphi(u)\right| \leq C \sup_{|\alpha|\leq N_0}\left \| \delta^\alpha(u)\right\|  \qquad \text{for all $u\in \cA_\theta$}. 
\end{equation*}
Let $a(s,\xi)\in A^m(\R^n\times \R^n;\cA_\theta)$, $m\in \R$. Given any multi-orders $\beta$ and $\gamma$, by combining the above estimate with~(\ref{eq:Amplitudes.amplitudes-semi-norms}) and~(\ref{eq:Amplitudes.amplitudes-evaluation-differentiation}) 
 we see that, for all $s,\xi \in \R^n$, we have 
\begin{align} \label{eq:Amplitudes.phi-estimates}
 \left|   \partial_s^\beta \partial_\xi^\gamma \left( \varphi\left[ a(s,\xi)\right] \right)\right| & = \left|  \varphi\left[   \partial_s^\beta \partial_\xi^\gamma a(s,\xi)\right] \right|  \\
 & \leq C  \sup_{|\alpha|\leq N_0}\left \| \delta^\alpha\partial_s^\beta \partial_\xi^\gamma a(s,\xi)\right\|  \nonumber \\
 & \leq C q_{N_0+|\beta|+|\gamma|}^{(m)} (a) \left( 1+|s|+|\xi|\right)^m.  \nonumber
\end{align}
This shows that $ \varphi[ a(s,\xi)]\in A^m(\R^n\times \R^n)$. Moreover,  given any $N\in \N_0$, we have 
\begin{equation*}
 q_N^{(m)} \left( \varphi[a]\right) \leq C q_{N+N_0}^{(m)} (a) \qquad \text{for all $a\in A^m(\R^n\times \R^n;\cA_\theta)$}.  
\end{equation*}
Therefore, we see that the linear map~(\ref{eq:Amplitudes.amplitudes-evaluation}) induces a continuous linear map from $A^m(\R^n\times \R^n;\cA_\theta)$ to $A^m(\R^n\times \R^n)$. The proof is complete. 
\end{proof}

\subsection{$\cA_\theta$-Valued oscillating integrals}
Let $a(s,\xi)\in A^m (\Rn\times\Rn;\cA_\theta)$, $m<-2n$. The estimates~(\ref{eq:Amplitudes.amplitudes-estimates}) imply that, for every multi-order $\alpha$, we have 
\begin{equation} \label{eq:Amplitudes.amplitudes-estimates2}
\iint \left\| \delta^\alpha\left(a(s,\xi)\right)\right\| dsd\xi  \leq \iint  \left(1+|s|+|\xi|\right)^m dsd\xi <\infty. 
\end{equation}
As the semi-norms $u\rightarrow \| \delta^\alpha(u)\|$, 
$\alpha \in \N_0^n$, generate the topology of the separable Fr\'echet space $\cA_\theta$, this shows that the map 
 $\R^n\times \R^n \ni(s,\xi)\rightarrow e^{is\cdot \xi}a(s,\xi)\in \cA_\theta$ is integrable in the sense of Definition~\ref{def:LCS.integrability} in Appendix~\ref{app:LCS-int}. 
 Therefore, we may define the $\cA_\theta$-valued integral, 
\begin{equation} \label{eq:Amplitudes.definition-J0}
J_0(a) := \iint e^{is\cdot\xi} a(s,\xi) ds\dbar\xi , 
\end{equation}
where we have set $\dbar\xi=(2\pi)^{-n}d\xi$.  More precisely, this is the unique element of $\cA_\theta$ such that
\begin{equation}
  \varphi \left( \iint e^{is\cdot \xi} a(s,\xi)ds\dbar \xi \right) =  \iint e^{is\cdot \xi} \varphi[a(s,\xi)]ds\dbar \xi \qquad \text{for all $\varphi \in \cA_\theta'$}. 
  \label{eq:Amplitudes.J_0-vphi}
\end{equation}
We obtain a linear map $J_0:A^m (\Rn\times\Rn;\cA_\theta)\rightarrow \cA_\theta$. Moreover, by combining~(\ref{eq:Amplitudes.amplitudes-estimates2}) with  Proposition~\ref{prop:LCS.Lebesgue-integral} we see that, for all $\alpha \in \N_0^n$, we get 
\begin{equation*}
 \left\| \delta^\alpha J_0(a)\right\|  \leq \iint \left\| \delta^\alpha\left(a(s,\xi)\right) \right\|  ds\dbar \xi \leq C(m) q_{|\alpha|}^{(m)}(a), 
\end{equation*}
where we have set $C(m)=  \iint (1+|s|+|\xi|)^m ds\dbar\xi$. Note that $C(m)<\infty$, since $m<-2n$. Therefore, we arrive at the following statement. 

\begin{lemma} \label{lem:Amplitudes.J0-continuity}
 The linear map $J_0:A^m (\Rn\times\Rn;\cA_\theta)\rightarrow \cA_\theta$ given by~(\ref{eq:Amplitudes.definition-J0}) is continuous for every $m<-2n$. 
\end{lemma}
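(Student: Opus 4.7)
The plan is straightforward because essentially all the analytic content has already been established in the paragraphs preceding the statement. What remains is to translate the two estimates derived there into the language of continuity between Fréchet spaces.

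First I would recall that the topology on $\cA_\theta$ is generated by the countable family of semi-norms $u\mapsto\|\delta^\alpha(u)\|$, $\alpha\in\N_0^n$, and that $A^m(\R^n\times\R^n;\cA_\theta)$ is a Fréchet space with semi-norms $q_N^{(m)}$ as in~(\ref{eq:Amplitudes.amplitudes-semi-norms}). Consequently, the continuity of the linear map $J_0$ between these two Fréchet spaces is equivalent to the existence, for each $\alpha\in\N_0^n$, of an integer $N=N(\alpha)$ and a constant $C_\alpha>0$ such that
\begin{equation*}
\bigl\|\delta^\alpha J_0(a)\bigr\|\leq C_\alpha\, q_N^{(m)}(a)\qquad\text{for all }a\in A^m(\R^n\times\R^n;\cA_\theta).
\end{equation*}

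The key step is to commute $\delta^\alpha$ past the integral sign. Since $\cA_\theta$ is a separable Fréchet space and $\delta^\alpha:\cA_\theta\to\cA_\theta$ is continuous, the estimate~(\ref{eq:Amplitudes.amplitudes-estimates2}) (applied with the multi-order $\alpha$) guarantees that $(s,\xi)\mapsto e^{is\cdot\xi}\delta^\alpha a(s,\xi)$ is Lebesgue-integrable in the sense of Appendix~\ref{app:LCS-int}. Invoking the properties of the Lebesgue integral stated in Proposition~\ref{prop:LCS.Lebesgue-integral} (in particular, that continuous linear maps commute with the integral and that the norm of the integral is bounded by the integral of the norms), I would obtain
\begin{equation*}
\delta^\alpha J_0(a)=\iint e^{is\cdot\xi}\delta^\alpha a(s,\xi)\,ds\,\dbar\xi,\qquad \bigl\|\delta^\alpha J_0(a)\bigr\|\leq \iint \bigl\|\delta^\alpha a(s,\xi)\bigr\|\,ds\,\dbar\xi.
\end{equation*}

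Finally, I would insert the amplitude estimate $\|\delta^\alpha a(s,\xi)\|\leq q_{|\alpha|}^{(m)}(a)(1+|s|+|\xi|)^m$ and pull the semi-norm out of the integral. This yields
\begin{equation*}
\bigl\|\delta^\alpha J_0(a)\bigr\|\leq C(m)\, q_{|\alpha|}^{(m)}(a),\qquad C(m):=\iint(1+|s|+|\xi|)^m\,ds\,\dbar\xi,
\end{equation*}
and the constant $C(m)$ is finite precisely because $m<-2n$ is the threshold of integrability for such a radial weight on $\R^{2n}$. These estimates, valid for every $\alpha$, establish the required continuity. The main (and essentially only) subtle point in the whole argument is the justification that $\delta^\alpha$ may be moved inside the integral; this is handled cleanly by the framework of Appendix~\ref{app:LCS-int}, and no additional difficulty is hidden in the $C^*$-algebraic structure.
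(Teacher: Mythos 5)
Your proof is correct and follows essentially the same route as the paper, which derives exactly this semi-norm estimate just before the lemma is stated. One small simplification: you do not actually need to commute $\delta^\alpha$ past the integral (a step that, incidentally, belongs to Proposition~\ref{prop:LCS.Phi-integral} rather than Proposition~\ref{prop:LCS.Lebesgue-integral}), since taking $p(u)=\|\delta^\alpha(u)\|$ in the semi-norm inequality~(\ref{eq:LCS.semi-norm-integral}) of Proposition~\ref{prop:LCS.Lebesgue-integral} already yields $\|\delta^\alpha J_0(a)\|\leq\iint\|\delta^\alpha a(s,\xi)\|\,ds\,\dbar\xi$ directly.
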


\begin{remark} \label{rem:Amplitudes.J0-phi-compatibility}
 Let $a(s,\xi)\in A^m (\Rn\times\Rn;\cA_\theta)$, $m<-2n$. We know by Lemma~\ref{lem:Amplitudes.amplitudes-evaluation} that, for every $\varphi \in \cA_\theta'$, the function $\varphi[a(s,\xi)]$ is contained in $A^m (\Rn\times\Rn)$. Then~(\ref{eq:Amplitudes.J_0-vphi}) means that we have
 \begin{equation*}
 \varphi \left[J_0(a)\right] = J_0\left[ \varphi\circ a \right] \qquad  \text{for all $\varphi \in \cA_\theta'$}. 
\end{equation*} 
\end{remark}

In what follows, given any differential operator $P$ on $\R^n\times \R^n$, we denote by $P^t$ its transpose. This is the differential operator on $\R^n\times \R^n$ such that 
\begin{equation} \label{eq:Amplitudes.operator-transpose}
 \iint Pu(s,\xi) v(s,\xi) ds d\xi  =  \iint u(s,\xi) P^tv(s,\xi) ds d\xi \qquad \text{for all $u,v\in C^\infty_c(\R^n\times \R^n)$}. 
\end{equation}
In fact, if we set $P= \sum c_{\beta\gamma}(s,\xi) \partial_s^\beta \partial_\xi^\gamma$, $c_{\beta\gamma}(s,\xi) \in C^\infty(\R^n\times \R^n)$, then we have
\begin{equation*}
 P^tu (s,\xi) = \sum (-1)^{|\beta|+|\gamma|}  \partial_s^\beta \partial_\xi^\gamma \left( c_{\beta\gamma}(s,\xi)u(s,\xi)\right) \qquad  
 \text{for all $u\in C^\infty(\R^n\times \R^n)$}. 
\end{equation*}

\begin{lemma} \label{lem:Amplitudes.J0-P-transpose}
 Let $P$ be a differential operator on $\R^n\times \R^n$. Then, we have
\begin{equation*}
 J_0\left( Pa\right) = J_0\left[ e^{-is\cdot \xi}P^t(e^{is\cdot \xi})a\right] \qquad \forall a \in C^\infty_c(\R^n\times \R^n;\cA_\theta). 
\end{equation*}
\end{lemma}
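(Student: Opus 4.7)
The plan is to reduce the identity to the scalar case by testing against arbitrary continuous linear forms $\varphi\in\cA_\theta'$ and invoking Hahn–Banach. Because $a\in C^\infty_c(\R^n\times\R^n;\cA_\theta)$, the maps $Pa$ and $e^{-is\cdot\xi}P^t(e^{is\cdot\xi})a$ are compactly supported and therefore integrable, so both sides of the asserted identity are well defined and Remark~\ref{rem:Amplitudes.J0-phi-compatibility} applies: for every $\varphi\in\cA_\theta'$,
\begin{equation*}
\varphi\!\left[J_0(Pa)\right]=J_0\!\left[\varphi\circ(Pa)\right]
\et
\varphi\!\left[J_0\!\left(e^{-is\cdot\xi}P^t(e^{is\cdot\xi})a\right)\right]=J_0\!\left[\varphi\circ\bigl(e^{-is\cdot\xi}P^t(e^{is\cdot\xi})a\bigr)\right].
\end{equation*}

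Next I would check that $\varphi$ commutes with the action of $P$ on $a$. Writing $P=\sum c_{\beta\gamma}(s,\xi)\,\partial_s^\beta\partial_\xi^\gamma$ with $c_{\beta\gamma}\in C^\infty(\R^n\times\R^n)$, the complex-linearity and continuity of $\varphi$, together with~(\ref{eq:Amplitudes.amplitudes-evaluation-differentiation}), yield
\begin{equation*}
\varphi\!\left[Pa(s,\xi)\right]=\sum c_{\beta\gamma}(s,\xi)\,\partial_s^\beta\partial_\xi^\gamma\bigl(\varphi[a(s,\xi)]\bigr)=P\bigl(\varphi\circ a\bigr)(s,\xi).
\end{equation*}
Likewise, since $f(s,\xi):=e^{-is\cdot\xi}P^t(e^{is\cdot\xi})\in C^\infty(\R^n\times\R^n)$ is scalar-valued, $\varphi\bigl[f(s,\xi)a(s,\xi)\bigr]=f(s,\xi)\,\varphi[a(s,\xi)]$. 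Therefore both identities above reduce the problem to showing
\begin{equation*}
J_0\!\left[Pu\right]=J_0\!\left[e^{-is\cdot\xi}P^t(e^{is\cdot\xi})u\right]\qquad\text{for all } u:=\varphi\circ a\in C^\infty_c(\R^n\times\R^n).
\end{equation*}

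For scalar compactly supported $u$, this is the familiar integration-by-parts identity: by the very definition~(\ref{eq:Amplitudes.operator-transpose}) of $P^t$,
\begin{equation*}
\iint e^{is\cdot\xi}\,Pu(s,\xi)\,ds\,\dbar\xi=\iint P^t\!\bigl(e^{is\cdot\xi}\bigr)\,u(s,\xi)\,ds\,\dbar\xi=\iint e^{is\cdot\xi}\bigl[e^{-is\cdot\xi}P^t(e^{is\cdot\xi})\bigr]u(s,\xi)\,ds\,\dbar\xi,
\end{equation*}
which is exactly the claim at the scalar level. Combining these steps yields $\varphi[J_0(Pa)]=\varphi[J_0(e^{-is\cdot\xi}P^t(e^{is\cdot\xi})a)]$ for every $\varphi\in\cA_\theta'$, and since continuous linear functionals separate points on the locally convex space $\cA_\theta$ (Hahn–Banach), the lemma follows.

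I do not anticipate any serious obstacle. The only care required is the interchange of $\varphi$ with $P$, which rests on (i) the scalar nature of the coefficients $c_{\beta\gamma}(s,\xi)$ and (ii) the commutation of $\varphi$ with the partial derivatives $\partial_s^\beta\partial_\xi^\gamma$ recorded in~(\ref{eq:Amplitudes.amplitudes-evaluation-differentiation}). All integrals involved are genuine Riemann integrals thanks to the compact-support hypothesis, so no subtlety of Lebesgue theory for vector-valued maps intervenes.
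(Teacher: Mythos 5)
Your proposal is correct and follows essentially the same route as the paper: test both sides against an arbitrary $\varphi\in\cA_\theta'$ via~(\ref{eq:Amplitudes.J_0-vphi}) (equivalently Remark~\ref{rem:Amplitudes.J0-phi-compatibility}), commute $\varphi$ past $P$ using~(\ref{eq:Amplitudes.amplitudes-evaluation-differentiation}) and the scalar-valuedness of the coefficients, reduce to the scalar integration-by-parts identity defining $P^t$, and conclude since $\cA_\theta'$ separates points. The only cosmetic difference is that you spell out $P=\sum c_{\beta\gamma}\partial_s^\beta\partial_\xi^\gamma$ to justify the commutation, whereas the paper states it directly.
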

\begin{proof}
 Let $a(s,\xi)\in C^\infty_c(\R^n\times \R^n; \cA_\theta)$ and $\varphi \in \cA_\theta'$. Then $\varphi\left[a(s,\xi)\right] \in C^\infty_c(\R^n\times \R^n)$ and it follows from~(\ref{eq:Amplitudes.amplitudes-evaluation-differentiation}) that $ \varphi\left( P\left[a(s,\xi)\right]\right)= P\left(\varphi\left[a(s,\xi)\right]\right)$. 
 Combining this with~(\ref{eq:Amplitudes.J_0-vphi}) we get 
 \begin{equation*}
 \varphi\left(  \iint e^{is\cdot \xi} P\left[ a(s,\xi)\right] ds\dbar\xi\right) = \iint \varphi\left(  e^{is\cdot \xi} P\left[ a(s,\xi)\right] \right) ds\dbar\xi = \iint  
 e^{is\cdot \xi} P\left( \varphi\left[a(s,\xi)\right] \right) ds\dbar\xi . 
\end{equation*}
By using~(\ref{eq:Amplitudes.J_0-vphi}) and~(\ref{eq:Amplitudes.operator-transpose}) we also obtain 
\begin{equation*}
 \iint  e^{is\cdot \xi} P\left( \varphi\left[a(s,\xi)\right] \right) ds\dbar\xi =  \iint  P^t\left(e^{is\cdot \xi}\right)  \varphi\left[a(s,\xi)\right]  ds\dbar\xi 
 = \varphi\left(  \iint  P^t\left(e^{is\cdot \xi}\right)  a(s,\xi)  ds\dbar\xi \right). 
\end{equation*}
As $\cA_\theta'$ separates the points of $\cA_\theta$, it then follows that 
\begin{equation*}
 \iint e^{is\cdot \xi} P\left[ a(s,\xi)\right] ds\dbar\xi = \iint  P^t\left(e^{is\cdot \xi}\right)  a(s,\xi)  ds\dbar\xi. 
\end{equation*}
This shows  that $J_0(Pa)= J_0[e^{-is\cdot \xi}P^t(e^{is\cdot \xi})a]$. The proof is complete. 
\end{proof}

We shall now explain how to extend the linear map $J_0$ to the whole class $A^{+\infty}(\Rn\times\Rn;\cA_\theta)$. To reach this end let $\chi(s,\xi) \in C_c^{\infty} (\Rn \times \Rn)$ be such that $\chi(s,\xi)= 1$ near $(s,\xi)=(0,0)$, and set 
\begin{equation*}
L:= \chi(s,\xi) + \frac{1-\chi(s,\xi)}{|s|^2 + |\xi|^2} \sum_{1 \leq j \leq n} (\xi_j D_{s_j} + s_j D_{\xi_j}) , 
\end{equation*}
where we have set $D_{x_j}=\frac{1}{i}\partial_{x_j}$, $j=1, \ldots, n$. We note that
\begin{equation} \label{eq:Amplitudes.L-exponential}
L(e^{is\cdot\xi}) = e^{is\cdot\xi}.
\end{equation}
We also denote by $L^t$ the transpose of $L$. 

\begin{lemma} \label{lem:Amplitudes.L-transpose-continuity} Let $m \in \R$. Then the differential operator $L^t$ gives rise to a continuous linear map,
\begin{equation*}
L^t : A^m (\Rn \times \Rn ; \cA_\theta) \longrightarrow A^{m-1} (\Rn \times \Rn ; \cA_\theta) .
\end{equation*}
\end{lemma}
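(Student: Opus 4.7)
The plan is to write $L^t$ explicitly as a first-order scalar differential operator with smooth coefficients, show that each coefficient and all of its partial derivatives decay like $O((1+|s|+|\xi|)^{-1})$, and then deduce the continuity from the Leibniz rule. Using $(D_{s_j})^t = -D_{s_j}$, $(D_{\xi_j})^t = -D_{\xi_j}$ and the fact that multiplication by a scalar function is self-transpose, a short computation gives
\begin{equation*}
L^t u = c_0(s,\xi) u - \sum_j \frac{(1-\chi)\xi_j}{|s|^2+|\xi|^2} D_{s_j} u - \sum_j \frac{(1-\chi)s_j}{|s|^2+|\xi|^2} D_{\xi_j} u,
\end{equation*}
where $c_0 = \chi - \sum_j D_{s_j}\bigl[\tfrac{(1-\chi)\xi_j}{|s|^2+|\xi|^2}\bigr] - \sum_j D_{\xi_j}\bigl[\tfrac{(1-\chi)s_j}{|s|^2+|\xi|^2}\bigr]$.

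The key step is to check that each coefficient $c$ of $L^t$ satisfies $|\partial_s^\beta \partial_\xi^\gamma c(s,\xi)| \leq C_{\beta\gamma}(1+|s|+|\xi|)^{-1}$ for all multi-orders $\beta, \gamma$. For the two first-order coefficients this is clear: they vanish near the origin (where $\chi \equiv 1$), and outside a compact set each partial derivative only improves the decay. For $c_0$ the correction terms vanish in a neighborhood of the origin, so $c_0 \equiv 1$ there; a direct rational-function computation then shows that at infinity $c_0$ and all of its partial derivatives are $O((1+|s|+|\xi|)^{-2})$. Boundedness on bounded regions combined with this decay at infinity gives the uniform bound. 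This is the only genuine subtlety: $c_0$ does not actually decay near the origin, but it is bounded there, which is enough because $(1+|s|+|\xi|)^{-1}$ is bounded below on any bounded set.

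With these estimates in hand, the lemma follows by expanding $\delta^\alpha \partial_s^\beta \partial_\xi^\gamma L^t(a)$ via the Leibniz rule, using that $\delta^\alpha$ commutes with $\partial_s, \partial_\xi$ and with multiplication by scalar functions. Each resulting term is of the form $(\partial_s^{\beta'}\partial_\xi^{\gamma'} c)(s,\xi) \cdot \delta^\alpha \partial_s^{\beta''}\partial_\xi^{\gamma''} a(s,\xi)$, with $c$ one of the coefficients of $L^t$, and the total number of $s,\xi$-derivatives on $a$ is at most $|\beta|+|\gamma|+1$. Combining the coefficient bound with the amplitude semi-norm of $a$ bounds each term by $C\, q^{(m)}_{N+1}(a)(1+|s|+|\xi|)^{m-1}$ with $N = |\alpha|+|\beta|+|\gamma|$, yielding $q^{(m-1)}_N(L^t a) \leq C_N\, q^{(m)}_{N+1}(a)$. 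This proves both that $L^t a \in A^{m-1}(\Rn\times\Rn; \cA_\theta)$ and that $L^t$ is continuous as a map from $A^m(\Rn\times\Rn; \cA_\theta)$ to $A^{m-1}(\Rn\times\Rn; \cA_\theta)$.
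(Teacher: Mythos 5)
Your proof is correct. After computing $L^t$ from the transpose rules, you treat it directly as a first-order operator with scalar coefficients, verify that each coefficient and all of its $(s,\xi)$-derivatives are $\mathrm{O}\bigl((1+|s|+|\xi|)^{-1}\bigr)$ (bounded near the origin, decaying like a homogeneous function of degree $-1$ or $-2$ at infinity), and then run the Leibniz expansion by hand to get the semi-norm estimate $q^{(m-1)}_N(L^t a) \leq C_N\, q^{(m)}_{N+1}(a)$. The paper computes the same transpose and reaches the same semi-norm bound, but organizes the argument differently: it simplifies the zeroth-order part one step further, identifying it as $-\tfrac{4is\cdot\xi(1-\chi)}{(|s|^2+|\xi|^2)^2} + \widetilde\chi$ with $\widetilde\chi \in C^\infty_c$, and then decomposes $L^t$ into elementary factors (partial derivatives, multiplications by the coordinates $s_j,\xi_j$, and multiplications by the cutoff rational functions $(1-\chi)(|s|^2+|\xi|^2)^{-\ell}$), each of which is handled through the product-continuity Lemma~\ref{lem:Amplitudes.amplitudes-product}. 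The paper's route is more modular — it reuses a general lemma and also isolates the building blocks it needs again in the proof of Proposition~\ref{prop:Amplitudes.J-properties}(iv) — while yours is more self-contained, trading lemma reuse for a direct coefficient estimate. Both are valid; the single subtlety you flagged (that $c_0$ is only bounded near the origin rather than decaying) is handled correctly.
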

\begin{proof}
We observe that $L^t$ is of the form,
\begin{equation} \label{eq:Amplitudes.L-transpose}
L^t = -\frac{1-\chi (s,\xi)}{|s|^2 +|\xi|^2} \sum_{1 \leq j \leq n} (\xi_j D_{s_j} + s_j D_{\xi_j} ) - \frac{4is\cdot\xi(1-\chi(s,\xi))}{(|s|^2+|\xi|^2)^2} + \widetilde{\chi}(s,\xi) ,
\end{equation}
where $\widetilde{\chi}(s,\xi) \in C^\infty_c(\R^n\times \R^n)$. As $\widetilde{\chi}(s,\xi)$ is a Schwartz-class function, it follows from Lemma~\ref{lem:Amplitudes.amplitudes-product} that the 
multiplication by $\widetilde{\chi}(s,\xi)$ gives rise to a continuous linear map from $A^m ( \Rn \times \Rn ; \cA_\theta )$ to $A^{m'} ( \Rn \times \Rn ; \cA_\theta )$ for all $m,m'\in\R$. In addition, given any $m \in \R$, we observe that
\begin{itemize}
\item For $j=1,\ldots,n$ the differential operators $D_{s_j}$ and $D_{\xi_j}$ give rise to continuous linear maps from $A^m(\Rn\times\Rn;\cA_\theta)$ to $A^m (\Rn \times \Rn ; \cA_\theta)$.
\item For $j=1,\ldots,n$ the multiplications by $s_j$ and $\xi_j$ give rise to continuous linear maps from $A^m (\Rn\times\Rn;\cA_\theta)$ to $A^{m+1}(\Rn\times\Rn;\cA_\theta)$.
\item For $\ell=1,2$ the multiplication by $(1-\chi(s,\xi))(|s|^2 +|\xi|^2)^{-\ell}$ induces a continuous linear map from $A^m(\Rn\times\Rn;\cA_\theta)$ to $A^{m-2\ell}(\Rn\times\Rn;\cA_\theta)$.
\end{itemize}
The first property is an immediate consequence of the definition of $A^m ( \Rn \times \Rn ; \cA_\theta )$ and its topology. The other two properties follow from Lemma~\ref{lem:Amplitudes.amplitudes-product}. Using these three properties we deduce that
\begin{itemize}
\item The differential operator $(1-\chi(s,\xi))(|s|^2 + |\xi|^2)^{-1} \sum (\xi_j D_{s_j} + s_j D_{\xi_j})$ induces a continuous linear map from $A^m (\Rn \times \Rn ; \cA_\theta)$ to $A^{m-1} (\Rn \times \Rn ; \cA_\theta)$.
\item The multiplication by $s\cdot\xi(1-\chi(s,\xi))(|s|^2+|\xi|^2)^{-2}$ gives rise to a continuous linear map from $A^m(\Rn\times\Rn;\cA_\theta)$ to $A^{m-2}(\Rn\times\Rn;\cA_\theta)$. 
\end{itemize}
Combining all this with~(\ref{eq:Amplitudes.L-transpose}) shows that the transpose $L^t$ induces a continuous linear map from $A^m(\Rn\times\Rn;\cA_\theta)$ to $A^{m-1}(\Rn\times\Rn;\cA_\theta)$. The proof is complete.
\end{proof}

We are now in a position to extend the linear map~(\ref{eq:Amplitudes.definition-J0}) to amplitudes of any order. 

\begin{proposition} \label{prop:Amplitudes.extension-J0}
The linear map~(\ref{eq:Amplitudes.definition-J0}) has a unique extension to a linear map $J:A^{+\infty}(\Rn\times\Rn;\cA_\theta)\rightarrow \cA_\theta$ that is continuous on each space $A^m(\Rn\times\Rn;\cA_\theta)$, $m\in\R$. More precisely, for every $a\in A^m(\Rn\times\Rn;\cA_\theta)$, $m\in\R$, we have
\begin{equation*} 
J(a)=\iint e^{is\cdot\xi}(L^t)^N [a(s,\xi)]ds\dbar\xi ,
\end{equation*}
where $N$ is any non-negative integer $>m+2n$.  
\end{proposition}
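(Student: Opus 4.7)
The plan is to define, for each $m\in\R$ and each non-negative integer $N>m+2n$, the linear map
\begin{equation*}
 J_N(a) := \iint e^{is\cdot\xi}(L^t)^N[a(s,\xi)]\,ds\dbar\xi, \qquad a\in A^m(\R^n\times\R^n;\cA_\theta),
\end{equation*}
and to show that its value is independent of the choice of $N$. Well-definedness of $J_N(a)$ is immediate: by iterating Lemma~\ref{lem:Amplitudes.L-transpose-continuity} the amplitude $(L^t)^N a$ lies in $A^{m-N}(\R^n\times\R^n;\cA_\theta)$, and since $m-N<-2n$ the map $J_0$ of~(\ref{eq:Amplitudes.definition-J0}) applies directly. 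Continuity on $A^m(\R^n\times\R^n;\cA_\theta)$ follows because $J_N=J_0\circ(L^t)^N$ is a composition of continuous maps, by Lemma~\ref{lem:Amplitudes.L-transpose-continuity} and Lemma~\ref{lem:Amplitudes.J0-continuity}.

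The crucial identity to establish is $J_0(L^ta)=J_0(a)$ for every $a\in C^\infty_c(\R^n\times\R^n;\cA_\theta)$. This drops out of Lemma~\ref{lem:Amplitudes.J0-P-transpose} applied with $P=L^t$: since $(L^t)^t=L$, that lemma yields
\begin{equation*}
 J_0(L^ta)=J_0\bigl[e^{-is\cdot\xi}L(e^{is\cdot\xi})\,a\bigr],
\end{equation*}
and the defining property~(\ref{eq:Amplitudes.L-exponential}) collapses $e^{-is\cdot\xi}L(e^{is\cdot\xi})$ to $1$. Iterating gives $J_0((L^t)^ka)=J_0(a)$ for all $k\geq 0$ and all compactly supported $a$; in particular $J_N(a)=J_0(a)$ on $C^\infty_c(\R^n\times\R^n;\cA_\theta)$.

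Independence of $N$ for arbitrary $a\in A^m(\R^n\times\R^n;\cA_\theta)$ will then be obtained by density: it suffices to show $J_N(a)=J_{N+1}(a)$ whenever $N>m+2n$. Proposition~\ref{prop:Amplitudes.amplitudes-density} furnishes a family $(a_\epsilon)_{0<\epsilon\leq 1}$ in $C^\infty_c(\R^n\times\R^n;\cA_\theta)$ converging to $a$ in $A^{m'}(\R^n\times\R^n;\cA_\theta)$ for every $m'>m$, and the strict inequality $N>m+2n$ lets us pick $m'\in(m,N-2n)$. With this choice $(L^t)^Na_\epsilon\to(L^t)^Na$ in $A^{m'-N}(\R^n\times\R^n;\cA_\theta)$, a space on which $J_0$ is continuous since $m'-N<-2n$; hence $J_N(a_\epsilon)\to J_N(a)$, and likewise $J_{N+1}(a_\epsilon)\to J_{N+1}(a)$. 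The previous paragraph gives $J_N(a_\epsilon)=J_0(a_\epsilon)=J_{N+1}(a_\epsilon)$, and passing to the limit yields $J_N(a)=J_{N+1}(a)$. The very same density-and-continuity scheme yields uniqueness: any continuous extension $\widetilde J$ must coincide with $J_0$ on compactly supported amplitudes, and then, by continuity on each $A^{m'}$ with $m'>m$, must coincide with our $J$ throughout $A^{+\infty}(\R^n\times\R^n;\cA_\theta)$. Finally, taking $N=0$ (allowed exactly when $m<-2n$) shows that $J$ restricts to $J_0$ on such spaces. The main bookkeeping obstacle I expect is the coordination between the order-loss produced by $L^t$ and the order-loss inherent to Proposition~\ref{prop:Amplitudes.amplitudes-density}: the threshold $N>m+2n$ is precisely what gives the strict room needed to pick a working $m'$.
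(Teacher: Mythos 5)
Your proposal is correct and follows essentially the same route as the paper: the key identity $J_0(L^ta)=J_0(a)$ on compactly supported amplitudes via Lemma~\ref{lem:Amplitudes.J0-P-transpose} and~(\ref{eq:Amplitudes.L-exponential}), iteration, and the density scheme of Proposition~\ref{prop:Amplitudes.amplitudes-density} together with the continuity of $J_0$ and $L^t$. The only cosmetic difference is that the paper first upgrades the identity to all of $A^m$ with $m<-2n$ (by density) and then deduces the independence of $N$ algebraically by applying the iterated identity to $(L^t)^N a$, whereas you run the density argument directly at the point where independence of $N$ is needed; your bookkeeping via $m'\in(m,N-2n)$ is exactly right.
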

\begin{proof}
The proof is based on the following claim. 

\begin{claim*}
 Let $a\in A^{m} (\Rn\times\Rn ;\cA_\theta)$, $m<-2n$. Then  
\begin{equation} \label{eq:Amplitudes.J0-L-transpose}
 J_0(a)= J_0\left(L^t[a]\right). 
\end{equation}
\end{claim*}
\begin{proof}[Proof of the Claim]
 Let $m'\in (m,-2n)$. It follows from Lemma~\ref{lem:Amplitudes.J0-continuity} and Lemma~\ref{lem:Amplitudes.L-transpose-continuity} that both sides of~(\ref{eq:Amplitudes.J0-L-transpose}) define continuous linear maps from $A^{m'}(\R^n\times \R^n; \cA_\theta)$ to $\cA_\theta$. Combining this with Proposition~\ref{prop:Amplitudes.amplitudes-density} we then deduce that it is enough to prove~(\ref{eq:Amplitudes.J0-L-transpose}) when $a(s,\xi)\in C^\infty_c(\R^n\times \R^n; \cA_\theta)$. Now, if $a(s,\xi)\in C^\infty_c(\R^n\times \R^n; \cA_\theta)$, then by using~(\ref{eq:Amplitudes.L-exponential}) and Lemma~\ref{lem:Amplitudes.J0-P-transpose} we get
\begin{equation*}
 J_0\left(L^t[a]\right)= J_0\left( e^{-is\cdot \xi}L(e^{is\cdot \xi})a\right)= J_0(a). 
\end{equation*}
This completes the proof of the claim. 
\end{proof}

 Let $a\in A^{m} (\Rn\times\Rn ;\cA_\theta)$, $m<-2n$. Repeated use of~(\ref{eq:Amplitudes.J0-L-transpose}) shows that
\begin{equation} \label{eq:Amplitudes.J0-L-transpose-N}
J_0(a) = J_0\left((L^t)^N [a]\right) \qquad \text{for all $N \geq 0$}.
\end{equation}
It follows from Lemma~\ref{lem:Amplitudes.L-transpose-continuity} that $(L^t)^N$ induces a continuous linear map from $A^{m}(\Rn\times\Rn ;\cA_\theta)$ to $A^{m-N}(\Rn\times\Rn ;\cA_\theta)$. Therefore, the right-hand side of~(\ref{eq:Amplitudes.J0-L-transpose-N}) actually makes sense for any amplitude in  $A^{m}(\Rn\times\Rn ;\cA_\theta)$ with $m<-2n+N$. 

Given $m\in \R$, let $N$ and $N'$ be non-negative integers such that $N'>N>m+2n$. Then $(L^t)^N [a(s,\xi)]$ and $(L^t)^{N'} [a(s,\xi)]$ are both amplitudes of order~$<-2n$. Moreover, using~(\ref{eq:Amplitudes.J0-L-transpose-N}) we see that 
\begin{equation*}
 J_0\left((L^t)^{N'} [a]\right)= J_0\left((L^t)^{N'-N}\left[(L^t)^N [a]\right]\right)= J_0\left((L^t)^N [a]\right) . 
\end{equation*}
Therefore, we see that the value of $J_0\left((L^t)^{N} [a]\right)$ is independent of the choice of the non-negative integer~$N>m+2n$. 

All this allows us to define a linear map $J:A^{+\infty}(\Rn\times\Rn;\cA_\theta)\rightarrow\cA_\theta$ such that, for every $a(s,\xi)\in A^{m}(\Rn\times\Rn;\cA_\theta)$, $m\in \R$, we have 
\begin{equation*}
 J(a)= J_0\left((L^t)^N [a]\right) = \iint e^{is\cdot\xi}(L^t)^N [a(s,\xi)]ds\dbar\xi ,
\end{equation*}
where $N$ is any non-negative integer $>m+2n$, the value of which is irrelevant. In particular, when $m<-2n$ we may take $N=0$; this allows us to recover the linear map $J_0$. 

Let $m\in \R$ and $N\in \N_0$ be such that $N>m+2n$. Then $J=J_0 \circ (L^t)^N$ on $A^{m}(\Rn\times\Rn;\cA_\theta)$. As mentioned above, $(L^t)^N$ maps continuously $A^{m}(\Rn\times\Rn ;\cA_\theta)$ to $A^{m-N}(\Rn\times\Rn ;\cA_\theta)$. As $m-N<-2n$ we also know by Lemma~\ref{lem:Amplitudes.J0-continuity} that $J_0$ is a continuous linear map from $A^{m-N}(\Rn\times\Rn ;\cA_\theta)$ to $\cA_\theta$. Thus, $J$ induces a continuous linear map from $A^{m}(\Rn\times\Rn;\cA_\theta)$ to $\cA_\theta$ for every $m\in \R$.

To complete the proof it remains to show that $J$ is the only linear extension of $J_0$ to $A^{+\infty}(\Rn\times\Rn;\cA_\theta)$ that is continuous on each space $A^{m}(\Rn\times\Rn;\cA_\theta)$, $m\in \R$. Let $\tilde{J}: A^{+\infty}(\Rn\times\Rn;\cA_\theta)\rightarrow \cA_\theta$ be another such extension. Let $a(s,\xi)\in A^{m}(\Rn\times\Rn;\cA_\theta)$, $m\in \R$. We know by Proposition~\ref{prop:Amplitudes.amplitudes-density} that there is a family $(a_\epsilon(s,\xi))_{0<\epsilon\leq 1}\subset C_c^{\infty}(\Rn\times\Rn;\cA_\theta)$ such that $a_\epsilon(s,\xi)\rightarrow a(s,\xi)$ as $\epsilon \rightarrow 0^+$ in $A^{m'}(\Rn\times\Rn;\cA_\theta)$ for every $m'>m$. The continuity of $J$ on $A^{m'}(\Rn\times\Rn;\cA_\theta)$ then implies that
\begin{equation*}
 J(a)= \lim_{\epsilon\rightarrow 0^+}J(a_\epsilon)= \lim_{\epsilon\rightarrow 0^+}J_0(a_\epsilon) .
\end{equation*}
Likewise, we have $\tilde{J}(a)= \lim_{\epsilon\rightarrow 0^+}J_0(a_\epsilon)=J(a)$, and so the linear maps $\tilde{J}$ and $J$ agree on 
$A^{+\infty}(\Rn\times\Rn;\cA_\theta)$. This shows that  $J$ is the unique linear extension of $J_0$ to $A^{+\infty}(\Rn\times\Rn;\cA_\theta)$ that is continuous on each space $A^{m}(\Rn\times\Rn;\cA_\theta)$, $m\in \R$. The proof is complete. 
\end{proof}

\begin{remark}
 A proof of Proposition~\ref{prop:Amplitudes.extension-J0} for scalar-valued amplitudes is given in~\cite{AG:AMS07} by using a dyadic partition of unity. 
\end{remark}

As with~(\ref{eq:Amplitudes.amplitudes-evaluation}), any continuous $\R$-linear map  $\Phi:\cA_\theta \rightarrow \cA_\theta$ gives rise to an $\R$-linear map, 
\begin{equation} \label{eq:Amplitudes.amplitudes-composition-Phi}
 C^\infty(\Rn\times\Rn;\cA_\theta)\ni a(s,\xi) \longrightarrow \Phi\left[a(s,\xi)\right] \in C^\infty(\Rn\times\Rn;\cA_\theta). 
\end{equation}
Moreover, in the same way as in~(\ref{eq:Amplitudes.amplitudes-evaluation-differentiation}) this map commutes with partial differentiations with respect to $s$ and $\xi$ of any order.

\begin{lemma} \label{lem:Amplitudes.J-Phi-compatibility}
 Let $\Phi:\cA_\theta \rightarrow \cA_\theta$ be a continuous $\R$-linear map.
 \begin{enumerate}
 \item[(i)] The linear map~(\ref{eq:Amplitudes.amplitudes-composition-Phi}) induces a continuous $\R$-linear map from $A^m(\Rn\times\Rn;\cA_\theta)$ to itself for every $m\in \R$. 
 
 \item[(ii)] If $\Phi$ is $\C$-linear, then, for  all $a(s,\xi)\in A^{+\infty}(\Rn\times\Rn;\cA_\theta)$, we have 
\begin{equation} \label{eq:Amplitudes.J-Phi-compatibility}
 J\left( \Phi(a)\right) = \Phi\left(J(a)\right).
\end{equation}

\item[(iii)] If $\Phi$ is anti-linear, then, for  all $a(s,\xi)\in A^{+\infty}(\Rn\times\Rn;\cA_\theta)$, we have 
\begin{equation} \label{eq:Amplitudes.J-antiPhi-compatibility}
 J\left( \Phi(a)\right) = \Phi\left[J(a\left(-s,\xi)\right)\right].
\end{equation}
\end{enumerate}
\end{lemma}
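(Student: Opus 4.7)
The plan is to prove (i) by direct seminorm estimates, and then to deduce (ii) and (iii) by first verifying each identity on the dense subspace $C^\infty_c(\R^n\times\R^n;\cA_\theta)\subset A^m(\R^n\times\R^n;\cA_\theta)$---where the oscillating integral reduces to a Lebesgue integral of a compactly supported continuous $\cA_\theta$-valued function---and then extending to all of $A^{+\infty}$ by the density result of Proposition~\ref{prop:Amplitudes.amplitudes-density} together with continuity.

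For (i), the crucial observation is that real partial derivatives of $\cA_\theta$-valued maps commute with any continuous $\R$-linear operator $\Phi$, since $\partial_{s_j}f(s,\xi)=\lim_{h\to 0} h^{-1}(f(s+he_j,\xi)-f(s,\xi))$ with real parameter $h$: both the $\R$-linearity and continuity of $\Phi$ then yield $\partial_s^\beta \partial_\xi^\gamma \Phi[a(s,\xi)] = \Phi[\partial_s^\beta\partial_\xi^\gamma a(s,\xi)]$. The continuity of $\Phi$ on $\cA_\theta$ also provides, for every multi-order $\alpha$, constants $C_\alpha>0$ and $N_\alpha\in\N_0$ with $\|\delta^\alpha \Phi(u)\|\leq C_\alpha \sup_{|\alpha'|\leq N_\alpha}\|\delta^{\alpha'} u\|$ for all $u\in\cA_\theta$. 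Combining these two facts with~(\ref{eq:Amplitudes.amplitudes-estimates}) yields $\|\delta^\alpha \partial_s^\beta \partial_\xi^\gamma \Phi[a(s,\xi)]\|\leq C_\alpha q^{(m)}_{N_\alpha+|\beta|+|\gamma|}(a)(1+|s|+|\xi|)^m$, proving at once that $\Phi\circ a\in A^m(\R^n\times\R^n;\cA_\theta)$ and that the induced map is continuous on each $A^m$.

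For (ii) and (iii), both sides of the claimed identities define continuous maps $A^m(\R^n\times\R^n;\cA_\theta)\to\cA_\theta$: the right-hand side of (ii) is the composition of the continuous extension $J$ of Proposition~\ref{prop:Amplitudes.extension-J0} with $\Phi$, and its left-hand side is $J$ composed with the continuous map of (i); for (iii), one uses additionally that the linear substitution $a(s,\xi)\mapsto a(-s,\xi)$ is continuous on each $A^m$. By Proposition~\ref{prop:Amplitudes.amplitudes-density} it then suffices to verify each identity for $a\in C^\infty_c(\R^n\times\R^n;\cA_\theta)$. In this compactly supported case $J(a)=\iint e^{is\cdot\xi}a(s,\xi)\,ds\,\dbar\xi$ is an honest Lebesgue integral (\cf\ Appendix~\ref{app:LCS-int}). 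For (ii), $\Phi(J(a))=J(\Phi(a))$ is the standard fact that continuous $\C$-linear maps commute with such integrals. For (iii), the anti-linearity of $\Phi$ lets me rewrite $e^{is\cdot\xi}\Phi(a(s,\xi)) = \Phi(e^{-is\cdot\xi}a(s,\xi))$; commuting $\Phi$ with the integral and performing the change of variables $s\mapsto -s$ then delivers exactly $\Phi[J(a(-s,\xi))]$.

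The main obstacle I anticipate is justifying the commutation of a continuous anti-linear $\Phi$ with the LCS integral in step (iii). A clean resolution is to observe that, for compactly supported continuous $\cA_\theta$-valued $f$, the integral $\int f$ is obtained as a limit of Riemann sums $\sum f(x_i)\Delta x_i$ with real weights $\Delta x_i$, so that continuous $\R$-linearity of $\Phi$ suffices for the commutation. Alternatively, one may view $\Phi$ as a continuous $\C$-linear map $\tilde\Phi:\cA_\theta\to\overline{\cA_\theta}$ into the complex conjugate locally convex space and invoke the $\C$-linear commutation result of Appendix~\ref{app:LCS-int}. Either route establishes the identity on $C^\infty_c(\R^n\times\R^n;\cA_\theta)$, after which density and continuity transport it to all of $A^{+\infty}(\R^n\times\R^n;\cA_\theta)$.
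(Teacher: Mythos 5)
Your proposal is correct and follows essentially the same route as the paper: seminorm estimates for (i), then reduction to $C^\infty_c(\R^n\times\R^n;\cA_\theta)$ via Proposition~\ref{prop:Amplitudes.amplitudes-density} and continuity for (ii) and (iii), with an explicit computation for compactly supported amplitudes. One small remark on your anticipated obstacle in (iii): you need not go through Riemann sums or a conjugate space, because Proposition~\ref{prop:LCS.Phi-integral} in Appendix~\ref{app:LCS-int} is already stated for continuous $\R$-linear maps $\Phi$ (its proof goes through the real version of the Hahn--Banach theorem), so anti-linear $\Phi$ commutes with the $\cA_\theta$-valued Lebesgue integral directly; this is exactly what the paper invokes, together with the change of variables $s\mapsto -s$ from Proposition~\ref{prop:LCS.change-variable}.
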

\begin{proof}
Let $N\in \N_0$. As $\Phi:\cA_\theta \rightarrow \cA_\theta$ is a continuous $\R$-linear map,  there are $N'\in \N_0$ and $C_{NN'}>0$ such that
\begin{equation*}
 \sup_{|\alpha|\leq N}\left\|\delta^\alpha\left(\Phi(u)\right)\right\|  \leq C_{NN'} \sup_{|\alpha|\leq N'}\left\|\delta^\alpha(u)\right\|  \qquad \text{for all $u\in \cA_\theta$}. 
\end{equation*}
Let $\alpha$, $\beta$ and $\gamma$ be multi-orders such that $|\alpha|+|\beta|+|\gamma|\leq N$. In addition, let $a(s,\xi)$ be an amplitude in $A^m(\Rn\times\Rn;\cA_\theta)$, $m\in \R$. Then, in the same way as in~(\ref{eq:Amplitudes.phi-estimates}), it can be shown that, for all $s,\xi\in \R^n$, we have 
\begin{equation*}
 \left\| \delta^\alpha \partial_s^\beta \partial_\xi^\gamma \Phi\left(a(s,\xi)\right)\right\|  \leq C_{NN'} q_{N+N'}^{(m)}(a) (1+|s|+|\xi|)^m. 
\end{equation*}
This shows that $\Phi(a(s,\xi))\in A^m(\Rn\times\Rn;\cA_\theta)$. Moreover, we have the semi-norm estimate,
\begin{equation*}
 q_N^{(m)}\left[ \Phi(a)\right]  \leq C_{NN'} q_{N+N'}^{(m)}(a) \qquad \text{for all $a\in A^m(\Rn\times\Rn;\cA_\theta)$}. 
\end{equation*}
Therefore, the linear map~(\ref{eq:Amplitudes.amplitudes-composition-Phi}) induces a continuous $\R$-linear endomorphism on $A^m(\Rn\times\Rn;\cA_\theta)$ for every $m\in \R$. 

Let us prove~(ii). Suppose that $\Phi$ is $\C$-linear. Using (i) and Proposition~\ref{prop:Amplitudes.extension-J0} we see that both sides of~(\ref{eq:Amplitudes.J-Phi-compatibility}) define continuous $\C$-linear maps on $A^{m}(\Rn\times\Rn;\cA_\theta)$ for every $m\in \R$. Combining this with Proposition~\ref{prop:Amplitudes.amplitudes-density} we deduce that it is enough to prove~(\ref{eq:Amplitudes.J-Phi-compatibility}) when $a(s,\xi)$ is in $C_c^{\infty}(\Rn\times\Rn;\cA_\theta)$. Now, if $a(s,\xi)$ is in $C_c^{\infty}(\Rn\times\Rn;\cA_\theta)$, then by using Proposition~\ref{prop:LCS.Phi-integral}  and the $\C$-linearity of $\Phi$ we get
\begin{equation*}
\Phi\left[ J(a)\right] =\Phi\left( \iint e^{is\cdot\xi} a(s,\xi) ds \dbar\xi \right) =  \iint e^{is\cdot\xi} \Phi\left[a(s,\xi)\right] ds \dbar\xi = J\left( \Phi(a)\right). 
\end{equation*}
This gives~(\ref{eq:Amplitudes.J-Phi-compatibility}) when $a(s,\xi)$ is in $C_c^{\infty}(\Rn\times\Rn;\cA_\theta)$. The proof of (ii) is complete.

It remains to prove~(iii). Assume that $\Phi$ is anti-linear. We observe that $a(s,\xi) \rightarrow a(-s,\xi)$ is a continuous linear map from $A^m(\Rn\times\Rn;\cA_\theta)$ to itself for every $m\in \R$. Therefore, by using~(i) we also see that both sides of~(\ref{eq:Amplitudes.J-antiPhi-compatibility}) define continuous anti-linear maps on $A^{m}(\Rn\times\Rn;\cA_\theta)$ for every $m\in \R$. In the same way as with the proof of~(\ref{eq:Amplitudes.J-Phi-compatibility}) above, combining this with Proposition~\ref{prop:Amplitudes.amplitudes-density} allows us to reduce the proof of~(\ref{eq:Amplitudes.J-antiPhi-compatibility}) to the case where $a(s,\xi)\in C_c^{\infty}(\Rn\times\Rn;\cA_\theta)$. 

Let $a(s,\xi)\in C_c^{\infty}(\Rn\times\Rn;\cA_\theta)$. In the same way as above, by using the anti-linearity of $\Phi$ and Proposition~\ref{prop:LCS.Phi-integral} we obtain
\begin{equation*}
\Phi\left[ J(a)\right] =\Phi\left( \iint e^{is\cdot\xi} a(s,\xi) ds \dbar\xi \right) =  \iint e^{-is\cdot\xi} \Phi\left[a(s,\xi)\right] ds \dbar\xi. 
\end{equation*}
Thanks to Proposition~\ref{prop:LCS.change-variable} we can make the change of variable $s \rightarrow -s$ to get
\begin{equation*}
 \Phi\left[ J(a)\right] = \iint e^{is\cdot\xi} \Phi\left[a(-s,\xi)\right] ds \dbar\xi = J\left(\Phi\left[a(-s,\xi)\right]\right). 
\end{equation*}
This establishes~(\ref{eq:Amplitudes.J-antiPhi-compatibility}) when $a(s,\xi)\in C_c^{\infty}(\Rn\times\Rn;\cA_\theta)$. This proves (iii) and completes the proof. 
\end{proof}

We gather the main properties of the linear map $J$ in the following statement. 

\begin{proposition} \label{prop:Amplitudes.J-properties}
Let $a(s,\xi)\in A^m(\Rn\times\Rn;\cA_\theta)$, $m\in\R$. The following holds.
\begin{enumerate}
\item[(i)] For all $b_1, b_2\in\cA_\theta$, we have 
\begin{equation*} 
J(b_1a b_2)=b_1J(a)b_2. 
\end{equation*}

\item[(ii)] Set $a^*(s,\xi)= a(-s,\xi)^*$, $s,\xi\in \R^n$. Then $a^*(s,\xi)\in A^m(\Rn\times\Rn;\cA_\theta)$, and we have
\begin{equation*} 
J(a)^* = J(a^*) .
\end{equation*}

\item[(iii)]  For every multi-order $\alpha$, we have
\begin{equation*} 
\label{eq:Amplitudes.J-delta-compatibility}
\delta^\alpha J(a) = J(\delta^\alpha a) .
\end{equation*}

\item[(iv)] For all multi-orders $\alpha$, $\beta$, we have
\begin{equation} \label{eq:Amplitudes.J-Delta-property}
J\left(D_s^\alpha D_\xi^\beta a\right) =  (-1)^{|\alpha|+|\beta|} J\left(s^\beta \xi^\alpha a\right).  
\end{equation}
\end{enumerate}
\end{proposition}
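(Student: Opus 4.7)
The plan is to prove the four items by reducing each to a convenient property of the extension $J$ from Lemma~\ref{lem:Amplitudes.J-Phi-compatibility}, plus a standard density and integration-by-parts argument for~(iv).

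For parts (i) and (iii), observe that left-multiplication $u\mapsto b_1u$, right-multiplication $u\mapsto ub_2$, and the derivation $\delta^\alpha$ are all continuous $\C$-linear endomorphisms of $\cA_\theta$. Applying Lemma~\ref{lem:Amplitudes.J-Phi-compatibility}(ii) to each yields $J(b_1 a)=b_1J(a)$, $J(ab_2)=J(a)b_2$, and $J(\delta^\alpha a)=\delta^\alpha J(a)$. Composing the first two identities proves~(i), and the third is precisely~(iii).

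For part (ii), let $\Phi(u)=u^*$, which is a continuous anti-linear endomorphism of $\cA_\theta$, and set $\tilde a(s,\xi):=a(-s,\xi)$. The change of variable $s\mapsto -s$ is a topological isomorphism on $A^m(\Rn\times\Rn;\cA_\theta)$, so $\tilde a\in A^m(\Rn\times\Rn;\cA_\theta)$. Moreover, the continuity of the involution on $\cA_\theta$ together with the compatibility~(\ref{eq:NCtori-involution-delta-compatibility}) shows that $\Phi$ preserves $A^m(\Rn\times\Rn;\cA_\theta)$; in particular $a^*=\Phi(\tilde a)$ lies in the same amplitude class as $a$. Lemma~\ref{lem:Amplitudes.J-Phi-compatibility}(iii) applied to $\tilde a$ then gives
\[
J(a^*)=J(\Phi(\tilde a))=\Phi\bigl[J(\tilde a(-s,\xi))\bigr]=\Phi(J(a))=J(a)^*,
\]
which proves~(ii).

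For part (iv), the strategy is to reduce to $a\in C_c^\infty(\Rn\times\Rn;\cA_\theta)$ by density and then apply Lemma~\ref{lem:Amplitudes.J0-P-transpose}. Differentiation $D_s^\alpha D_\xi^\beta$ maps $A^m(\Rn\times\Rn;\cA_\theta)$ continuously into itself, and multiplication by the polynomial $s^\beta\xi^\alpha$ maps $A^m(\Rn\times\Rn;\cA_\theta)$ continuously into $A^{m+|\alpha|+|\beta|}(\Rn\times\Rn;\cA_\theta)$ by Lemma~\ref{lem:Amplitudes.amplitudes-product}; combined with the continuity of $J$ on each amplitude space from Proposition~\ref{prop:Amplitudes.extension-J0}, both sides of~(\ref{eq:Amplitudes.J-Delta-property}) depend continuously on $a$ in every $A^{m'}(\Rn\times\Rn;\cA_\theta)$ with $m'>m$. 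Proposition~\ref{prop:Amplitudes.amplitudes-density} then lets us approximate $a$ by compactly supported amplitudes, reducing matters to that case. For $a\in C_c^\infty$, $J$ agrees with the Riemann integral $J_0$, and Lemma~\ref{lem:Amplitudes.J0-P-transpose} applies with $P=D_s^\alpha D_\xi^\beta$; since each $D_{s_j}^t=-D_{s_j}$ and $D_{\xi_j}^t=-D_{\xi_j}$ and all these operators commute, one has $P^t=(-1)^{|\alpha|+|\beta|}P$, and the identities $D_s^\alpha(e^{is\cdot\xi})=\xi^\alpha e^{is\cdot\xi}$ and $D_\xi^\beta(e^{is\cdot\xi})=s^\beta e^{is\cdot\xi}$ let us identify $e^{-is\cdot\xi}P(e^{is\cdot\xi})$ with the scalar $s^\beta\xi^\alpha$, yielding the stated identity. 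The main obstacle here is the clean handling of the computation $P^t(e^{is\cdot\xi})$, since $D_s^\alpha$ does not commute with multiplication by $s^\beta$; I expect that breaking $P$ into separate applications of $D_s^\alpha$ and $D_\xi^\beta$ and first proving the two ``pure'' identities $J(D_s^\alpha a)=(-1)^{|\alpha|}J(\xi^\alpha a)$ and $J(D_\xi^\beta a)=(-1)^{|\beta|}J(s^\beta a)$ (each an immediate application of Lemma~\ref{lem:Amplitudes.J0-P-transpose} in the compactly supported case) will give the template on which to build the combined formula.
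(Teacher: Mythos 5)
Your handling of (i)--(iii) matches the paper's proof exactly: all three are immediate from Lemma~\ref{lem:Amplitudes.J-Phi-compatibility} applied to left/right multiplication, the derivations $\delta^\alpha$, and the involution (the last via the anti-linear case together with the change of variable $s\mapsto -s$). Your approach to (iv) is also the paper's---density reduction plus Lemma~\ref{lem:Amplitudes.J0-P-transpose} with $P=D_s^\alpha D_\xi^\beta$---and the suspicion you raise about the computation of $P^t(e^{is\cdot\xi})$ is well placed.

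But that suspicion cannot be resolved in the way you hope, because the mixed formula~(\ref{eq:Amplitudes.J-Delta-property}) is actually \emph{false} whenever both $\alpha$ and $\beta$ are nonzero, and the two ``pure'' identities do not compose to give it. Take $n=1$ and $\alpha=\beta=1$. Then
\[
D_sD_\xi\bigl(e^{is\xi}\bigr) = D_s\bigl(s\,e^{is\xi}\bigr) = \tfrac{1}{i}\,e^{is\xi} + s\xi\,e^{is\xi},
\]
so for $a\in C^\infty_c(\R\times\R;\cA_\theta)$ Lemma~\ref{lem:Amplitudes.J0-P-transpose} gives $J(D_sD_\xi a)=J(s\xi a)+\tfrac{1}{i}J(a)$, not $J(s\xi a)$. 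The extra term is genuine: for the scalar Gaussian $a(s,\xi)=e^{-(s^2+\xi^2)/2}$ one has $D_sD_\xi a=-s\xi a$, and explicit Gaussian integrals give $J(a)=1/\sqrt{2}$ and $J(s\xi a)=i/(2\sqrt{2})$, hence $J(D_sD_\xi a)=-i/(2\sqrt{2})\neq J(s\xi a)$. (The paper's own proof contains the same slip: the claimed equality $D_s^\alpha D_\xi^\beta(e^{is\cdot\xi})=s^\beta\xi^\alpha e^{is\cdot\xi}$ holds only when $\alpha=0$ or $\beta=0$.) The correct and usable content of (iv) is exactly the two one-variable identities you isolated, $J(D_s^\alpha a)=(-1)^{|\alpha|}J(\xi^\alpha a)$ and $J(D_\xi^\beta a)=(-1)^{|\beta|}J(s^\beta a)$, each of which does follow cleanly from Lemma~\ref{lem:Amplitudes.J0-P-transpose} since there $P^t(e^{is\cdot\xi})$ really is $e^{is\cdot\xi}$ times a monomial. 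Building the two-variable formula from them runs straight into the obstruction you yourself named: applying the first identity to $b=D_\xi^\beta a$ leaves $J(\xi^\alpha D_\xi^\beta a)$, and $\xi^\alpha$ does not commute past $D_\xi^\beta$.
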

\begin{proof}
The properties (i)--(iii) are immediate consequences of Lemma~\ref{lem:Amplitudes.J-Phi-compatibility}. Therefore, we only have to prove (iv). Let $\alpha$ and $\beta$ be multi-orders. As mentioned in the proof of Lemma~\ref{lem:Amplitudes.L-transpose-continuity}, for $j=1, \ldots, n$ and every $m\in \R$, multiplication by $s_j$ and $\xi_j$ give rise to continuous linear maps from $A^m(\Rn\times\Rn;\cA_\theta)$ to   $A^{m+1}(\Rn\times\Rn;\cA_\theta)$, and differentiation with respect to $s_j$ and $\xi_j$ give rise to continuous linear maps from $A^m(\Rn\times\Rn;\cA_\theta)$ to itself. Therefore, both sides of~(\ref{eq:Amplitudes.J-Delta-property}) defines continuous linear maps on $A^m(\Rn\times\Rn;\cA_\theta)$ for every $m\in \R$. As in the proof of Lemma~\ref{lem:Amplitudes.J-Phi-compatibility} this reduces the proof of~(\ref{eq:Amplitudes.J-Delta-property}) to the case where $a(s,\xi)\in C^{\infty}_c(\Rn\times\Rn;\cA_\theta)$.

Let $a(s,\xi)\in C^{\infty}_c(\Rn\times\Rn;\cA_\theta)$. Note that if we set $P=D_s^\alpha D_\xi^\beta$, then we have 
\begin{equation*}
 P^t\left( e^{i s\cdot \xi} \right) = (-1)^{|\alpha|+|\beta|} D_s^\alpha D_\xi^\beta\left( e^{i s\cdot \xi} \right)= (-1)^{|\alpha|+|\beta|} s^\beta \xi^\alpha e^{i s\cdot \xi} . 
\end{equation*}
Combining this with Lemma~\ref{lem:Amplitudes.J0-P-transpose} we obtain
\begin{equation*}
 J\left(D_s^\alpha D_\xi^\beta a\right) = J_0\left( P[a]\right) = J_0\left( e^{-is\cdot \xi}P^t(e^{is\cdot \xi})a\right) = (-1)^{|\alpha|+|\beta|} J\left(s^\beta \xi^\alpha a\right).
\end{equation*}
This proves~(\ref{eq:Amplitudes.J-Delta-property}) when $a(s,\xi)\in C^{\infty}_c(\Rn\times\Rn;\cA_\theta)$. The proof is complete. 
\end{proof}

We conclude this section with the following result on oscillating integrals associated with families of amplitudes. 

\begin{proposition} \label{prop:Amplitudes.J-partial-compatibility-family}
Suppose $U$ is an open subset of $\R^d$, $d\geq 1$. Given $m\in\R$, let $a(x;s,\xi)\in C^\infty (U\times\Rn\times\Rn ;\cA_\theta)$ be such that, for all compact sets $K\subset U$ and for all multi-orders $\alpha\in\N_0^d$ and $ \beta, \gamma, \lambda \in \N_0^n$, there is $C_{K\alpha\beta\gamma\lambda}>0$ such that, for all $(x,s,\xi)\in K\times\Rn\times\Rn $, we have
\begin{equation} \label{eq:Amplitudes.amplitudes-family-estimates}
\norm{\partial_x^\alpha \partial_s^\beta \partial_\xi^\gamma \delta^\lambda a(x;s,\xi)}\leq C_{K\alpha\beta\gamma\lambda} (1+|s|+|\xi|)^m .
\end{equation}
Then $x\rightarrow J(a(x;\cdot,\cdot))$ is a smooth map from $U$ to $\cA_\theta$, and, for every multi-order $\alpha$, we have 
\begin{equation*}
\partial_x^\alpha J\left(a(x;\cdot,\cdot)\right) = J\left[(\partial_x^\alpha a)(x;\cdot,\cdot)\right]  \qquad  \forall x\in U .
\end{equation*}
\end{proposition}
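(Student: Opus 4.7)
The plan is to reduce the proof to an absolutely convergent $\cA_\theta$-valued integral by invoking the explicit formula from Proposition~\ref{prop:Amplitudes.extension-J0}, and then justify differentiation under the integral sign using the calculus of Lebesgue-type integrals of maps with values in quasi-complete Suslin locally convex spaces developed in Appendix~\ref{app:LCS-int}--\ref{app:LCS-diff}. More precisely, first observe that for each fixed $x \in U$ the map $(s,\xi) \mapsto a(x;s,\xi)$ belongs to $A^m(\R^n\times\R^n;\cA_\theta)$ by~(\ref{eq:Amplitudes.amplitudes-family-estimates}), so $J(a(x;\cdot,\cdot))$ is well defined. Fix any non-negative integer $N > m+2n$. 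By Proposition~\ref{prop:Amplitudes.extension-J0} we have
\begin{equation*}
J\bigl(a(x;\cdot,\cdot)\bigr) = \iint e^{is\cdot\xi}(L^t)^N\bigl[a(x;s,\xi)\bigr]\, ds\dbar\xi,
\end{equation*}
and the integrand, being an $\cA_\theta$-valued amplitude of order $m-N<-2n$, is absolutely integrable on $\R^n\times\R^n$ in the sense of Definition~\ref{def:LCS.integrability}.

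Next, for every multi-order $\alpha\in\N_0^d$ the hypothesis~(\ref{eq:Amplitudes.amplitudes-family-estimates}) holds \emph{verbatim} for $(\partial_x^\alpha a)(x;s,\xi)$ in place of $a(x;s,\xi)$, with constants $C_{K\alpha'\beta\gamma\lambda}$ replaced by $C_{K,\alpha+\alpha',\beta,\gamma,\lambda}$. Since $L^t$ is a differential operator in $(s,\xi)$ whose coefficients are smooth functions of $(s,\xi)$ alone, $\partial_x^\alpha$ commutes with $(L^t)^N$. Combining this with the Leibniz rule and the explicit form~(\ref{eq:Amplitudes.L-transpose}) of $L^t$, the exact same computation that underlies Lemma~\ref{lem:Amplitudes.L-transpose-continuity} yields, for every compact $K\subset U$ and every multi-orders $\alpha\in\N_0^d$ and $\beta\in\N_0^n$, a constant $C=C(K,\alpha,\beta,N)>0$ such that
\begin{equation*}
\bigl\|\delta^\beta \partial_x^\alpha (L^t)^N\bigl[a(x;s,\xi)\bigr]\bigr\| \leq C\,(1+|s|+|\xi|)^{m-N} \qquad \forall (x,s,\xi)\in K\times\R^n\times\R^n.
\end{equation*}
For $N>m+2n$ the right-hand side is integrable on $\R^n\times\R^n$ and independent of $x\in K$, so it dominates every seminorm $\|\delta^\beta(\cdot)\|$ of the derivative $\partial_x^\alpha$ of the integrand uniformly on $K$.

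This uniform, seminorm-by-seminorm integrable domination is exactly what is required to apply the differentiation-under-the-integral-sign result of Appendix~\ref{app:LCS-diff} to the $\cA_\theta$-valued Lebesgue integral. Iterating over all multi-orders $\alpha$, we conclude that $x\mapsto J(a(x;\cdot,\cdot))$ belongs to $C^\infty(U;\cA_\theta)$ and that
\begin{equation*}
\partial_x^\alpha J\bigl(a(x;\cdot,\cdot)\bigr) = \iint e^{is\cdot\xi}(L^t)^N\bigl[(\partial_x^\alpha a)(x;s,\xi)\bigr]\, ds\dbar\xi.
\end{equation*}
Since the family $(\partial_x^\alpha a)(x;\cdot,\cdot)$ is again of order $m$ in $(s,\xi)$, the choice $N>m+2n$ is valid for it as well, and Proposition~\ref{prop:Amplitudes.extension-J0} identifies the right-hand side with $J[(\partial_x^\alpha a)(x;\cdot,\cdot)]$, giving the announced formula.

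The main obstacle is the differentiation-under-the-integral step, because we are differentiating a parameter-dependent map into the locally convex space $\cA_\theta$ rather than into a Banach space: the uniform domination must be verified \emph{for each of the countably many seminorms} $u\mapsto\|\delta^\beta u\|$ generating the topology of $\cA_\theta$, and then combined with the $\cA_\theta$-valued Lebesgue integration theory of Appendix~\ref{app:LCS-int}. Once this is set up the commutation $\partial_x^\alpha\circ(L^t)^N=(L^t)^N\circ\partial_x^\alpha$ and the Leibniz estimate above make the argument routine.
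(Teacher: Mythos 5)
Your proof is correct, but it takes a genuinely different route from the paper's. The paper does not differentiate under the integral sign at all: it first proves, as a \emph{Claim}, that $x\mapsto a(x;\cdot,\cdot)$ is a $C^\infty$ map from $U$ into the Fr\'echet space $A^m(\R^n\times\R^n;\cA_\theta)$ with $\partial_x^\alpha[a(x;\cdot,\cdot)]=(\partial_x^\alpha a)(x;\cdot,\cdot)$ (using the Taylor-type rate-of-change estimates of Lemma~\ref{lem:LCS.C1-differentiable} and Lemma~\ref{lem:LCS.convergence-C1}), and then simply invokes Proposition~\ref{prop:LCS.smooth-Phi} for the continuous linear map $J:A^m(\R^n\times\R^n;\cA_\theta)\to\cA_\theta$ whose continuity is already known from Proposition~\ref{prop:Amplitudes.extension-J0}. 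You instead unwind the definition of $J$ via $(L^t)^N$, establish an integrable, seminorm-by-seminorm domination uniform in the parameter, and apply Proposition~\ref{prop:LCS.smoothness-integrals}. Both arguments are valid; the paper's version is more structural (it reuses the continuity of $J$ and avoids re-deriving integrability estimates after applying $(L^t)^N$), while yours is more computational and self-contained at the level of the integral.

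One small point to tighten in your write-up: Proposition~\ref{prop:LCS.smoothness-integrals} as stated requires the dominating functions $g_{p,\alpha}$ to work for all $x\in U$, whereas your hypothesis~(\ref{eq:Amplitudes.amplitudes-family-estimates}) only supplies constants depending on a compact $K\subset U$. You should say explicitly that one fixes $x_0\in U$, chooses a relatively compact open ball $V\ni x_0$ with $\overline{V}\subset U$, takes $K=\overline{V}$, and applies the proposition on $V$; since smoothness and the stated formula for $\partial_x^\alpha$ are local, this suffices. Also, the identity $\partial_x^\alpha\circ(L^t)^N=(L^t)^N\circ\partial_x^\alpha$ and the propagation of the estimate~(\ref{eq:Amplitudes.amplitudes-family-estimates}) through $(L^t)^N$ rest on the fact that the coefficients of $L^t$ in~(\ref{eq:Amplitudes.L-transpose}) depend only on $(s,\xi)$; you use this, but it is worth stating as the reason the Leibniz computation goes through uniformly on $K$.
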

\begin{proof}
The smoothness of $a(x;s,\xi)$ and the estimates~(\ref{eq:Amplitudes.amplitudes-family-estimates}) ensures us that $\partial_x^\alpha a(x; \cdot, \cdot)$ is an element of 
$A^m (\Rn\times\Rn;\cA_\theta)$ for all $x\in U$ and $\alpha \in \N_0^d$. 

\begin{claim*}
 The map $x \rightarrow a(x; \cdot, \cdot)$ is a smooth map from $U$ to $A^m (\Rn\times\Rn;\cA_\theta)$. Moreover, for every multi-order $\alpha \in \N_0^d$, we have 
 \begin{equation}
 \partial_x^\alpha \left[ a(x; \cdot, \cdot)\right] = ( \partial_x^\alpha a)(x; \cdot, \cdot) \qquad \forall x\in U. 
 \label{eq:Amplitudes.derivatives-family}
\end{equation}
\end{claim*}
\begin{proof}[Proof of the Claim]
 Let $x\in U$ and $\delta>0$ be such that $\overline{B}(x,\delta)\subset U$. Let $ \beta, \gamma, \lambda \in \N_0^n$ and $s,\xi \in \R^n$. By Lemma~\ref{lem:LCS.C1-differentiable} for $|h|<\delta$ we have 
 \begin{equation}
\partial_s^\beta \partial_\xi^\gamma \delta^\lambda a(x+h;s,\xi) - \partial_s^\beta \partial_\xi^\gamma \delta^\lambda a(x;s,\xi) = 
\sum_{1\leq j \leq d} h_j \int_0^1 \partial_{x_j} \partial_s^\beta \partial_\xi^\gamma \delta^\lambda a(x+th;s,\xi)dt.  
\label{eq:Amplitudes.rate-change} 
\end{equation}
Thus,
  \begin{align*}
\norm{\partial_s^\beta \partial_\xi^\gamma \delta^\lambda a(x+h;s,\xi) - \partial_s^\beta \partial_\xi^\gamma \delta^\lambda a(x;s,\xi)} & \leq 
\sum_{1\leq j \leq d} |h_j| \int_0^1 \norm{\partial_{x_j} \partial_s^\beta \partial_\xi^\gamma \delta^\lambda a(x+th;s,\xi)}dt\\
& \leq \sqrt{d} |h| \sup_{1\leq j \leq d} \sup_{|y-x|\leq \delta} \norm{\partial_{x_j} \partial_s^\beta \partial_\xi^\gamma \delta^\lambda a(y;s,\xi)}.  
\end{align*}
Combining this with~(\ref{eq:Amplitudes.amplitudes-family-estimates}) we see that, for all  $N\in \N_0$, there is a constant $C_N>0$ such that, 
\begin{equation*}
 q_N^{(m)}\left[ a(x+h;\cdot, \cdot)- a(x;\cdot, \cdot)\right] \leq C_N |h| \qquad \text{for all $h \in \overline{B}(0,\delta)$}.    
\end{equation*}
It then follows that $ a(x+h;\cdot, \cdot) \rightarrow a(x;\cdot, \cdot)$ in $A^m (\Rn\times\Rn;\cA_\theta)$ as $h\rightarrow 0$. This shows that $x \rightarrow a(x; \cdot, \cdot)$ is a continuous map from $U$ to $A^m (\Rn\times\Rn;\cA_\theta)$. It can be similarly shown that we have a continuous map $U\ni x \rightarrow \partial_x^\alpha a(x; \cdot, \cdot)\in A^m (\Rn\times\Rn;\cA_\theta)$ for every $\alpha \in \N_0^d$. 
 
Let  $x\in U$ and $\delta>0$ be such that $\overline{B}(x,\delta)\subset U$. In addition, let $(e_1, \ldots, e_d)$ be the canonical basis of $\R^d$. Then~(\ref{eq:Amplitudes.rate-change}) implies that, for $0<|t|\leq \delta$ and $j=1,\ldots, d$, we have 
\begin{equation}
 \frac{1}{t} \left[ a(x+te_j;\cdot,\cdot) - a(x;\cdot,\cdot) \right] = \int_0^1  \partial_{x_j} a(x+ste_j;\cdot,\cdot)ds \qquad \text{in $A^m (\Rn\times\Rn;\cA_\theta)$}.
 \label{eq:Amplitudes.rate-change-ej}
\end{equation}
As mentioned above  $y\rightarrow  \partial_{x_j} a(y;\cdot,\cdot)$ is a continuous map from $U$ to  $A^m (\Rn\times\Rn;\cA_\theta)$, and so it is uniformly continuous on the compact set $\overline{B}(x,\delta)$. Therefore, by arguing as in the proof of Lemma~\ref{lem:LCS.convergence-C1}  it can be shown that as $t\rightarrow 0$ the right-hand side of~(\ref{eq:Amplitudes.rate-change-ej}) converges to $ \partial_{x_j} a(x; \cdot, \cdot)$ in $A^m (\Rn\times\Rn;\cA_\theta)$. It then follows that $x\rightarrow  a(x;\cdot,\cdot)$ is a $C^1$-map from $U$ to  $A^m (\Rn\times\Rn;\cA_\theta)$. An induction further shows that, for every integer $N\geq 1$, this map is $C^N$ and its partial derivatives of order~$\leq N$ are given by~(\ref{eq:Amplitudes.derivatives-family}). Incidentally, this is a smooth map. The claim is thus proved. 
 \end{proof}

We know by Proposition~\ref{prop:Amplitudes.extension-J0} that $J$ is a continuous linear map from $A^m (\Rn\times\Rn;\cA_\theta)$ to $\cA_\theta$. Therefore, by combining the above claim with Proposition~\ref{prop:LCS.smooth-Phi} 
shows that $x\rightarrow J(a(x;~\cdot,\cdot))$ is a smooth map from $U$ to $\cA_\theta$ and, for every $\alpha \in \N_0^d$ and all $x\in U$, we have 
\begin{equation*}
 \partial_x^\alpha J\left(a(x;\cdot,\cdot)\right) = J\left(\partial_x^\alpha \left[a(x;\cdot,\cdot)\right]\right) = J\left[(\partial_x^\alpha a)(x;\cdot,\cdot)\right]. 
\end{equation*}
The proof is complete. 
\end{proof}

\section{Pseudodifferential Operators on Noncommutative Tori}\label{sec:PsiDOs}
In this section, we give a precise definition of the pseudodifferential operators (\psidos) on noncommutative tori associated with symbols and amplitudes. We shall also derive a few properties of these operators, including a characterization of smoothing operators. 

\subsection{$\mathbf{\Psi}$DOs associated with amplitudes} 
The results of the previous section allow us to give sense to the integral appearing at the end of Section~\ref{section:NCtori}. More generally, we can give sense to integrals of the form, 
\begin{equation*}
 \iint e^{is\cdot \xi} a(s,\xi) \alpha_{-s}(u) ds \dbar \xi,
\end{equation*}
where $a(s,\xi)\in A^m(\Rn\times\Rn;\cA_\theta)$, $m\in\R$, and $u\in \cA_\theta$. Namely, such an integral is the oscillating integral associated with $a(s,\xi) \alpha_{-s}(u)$. Thus, we only have to justify that $a(s,\xi) \alpha_{-s}(u)$ is an amplitude. In fact, we have the following result. 

\begin{lemma} \label{lem:PsiDOs.amplitudes-alpha-product-continuity}
 Let $m\in \R$. Then the map $(a,u)\rightarrow a(s,\xi) \alpha_{-s}(u)$ is a continuous bilinear map from $A^m(\Rn\times\Rn;\cA_\theta)\times \cA_\theta$ to  $A^m(\Rn\times\Rn;\cA_\theta)$. 
\end{lemma}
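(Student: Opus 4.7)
The plan is to establish bilinearity (which is clear) and then derive the required amplitude estimates for the product, from which continuity in both arguments follows automatically since $A^m(\R^n\times\R^n;\cA_\theta)$ and $\cA_\theta$ are Fr\'echet spaces.

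First, I would verify that $(s,\xi)\mapsto a(s,\xi)\alpha_{-s}(u)$ is smooth from $\R^n\times\R^n$ to $\cA_\theta$. Smoothness of $a(s,\xi)$ is given, and smoothness of $s\mapsto \alpha_{-s}(u)$ from $\R^n$ to $\cA_\theta$ follows from Proposition~\ref{prop:NCtori.cAtheta-Frechet}(3). The product in $\cA_\theta$ is continuous (Proposition~\ref{prop:NCtori.cAtheta-Frechet}(1)), and in fact jointly smooth in the variables, so the product map is smooth. Using the relations~(\ref{eq:NCtori.cAtheta-groupaction-differentiation}), differentiation with respect to $s$ of $\alpha_{-s}(u)$ yields
\begin{equation*}
 \partial_s^{\beta''}\alpha_{-s}(u) = (-i)^{|\beta''|} \alpha_{-s}\bigl(\delta^{\beta''}u\bigr),
\end{equation*}
and since the derivations $\delta^{\alpha''}$ commute with $\alpha_{-s}$, one has $\delta^{\alpha''}\alpha_{-s}(u)=\alpha_{-s}(\delta^{\alpha''}u)$.

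Next, I would apply the Leibniz rule~(\ref{eq:NCtori-uv-Leibniz}) and the usual Leibniz rule in $s$ to obtain, for any multi-orders $\alpha$, $\beta$, $\gamma$,
\begin{equation*}
 \delta^\alpha\partial_s^\beta\partial_\xi^\gamma\!\bigl[a(s,\xi)\alpha_{-s}(u)\bigr] = \sum \binom{\alpha}{\alpha'}\binom{\beta}{\beta'} (-i)^{|\beta''|}\bigl[\delta^{\alpha'}\partial_s^{\beta'}\partial_\xi^\gamma a(s,\xi)\bigr]\cdot \alpha_{-s}\!\bigl(\delta^{\alpha''+\beta''}u\bigr),
\end{equation*}
where the sum is over $\alpha'+\alpha''=\alpha$ and $\beta'+\beta''=\beta$. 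The key observation is that the $C^*$-norm is $\R^n$-invariant (cf.~(\ref{eq:NCtori.cAtheta-groupaction})), so
$\|\alpha_{-s}(\delta^{\alpha''+\beta''}u)\| = \|\delta^{\alpha''+\beta''}u\|$ is independent of $s$.

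Combining this with the amplitude estimate for $a(s,\xi)$ gives, for any $N\in\N_0$ and any $|\alpha|+|\beta|+|\gamma|\leq N$,
\begin{equation*}
 \bigl\|\delta^\alpha\partial_s^\beta\partial_\xi^\gamma[a(s,\xi)\alpha_{-s}(u)]\bigr\| \leq 2^{|\alpha|+|\beta|}\, q_N^{(m)}(a)\,\Bigl(\sup_{|\rho|\leq 2N}\|\delta^\rho u\|\Bigr)\,(1+|s|+|\xi|)^m.
\end{equation*}
This shows that $a(s,\xi)\alpha_{-s}(u)\in A^m(\R^n\times\R^n;\cA_\theta)$ and yields the semi-norm estimate
\begin{equation*}
 q_N^{(m)}\bigl(a(s,\xi)\alpha_{-s}(u)\bigr) \leq 2^N\, q_N^{(m)}(a)\sup_{|\rho|\leq 2N}\|\delta^\rho u\|,
\end{equation*}
which is exactly the joint continuity statement for the bilinear map. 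I do not anticipate any serious obstacle: the argument is essentially a Leibniz computation combined with the $\R^n$-invariance of the $C^*$-norm, the only subtle point being to recognize that this invariance prevents any $s$-dependent blow-up on the $\alpha_{-s}(u)$-factor.
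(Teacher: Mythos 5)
Your proof is correct and rests on the same key ingredients as the paper's: the compatibility $\partial_s^{\beta''}\alpha_{-s}(u)=(-i)^{|\beta''|}\alpha_{-s}(\delta^{\beta''}u)$, the Leibniz rule, and the $\R^n$-invariance of the $C^*$-norm, $\|\alpha_{-s}(v)\|=\|v\|$. The difference is purely organizational: the paper first observes that $u\mapsto\tilde u(s,\xi):=\alpha_{-s}(u)$ is a continuous linear map from $\cA_\theta$ to $A^0(\R^n\times\R^n;\cA_\theta)$, with $q_N^{(0)}(\tilde u)=\sup_{|\rho|\le N}\|\delta^\rho u\|$, and then simply invokes the already-established bilinear continuity of the amplitude product (Lemma~\ref{lem:Amplitudes.amplitudes-product}), thereby avoiding redoing the Leibniz estimate; you carry out the full Leibniz computation in one shot, which is equally valid. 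Two small remarks: since $|\alpha''+\beta''|\le|\alpha|+|\beta|\le N$, the factor $\sup_{|\rho|\le 2N}\|\delta^\rho u\|$ can be sharpened to $\sup_{|\rho|\le N}\|\delta^\rho u\|$; and the semi-norm bound you derive already establishes joint continuity directly, so the opening appeal to a Fr\'echet-space criterion is superfluous.
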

\begin{proof}
 Given $u \in \cA_\theta$, let $\tilde{u}(s,\xi)\in C^\infty(\R^n\times \R^n;\cA_\theta)$ be defined by 
\begin{equation*}
 \tilde{u}(s,\xi)= \alpha_{-s}(u), \qquad s,\xi \in \R^n. 
\end{equation*}
 Let $N\in \N_0$ and $\alpha, \gamma \in \N_0^n$ be such that $|\alpha|+|\gamma|\leq N$. Then we have  
\begin{equation*}
\left\| \partial^\alpha_s \delta^\gamma \tilde{u}(s,\xi)\right \|= 
 \left\| \partial^\alpha_s \delta^\gamma\left( \alpha_{-s}(u)\right) \right\|  = 
  \left\| \partial^{\alpha+\gamma}_s \left( \alpha_{-s}( u)\right) \right\|  = 
\left \|\alpha_{-s}\left( \delta^{\alpha+\gamma}(u)\right)\right\|   =\left\| \delta^{\alpha+\gamma}(u)\right\| . 
\end{equation*}
 This shows that $\tilde{u}(s,\xi)\in A^0(\Rn\times\Rn;\cA_\theta)$, and we have the semi-norm equalities, 
\begin{equation*}
 q_N^{(0)} (\tilde{u})= \sup_{|\alpha|\leq N} \|\delta^\alpha(u)\| \qquad \text{for all $u \in \cA_\theta$}. 
\end{equation*}
 Therefore, we see that $u \rightarrow \tilde{u}(s,\xi)$ is a continuous linear map from $\cA_\theta$ to $A^0(\Rn\times\Rn;\cA_\theta)$. Combining this with Lemma~\ref{lem:Amplitudes.amplitudes-product} gives the result. 
 \end{proof}
 
 Given any amplitude $a(s,\xi)\in A^m(\Rn\times\Rn;\cA_\theta)$, $m\in\R$, the above lemma allows us to define a linear operator $P_a:\cA_\theta \rightarrow \cA_\theta$ by
\begin{align*}
P_a u &= J\left(a(s,\xi)\alpha_{-s} (u)\right) \\\nonumber
&= \iint e^{is\cdot\xi} a(s,\xi) \alpha_{-s} (u) ds\dbar\xi, \qquad u\in\cA_\theta .
\end{align*}
 Thanks to the continuity contents of Proposition~\ref{prop:Amplitudes.extension-J0} and Lemma~\ref{lem:PsiDOs.amplitudes-alpha-product-continuity} we have the following result.

\begin{proposition} \label{prop:PsiDOs.pdos-continuity}
Let $m\in\R$. Then the map $(a,u)\rightarrow P_a u$ is a continuous bilinear map from $A^m(\Rn\times\Rn;\cA_\theta)\times\cA_\theta$ to $\cA_\theta$. In particular, 
for every $a(s,\xi)\in A^m(\Rn\times\Rn;\cA_\theta)$, the linear operator $P_a:\cA_\theta \rightarrow \cA_\theta$ is continuous. 
\end{proposition}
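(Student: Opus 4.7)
The plan is to recognize $P_a u$ as the composition of two continuous maps established in the preceding results. By the very definition of $P_a$, we have
\begin{equation*}
P_a u = J\bigl(a(s,\xi)\alpha_{-s}(u)\bigr),
\end{equation*}
so it suffices to exhibit continuity at each stage of this composition and appeal to the fact that the composition of a continuous bilinear map with a continuous linear map is again a continuous bilinear map.

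First, Lemma~\ref{lem:PsiDOs.amplitudes-alpha-product-continuity} asserts that the assignment $(a,u)\mapsto a(s,\xi)\alpha_{-s}(u)$ is a continuous bilinear map from $A^m(\Rn\times\Rn;\cA_\theta)\times\cA_\theta$ into $A^m(\Rn\times\Rn;\cA_\theta)$. Second, Proposition~\ref{prop:Amplitudes.extension-J0} supplies a continuous linear map $J:A^m(\Rn\times\Rn;\cA_\theta)\rightarrow \cA_\theta$. Composing the two yields the continuous bilinear map $(a,u)\mapsto P_a u$ from $A^m(\Rn\times\Rn;\cA_\theta)\times\cA_\theta$ to $\cA_\theta$. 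The ``in particular'' statement is then immediate, since fixing one argument of a continuous bilinear map produces a continuous linear map in the remaining argument.

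There is no real obstacle in this argument; all the technical work has already been carried out. The oscillation in the integral defining $P_a$ was tamed in the construction of $J$ via the integration-by-parts argument based on the operator $L^t$ (Proposition~\ref{prop:Amplitudes.extension-J0}), and the observation that $\tilde u(s,\xi):=\alpha_{-s}(u)$ sits in $A^0(\Rn\times\Rn;\cA_\theta)$ with semi-norms $q_N^{(0)}(\tilde u)=\sup_{|\alpha|\leq N}\|\delta^\alpha(u)\|$ reduced Lemma~\ref{lem:PsiDOs.amplitudes-alpha-product-continuity} to the bilinear continuity of the product on amplitude spaces (Lemma~\ref{lem:Amplitudes.amplitudes-product}). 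If desired, tracing the constants through the composition yields explicit semi-norm estimates of the form
\begin{equation*}
\sup_{|\alpha|\leq N}\bigl\|\delta^\alpha(P_a u)\bigr\| \leq C_{N,m}\, q^{(m)}_{N'}(a)\sup_{|\beta|\leq N''}\bigl\|\delta^\beta(u)\bigr\|
\end{equation*}
for suitable integers $N'=N'(N,m)$ and $N''=N''(N,m)$, but this quantitative refinement is not needed for the conclusion.
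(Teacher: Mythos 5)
Your proof is correct and matches the paper's own argument exactly: the paper derives the proposition directly from the continuity statements in Lemma~\ref{lem:PsiDOs.amplitudes-alpha-product-continuity} and Proposition~\ref{prop:Amplitudes.extension-J0}, viewing $P_a u = J(a(s,\xi)\alpha_{-s}(u))$ as a composition. Nothing to add.
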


In what follows we denote by $\cL(\cA_\theta)$ the algebra of continuous linear maps $T:\cA_\theta \rightarrow \cA_\theta$. We equip it with its strong dual topology (a.k.a.\ uniform bounded convergence topology). This is the locally convex topology generated  by the semi-norms,
\begin{equation*}
T\longrightarrow\sup_{u\in B}\norm{\delta^\alpha Tu} , \qquad \alpha \in \N_0^n, \quad \text{$B\subset \cA_\theta$ bounded}. 
\end{equation*}
 
\begin{corollary} \label{cor:PsiDOs.amplitudes-to-pdos-continuity}
 Let $m \in \R$. Then the map $a(s,\xi)\rightarrow P_a$ is a continuous linear map from $A^m(\Rn\times\Rn;\cA_\theta)$ to $\cL(\cA_\theta)$. 
\end{corollary}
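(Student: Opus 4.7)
The linearity of the map $a\mapsto P_a$ is immediate from the linearity of $J$ and from the fact that $a(s,\xi)\alpha_{-s}(u)$ depends linearly on $a$. So the whole content of the statement is the continuity, and my plan is to deduce it directly from the joint continuity of the bilinear map $(a,u)\mapsto P_a u$ established in Proposition~\ref{prop:PsiDOs.pdos-continuity}, together with the definition of the topology on $\cL(\cA_\theta)$ in terms of the semi-norms $T\mapsto \sup_{u\in B}\|\delta^\alpha Tu\|$ indexed by $\alpha\in\N_0^n$ and by bounded sets $B\subset \cA_\theta$.

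The first step is to recall that both $A^m(\R^n\times\R^n;\cA_\theta)$ and $\cA_\theta$ are Fr\'echet spaces (Proposition~\ref{prop:Amplitudes.amplitudes-Frechetspace} and Proposition~\ref{prop:NCtori.cAtheta-Frechet}), so the joint continuity from Proposition~\ref{prop:PsiDOs.pdos-continuity} may be quantified at the level of semi-norms: for each $\alpha\in\N_0^n$ there exist an integer $N=N(\alpha)\in\N_0$, an integer $N'=N'(\alpha)\in\N_0$ and a constant $C_\alpha>0$ such that
\begin{equation*}
\left\|\delta^\alpha P_a u\right\| \;\leq\; C_\alpha\, q_N^{(m)}(a)\, \sup_{|\beta|\leq N'}\left\|\delta^\beta u\right\| \qquad \forall a\in A^m(\R^n\times\R^n;\cA_\theta),\ \forall u\in\cA_\theta.
\end{equation*}
The second step is to fix a bounded set $B\subset \cA_\theta$ and set $M_{B,\alpha}:=\sup_{u\in B}\sup_{|\beta|\leq N'}\|\delta^\beta u\|$; this is finite because the semi-norms generating the topology of $\cA_\theta$ are bounded on every bounded set. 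Taking the supremum over $u\in B$ in the displayed inequality then yields
\begin{equation*}
\sup_{u\in B}\left\|\delta^\alpha P_a u\right\| \;\leq\; C_\alpha\, M_{B,\alpha}\, q_N^{(m)}(a) \qquad \forall a\in A^m(\R^n\times\R^n;\cA_\theta),
\end{equation*}
which is exactly the statement that $a\mapsto P_a$ is continuous from $A^m(\R^n\times\R^n;\cA_\theta)$ to $\cL(\cA_\theta)$ endowed with its strong topology.

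The main (and essentially only) conceptual point is that we must use \emph{joint} continuity of $(a,u)\mapsto P_a u$ rather than separate continuity in each variable, in order to have the semi-norm control depend on $a$ via a fixed semi-norm $q_N^{(m)}$ independent of $u$. This is already built into the conclusion of Proposition~\ref{prop:PsiDOs.pdos-continuity}, which quantifies the continuity of a bilinear map between Fr\'echet spaces by a product of semi-norms in the standard way, so no further obstacle arises. The rest of the argument is purely formal bookkeeping with the definition of the strong topology on $\cL(\cA_\theta)$.
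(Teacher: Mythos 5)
Your proof is correct and follows essentially the same route as the paper's: extract the bilinear semi-norm estimate $\|\delta^\alpha P_a u\| \leq C\, q_N^{(m)}(a)\, \sup_{|\beta|\leq N'}\|\delta^\beta u\|$ from the joint continuity in Proposition~\ref{prop:PsiDOs.pdos-continuity}, then take the supremum over a bounded set $B\subset\cA_\theta$ to obtain the required control on the generating semi-norms of the strong topology of $\cL(\cA_\theta)$. The paper's own proof is precisely this bookkeeping, with the same constants and the same quantity $C_{N'}(B)=\sup_{|\alpha|\leq N'}\sup_{u\in B}\|\delta^\alpha u\|$ appearing.
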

 \begin{proof}
 This is a standard semi-norm estimate argument. Let $\alpha \in \N_0^n$. It follows from Proposition~\ref{prop:PsiDOs.pdos-continuity} that there are non-negative integers $N$, $N'$ and a constant $C_{\alpha NN'}$ such that, for all $a(s,\xi)\in A^m(\Rn\times\Rn;\cA_\theta)$ and $u\in \cA_\theta$, we have 
 \begin{equation*} 
\left\| \delta^\alpha ( P_a u) \right\|  \leq C_{\alpha NN'} q_{N}^{(m)}(a)\sup_{|\beta|\leq N'}\norm{\delta^\beta u}.
\end{equation*}
Thus, for every bounded set $B\subset \cA_\theta$, we have
\begin{equation*}
 \sup_{u\in B}\left\| \delta^\alpha ( P_a u) \right\|  \leq C_{\alpha NN'} C_{N'}(B) q_{N}^{(m)}(a)  \qquad \text{for all $a(s,\xi)\in A^m(\Rn\times\Rn;\cA_\theta)$}, 
\end{equation*}
where we have set $C_{N'}(B)= \sup_{|\alpha|\leq N'} \sup_{u\in B}\norm{\delta^\alpha u}$. This proves the result. 
\end{proof}

\subsection{$\mathbf{\Psi}$DOs associated with symbols} 
As mentioned above, any symbol $\rho(\xi)\in \stS^m(\Rn;\cA_\theta)$, $m\in\R$, can be regarded as an amplitude in $A^{m_+}(\Rn\times\Rn;\cA_\theta)$. We thus can define a continuous linear operator $P_\rho:\cA_\theta \rightarrow \cA_\theta$ as in~(\ref{eq:NCtori.diff-op-integral}). We thus obtain the formula given in~\cite{Co:CRAS80}, 
\begin{equation*}
 P_\rho u= \iint e^{is\cdot\xi} \rho (\xi) \alpha_{-s} (u) ds\dbar\xi, \qquad u\in\cA_\theta ,
\end{equation*}
where the integral is meant as an oscillating integral, i.e., this is $J[\rho(\xi) \alpha_{-s} (u)]$. 

Combining Lemma~\ref{lem:Amplitudes.symbol-inclusion} with Proposition~\ref{prop:PsiDOs.pdos-continuity} and Corollary~\ref{cor:PsiDOs.amplitudes-to-pdos-continuity} proves the following result.

\begin{proposition} \label{prop:PsiDOs.symbols-to-pdos-continuity}
Let $m\in\R$. The following holds. 
\begin{enumerate}
\item The map $(\rho,u)\rightarrow P_\rho u$ is a continuous bilinear map from $\stS^m(\Rn;\cA_\theta)\times\cA_\theta$ to $\cA_\theta$. 

\item The map $\rho\rightarrow P_\rho$ is  a continuous linear map from $\stS^m(\Rn;\cA_\theta)$ to $\cL(\cA_\theta)$.
\end{enumerate}
\end{proposition}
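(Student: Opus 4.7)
The proposition is essentially a corollary of the results already established in the excerpt, and the plan is to reduce it to the amplitude case via the continuous embedding of symbols into amplitudes. Every symbol $\rho(\xi)\in\stS^m(\R^n;\cA_\theta)$ can be viewed as the amplitude $\tilde\rho(s,\xi):=\rho(\xi)$ that is constant in the $s$-variable, and by construction we have $P_\rho u = P_{\tilde\rho}u$ for every $u\in\cA_\theta$ since $\tilde\rho(s,\xi)\alpha_{-s}(u)=\rho(\xi)\alpha_{-s}(u)$. Thus, both statements reduce to combining a continuous inclusion with a continuity statement for operators defined by amplitudes.

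First, set $m_+=\max(m,0)$. Lemma~\ref{lem:Amplitudes.symbol-inclusion} provides a continuous linear inclusion
\begin{equation*}
\iota:\stS^m(\R^n;\cA_\theta)\hookrightarrow A^{m_+}(\R^n\times\R^n;\cA_\theta),\qquad \iota(\rho)=\tilde\rho.
\end{equation*}
For part~(1), I would observe that the bilinear map $(\rho,u)\mapsto P_\rho u$ factors as the composition
\begin{equation*}
\stS^m(\R^n;\cA_\theta)\times\cA_\theta \xrightarrow{\iota\times \mathrm{id}} A^{m_+}(\R^n\times\R^n;\cA_\theta)\times\cA_\theta \xrightarrow{(a,u)\mapsto P_a u} \cA_\theta,
\end{equation*}
where the first map is continuous by Lemma~\ref{lem:Amplitudes.symbol-inclusion} and the second is continuous bilinear by Proposition~\ref{prop:PsiDOs.pdos-continuity}. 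The composition of a continuous linear map in one factor with a continuous bilinear map is continuous bilinear, which yields~(1).

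For part~(2), I would similarly factor the linear map $\rho\mapsto P_\rho$ as
\begin{equation*}
\stS^m(\R^n;\cA_\theta)\xrightarrow{\iota} A^{m_+}(\R^n\times\R^n;\cA_\theta)\xrightarrow{a\mapsto P_a}\cL(\cA_\theta),
\end{equation*}
where the first map is continuous by Lemma~\ref{lem:Amplitudes.symbol-inclusion} and the second is continuous by Corollary~\ref{cor:PsiDOs.amplitudes-to-pdos-continuity}. The composition is continuous linear, which establishes~(2). There is no real obstacle here; the only point deserving a moment of thought is the verification that $P_\rho$ and $P_{\tilde\rho}$ coincide as operators, and this is immediate from the definitions. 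The whole proof is therefore a few lines of bookkeeping.
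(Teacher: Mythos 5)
Your proposal is correct and follows exactly the same route as the paper: the paper states the result by a one-sentence combination of Lemma~\ref{lem:Amplitudes.symbol-inclusion} with Proposition~\ref{prop:PsiDOs.pdos-continuity} and Corollary~\ref{cor:PsiDOs.amplitudes-to-pdos-continuity}, which is precisely the factorization you spell out. Your write-up simply makes explicit the bookkeeping that the paper leaves implicit.
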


\begin{proposition}[\cite{Ba:CRAS88}]
 Let $\rho(\xi)\in \stS^m(\R^n; \cA_\theta)$, $m\in \R$. For $j=1,\ldots, n$, we have 
 \begin{equation*}
 [\delta_j, P_\rho]= P_{\delta_j \rho}. 
\end{equation*}
\end{proposition}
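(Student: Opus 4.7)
The plan is to compute $\delta_j(P_\rho u)$ directly by commuting $\delta_j$ with the oscillating integral and then using the Leibniz rule together with the fact that $\delta_j$ commutes with the action $\alpha_{-s}$. Given the continuity properties already established, each manipulation is justified by a result from earlier in the paper.

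First I would recall that $\delta_j: \cA_\theta \to \cA_\theta$ is a continuous $\C$-linear map, so Lemma~\ref{lem:Amplitudes.J-Phi-compatibility}(ii) applies with $\Phi = \delta_j$. For any $u \in \cA_\theta$, this gives
\begin{equation*}
\delta_j(P_\rho u) = \delta_j\bigl[J\bigl(\rho(\xi)\alpha_{-s}(u)\bigr)\bigr] = J\bigl[\delta_j\bigl(\rho(\xi)\alpha_{-s}(u)\bigr)\bigr].
\end{equation*}
Next I would apply the Leibniz rule~(\ref{eq:NCtori.derivation-Leibniz}) to the integrand and use~(\ref{eq:NCtori.derivation-groupaction}), which gives $\delta_j(\alpha_{-s}(u)) = \alpha_{-s}(\delta_j u)$, to obtain the pointwise identity
\begin{equation*}
\delta_j\bigl(\rho(\xi)\alpha_{-s}(u)\bigr) = \bigl(\delta_j\rho(\xi)\bigr)\alpha_{-s}(u) + \rho(\xi)\alpha_{-s}(\delta_j u).
\end{equation*}

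The remaining step is to split the oscillating integral by linearity of $J$ and identify each piece. Since $\delta^\alpha \partial_\xi^\beta \delta_j\rho(\xi) = \delta^{\alpha+e_j}\partial_\xi^\beta \rho(\xi)$, we immediately see that $\delta_j\rho(\xi) \in \stS^m(\R^n;\cA_\theta)$, so the first piece is $P_{\delta_j \rho}u$; the second piece is $P_\rho(\delta_j u)$, with $\delta_j u \in \cA_\theta$. (Both integrands are amplitudes in $A^{m_+}$ by Lemma~\ref{lem:Amplitudes.symbol-inclusion} and Lemma~\ref{lem:PsiDOs.amplitudes-alpha-product-continuity}, so the decomposition is legitimate.) Rearranging yields
\begin{equation*}
[\delta_j, P_\rho]u = \delta_j(P_\rho u) - P_\rho(\delta_j u) = P_{\delta_j\rho}u,
\end{equation*}
which is the desired identity.

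There is no real obstacle here: the argument is a sequence of three operations — pulling $\delta_j$ inside the integral, applying Leibniz, and commuting $\delta_j$ with $\alpha_{-s}$. The only point requiring care is the first step, but it is precisely the content of Lemma~\ref{lem:Amplitudes.J-Phi-compatibility}(ii), so nothing beyond a clean invocation of that lemma is needed.
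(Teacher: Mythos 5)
Your proof is correct and follows essentially the same route as the paper's: pull $\delta_j$ inside the oscillating integral, apply Leibniz together with $\delta_j\circ\alpha_{-s}=\alpha_{-s}\circ\delta_j$, and split by linearity. The only cosmetic difference is that you invoke Lemma~\ref{lem:Amplitudes.J-Phi-compatibility}(ii) directly, while the paper cites Proposition~\ref{prop:Amplitudes.J-properties}(iii), which is itself an immediate consequence of that lemma.
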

\begin{proof}
 Let $u\in \cA_\theta$. By using Proposition~\ref{prop:Amplitudes.J-properties} we see that, for $j=1,\ldots, n$, we have 
\begin{equation*}
 \delta_j P_\rho u = \delta_j J\left(\rho(\xi)\alpha_{-s}(u)\right)= J\left( \delta_j\left[\rho(\xi)\alpha_{-s}(u)\right]\right). 
\end{equation*}
As $ \delta_j[\rho(\xi)\alpha_{-s}(u)]=[\delta_j\rho(\xi)]\alpha_{-s}(u) + \rho(\xi)\alpha_{-s}(\delta_ju)$, we get 
\begin{equation*}
 \delta_j P_\rho u = J\left(\delta_j\left[\rho(\xi)\right]\alpha_{-s}(u)\right) + J\left( \rho(\xi)\alpha_{-s}(\delta_ju)\right)= P_{\delta_j \rho}u+P_\rho (\delta_j u). 
\end{equation*}
This shows that $[\delta_j, P_\rho]= P_{\delta_j \rho}$. The result is proved. 
\end{proof}

\begin{definition}
 $\Psi^q(\cA_\theta)$,  $q\in \C$, consists of all linear operators $P:\cA_\theta\rightarrow \cA_\theta$ that are of the form $P=P_\rho$ for some symbol $\rho(\xi)\in S^q(\R^n; \cA_\theta)$. 
\end{definition}

\begin{remark}
Given $P\in \Psi^q(\cA_\theta)$ there is not a unique symbol $\rho(\xi)\in S^q(\R^n; \cA_\theta)$ such that $P=P_\rho$. However, the symbol is unique up to the addition of an element of $\cS(\R^n;\cA_\theta)$ (\emph{cf}.\ Corollary~\ref{cor:PsiDOs.uniqueness-symbol} \emph{infra}). As a result, the homogeneous symbols $\rho_{q-j}(\xi)\in S_{q-j}(\R^n;\cA_\theta)$, $j=0,1,\ldots$, are uniquely determined by $P$. Therefore, it makes sense to call $\rho_{q-j}(\xi)$ the \emph{symbol of degree $q-j$} of $P$. In particular, we shall call $\rho_q(\xi)$ the \emph{principal symbol} of $P$.
\end{remark}

\begin{lemma}[\cite{Co:CRAS80}]\label{lem:PsiDO.Prho-cS}
 Let $\rho(\xi)\in \cS(\R^n; \cA_\theta)$. Then 
 \begin{equation*}
 P_{\rho}u = \int \check{\rho}(s)\alpha_{-s}(u) ds \qquad \text{for all $u\in \cA_\theta$}, 
\end{equation*}
where $\check{\rho}(s)$ is the inverse Fourier transform of $\rho(\xi)$ (\cf\ Appendix~\ref{app:LCS-diff}). 
\end{lemma}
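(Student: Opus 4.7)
The identity is a Fubini-type statement, but the amplitude $a(s,\xi) := \rho(\xi)\alpha_{-s}(u)$ is only bounded in $s$, hence not absolutely integrable on $\R^n\times\R^n$. The plan is therefore to truncate in $s$, apply Fubini to the truncated amplitude, and pass to the limit using the continuity of $J$ from Proposition~\ref{prop:Amplitudes.extension-J0}.

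Fix $\chi\in C_c^\infty(\R^n)$ with $\chi(0)=1$, and for $0<\epsilon\le 1$ set
\begin{equation*}
a_\epsilon(s,\xi) := \chi(\epsilon s)\rho(\xi)\alpha_{-s}(u).
\end{equation*}
Since $\chi(\epsilon\cdot)$ has compact support in $s$ and $\rho\in\cS(\R^n;\cA_\theta)$, a direct seminorm estimate shows $a_\epsilon\in A^m(\R^n\times\R^n;\cA_\theta)$ for every $m\in\R$; in particular for some $m<-2n$, so that $J(a_\epsilon)=J_0(a_\epsilon)$ is given by an absolutely convergent $\cA_\theta$-valued Lebesgue integral in the sense of Appendix~\ref{app:LCS-int}. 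Applying Fubini's theorem (valid in the quasi-complete Suslin setting of Appendix~\ref{app:LCS-int}) and integrating in $\xi$ first then yields
\begin{equation*}
J(a_\epsilon) = \int \chi(\epsilon s)\left(\int e^{is\cdot\xi}\rho(\xi)\,\dbar\xi\right)\alpha_{-s}(u)\, ds = \int \chi(\epsilon s)\check\rho(s)\alpha_{-s}(u)\, ds.
\end{equation*}

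It remains to pass to the limit $\epsilon\to 0^+$ on both sides. For the right-hand side, the map $\check\rho$ lies in $\cS(\R^n;\cA_\theta)$ (Appendix~\ref{app:LCS-diff}), and the seminorms $\|\delta^\alpha\alpha_{-s}(u)\|=\|\delta^\alpha u\|$ are uniformly bounded in $s$, so $s\mapsto\check\rho(s)\alpha_{-s}(u)$ is Schwartz-class with values in $\cA_\theta$; dominated convergence in each seminorm of $\cA_\theta$ then gives
\begin{equation*}
\int\chi(\epsilon s)\check\rho(s)\alpha_{-s}(u)\,ds \longrightarrow \int\check\rho(s)\alpha_{-s}(u)\,ds.
\end{equation*}
For the left-hand side, I will show $a_\epsilon\to a$ in $A^{m'}(\R^n\times\R^n;\cA_\theta)$ for every $m'>0$, whence $J(a_\epsilon)\to J(a)=P_\rho u$ by the continuity of $J$ on $A^{m'}$.

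The main step is thus the convergence $a_\epsilon\to a$ in $A^{m'}$ for $m'>0$. Writing $a_\epsilon-a = [\chi(\epsilon\cdot)-1]\cdot a$ with $a\in A^0(\R^n\times\R^n;\cA_\theta)$, the continuous bilinearity of the product on amplitudes (Lemma~\ref{lem:Amplitudes.amplitudes-product}) reduces the task to showing $\chi(\epsilon s)\to 1$ in $A^{m'}(\R^n\times\R^n;\cA_\theta)$, where $\chi(\epsilon s)$ is viewed as an $s$-only amplitude. This is the only nontrivial point of the argument and follows from a direct seminorm check: $\chi(\epsilon s)-1$ vanishes on a ball $|s|\le c/\epsilon$, so its nonzero contributions come from $|s|\gtrsim 1/\epsilon$, where $(1+|s|+|\xi|)^{-m'}\lesssim\epsilon^{m'}$; meanwhile, $s$-derivatives of positive order produce additional $\epsilon^{|\beta|}$ factors and $\xi$-derivatives vanish identically.
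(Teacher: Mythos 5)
Your argument takes essentially the same route as the paper: truncate the amplitude in $s$ by a compactly supported cutoff $\chi(\epsilon s)$, apply Fubini and factor out $\alpha_{-s}(u)$ to express $J(a_\epsilon)$ as an absolutely convergent $s$-integral, and then pass to the limit $\epsilon\to 0^+$ on each side using the continuity of $J$ on $A^{m'}$ and dominated convergence. One small inconsistency: your direct seminorm verification that $\chi(\epsilon s)\to 1$ in $A^{m'}$ assumes $\chi\equiv 1$ near the origin (so that $\chi(\epsilon s)-1$ vanishes on $|s|\lesssim 1/\epsilon$), whereas you only hypothesized $\chi(0)=1$; this is harmless since you are free to choose such a $\chi$, but you should either strengthen the choice of $\chi$ accordingly or invoke Lemma~\ref{lem:Symbols.approximate-unit} (H\"ormander's Proposition~18.1.2) as the paper does, which works under the weaker hypothesis.
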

\begin{proof}
Let $\chi(s)\in C_c^\infty(\R^n)$ be such that $\chi(0)=1$. For $0\leq \epsilon \leq 1$ set $\chi_\epsilon(s)=\chi(\epsilon s)$, $s\in \R^n$. For every $m>0$, the family $(\chi_\epsilon(s))_{0\leq \epsilon \leq 1}$ converges to $1$ in $A^{m}(\R^n\times \R^n)$ as $\epsilon \rightarrow 0^+$ (\cf~\cite[Proposition~18.1.2]{Ho:Springer85}). Moreover, it follows from Lemma~\ref{lem:PsiDOs.amplitudes-alpha-product-continuity} that $\rho(\xi)\alpha_{-s}(u)\in A^0(\R^n\times \R^n; \cA_\theta)$. Therefore, by using Lemma~\ref{lem:Amplitudes.amplitudes-product} we see that, given any $m>0$, the family 
$(\chi_\epsilon(s) \rho(\xi)\alpha_{-s}(u))_{0\leq \epsilon \leq 1}$ converges to $\rho(\xi)\alpha_{-s}(u)$ in $A^{m}(\R^n\times \R^n; \cA_\theta)$  as $\epsilon \rightarrow 0^{+}$. Combining this with continuity of $J$ on $A^{m}(\R^n\times \R^n; \cA_\theta)$ then gives
\begin{equation}
 P_\rho u=J\left[\rho(\xi)\alpha_{-s}(u)\right] = \lim_{\epsilon \rightarrow 0^{+}} J\left[\chi_\epsilon(s)  \rho(\xi)\alpha_{-s}(u)\right]. 
 \label{eq:PsiDO.Prho-lim}
 \end{equation}

Let $\epsilon \in (0,1]$. Note that  $\rho(\xi) \in \cS(\R^n;\cA_\theta)$ and $\chi_{\epsilon}(s) \alpha_{-s}(u) \in C_c^\infty(\R^n; \cA_\theta)$. Therefore, in a similar way as in the proof of Lemma~\ref{lem:Symbols.standard-product} it can be shown that $\rho(\xi) \chi_{\epsilon}(s) \alpha_{-s}(u)\in \cS(\R^n\times \R^n; \cA_\theta)$. Combining this with the version of Fubini's theorem provided by Proposition~\ref{prop:LCS.Fubini}  then gives 
\begin{align*}
 J\left[\chi_\epsilon(s)  \rho(\xi)\alpha_{-s}(u)\right] & = \iint e^{is\cdot \xi} \chi_\epsilon(s)  \rho(\xi)\alpha_{-s}(u)ds\dbar \xi\\ 
 &  = \int  \chi_\epsilon(s) \left( \int e^{is\cdot \xi} \rho(\xi)\alpha_{-s}(u)\dbar \xi\right) ds. 
\end{align*}
Given any $s\in \R^n$, the right-multiplication by $\alpha_{-s}(u)$ induces a continuous endomorphism on $\cA_\theta$, and so by Proposition~\ref{prop:LCS.Phi-integral} it commutes with the integration of integrable $\cA_\theta$-maps. Thus, 
 \begin{equation*}
 \int e^{is\cdot \xi} \rho(\xi)\alpha_{-s}(u)\dbar \xi = \left( \int e^{is\cdot \xi} \rho(\xi)\dbar \xi\right) \alpha_{-s}(u) = \check{\rho}(s) \alpha_{-s}(u). 
\end{equation*}
where $\check{\rho}(s)$ is the inverse Fourier transform of $\rho(\xi)$. Note that Proposition~\ref{prop:LCS.Fubini} implies that $\check{\rho}(s) \alpha_{-s}(u)\in L^1(\R^n; \cA_\theta)$. In fact, as $\check{\rho}(s)\in \cS(\R^n; \cA_\theta)$ (\cf\ Appendix~\ref{app:LCS-diff}), in a similar way as in the proof of Lemma~\ref{lem:Symbols.standard-product} it can be shown that  
$\check{\rho}(s) \alpha_{-s}(u)\in \cS(\R^n; \cA_\theta)$. In any case, it follows from all this that we have 
\begin{equation}
 J\left[\chi_\epsilon(s)  \rho(\xi)\alpha_{-s}(u)\right] =  \int  \chi_\epsilon(s) \check{\rho}(s) \alpha_{-s}(u)ds. 
 \label{eq:PsiDO.Jepsi-int}
\end{equation}

As $\chi(0)=1$, for every $s\in \R^n$, the family $\chi_\epsilon(s) \check{\rho}(s)\alpha_{-s}(u)$ converges to $\check{\rho}(s)\alpha_{-s}(u)$ in $\cA_\theta$ as $\epsilon \rightarrow 0^+$. Moreover, given any multi-order $\alpha$, for all $s\in \R^n$ and $\epsilon \in [0,1]$, we have 
\begin{equation*}
 \norm{\delta^\alpha [\chi_\epsilon(s) \check{\rho}(s)\alpha_{-s}(u)]} =|\chi_\epsilon(s)| \norm{\delta^\alpha [\check{\rho}(s)\alpha_{-s}(u)]} \leq C \norm{\delta^\alpha [\check{\rho}(s)\alpha_{-s}(u)]}, 
\end{equation*}
where we have set $C= \sup_{s \in \R^n}|\chi(s)|$. Note that $\norm{\delta^\alpha [\check{\rho}(s)\alpha_{-s}(u)]} \in L^1(\R^n)$, since $
\check{\rho}(s) \alpha_{-s}(u)$ is in $L^1(\R^n; \cA_\theta)$. Therefore, we may use the version of the dominated convergence theorem provided by Proposition~\ref{prop:LCS.DCT} to get 
\begin{equation*}
 \lim_{\epsilon \rightarrow 0^+}  \int  \chi_\epsilon(s) \check{\rho}(s) \alpha_{-s}(u)ds = \int  \check{\rho}(s) \alpha_{-s}(u)ds. 
\end{equation*}
Combining this with~(\ref{eq:PsiDO.Prho-lim}) and~(\ref{eq:PsiDO.Jepsi-int}) shows that $P_{\rho}u = \int \check{\rho}(s)\alpha_{-s}(u) ds$. The proof is complete. 
 \end{proof}

\begin{proposition}[\cite{CT:Baltimore11}] \label{prop:PsiDOs.Prhou-equation}
Let $\rho(\xi)\in\stS^m(\Rn;\cA_\theta)$, $m\in\R$. Then, for every $u=\sum_{k\in\Z^n} u_k U^k\in\cA_\theta$, we have
\begin{equation*}
P_\rho u = \sum_{k\in\Z^n} u_k \rho(k)U^k .
\end{equation*}
\end{proposition}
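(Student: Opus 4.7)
The plan is to reduce the general identity to the special case $u = U^k$ for a single $k\in\Z^n$, and then exploit the continuity of $P_\rho$ on $\cA_\theta$ together with the convergence of the Fourier series $u = \sum u_k U^k$ in $\cA_\theta$ (Proposition~\ref{prop:NCtori.condition-cAtheta}). Once the key identity $P_\rho U^k = \rho(k) U^k$ is known for every $k\in\Z^n$, applying $P_\rho$ to the partial sums $\sum_{|k|\leq N} u_k U^k$ and using the continuity of $P_\rho:\cA_\theta\rightarrow\cA_\theta$ from Proposition~\ref{prop:PsiDOs.symbols-to-pdos-continuity} yields
\[
 P_\rho u = \lim_{N\rightarrow \infty}\sum_{|k|\leq N} u_k P_\rho U^k = \sum_{k\in \Z^n} u_k \rho(k) U^k,
\]
with convergence of the right-hand side in $\cA_\theta$ a direct consequence of this identity.

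To establish $P_\rho U^k = \rho(k) U^k$, I would first treat the Schwartz case $\rho \in \cS(\R^n;\cA_\theta)$ using Lemma~\ref{lem:PsiDO.Prho-cS}. Since $\alpha_{-s}(U^k) = e^{-is\cdot k} U^k$ by~(\ref{eq:NCtori.action-on-U^k}), we have
\[
 P_\rho U^k = \int \check\rho(s) \alpha_{-s}(U^k)\, ds = \int e^{-is\cdot k} \check\rho(s) U^k\, ds.
\]
Right-multiplication by $U^k$ is a continuous $\C$-linear endomorphism of $\cA_\theta$, so by the results of Appendix~\ref{app:LCS-int} it commutes with the integral, giving $P_\rho U^k = \bigl(\int e^{-is\cdot k}\check\rho(s)\, ds\bigr) U^k$. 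The remaining scalar-type Fourier inversion identity $\int e^{-is\cdot k} \check\rho(s)\, ds = \rho(k)$, applied to a $\cA_\theta$-valued Schwartz map, is provided by the vector-valued Fourier theory developed in Appendix~\ref{app:LCS-diff}.

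For general $\rho \in \stS^m(\R^n;\cA_\theta)$, I would invoke the density result of Proposition~\ref{prop:Symbols.standard-density}: fix a cutoff $\chi\in C_c^\infty(\R^n)$ with $\chi(0)=1$ and set $\rho_\epsilon(\xi) = \chi(\epsilon\xi)\rho(\xi)$, so that $\rho_\epsilon \in \cS(\R^n;\cA_\theta)$ and $\rho_\epsilon \rightarrow \rho$ in $\stS^{m'}(\R^n;\cA_\theta)$ for every $m'>m$. By Proposition~\ref{prop:PsiDOs.symbols-to-pdos-continuity} applied to $\stS^{m'}$, we then have $P_{\rho_\epsilon} U^k \rightarrow P_\rho U^k$ in $\cA_\theta$. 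The Schwartz case already established gives $P_{\rho_\epsilon} U^k = \chi(\epsilon k)\rho(k) U^k$, and this clearly tends to $\rho(k) U^k$ in $\cA_\theta$ as $\epsilon \rightarrow 0^+$. Passing to the limit yields $P_\rho U^k = \rho(k) U^k$, completing the reduction.

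The only delicate point is the Fourier inversion step for $\cA_\theta$-valued Schwartz maps and the legitimacy of pulling $U^k$ outside of a Bochner-type integral; both are handled by the general vector-valued framework of Appendices~\ref{app:LCS-int} and \ref{app:LCS-diff}, so no substantive new analytical difficulty arises. The remainder of the argument consists of routine applications of continuity and density.
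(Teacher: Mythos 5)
Your proposal is correct and follows essentially the same route as the paper: reduce to $P_\rho(U^k)=\rho(k)U^k$ via continuity of $P_\rho$ and $\cA_\theta$-convergence of the Fourier series, settle the Schwartz case using Lemma~\ref{lem:PsiDO.Prho-cS} together with vector-valued Fourier inversion (Proposition~\ref{prop:LCS.Fourier-inversion}), and then pass to general symbols by the density argument of Proposition~\ref{prop:Symbols.standard-density} combined with continuity of $\rho\mapsto P_\rho U^k$ on $\stS^{m'}$ for $m'>m$ (Proposition~\ref{prop:PsiDOs.symbols-to-pdos-continuity}). The only cosmetic difference is that you work out the limit $P_{\rho_\epsilon}U^k = \chi(\epsilon k)\rho(k)U^k \to \rho(k)U^k$ explicitly, whereas the paper phrases the same step as an abstract ``both sides are continuous on $\stS^{m'}$'' reduction.
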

\begin{proof} Let $u=\sum_{k\in\Z^n} u_k U^k\in\cA_\theta$. As we have convergence in $\cA_\theta$ and $P_\rho$ is a continuous endomorphism on $\cA_\theta$, we have 
\begin{equation*}
P_\rho u = P\biggl( \sum_{k\in\Z^n} u_k U^k \biggr) =\sum_{k\in\Z^n} u_k P_\rho (U^k) .
\end{equation*}
Therefore, it is enough to prove that 
\begin{equation} \label{eq:PsiDOs.Prhou-basis}
P_\rho(U^k) = \rho(k)U^k  \qquad \text{for all $k\in \Z^n$}. 
\end{equation}
The right-hand side~is a continuous linear map on $\stS^{m'}(\R^n;\cA_\theta)$ for all $m'>m$. The same is true for the left-hand side~thanks to Proposition~\ref{prop:PsiDOs.symbols-to-pdos-continuity}. Combining this with Proposition~\ref{prop:Symbols.standard-density} we then deduce it is sufficient to prove~(\ref{eq:PsiDOs.Prhou-basis}) when $\rho(\xi)\in \cS(\R^n; \cA_\theta)$. 

Suppose that $\rho(\xi)\in\cS(\R^n; \cA_\theta)$.  Then by Lemma~\ref{lem:PsiDO.Prho-cS} we have 
\begin{equation*}
 P_\rho (U^k)=  \int  \check{\rho}(s) \alpha_{-s}\left( U^k\right)ds=   \int  e^{-is\cdot k} \check{\rho}(s) U^k ds. 
\end{equation*}
In the same way as in the proof of Lemma~\ref{lem:PsiDO.Prho-cS}, the right-multiplication by $U^k$ commutes with the integration of integrable $\cA_\theta$-valued maps, and so we have 
\begin{equation*}
P_\rho (U^k)=   \int  e^{-is\cdot k} \check{\rho}(s) U^k ds =  \left( \int  e^{-is\cdot k} \check{\rho}(s)  ds\right) U^k = \left( \check{\rho}\right)^\wedge\!\! (k) U^k, 
\end{equation*}
where $( \check{\rho})^\wedge(\xi)$ is the Fourier transform of $\check{\rho}(s)$ (\cf\ Appendix~\ref{app:LCS-diff}). As by the Fourier inversion formulas provided by Proposition~\ref{prop:LCS.Fourier-inversion} we have $( \check{\rho})^\wedge(k)=\rho(k)$, we see that $P_\rho (U^k)=\rho(k)U^k$. This proves~(\ref{eq:PsiDOs.Prhou-basis}) when $\rho(\xi)\in \cS(\R^n; \cA_\theta)$. The proof is complete. 
\end{proof}

\begin{corollary}\label{cor:PsiDOs.P-Prho-relation}
 Let $\rho_j(\xi)\in \stS^m(\R^n;\cA_\theta)$, $j=1,2$. Then $P_{\rho_1}=P_{\rho_2}$ if and only if $\rho_1(k)=\rho_2(k)$ for all $k\in \Z^n$. 
\end{corollary}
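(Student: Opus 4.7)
The plan is to reduce this immediately to Proposition~\ref{prop:PsiDOs.Prhou-equation}. By linearity of the assignment $\rho\mapsto P_\rho$, we may set $\rho=\rho_1-\rho_2\in\stS^m(\R^n;\cA_\theta)$, so the claim becomes the statement that $P_\rho=0$ if and only if $\rho(k)=0$ for every $k\in\Z^n$.

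For the ``if'' direction, I would apply Proposition~\ref{prop:PsiDOs.Prhou-equation} directly: if $\rho(k)=0$ for all $k\in\Z^n$, then for every $u=\sum_{k\in\Z^n}u_kU^k\in\cA_\theta$ we have
\begin{equation*}
P_\rho u=\sum_{k\in\Z^n}u_k\rho(k)U^k=0.
\end{equation*}

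For the ``only if'' direction, I would test $P_\rho$ against the basis unitaries. Write $U^\ell=\sum_{k\in\Z^n}\delta_{k\ell}U^k$, which is a valid Fourier expansion in $\cA_\theta$ since the sequence $(\delta_{k\ell})_{k\in\Z^n}$ lies in $\cS(\Z^n)$. Then Proposition~\ref{prop:PsiDOs.Prhou-equation} gives
\begin{equation*}
P_\rho(U^\ell)=\rho(\ell)U^\ell\qquad \text{for every }\ell\in\Z^n.
\end{equation*}
If $P_\rho=0$, then $\rho(\ell)U^\ell=0$, and since $U^\ell$ is a unitary in $A_\theta$ (in particular invertible), we conclude $\rho(\ell)=0$ for every $\ell\in\Z^n$.

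There is no real obstacle here: the only subtlety is making sure that $\rho_1-\rho_2$ is a legitimate symbol in $\stS^m$ (which is immediate since $\stS^m$ is a vector space) and that the testing argument on the basis $U^\ell$ is allowed. Everything follows transparently from the explicit formula in Proposition~\ref{prop:PsiDOs.Prhou-equation}.
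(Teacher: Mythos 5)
Your proof is correct and follows essentially the same route as the paper: both directions rest on the formula $P_\rho(U^k)=\rho(k)U^k$ from Proposition~\ref{prop:PsiDOs.Prhou-equation}, with the ``only if'' direction obtained by testing on the basis unitaries $U^k$ and invoking their invertibility. The only cosmetic difference is that you first reduce to $\rho=\rho_1-\rho_2$ by linearity, whereas the paper works with $\rho_1$ and $\rho_2$ directly.
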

\begin{proof}
 If $\rho_1$ and $\rho_2$ agree on $\Z^n$, then it follows from Proposition~\ref{prop:PsiDOs.Prhou-equation} that $P_{\rho_1}=P_{\rho_2}$ on all $\cA_\theta$. Conversely, suppose that $P_{\rho_1}=P_{\rho_2}$. Then Proposition~\ref{prop:PsiDOs.Prhou-equation} ensures us that, for all $k\in \Z^n$, we have 
 $\rho_1(k)U^k= P_{\rho_1}(U^k)= P_{\rho_2}(U^k)= \rho_2(k)U^k$, and hence $\rho_1=\rho_2$ on $\Z^n$. The proof is complete. 
\end{proof}

We are now in a position to justify the formula~(\ref{eq:NCtori.diff-op-integral}).  Namely, we have the following result. 

\begin{corollary}
 Let $P=\sum_{|\alpha|\leq m} a_\alpha \delta^\alpha$, $a_\alpha\in \cA_\theta$, be a differential operator of order~$m$. Then  $P=P_\rho$, where
 $\rho(\xi)=\sum_{|\alpha|\leq m} a_\alpha \xi^\alpha$ is the symbol of $P$. In particular, $P\in \Psi^m(\cA_\theta)$. 
\end{corollary}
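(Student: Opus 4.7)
The plan is to show that $P$ and $P_\rho$ agree on every basis element $U^k$, $k \in \Z^n$, and then conclude by continuity combined with the Fourier series representation of elements of $\cA_\theta$. Concretely, one first observes that $\rho(\xi) = \sum_{|\alpha| \leq m} a_\alpha \xi^\alpha$ is a polynomial map, hence a classical symbol in $S^m(\R^n; \cA_\theta)$, so that $P_\rho$ is a well-defined element of $\Psi^m(\cA_\theta)$.

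Next, I would compute the action of both operators on a single unitary $U^k$, $k \in \Z^n$. Using~(\ref{eq:NCtori.delta-U^k}) together with the fact that each coefficient $a_\alpha$ acts by left multiplication, we get
\begin{equation*}
P(U^k) = \sum_{|\alpha|\leq m} a_\alpha \delta^\alpha(U^k) = \sum_{|\alpha|\leq m} a_\alpha k^\alpha U^k = \rho(k) U^k.
\end{equation*}
On the other hand, Proposition~\ref{prop:PsiDOs.Prhou-equation} (applied to $u = U^k$, whose only nonzero Fourier coefficient is $u_k = 1$) gives $P_\rho(U^k) = \rho(k) U^k$. Thus $P$ and $P_\rho$ agree on every $U^k$, and so by linearity on the dense subalgebra $\cA_\theta^0 = \op{Span}\{U^k;\ k \in \Z^n\}$.

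To pass from this dense subalgebra to all of $\cA_\theta$, I would invoke Proposition~\ref{prop:NCtori.condition-cAtheta}: any $u \in \cA_\theta$ is the sum of its Fourier series $\sum_{k \in \Z^n} u_k U^k$ in $\cA_\theta$. Since $P$ is a continuous linear operator on $\cA_\theta$ (each $\delta^\alpha$ and each left multiplication by $a_\alpha$ is continuous on $\cA_\theta$), and $P_\rho$ is continuous by Proposition~\ref{prop:PsiDOs.symbols-to-pdos-continuity}, both operators commute with the convergent Fourier series decomposition. Therefore
\begin{equation*}
Pu = \sum_{k \in \Z^n} u_k P(U^k) = \sum_{k \in \Z^n} u_k \rho(k) U^k = P_\rho u,
\end{equation*}
which proves $P = P_\rho$. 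In particular, $P \in \Psi^m(\cA_\theta)$.

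There is no real obstacle here; the proof is essentially a bookkeeping check. The only thing to be careful about is justifying that one may distribute $P$ through the Fourier series, which follows from the continuity of differential operators on $\cA_\theta$ noted in Section~\ref{section:NCtori}. Alternatively, one could bypass continuity entirely and apply Corollary~\ref{cor:PsiDOs.P-Prho-relation}: since $P$ and $P_\rho$ agree on every $U^k$ and since the difference of two such operators associated with symbols agreeing on $\Z^n$ is zero, we immediately get $P = P_\rho$ once we verify the case $u = U^k$.
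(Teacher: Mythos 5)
Your main argument matches the paper's own proof: both reduce to checking $P(U^k)=P_\rho(U^k)$ on the basis elements, compute $P(U^k)=\rho(k)U^k$ via $\delta^\alpha(U^k)=k^\alpha U^k$, invoke Proposition~\ref{prop:PsiDOs.Prhou-equation} for $P_\rho(U^k)=\rho(k)U^k$, and then conclude by continuity of both operators on $\cA_\theta$ together with Fourier series convergence.

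One caveat about your closing parenthetical: the suggestion that one could ``bypass continuity entirely'' by invoking Corollary~\ref{cor:PsiDOs.P-Prho-relation} does not actually work. That corollary compares two operators $P_{\rho_1}$ and $P_{\rho_2}$ that are already known to be of the form $P_\sigma$; it gives no information about an arbitrary linear operator $P$ that merely agrees with $P_\rho$ on the unitaries $U^k$. To pass from agreement on $\cA_\theta^0$ to agreement on all of $\cA_\theta$ you genuinely need continuity (or some equivalent density argument), exactly as your main proof does. So the ``alternative'' is not an alternative; it silently presupposes what you are trying to prove.
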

\begin{proof}
As $P$ and $P_\rho$ are both continuous endomorphisms of $\cA_\theta$, it is enough to show that 
\begin{equation} \label{eq:PsiDOs.equality-check-basis}
 P(U^k)=P_\rho(U^k) \qquad \text{for all $k\in \Z^n$}. 
\end{equation}
Let $k \in \Z^n$. We know by Proposition~\ref{prop:PsiDOs.Prhou-equation} that $P_\rho(U^k)=\rho(k)U^k$. Moreover, we have 
\begin{equation*}
 P(U^k)= \sum_{|\alpha|\leq m} a_\alpha \delta^\alpha(U^k)= \sum_{|\alpha|\leq m} a_\alpha k^\alpha U^k =\rho(k)U^k. 
\end{equation*}
Therefore, we see that $ P(U^k)=P_\rho(U^k)$. This proves~(\ref{eq:PsiDOs.equality-check-basis}). As $\rho(\xi)\in S^m(\R^n; \cA_\theta)$ (\emph{cf}.~Remark~\ref{rem:NCtori.differential-op}), it then follows that $P\in \Psi^m(\cA_\theta)$. 
The proof is complete. 
\end{proof}

As it can be inferred from Corollary~\ref{cor:PsiDOs.P-Prho-relation} the symbol of \psido\ is by no means unique. Nevertheless, we have the following result. 

\begin{proposition}\label{prop:PsiDOs.vanishing-Prho}
 Let $\rho(\xi)\in \stS^m(\R^n;\cA_\theta)$, $m\in \R$, be such that $P_\rho=0$. Then $\rho(\xi)\in \cS(\R^n;\cA_\theta)$.  
\end{proposition}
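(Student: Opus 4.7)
The plan is as follows. First, I would use Proposition~\ref{prop:PsiDOs.Prhou-equation} to translate the assumption $P_\rho = 0$ into the pointwise condition $\rho(k) = 0$ for every $k \in \Z^n$: applying $P_\rho$ to the basis unitary $U^k$ yields $\rho(k)U^k = 0$, and invertibility of $U^k$ in $\cA_\theta$ gives $\rho(k) = 0$. The substance is then to show that a symbol $\rho \in \stS^m(\R^n;\cA_\theta)$ vanishing on the lattice $\Z^n$ must lie in $\cS(\R^n;\cA_\theta)$.

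The heart of the argument is an induction on $|\gamma|$ showing that $\|\partial_\xi^\gamma \rho(k)\|$ decays faster than any inverse polynomial in $|k|$ for every $k \in \Z^n$ and every multi-order $\gamma$; the base case $|\gamma| = 0$ is the hypothesis. For the inductive step, fix $\gamma$ with $|\gamma| \geq 1$, pick $j$ with $\gamma_j \geq 1$, and introduce the $\cA_\theta$-valued one-variable map
\begin{equation*}
f(t) = \partial_\xi^{\gamma - e_j}\rho(k + t e_j), \qquad t \in \R.
\end{equation*}
Since $f^{(\ell)}(t) = \partial_\xi^{\gamma - e_j + \ell e_j}\rho(k+te_j)$, the symbol bound gives $\|f^{(\ell)}(t)\| \leq C_\ell(1+|k|+|t|)^{m - |\gamma| + 1 - \ell}$. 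Taylor-expanding $f$ at $0$ to order $N$ with integral remainder, evaluating at $t = 0, 1, \ldots, N$, one obtains the linear system
\begin{equation*}
\sum_{\ell=0}^{N} \frac{i^{\ell}}{\ell!}\, f^{(\ell)}(0) = f(i) - R_N(i), \qquad i = 0, 1, \ldots, N.
\end{equation*}
The remainder satisfies $\|R_N(i)\| = O((1+|k|)^{m - |\gamma| - N})$, and because each $f(i) = \partial_\xi^{\gamma - e_j}\rho(k + ie_j)$ is evaluated at the lattice point $k + ie_j \in \Z^n$, the inductive hypothesis (applied with $|\gamma - e_j| < |\gamma|$) gives $\|f(i)\| = O((1+|k|)^{-M})$ for every $M$. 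The coefficient matrix is a triangular Vandermonde-type matrix, hence invertible with an inverse whose norm depends only on $N$; solving yields $\|f^{(\ell)}(0)\| = O((1+|k|)^{-L})$ for every $L$ and $\ell = 0, 1, \ldots, N$. In particular $\|\partial_\xi^\gamma \rho(k)\| = \|f^{(1)}(0)\|$ decays faster than any polynomial in $|k|$, closing the induction. Applied to $\delta^\alpha \rho$ (which still lies in $\stS^m$ and vanishes on $\Z^n$, since $\delta^\alpha$ is linear), this yields $\|\partial_\xi^\gamma \delta^\alpha \rho(k)\| = O((1+|k|)^{-L})$ for all $\alpha, \gamma, L$ and $k \in \Z^n$.

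Finally, for any $\xi \in \R^n$ with $|\xi|$ large, I would choose a nearest lattice point $k \in \Z^n$ (so $|\xi - k| \leq \sqrt{n}/2$) and Taylor-expand $\delta^\alpha \partial_\xi^\beta \rho$ around $k$ to order $N$. The polynomial coefficients $\partial_\xi^{\beta+\gamma}\delta^\alpha \rho(k)$ decay faster than any polynomial in $|k|$ (hence in $|\xi|$) by the preceding step, while the remainder is bounded by $C_N(1+|\xi|)^{m-|\beta|-N-1}$ via the symbol estimate. Letting $N$ and the polynomial decay exponent tend to infinity gives $\|\delta^\alpha \partial_\xi^\beta \rho(\xi)\| = O((1+|\xi|)^{-L})$ for every $L$, which is precisely $\rho \in \cS(\R^n;\cA_\theta)$. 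The main obstacle is the inductive step, since the $\xi$-derivatives $\partial_\xi^\gamma \rho$ do not a priori vanish on $\Z^n$; this is resolved by the Taylor-plus-Vandermonde linear system that leverages the inductive hypothesis and the decay of the symbol remainder to extract rapid decay of these derivatives at lattice points.
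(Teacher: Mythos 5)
Your proof is correct but takes a genuinely different route through the key step, which is to pass from $\rho(k)=0$ for all $k\in\Z^n$ to $\rho\in\cS(\R^n;\cA_\theta)$. The paper defers this to Corollary~\ref{cor:toroidal.rhok0-cS}, whose engine is the finite-difference machinery of Section~\ref{sec:toroidal}: Lemma~\ref{lem:toroidal.finite-diff-partial-derivatives} shows $\Delta^\alpha\rho-\partial_\xi^\alpha\rho\in\stS^{m-|\alpha|-1}(\R^n;\cA_\theta)$, so vanishing on $\Z^n$ makes all iterated differences vanish and $(\rho(k))_{k\in\Z^n}$ lands in $\cS(\Z^n;\cA_\theta)$; Proposition~\ref{prop:toroidal.rhok-rohxim} then pulls the discrete decay back to $\rho$ itself via a first-order Taylor expansion from a nearest lattice point together with Peetre's inequality. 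You replace the finite-difference comparison by a Vandermonde interpolation scheme: an $N$-th order Taylor expansion of $f(t)=\partial_\xi^{\gamma-e_j}\rho(k+te_j)$ evaluated at the $N+1$ lattice points $k,\,k+e_j,\ldots,k+Ne_j$ gives a linear system with coefficient matrix $(i^\ell/\ell!)_{0\le i,\ell\le N}$ — a column-rescaled Vandermonde matrix with distinct nodes, hence invertible (it is not triangular, as you describe, but that is a slip in wording rather than in substance) — whose solution, combined with the symbol estimate on the remainder and the inductive hypothesis on $\partial_\xi^{\gamma-e_j}\rho$ at lattice points, yields rapid decay of $\partial_\xi^\gamma\rho(k)$ once $N$ is large; you then interpolate to off-lattice $\xi$ with another high-order Taylor expansion. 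Your argument is more self-contained in that it bypasses the toroidal symbol calculus entirely, whereas the paper's approach buys the sharper Proposition~\ref{prop:toroidal.rhok-rohxim} — tracking the exact symbol order rather than only the Schwartz case — which is needed elsewhere to prove the coincidence of toroidal and standard \psidos.
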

\begin{proof}
If $P_\rho=0$, then it follows from Corollary~\ref{cor:PsiDOs.P-Prho-relation} that $\rho(k)=0$ for all $k\in \Z^n$. It will be shown later 
(see Corollary~\ref{cor:toroidal.rhok0-cS}) that this implies that $\rho(\xi)\in \cS(\R^n; \cA_\theta)$. Thus, modulo proving Corollary~\ref{cor:toroidal.rhok0-cS}, we obtain the result.   
\end{proof}

\begin{corollary}\label{cor:PsiDOs.uniqueness-symbol}
Let $P\in \Psi^q(\cA_\theta)$, $q\in \C$. Then
\begin{enumerate}
 \item[(i)] The symbol of $P$ is unique modulo $\cS(\R^n;\cA_\theta)$. 
 
 \item[(ii)] The homogeneous components of the symbol of $P$ are uniquely determined by $P$. 
\end{enumerate}
\end{corollary}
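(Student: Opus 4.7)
The plan is to deduce both parts directly from Proposition~\ref{prop:PsiDOs.vanishing-Prho}, which is the substantive input. The continuous linear map $\rho\mapsto P_\rho$ (Proposition~\ref{prop:PsiDOs.symbols-to-pdos-continuity}) reduces questions about two symbols of $P$ to a question about a single symbol with vanishing \psido.

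For part (i), suppose $P=P_{\rho_1}=P_{\rho_2}$ with $\rho_1,\rho_2\in S^q(\R^n;\cA_\theta)$. Then $P_{\rho_1-\rho_2}=0$ by linearity. Since $S^q(\R^n;\cA_\theta)\subset\stS^{\Re q}(\R^n;\cA_\theta)$ by Remark~\ref{rmk:Symbols.classical-inclusion}, Proposition~\ref{prop:PsiDOs.vanishing-Prho} applies and yields $\rho_1-\rho_2\in\cS(\R^n;\cA_\theta)$, which is exactly (i).

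For part (ii), let $\rho_1(\xi)\sim\sum_{j\geq 0}\rho^{(1)}_{q-j}(\xi)$ and $\rho_2(\xi)\sim\sum_{j\geq 0}\rho^{(2)}_{q-j}(\xi)$ be two symbols of $P$. By (i) we have $\sigma(\xi):=\rho_1(\xi)-\rho_2(\xi)\in\cS(\R^n;\cA_\theta)$, and Remark~\ref{rmk:Symbols.derivatives-classical} shows that $\sigma(\xi)\in S^q(\R^n;\cA_\theta)$ with homogeneous components $\rho^{(1)}_{q-j}(\xi)-\rho^{(2)}_{q-j}(\xi)$. Applying the explicit formulas of Remark~\ref{rem:Symbols.classical-uniqueness} to $\sigma$, for each fixed $\xi\neq 0$ we have
\begin{equation*}
\rho^{(1)}_q(\xi)-\rho^{(2)}_q(\xi)=\lim_{\lambda\to\infty}\lambda^{-q}\sigma(\lambda\xi),
\end{equation*}
and, inductively, analogous limit formulas for the lower-order components. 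Since $\sigma\in\cS(\R^n;\cA_\theta)$, the quantity $\|\sigma(\lambda\xi)\|$ decays faster than any power of $\lambda$, so all these limits vanish in $\cA_\theta$. An induction on $j$ then yields $\rho^{(1)}_{q-j}=\rho^{(2)}_{q-j}$ for every $j\geq 0$, which is (ii).

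There is no real obstacle in this argument itself: it is a bookkeeping consequence of Proposition~\ref{prop:PsiDOs.vanishing-Prho}, together with the uniqueness of the homogeneous expansion of a classical symbol. The only delicate point is that Proposition~\ref{prop:PsiDOs.vanishing-Prho} itself is proved only \emph{modulo} Corollary~\ref{cor:toroidal.rhok0-cS} from Section~\ref{sec:toroidal}, so the corollary inherits the same proviso.
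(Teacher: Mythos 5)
Your proof is correct and follows the same route as the paper's: for part (i), reduce to Proposition~\ref{prop:PsiDOs.vanishing-Prho} by linearity, and for part (ii), observe that the difference of the two symbols lies in $\cS(\R^n;\cA_\theta)$ and so has vanishing homogeneous components. The paper dispatches (ii) with the single sentence ``In particular, they have the same homogeneous components,'' whereas you unpack it via the limit formulas of Remark~\ref{rem:Symbols.classical-uniqueness}; the only minor slip is citing Remark~\ref{rmk:Symbols.derivatives-classical} (which concerns derivatives of classical symbols) where linearity of the asymptotic expansion is all that is needed.
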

\begin{proof}
 Let $\rho_1(\xi)$ and $\rho_2(\xi)$ be symbols in $S^q(\R^n;\cA_\theta)$ such that $P_{\rho_1}=P_{\rho_2}=P$. Then $P_{\rho_1-\rho_2}=0$, and so by Proposition~\ref{prop:PsiDOs.vanishing-Prho} the symbols $\rho_1(\xi)$ and $\rho_2(\xi)$ differ by an element of $\cS(\R^n; \cA_\theta)$. In particular, they have the same homogeneous components. This proves the result.
\end{proof}

Let us now give some examples of pseudodifferential operators that are not differential operators. Let $\Delta= \delta_1^2 + \cdots + \delta_n^2$ be the flat Laplacian of $\cA_\theta$.  This operator is isospectral to the ordinary Laplacian on the usual torus 
$\mathbb{T}^n=\R^n\slash 2\pi \Z^n$. More precisely, the family $(U^k)_{k\in \Z^n}$ forms an orthonormal eigenbasis of $\cH_\theta$ such that
\begin{equation*}
 \Delta \left(U^k\right)= |k |^2 U^k \qquad \text{for all $k \in \Z^n$}. 
\end{equation*}
 We have a positive selfadjoint operator on $\cH_\theta$ with domain, 
 \begin{equation*}
 \op{Dom}(\Delta)=\biggl\{ u=\sum_{k\in \Z^n} u_kU^k\in \cH_\theta; \ \sum_{k\in \Z^n} |k|^4|u_k|^2<\infty\biggr\}. 
\end{equation*}
For any $s\in \C$ we denote by $\Lambda^s$ the operator $(1+\Delta)^{\frac{s}2}$. Thus, 
\begin{equation} \label{eq:PsiDOs.Lambda-s-equation}
 \Lambda^s \left(U^k\right)=\left(1+ |k |^2\right)^{\frac{s}{2}} U^k \qquad \text{for all $k \in \Z^n$}.
\end{equation}
For $\Re s\leq 0$ we obtain a bounded operator. For $\Re s>0$ we get a closed operator with domain, 
 \begin{equation*} 
\op{Dom}(\Lambda^s)= \biggl\{ u=\sum_{k\in \Z^n} u_kU^k\in \cH_\theta; \ \sum_{k\in \Z^n} |k|^{2 \Re s} |u_k|^2<\infty\biggr\}. 
\end{equation*}
In particular, the domain of $\Lambda^s$ always contains $\cA_\theta$. We obtain a selfadjoint operator when $s\in \R$. We also have the property, 
\begin{equation*}
\Lambda^{s_1+s_2}=\Lambda^{s_1}\Lambda^{s_2}, \qquad s_1, s_2\in \C.
\end{equation*}
In addition, when $\Re s<0$ we have $(1+ |k |^2)^{\frac{s}2} \rightarrow 0$ as $|k|\rightarrow \infty$, and so $\Lambda^s$ is a compact operator on $\cH_\theta$. 

Recall  that $\brak{\xi}^s:= (1+ |\xi|^2)^{\frac{s}2}$ is a classical symbol of order $s$ (\emph{cf}.~Example~\ref{ex:Symbols.example-symbol}).  Moreover, given any $u = \sum u_kU^k$ in $\cA_\theta$, using~(\ref{eq:PsiDOs.Lambda-s-equation}) and Proposition~\ref{prop:PsiDOs.Prhou-equation} we get
\begin{equation*}
 \Lambda^s u = \sum_{k\in \Z^n} u_k \brak{k}^{\frac{s}2} U^k =P_{\brak{\xi}^s}u. 
\end{equation*}
Therefore, we arrive at the following statement. 

\begin{proposition} \label{prop:PsiDOs.Lambdas-properties}
 The family $(\Lambda^s)_{s\in \C}$ is a 1-parameter group of classical \psidos. For every $s\in \C$, the operator $\Lambda^s$ is the \psido\ associated with the symbol $\brak{\xi}^s$. In particular, $\Lambda^s\in \Psi^{s}(\cA_\theta)$ and the principal symbol of $\Lambda^s$ is $|\xi|^s$. 
\end{proposition}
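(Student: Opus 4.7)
The plan is to identify $\Lambda^s$ with the \psido\ $P_{\brak{\xi}^s}$ on $\cA_\theta$ via the Fourier-series formula of Proposition~\ref{prop:PsiDOs.Prhou-equation}, and then read off the classical-symbol structure from Example~\ref{ex:Symbols.example-symbol}.

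First I would verify that $\Lambda^s$ actually preserves $\cA_\theta$ for every $s\in \C$. Given $u=\sum u_k U^k \in \cA_\theta$, the sequence $(u_k)_{k\in\Z^n}$ lies in $\cS(\Z^n)$ by Proposition~\ref{prop:NCtori.condition-cAtheta}. Since $\brak{k}^s$ grows at most polynomially in $|k|$, the sequence $(u_k\brak{k}^s)_{k\in\Z^n}$ is again in $\cS(\Z^n)$, so $\sum u_k \brak{k}^s U^k$ converges in $\cA_\theta$. Combining the defining relation~(\ref{eq:PsiDOs.Lambda-s-equation}) with the continuity of $\Lambda^s$ on $\cH_\theta$ (using boundedness when $\Re s\leq 0$, and a standard closed-operator/domain argument when $\Re s>0$, noting that $\cA_\theta\subset \op{Dom}(\Lambda^s)$), I would conclude that $\Lambda^s u =\sum u_k \brak{k}^s U^k$ and, in particular, $\Lambda^s u\in \cA_\theta$.

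Next, Example~\ref{ex:Symbols.example-symbol} shows that $\brak{\xi}^s\in S^s(\R^n;\cA_\theta)$ with principal symbol $|\xi|^s$. Applying Proposition~\ref{prop:PsiDOs.Prhou-equation} to this (scalar) symbol yields
\begin{equation*}
P_{\brak{\xi}^s} u = \sum_{k\in\Z^n} u_k \brak{k}^s U^k \qquad \text{for all } u=\sum u_k U^k \in \cA_\theta.
\end{equation*}
Comparison with the previous paragraph gives $\Lambda^s = P_{\brak{\xi}^s}$ as linear maps on $\cA_\theta$. This exhibits $\Lambda^s$ as an operator in $\Psi^s(\cA_\theta)$ with principal symbol $|\xi|^s$.

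Finally, the 1-parameter group property $\Lambda^{s_1+s_2}=\Lambda^{s_1}\Lambda^{s_2}$ follows at once from the eigenvalue equation~(\ref{eq:PsiDOs.Lambda-s-equation}) tested on the basis $(U^k)_{k\in \Z^n}$, combined with the preservation of $\cA_\theta$ just established and the continuity of the operators on it. No step should present a genuine obstacle; the only mildly delicate point is checking that $\cA_\theta$ is stable under $\Lambda^s$ for every $s\in \C$, and this is handled uniformly by the Schwartz-sequence characterization of $\cA_\theta$.
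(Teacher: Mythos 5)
Your proposal is correct and takes essentially the same route as the paper: the paper also combines the eigenvalue relation~(\ref{eq:PsiDOs.Lambda-s-equation}) with Proposition~\ref{prop:PsiDOs.Prhou-equation} and Example~\ref{ex:Symbols.example-symbol} to identify $\Lambda^s$ with $P_{\brak{\xi}^s}$. The only difference is that you spell out the (correct, mildly delicate) closed-operator/domain argument needed to commute $\Lambda^s$ with the Fourier series of $u$ and to check $\Lambda^s(\cA_\theta)\subset\cA_\theta$, whereas the paper leaves this implicit; your extra care does not change the substance of the argument.
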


\section{Toroidal \psidos\ and Smoothing Operators}\label{sec:toroidal} 
In this section, we clarify the relationship between standard \psidos\ as defined in the previous sections and the toroidal \psidos\ considered in~\cite{GJP:MAMS17, LNP:TAMS16}. In particular, we will see that we get the exact same class of operators. This will extend to the noncommutative case the equality between toroidal and standard \psidos\ on $\T^n$ given in~\cite{RT:Birkhauser10}. 

The class of toroidal symbols specifically pertains to the Fourier series decomposition in $\cA_\theta$. Therefore, toroidal symbols do not make sense for more general $C^*$-dynamical systems. Nevertheless, they allows us to make use of Fourier series techniques to simplify the derivation of a few important properties of \psidos\ on noncommutative tori. This fact will be illustrated by the characterization of smoothing operators at the end of this section. 

\subsection{Toroidal symbols} 
Given any symbol $\rho(\xi)\in \stS^m(\R^n;\cA_\theta)$ we know by Proposition~\ref{prop:PsiDOs.Prhou-equation} that 
\begin{equation*}
 P_\rho u = \sum_{k\in \Z^n} u_k\rho(k)U^k, \qquad u=\sum u_kU^k\in \cA_\theta. 
\end{equation*}
Therefore, it stems for reason to consider more generally operators $P:\cA_\theta \rightarrow \cA_\theta$ of the above form where $(\rho(k))_{k\in \Z^n}$ is replaced by sequences of a suitable class. This class of sequences is the class of toroidal symbols. They are defined as follows. 

Let $\cA_\theta^{\Z^n}$ denote the space of sequences with values in $\cA_\theta$ that are indexed by $\Z^n$. In addition, let $(e_1, \ldots, e_n)$ be the canonical basis of $\R^n$. For $i=1,\ldots, n$, the difference operator $\Delta_i: \cA_\theta^{\Z^n} \rightarrow \cA_\theta^{\Z^n}$ is defined by
\begin{equation*}
 \Delta_i u_k= u_{k+e_i}-u_k, \qquad (u_k)_{k\in \Z^n} \in \cA_\theta^{\Z^n}. 
\end{equation*}
The operators $\Delta_1, \ldots, \Delta_n$ pairwise commute. For $\alpha =(\alpha_1, \ldots, \alpha_n)\in \N_0^n$, we set $\Delta^\alpha =\Delta_1^{\alpha_1} \cdots \Delta_n^{\alpha_n}$. We similarly define backward difference operators $\overline{\Delta}=\overline{\Delta}_1^{\alpha_1} \cdots \overline{\Delta}_n^{\alpha_n}$, where $\overline{\Delta}_i:  \cA_\theta^{\Z^n} \rightarrow \cA_\theta^{\Z^n}$ is defined by
\begin{equation*}
 \Delta_i u_k= u_{k}-u_{k-e_i}, \qquad (u_k)_{k\in \Z^n} \in \cA_\theta^{\Z^n}. 
\end{equation*}
We refer to~\cite{RT:Birkhauser10} for the main properties of difference operators.  

\begin{definition}[Toroidal Symbols~\cite{LNP:TAMS16, RT:Birkhauser10}] $\stS^m(\Z^n; \cA_\theta)$, $m\in \R$, consists of sequences $(\rho_k)_{k\in \Z^n} \subset\cA_\theta$ such that, for all multi-orders $\alpha$ and $\beta$, there is $C_{\alpha\beta}>0$ such that 
\begin{equation*}
 \big\| \delta^\alpha \Delta^\beta \rho_k \| \leq C_{\alpha\beta} (1+|k|)^{m-|\beta|} \qquad \forall k \in \Z^n. 
\end{equation*}
\end{definition}

\begin{remark}
 When $\theta=0$ we recover the toroidal symbols on $\T^n$ of~\cite{RT:Birkhauser10}. 
\end{remark}

\begin{remark}
If $(\rho_k)_{k\in \Z^n} \in  \stS^m(\Z^n; \cA_\theta)$, then $(\delta^\alpha \Delta^\beta \rho_k)_{k\in \Z^n} \in  \stS^{m-|\beta|}(\Z^n; \cA_\theta)$  for all $\alpha,\beta\in \N_0^n$. 
\end{remark}

\begin{remark}
 $ \stS^m(\Z^n; \cA_\theta)$ is a Fr\'echet space with respect to the locally convex topology generated by the norms, 
\begin{equation*}
 (\rho_k)_{k\in \Z^n} \longrightarrow \sup_{|\alpha|+|\beta|\leq N} \sup_{k\in \Z^n} (1+|k|)^{-m+|\beta|}\big\| \delta^\alpha \Delta^\beta \rho_k  \big\|, \qquad N\geq 0.  
\end{equation*}
\end{remark}

It is also convenient to introduce the following class of sequences. 

\begin{definition}
 $\cS(\Z^n; \cA_\theta)$ consists of sequences $(\rho_k)_{k\in \Z^n} \subset \cA_\theta$ such that, for all $N\in \N_0$ and $\alpha\in \N_0^n$, there is $C_{N\alpha}>0$ such that 
 \begin{equation*}
 \|\delta^\alpha \rho_k\|\leq C_{N\alpha}(1+|k|)^{-N}\qquad  \forall k \in \Z^n.  
\end{equation*}
\end{definition}
 
\begin{remark}\label{rmk:toroidal.symbols-intersection}
 If $(\rho_k)_{k\in \Z^n}\in \cS(\Z^n; \cA_\theta)$, then $(\delta^\alpha \Delta^\beta \rho_k)_{k\in \Z^n}\in \cS(\Z^n; \cA_\theta)$ for all $\alpha, \beta \in \N_0^n$. It then follows that
\begin{equation*}
 \cS(\Z^n; \cA_\theta) = \bigcap_{m\in \R} \stS^m(\Z^n; \cA_\theta). 
\end{equation*}
\end{remark}
 
 \begin{remark}
 A sequence $(\rho_k)_{k\in \Z^n} \subset \cA_\theta$ is in  $\cS(\Z^n; \cA_\theta)$ if and only if $((1+|k|)^{N}\rho_k)_{k\in \Z^n}$ is bounded in $\cA_\theta$ for all $N\in \N_0$. Therefore, if $(\rho_k)_{k\in \Z^n}\in \cS(\Z^n; \cA_\theta)$, then $((1+|k|)^{N}\rho_k)_{k\in \Z^n}$ is contained in $\cS(\Z^n; \cA_\theta)$ for every $N\in \N_0$. 
\end{remark}
 
\begin{remark}
 $\cS(\Z^n; \cA_\theta)$ is a Fr\'echet space with respect to the topology generated by the semi-norms, 
\begin{equation*}
  (\rho_k)_{k\in \Z^n} \longrightarrow \sup_{k\in \Z^n} (1+|k|)^{-N}\big\| \delta^\alpha \rho_k\big\|, \qquad N\geq 0, \quad \alpha \in \N_0^n.
\end{equation*}
\end{remark}

Following~\cite{LNP:TAMS16} (see also~\cite{GJP:MAMS17}) the \psido\ associated with a  toroidal symbol $(\rho_k)_{k\in \Z^n}\in \stS^m(\Z^n; \cA_\theta)$, $m\in \R$, is the linear operator $P:\cA_\theta \rightarrow \cA_\theta$ given by 
\begin{equation*}
 P u = \sum_{k\in \Z^n} u_k\rho(k)U^k, \qquad u=\sum u_kU^k\in \cA_\theta. 
\end{equation*}
We refer to~\cite{LNP:TAMS16} for the verification that this operator is well defined and continuous.

\subsection{From standard symbols to toroidal symbols}
We also can define difference operators $\Delta^\alpha = \Delta_1^{\alpha_1} \cdots \Delta_n^{\alpha_n}$ on $C^\infty(\R^n; \cA_\theta)$, where  $\Delta_i:C^\infty(\R^n; \cA_\theta) \rightarrow C^\infty(\R^n; \cA_\theta)$ is given by 
\begin{equation*}
 (\Delta_i u)(\xi): = u(\xi+e_i)-u(\xi), \qquad u\in C^\infty(\R^n; \cA_\theta), \ \xi \in \R^n. 
\end{equation*}

We record the following version of Peetre's inequality.

\begin{lemma}[see, e.g., {\cite[Lemma I.8.2]{AG:AMS07}}] \label{lem:Peetre}
Let $m\in\R$. Then, for all $\xi,\eta\in \Rn$, we have 
\begin{equation}
(1+|\xi+\eta|)^m\leq (1+|\xi|)^m (1+|\eta|)^{|m|}.
 \label{eq:Peetre}
\end{equation}
\end{lemma}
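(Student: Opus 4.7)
The plan is to reduce the inequality to the triangle inequality on $\R^n$, treating the cases $m \geq 0$ and $m < 0$ separately, since the appearance of $|m|$ on the factor $(1+|\eta|)$ is the only asymmetry in the statement.

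For the case $m \geq 0$, I would start from the triangle inequality $|\xi+\eta| \leq |\xi|+|\eta|$, which yields
\begin{equation*}
1 + |\xi+\eta| \leq 1 + |\xi| + |\eta| \leq (1+|\xi|)(1+|\eta|),
\end{equation*}
since the right-hand side expands as $1+|\xi|+|\eta|+|\xi||\eta|$. Raising to the non-negative power $m$ preserves the inequality, and using $|m|=m$ gives~(\ref{eq:Peetre}) in this case.

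For the case $m < 0$, the idea is to apply the same type of inequality but with the roles of $\xi$ and $\xi+\eta$ swapped. Writing $\xi = (\xi+\eta) - \eta$ and invoking the triangle inequality gives $|\xi| \leq |\xi+\eta|+|\eta|$, hence
\begin{equation*}
1 + |\xi| \leq (1+|\xi+\eta|)(1+|\eta|).
\end{equation*}
Raising this to the positive power $|m|=-m$ and rearranging yields
\begin{equation*}
(1+|\xi+\eta|)^{-|m|} \leq (1+|\xi|)^{-|m|}(1+|\eta|)^{|m|},
\end{equation*}
which is exactly~(\ref{eq:Peetre}) since $-|m|=m$.

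There is essentially no serious obstacle here; the only subtlety is remembering to bound the correct quantity in each case. For $m\geq 0$ one bounds $|\xi+\eta|$ in terms of $|\xi|$ and $|\eta|$, while for $m<0$ one instead bounds $|\xi|$ in terms of $|\xi+\eta|$ and $|\eta|$. Both reductions rely solely on the triangle inequality and the elementary estimate $1+a+b\leq (1+a)(1+b)$ for $a,b\geq 0$.
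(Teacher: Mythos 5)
Your proof is correct. The paper does not supply its own argument for this lemma; it simply cites~\cite[Lemma I.8.2]{AG:AMS07}, and your two-case reduction (bounding $|\xi+\eta|$ for $m\geq 0$, and $|\xi|$ for $m<0$) via the triangle inequality and $1+a+b\leq(1+a)(1+b)$ is exactly the standard proof of Peetre's inequality found there.
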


The following lemma relates for standard symbols difference operators to partial derivatives.

\begin{lemma}\label{lem:toroidal.finite-diff-partial-derivatives}
 Let $\rho(\xi)\in \stS^m(\R^n; \cA_\theta)$. Then, for all $\alpha \in \N_0^n$, we have
\begin{equation*}
\Delta^\alpha \rho(\xi)\in   \stS^{m-|\alpha|}(\R^n; \cA_\theta), \qquad  \Delta^\alpha \rho(\xi)-\partial^\alpha_\xi \rho(\xi) \in   \stS^{m-|\alpha|-1}(\R^n; \cA_\theta). 
\end{equation*}
\end{lemma}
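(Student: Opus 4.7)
The plan is to reduce both claims to the case $|\alpha|=1$ by expanding $\Delta^\alpha$ via a binomial/multinomial identity on commuting operators, and to handle the base case with Taylor's formula with integral remainder. Concretely, for $\alpha = e_i$ one has
\begin{align*}
\Delta_i \rho(\xi) &= \int_0^1 (\partial_{\xi_i}\rho)(\xi+te_i)\, dt, \\
\Delta_i \rho(\xi) - \partial_{\xi_i}\rho(\xi) &= \int_0^1 (1-t)(\partial_{\xi_i}^2 \rho)(\xi+te_i)\, dt,
\end{align*}
both identities interpreted as $\cA_\theta$-valued integrals in the sense of Appendix~\ref{app:LCS-int}.

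First I would verify the base case. Applying $\delta^\beta \partial_\xi^\gamma$ and pulling it inside the integral (using the differentiation-under-the-integral results of Appendix~\ref{app:LCS-diff}), the hypothesis $\rho\in\stS^m(\R^n;\cA_\theta)$ together with Peetre's inequality (Lemma~\ref{lem:Peetre}) and the bound $1+t\leq 2$ for $t\in[0,1]$ give uniform estimates of the form
\[
\norm{\delta^\beta \partial_\xi^\gamma (\partial_{\xi_i}\rho)(\xi+te_i)} \leq C'_{\beta\gamma}(1+|\xi|)^{m-1-|\gamma|},
\]
and an analogous bound with $(1+|\xi|)^{m-2-|\gamma|}$ on the right-hand side for the second-derivative integrand. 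Integrating over $t\in[0,1]$ then yields $\Delta_i\rho \in \stS^{m-1}(\R^n;\cA_\theta)$ and $R_i\rho := (\Delta_i - \partial_{\xi_i})\rho \in \stS^{m-2}(\R^n;\cA_\theta)$, which settles both assertions when $|\alpha|=1$.

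Next I would exploit the translation-invariance of each $\Delta_j$ and $\partial_{\xi_k}$: these operators pairwise commute, and hence so do the $R_j=\Delta_j-\partial_{\xi_j}$ among themselves and with every $\partial_{\xi_k}$. Expanding each factor binomially and using the multinomial theorem gives
\[
\Delta^\alpha \;=\; \prod_{i=1}^n (\partial_{\xi_i}+R_i)^{\alpha_i} \;=\; \sum_{\beta\leq\alpha} \binom{\alpha}{\beta}\,\partial_\xi^{\alpha-\beta} R^\beta,
\]
where $R^\beta := R_1^{\beta_1}\cdots R_n^{\beta_n}$. By the base case iterated $|\beta|$ times, $R^\beta$ maps $\stS^m$ into $\stS^{m-2|\beta|}$; post-composing with $\partial_\xi^{\alpha-\beta}$, which lowers the order by $|\alpha-\beta|$, yields an image in $\stS^{m-|\alpha|-|\beta|}$. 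The $\beta=0$ term equals $\partial_\xi^\alpha\rho \in \stS^{m-|\alpha|}$, while every other summand has $|\beta|\geq 1$ and therefore lies in $\stS^{m-|\alpha|-1}$ or better. Summing gives both $\Delta^\alpha\rho \in \stS^{m-|\alpha|}(\R^n;\cA_\theta)$ and $\Delta^\alpha\rho - \partial_\xi^\alpha\rho \in \stS^{m-|\alpha|-1}(\R^n;\cA_\theta)$.

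The main obstacle is purely technical: one must carefully justify differentiation under the integral sign for $\cA_\theta$-valued integrands (invoking the appropriate results of Appendix~\ref{app:LCS-diff}) and keep track of the shift constants in Peetre's inequality so the estimates are uniform in $t\in[0,1]$. Once these are in place, the commutativity of $\Delta_j$ and $\partial_{\xi_k}$ together with the multinomial expansion does all the remaining work, with no real difficulty beyond bookkeeping.
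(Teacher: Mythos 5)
Your proof is correct, and it takes a genuinely different route from the paper's. The paper proceeds by induction on $|\alpha|$: after establishing the base case $|\alpha|=1$ exactly as you do (Taylor's formula with integral remainder plus Peetre's inequality), it passes from $|\alpha|\leq N$ to $|\alpha|\leq N+1$ via the telescoping identity
\begin{equation*}
\Delta_i\Delta^\alpha\rho - \partial_{\xi_i}\partial_\xi^\alpha\rho
= \Delta_i\bigl(\Delta^\alpha\rho - \partial_\xi^\alpha\rho\bigr) + \partial_\xi^\alpha\bigl(\Delta_i\rho - \partial_{\xi_i}\rho\bigr),
\end{equation*}
applying the base case to each piece. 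You instead close the argument without induction by writing $\Delta_i = \partial_{\xi_i}+R_i$, exploiting the pairwise commutativity of all the $\Delta_j$, $\partial_{\xi_k}$, and hence $R_j$, and then invoking the multinomial expansion $\Delta^\alpha = \sum_{\beta\leq\alpha}\binom{\alpha}{\beta}\partial_\xi^{\alpha-\beta}R^\beta$. Since $R_i$ drops the order by $2$, the iterated bound $R^\beta:\stS^m\to\stS^{m-2|\beta|}$ gives each summand with $|\beta|\geq 1$ order $\leq m-|\alpha|-1$, while the $\beta=0$ term is $\partial_\xi^\alpha\rho$. The two approaches are logically equivalent (and rely on the same base case), but yours is more explicit: it exhibits the full expansion of $\Delta^\alpha - \partial_\xi^\alpha$ as a sum of terms graded by $|\beta|$, each dropping order by $|\beta|$, rather than only the coarse statement that the difference lies one order down; the paper's induction is slightly shorter to write but yields less structural information. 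One small remark: your separate integral representation $\Delta_i\rho(\xi) = \int_0^1(\partial_{\xi_i}\rho)(\xi+te_i)\,dt$ and the estimate derived from it are not actually needed, since $\Delta_i\rho\in\stS^{m-1}$ already follows from $R_i\rho\in\stS^{m-2}$ together with $\partial_{\xi_i}\rho\in\stS^{m-1}$, which is how the paper handles it.
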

\begin{proof}
 We proceed by induction on $|\alpha|$.  Let us first prove the result when $|\alpha|=1$. By Proposition~\ref{prop:LCS-Taylor}, for $i=1, \ldots, n$, we have 
\begin{equation}
 \Delta_i \rho(\xi)-\partial_{\xi_i}\rho(\xi) = \rho(\xi+e_i)-\rho(\xi)-\partial_{\xi_i}\rho(\xi)= \int_0^1 (1-t)\partial_{\xi_i}^2\rho(\xi+te_i)dt.
 \label{eq:toroidal.Taylor-order2}  
\end{equation}
Using the fact that $\partial_{\xi_i}^2\rho(\xi)\in \stS^{m-2}(\R^n;\cA_\theta)$ together with Peetre's inequality~(\ref{eq:Peetre}) we see there is $C>0$ such that, for all $\xi\in \R^n$ and $t\in [0,1]$, we have 
\begin{align*}
 \| \partial_{\xi_i}^2\rho(\xi+te_i)\| &\leq C(1+|\xi+te_i|)^{m-2} \\ 
& \leq  C(1+t)^{|m-2|}(1+|\xi|)^{m-2}\\ 
 &\leq 2^{|m-2|}C(1+|\xi|)^{m-2}. 
\end{align*}
Combining this with~(\ref{eq:toroidal.Taylor-order2}) shows that there is $C>0$ such that, for all $\xi\in \R^n$, we have
\begin{equation*}
 \| \Delta_i \rho(\xi)-\partial_{\xi_i}\rho(\xi)\| \leq \int_0^1 (1-t)\|\partial_{\xi_i}^2\rho(\xi+te_i)\|dt \leq C(1+|\xi|)^{m-2}. 
\end{equation*}
Likewise, as $\delta^\alpha\partial_\xi^\beta\rho\in \stS^{m-|\beta|}(\R^n; \cA_\theta)$,  given any multi-orders $\alpha$ and $\beta$, there is $C_{\alpha\beta}>0$ such that, for all $\xi\in \R^n$, we have
\begin{equation*}
 \big\| \delta^\alpha\partial_\xi^\beta(\Delta_i \rho-\partial_{\xi_i}\rho)(\xi)\big\| = \big\| \Delta_i (\delta^\alpha\partial_\xi^\beta\rho)(\xi)-\partial_{\xi_i}(\delta^\alpha\partial_\xi^\beta\rho)(\xi)\big\| \leq C_{\alpha\beta}(1+|\xi|)^{m-|\beta|-2}. 
 \end{equation*}
This shows that  $\Delta_i \rho(\xi)-\partial_{\xi_i}\rho(\xi) \in \stS^{m-2}(\R^n; \cA_\theta)$. As $\partial_{\xi_i}\rho(\xi) \in 
 \stS^{m-1}(\R^n; \cA_\theta)$, we deduce that $\Delta_i \rho(\xi)  \in 
 \stS^{m-1}(\R^n; \cA_\theta)$. This proves the result when $|\alpha|=1$. 

Suppose that the result is true for $|\alpha|\leq N$. Let $\alpha\in \N_0^n$, $|\alpha|\leq N$. For $i=1, \ldots, n$, we have 
\begin{equation*}
  \Delta_i \Delta^\alpha \rho  - \partial_{\xi_i} \partial_{\xi}^\alpha\rho = \Delta_i (\Delta^\alpha \rho -\partial_{\xi}^\alpha\rho) +\partial_{\xi}^\alpha (\Delta_i\rho -\partial_{\xi_i}\rho). 
\end{equation*}
By assumption $\Delta^\alpha \rho -\partial_{\xi}^\alpha\rho\in \stS^{m-|\alpha|-1}(\Rn;\cA_\theta)$, and so by the first part of the proof the finite difference $\Delta_i (\Delta^\alpha \rho -\partial_{\xi}^\alpha\rho)$ is a symbol in $\stS^{m-|\alpha|-2}(\Rn;\cA_\theta)$. The first part of the proof also ensures us that $\Delta_i\rho -\partial_{\xi_i}\rho\in \stS^{m-2}(\Rn;\cA_\theta)$, and so $\partial_{\xi}^\alpha (\Delta_i\rho -\partial_{\xi_i}\rho)$ is in $\stS^{m-|\alpha|-2}(\R^n; \cA_\theta)$ as well. It then follows that $ \Delta_i \Delta^\alpha \rho  - \partial_{\xi_i} \partial_{\xi}^\alpha\rho\in \stS^{m-|\alpha|-2}(\R^n; \cA_\theta)$. As $\partial_{\xi_i} \partial_{\xi}^\alpha\rho \in \stS^{m-|\alpha|-1}(\R^n; \cA_\theta)$ this implies that $\Delta_i\Delta^\alpha \rho \in \stS^{m-|\alpha|-1}(\R^n; \cA_\theta)$. This proves the result for multi-orders $\alpha$ with $|\alpha|\leq N+1$. The proof is complete. 
\end{proof}

\begin{remark}
 Even when $\theta=0$ Lemma~\ref{lem:toroidal.finite-diff-partial-derivatives} seems to be new. 
\end{remark}

\begin{corollary}[\cite{RT:Birkhauser10}]\label{cor:toroidal.SR-SZ-m}
If $\rho(\xi)\in \stS^m(\R^n; \cA_\theta)$, $m\in \R$, then $(\rho(k))_{k\in \Z^n} \in  \stS^m(\Z^n; \cA_\theta)$. 
\end{corollary}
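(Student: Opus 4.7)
The plan is to reduce the claim to a direct consequence of Lemma~\ref{lem:toroidal.finite-diff-partial-derivatives}. Unwinding the definition of $\stS^m(\Z^n;\cA_\theta)$, what must be checked is that, for every pair of multi-orders $\alpha,\beta\in\N_0^n$, there is a constant $C_{\alpha\beta}>0$ such that
\begin{equation*}
\bigl\|\delta^\alpha \Delta^\beta \rho(k)\bigr\|\leq C_{\alpha\beta}(1+|k|)^{m-|\beta|}\qquad\text{for all } k\in\Z^n,
\end{equation*}
where on the left $\Delta^\beta$ acts on the sequence $(\rho(k))_{k\in\Z^n}$.

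First I would observe that, because the continuous difference operators $\Delta_i$ on $C^\infty(\R^n;\cA_\theta)$ agree with the sequence difference operators $\Delta_i$ when restricted to $\Z^n$, we have $(\Delta^\beta\rho)(k) = \Delta^\beta(\rho(k))$ for every $k\in\Z^n$ and every multi-order~$\beta$. Similarly, $\delta^\alpha$ commutes with the restriction to $\Z^n$, since it is applied pointwise in $\xi$.

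Next, I would invoke Lemma~\ref{lem:toroidal.finite-diff-partial-derivatives}, which asserts that $\Delta^\beta\rho(\xi)\in\stS^{m-|\beta|}(\R^n;\cA_\theta)$. By the very definition of this symbol class, there exists $C_{\alpha\beta}>0$ such that
\begin{equation*}
\bigl\|\delta^\alpha\Delta^\beta\rho(\xi)\bigr\|\leq C_{\alpha\beta}(1+|\xi|)^{m-|\beta|}\qquad\text{for all }\xi\in\R^n.
\end{equation*}
Specializing to $\xi=k\in\Z^n$ and combining with the commutation of restriction and the discrete $\Delta^\beta$, $\delta^\alpha$ operations yields the required toroidal estimate. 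This proves $(\rho(k))_{k\in\Z^n}\in\stS^m(\Z^n;\cA_\theta)$.

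There is no real obstacle here: the content of the corollary is entirely carried by Lemma~\ref{lem:toroidal.finite-diff-partial-derivatives}, which comparing forward differences to partial derivatives is the substantive step. The only minor point to be careful about is verifying the identification of the two kinds of $\Delta^\beta$ (acting on smooth symbols versus on sequences) under restriction to the integer lattice, but this is immediate from their definitions.
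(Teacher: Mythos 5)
Your proof is correct and follows essentially the same route as the paper: both rest on Lemma~\ref{lem:toroidal.finite-diff-partial-derivatives} to place $\Delta^\beta\rho$ (equivalently $\Delta^\beta\delta^\alpha\rho$) in $\stS^{m-|\beta|}(\R^n;\cA_\theta)$, then specialize the resulting estimate at integer points. The only cosmetic difference is that the paper applies the lemma to $\delta^\alpha\rho$ while you apply it to $\rho$ and then read off the $\delta^\alpha$-estimate from the definition of the symbol class; these are interchangeable.
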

\begin{proof}
 Let $\alpha, \beta \in \N_0^n$. As $\delta^\alpha \rho(\xi)\in \stS^m(\R^n; \cA_\theta)$, Lemma~\ref{lem:toroidal.finite-diff-partial-derivatives} ensures us that $\delta^\alpha \Delta^\beta \rho(\xi)= \Delta^\beta \delta^\alpha\rho(\xi)$ is contained in $\stS^{m-|\beta|}(\R^n;\cA_\theta)$, and so $\delta^\alpha \Delta^\beta \rho(k)=\op{O}(|k|^{m-|\beta|})$ as $|k|\rightarrow \infty$. This shows that  $(\rho(k))_{k\in \Z^n} \in  \stS^m(\Z^n; \cA_\theta)$. The result is proved.
 \end{proof}

The above result asserts that the restriction to $\Z^n$ of a standard symbol is a toroidal symbol. Combining this with Proposition~\ref{prop:PsiDOs.Prhou-equation} shows that every standard \psido\ is a toroidal \psido. 

Set $\stS^{+\infty}(\R^n;\cA_\theta):=  \bigcup_{\mu \in \R} \stS^{\mu}(\R^n;\cA_\theta)$. We have  the following converse of Corollary~\ref{cor:toroidal.SR-SZ-m}. 

\begin{proposition}\label{prop:toroidal.rhok-rohxim}
 Let $\rho(\xi)\in \stS^{+\infty}(\R^n;\cA_\theta)$ be such that $(\rho(k))_{k\in \Z^n}\in \stS^m(\Z^n;\cA_\theta)$, $m\in \R$. Then $\rho(\xi)$ is contained in 
 $\stS^m(\R^n;\cA_\theta)$.  
\end{proposition}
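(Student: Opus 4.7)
The plan is to prove the proposition by iterating a one-step descent statement that lowers the symbol order by one unit per iteration, starting from the given order $\mu$ and terminating once the order has reached $m$. As a preliminary reduction, observe that for every multi-order $\alpha$, the map $\delta^\alpha\rho$ satisfies $\delta^\alpha\rho\in\stS^\mu(\R^n;\cA_\theta)$ and $(\delta^\alpha\rho(k))_{k\in\Z^n}\in\stS^m(\Z^n;\cA_\theta)$; hence it will suffice to establish bounds of the form $\|\partial_\xi^\beta\rho(\xi)\|\leq C_\beta(1+|\xi|)^{m-|\beta|}$ for every $\beta$ and then apply the result to each $\delta^\alpha\rho$ in turn.

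The one-step claim is the following: if $\rho\in\stS^{\mu'}(\R^n;\cA_\theta)$, $(\rho(k))_{k\in\Z^n}\in\stS^m(\Z^n;\cA_\theta)$, and $\mu'>m$, then $\rho\in\stS^{\max(m,\,\mu'-1)}(\R^n;\cA_\theta)$. Granted this claim, the proposition follows by iteration: if $\mu-1\geq m$, each application reduces the order by one; after at most $\lceil\mu-m\rceil$ steps we reach some $\mu'$ with $\mu'-1<m$, at which point the claim yields $\rho\in\stS^m(\R^n;\cA_\theta)$.

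To prove the one-step claim, fix $\xi\in\R^n$ and choose $k\in\Z^n$ nearest to $\xi$, so that $t:=\xi-k$ satisfies $|t|\leq\sqrt{n}/2$ and hence $1+|\xi|$ and $1+|k|$ are comparable. For a multi-order $\beta$ and an integer $N$ with $\mu'-N\leq\max(m,\mu'-1)$ (e.g.\ $N=\max(1,\lceil\mu'-m\rceil)$), apply Taylor's formula to $\partial_\xi^\beta\rho$ at $k$:
$$\partial_\xi^\beta\rho(\xi)=\sum_{|\gamma|<N}\frac{t^\gamma}{\gamma!}\,\partial_\xi^{\beta+\gamma}\rho(k)+R_N(\xi),$$
where the remainder involves $\partial_\xi^{\beta+\gamma}\rho(k+st)$ with $|\gamma|=N$ and $s\in[0,1]$. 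Since $\rho\in\stS^{\mu'}$ and $|st|\leq\sqrt{n}/2$, Peetre's inequality (Lemma~\ref{lem:Peetre}) yields $\|R_N(\xi)\|\leq C_N(1+|k|)^{\mu'-|\beta|-N}\leq C_N(1+|k|)^{\max(m,\mu'-1)-|\beta|}$. For the Taylor coefficients, Lemma~\ref{lem:toroidal.finite-diff-partial-derivatives} supplies $\partial_\xi^\eta\rho=\Delta^\eta\rho+r_\eta$ with $r_\eta\in\stS^{\mu'-|\eta|-1}$; combining the toroidal hypothesis with the standard bound on $r_{\beta+\gamma}$ gives
$$\|\partial_\xi^{\beta+\gamma}\rho(k)\|\leq C\bigl((1+|k|)^{m-|\beta|-|\gamma|}+(1+|k|)^{(\mu'-1)-|\beta|-|\gamma|}\bigr)\leq C(1+|k|)^{\max(m,\mu'-1)-|\beta|-|\gamma|}.$$
Summing over $|\gamma|<N$ with $|t|\leq\sqrt{n}/2$ produces a Taylor sum bounded by $C(1+|k|)^{\max(m,\mu'-1)-|\beta|}$, and together with the remainder bound and the comparability of $1+|\xi|$ and $1+|k|$, this gives the desired $\|\partial_\xi^\beta\rho(\xi)\|\leq C_\beta(1+|\xi|)^{\max(m,\mu'-1)-|\beta|}$.

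The main obstacle is the bookkeeping of the two competing decay scales: the standard-symbol scale $\mu'$ governing $\rho$ itself, and the toroidal scale $m$ governing its lattice values. Lemma~\ref{lem:toroidal.finite-diff-partial-derivatives} is precisely the tool needed to transfer control from $\Delta^\eta\rho(k)$ back to $\partial_\xi^\eta\rho(k)$; once this substitution is in place and Taylor interpolation fills the gap between $\Z^n$ and $\R^n$, everything else is routine. The rest amounts to choosing $N$ large enough at each iteration and verifying that the toroidal hypothesis on $(\rho(k))_k$ is preserved when one replaces $\rho$ by $\delta^\alpha\rho$.
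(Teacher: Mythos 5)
Your proof is correct, but it takes a genuinely different route from the paper's. The paper argues by contradiction: it sets $m_0 = \inf\{\mu : \rho \in \stS^\mu\}$, notes $m \leq m_0$, picks $\mu \in (m_0-1, m_0)$ so that $\rho \in \stS^{\mu+1}$ but $\rho \notin \stS^{\overline{m}}$ with $\overline{m} = \max(m,\mu)$, and then a single application of Lemma~\ref{lem:toroidal.finite-diff-partial-derivatives} together with first-order Taylor and Peetre shows $\rho \in \stS^{\overline{m}}$, giving the contradiction in one blow. You instead prove the contrapositive content directly by iterating a one-step descent $\stS^{\mu'} \to \stS^{\max(m,\mu'-1)}$ under the toroidal hypothesis. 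Both hinge on exactly the same ingredients (Lemma~\ref{lem:toroidal.finite-diff-partial-derivatives}, Peetre's inequality, Taylor expansion near the nearest lattice point, and the toroidal control of lattice values), so neither argument is technically deeper. The paper's infimum trick is slicker because it avoids the explicit finite induction, while your iteration is arguably more transparent since it makes visible that each step costs exactly one unit of decay. Two minor remarks on your version: your choice $N = \max(1,\lceil\mu'-m\rceil)$ is more than necessary --- $N=1$ already satisfies $\mu'-N \leq \max(m,\mu'-1)$ in every case, which is precisely why the paper only needs a first-order Taylor; and the ``preliminary reduction'' should be invoked inside the one-step claim (apply the $\partial_\xi^\beta$-bound to each $\delta^\alpha\rho$, which inherits both hypotheses) so that the intermediate conclusion $\rho \in \stS^{\max(m,\mu'-1)}$, needed to feed the next iteration, is fully justified --- as you state it the reduction sits outside the loop and the one-step claim's conclusion is nominally stronger than what its proof establishes.
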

\begin{proof}
 With a view toward contradiction suppose that $\rho(\xi)\not \in \stS^m(\R^n;\cA_\theta)$. Set
\begin{equation*}
 m_0= \inf \left\{ \mu \in \R; \ \rho(\xi)\in \stS^\mu(\R^n; \cA_\theta)\right\}. 
\end{equation*}
The definition of $m_0$ implies that $\rho (\xi)\in \stS^{m'}(\R^n;\cA_\theta)$ for all $m'>m_0$. Thus, the assumption that 
$\rho(\xi)\not \in \stS^m(\R^n;\cA_\theta)$ implies that $m\leq m_0$. 

Let $\mu \in (m_0-1,m_0)$. Thus, $\rho(\xi)$ is not in $\stS^\mu(\R^n;\cA_\theta)$, and so if we set $\overline{m}= \max(m,\mu)$, then $\rho(\xi)\not\in \stS^{\overline{m}}(\R^n;\cA_\theta)$. Furthermore, as $\mu>m_0-1$, we have $\mu+1>m_0$, and so $\rho(\xi)\in \stS^{\mu+1}(\R^n;\cA_\theta)$. Combining this with Lemma~\ref{lem:toroidal.finite-diff-partial-derivatives} shows that, given any $\beta \in \N_0^n$, the difference $\partial_\xi^\beta\rho(\xi) - \Delta^\beta \rho(\xi)$ is a symbol in $\stS^{\mu-|\beta|}(\R^n;\cA_\theta)$, and hence by Corollary~\ref{cor:toroidal.SR-SZ-m} the sequence $(\partial_\xi^\beta\rho(k) - \Delta^\beta \rho(k))_{k\in \Z^n}$ is contained in $\stS^{\mu-|\beta|}(\Z^n;\cA_\theta)$. Thus, given any $\alpha \in \N_0^n$,  we have 
\begin{equation}
 \|\delta^\alpha\partial_\xi^\beta\rho(k) - \delta^\alpha\Delta^\beta \rho(k)\| = \op{O}(|k|^{\mu-|\beta|})=  \op{O}(|k|^{\overline{m}-|\beta|}) \qquad \text{as $|k|\rightarrow \infty$}.
 \label{eq:toroidal.deltapart-deltaDeltak} 
\end{equation}
By assumption $(\rho(k))_{k \in \Z^n}\in \stS^{m}(\Z^n; \cA_\theta)$, and so $\|\delta^\alpha\Delta^\beta \rho(k)\| = \op{O}(|k|^{m-|\beta|})=  \op{O}(|k|^{\overline{m}-|\beta|})$ as $|k|\rightarrow \infty$. Combining this with~(\ref{eq:toroidal.deltapart-deltaDeltak}) shows that, for all $\alpha, \beta \in \N_0^n$, there is $C_{\alpha\beta}>0$  such that
\begin{equation}
  \|\delta^\alpha\partial_\xi^\beta\rho(k) \| \leq C_{\alpha\beta} (1+|k|)^{\overline{m}-|\beta|} \qquad \forall k \in \Z^n.
  \label{eq:toroidal.deltapartphok} 
\end{equation}

Bearing this in mind, let $\xi\in \R^n$ and $k\in \Z^n$ be such that $|\xi_i-k_i|\leq 1$ for $i=1,\ldots, n$. In particular, we have $|\xi-k|\leq \sqrt{n}$. By Lemma~\ref{lem:LCS.C1-differentiable} we have 
\begin{equation}
\rho(\xi)-\rho(k) =\sum_{1\leq j \leq n} (\xi_j -k_j) \int_0^1 \partial_{\xi_j} \rho\big(\xi+t(k-\xi)\big) dt. 
 \label{eq:toroidal.Taylor-order1} 
\end{equation}
As  $\rho(\xi)\in \stS^{\mu+1}(\R^n;\cA_\theta)$ each partial derivative $ \partial_{\xi_j} \rho(\xi)$ is a symbol in 
$\stS^{\mu}(\R^n;\cA_\theta)\subset \stS^{\overline{m}}(\R^n;\cA_\theta)$. By combining this with Peetre's inequality~(\ref{eq:Peetre}) and using the inequality $|\xi-k|\leq \sqrt{n}$, we see there is a constant $C>0$ independent of $\xi$ and $k$ such that, for all $t\in [0,1]$, we have
\begin{align*}
\big\|\partial_{\xi_j} \rho\big(\xi+t(k-\xi)\big)\big\| & \leq C\big( 1+ |\xi +t (k-\xi)|\big)^{\overline{m}} \\
& \leq C \big( 1+ t|k-\xi|\big)^{|\overline{m}|} ( 1+ |\xi|)^{\overline{m}} \\
& \leq C\big(1+  \sqrt{n}\big)^{|\overline{m}|} ( 1+ |\xi|)^{\overline{m}} . 
\end{align*}
Combining this with~(\ref{eq:toroidal.Taylor-order1}) we deduce there is $C>0$ independent of $\xi$ and $k$ such that
\begin{equation*}
 \big\|  \rho(\xi)-\rho(k)\| \leq C ( 1+ |\xi|)^{\overline{m}} . 
\end{equation*}
Likewise, given any multi-orders $\alpha$ and $\beta$, there is a constant $C_{\alpha\beta}>0$ independent of $\xi$ and $k$, such that 
\begin{equation}
  \big\|  \delta^\alpha\partial_\xi^\beta\rho(\xi)- \delta^\alpha\partial_\xi^\beta\rho(k)\| \leq C_{\alpha\beta} (1+ |\xi|)^{\overline{m}-|\beta|}.
  \label{eq:toroidal.deltapart-xi-k}
\end{equation}

In addition, in view of~(\ref{eq:toroidal.deltapartphok}), by using Peetre's inequality~(\ref{eq:Peetre})  and arguing as above we get 
 \begin{align*}
  \|\delta^\alpha\partial_\xi^\beta\rho(k) \| & \leq C_{\alpha\beta} (1+|k|)^{\overline{m}-|\beta|}\\ 
  & \leq C_{\alpha\beta} \big( 1+ |k-\xi|\big)^{|\overline{m}-|\beta||} ( 1+ |\xi|)^{\overline{m}-|\beta|}\\
  & \leq C_{\alpha\beta} \big(1+  \sqrt{n}\big)^{|\overline{m}-|\beta||}  (1+ |\xi|)^{\overline{m}-|\beta|}.
\end{align*}
Combining this with~(\ref{eq:toroidal.deltapart-xi-k}) we deduce that there is $C_{\alpha\beta}>0$ such that
\begin{equation*}
  \big\|  \delta^\alpha\partial_\xi^\beta\rho(\xi)\| \leq C_{\alpha\beta} ( 1+ |\xi|)^{\overline{m}-|\beta|} \qquad \forall \xi \in \R^n.
\end{equation*}
This shows that $\rho(\xi)\in\stS^{\overline{m}}(\R^n; \cA_\theta)$. This gives a contradiction, since  $\rho(\xi)$ is not contained in $\stS^{\overline{m}}(\R^n; \cA_\theta)$. Thus, it is impossible for $\rho(\xi)$ not to be contained in $\stS^{m}(\R^n; \cA_\theta)$, and so it must be a symbol of this class. The proof is complete. 
\end{proof}

\begin{remark}
 Even when $\theta=0$ Proposition~\ref{prop:toroidal.rhok-rohxim} seems to be new. 
\end{remark}

Recall that  $\cS(\R^n; \cA_\theta)=\bigcap \stS^m(\R^n;\cA_\theta)$ and $\cS(\Z^n; \cA_\theta) = \bigcap \stS^m(\Z^n;\cA_\theta)$ (\emph{cf}.\ Remark~\ref{rmk:Symbols.amplitudes-intersection} and Remark~\ref{rmk:toroidal.symbols-intersection}). Therefore, specializing Corollary~\ref{cor:toroidal.SR-SZ-m} and Proposition~\ref{prop:toroidal.rhok-rohxim} to symbols of order~$-\infty$ leads us to the following characterization of these symbols.

\begin{corollary}\label{cor:toroidal.rhok-rho-cS}
 Let $\rho(\xi)\in\stS^{+\infty}(\R^n;\cA_\theta)$. Then $\rho(\xi)\in \cS(\R^n; \cA_\theta)$ if and only if $(\rho(k))_{k\in \Z^n}\in \cS(\Z^n; \cA_\theta)$. 
\end{corollary}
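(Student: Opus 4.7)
The plan is to deduce the corollary directly from the two preceding results (Corollary~\ref{cor:toroidal.SR-SZ-m} and Proposition~\ref{prop:toroidal.rhok-rohxim}) by intersecting over all orders $m\in\R$. The two key identities I will use are the characterizations of Schwartz-class maps/sequences as intersections of symbol classes, namely
\begin{equation*}
\cS(\R^n;\cA_\theta) = \bigcap_{m\in\R}\stS^m(\R^n;\cA_\theta), \qquad \cS(\Z^n;\cA_\theta) = \bigcap_{m\in\R}\stS^m(\Z^n;\cA_\theta),
\end{equation*}
which are recorded in Remark~\ref{rmk:Symbols.amplitudes-intersection} and Remark~\ref{rmk:toroidal.symbols-intersection}, respectively.

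For the forward direction, suppose $\rho(\xi)\in\cS(\R^n;\cA_\theta)$. Then $\rho(\xi)\in\stS^m(\R^n;\cA_\theta)$ for every $m\in\R$, so Corollary~\ref{cor:toroidal.SR-SZ-m} yields $(\rho(k))_{k\in\Z^n}\in\stS^m(\Z^n;\cA_\theta)$ for every $m\in\R$. Intersecting over $m$ gives $(\rho(k))_{k\in\Z^n}\in\cS(\Z^n;\cA_\theta)$.

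For the converse, suppose $\rho(\xi)\in\stS^{+\infty}(\R^n;\cA_\theta)$ and $(\rho(k))_{k\in\Z^n}\in\cS(\Z^n;\cA_\theta)$. The second assumption means that $(\rho(k))_{k\in\Z^n}\in\stS^m(\Z^n;\cA_\theta)$ for every $m\in\R$. Since $\rho(\xi)\in\stS^{+\infty}(\R^n;\cA_\theta)$, the hypothesis of Proposition~\ref{prop:toroidal.rhok-rohxim} is satisfied, and so it provides $\rho(\xi)\in\stS^m(\R^n;\cA_\theta)$ for each $m\in\R$. Intersecting over $m$ yields $\rho(\xi)\in\cS(\R^n;\cA_\theta)$.

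There is no real obstacle here; the hypothesis $\rho(\xi)\in\stS^{+\infty}(\R^n;\cA_\theta)$ is precisely what is needed so that Proposition~\ref{prop:toroidal.rhok-rohxim} can be invoked in the backward direction (it cannot be dropped, since a priori a smooth map need not be a symbol of any finite order, and the toroidal data only constrain $\rho$ on the lattice $\Z^n$). With that hypothesis in place, the statement is a one-line corollary.
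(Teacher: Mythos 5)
Your proof is correct and matches the paper's argument: both specialize Corollary~\ref{cor:toroidal.SR-SZ-m} and Proposition~\ref{prop:toroidal.rhok-rohxim} to each order $m$ and then intersect, using the identities $\cS(\R^n;\cA_\theta)=\bigcap_m\stS^m(\R^n;\cA_\theta)$ and $\cS(\Z^n;\cA_\theta)=\bigcap_m\stS^m(\Z^n;\cA_\theta)$. Your remark on why the hypothesis $\rho(\xi)\in\stS^{+\infty}(\R^n;\cA_\theta)$ cannot be dropped is also well placed.
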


Further specializing Corollary~\ref{cor:toroidal.rhok-rho-cS} to the case where $\rho=0$ on $\Z^n$ we get the following statement, which was used in the proof of Proposition~\ref{prop:PsiDOs.vanishing-Prho}. 

\begin{corollary}\label{cor:toroidal.rhok0-cS}
  Let $\rho(\xi)\in \stS^{+\infty}(\R^n;\cA_\theta)$ be such that $\rho(k)=0$ for all $k\in \Z^n$. Then $\rho(\xi)\in \cS(\R^n; \cA_\theta)$. 
\end{corollary}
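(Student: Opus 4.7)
The plan is to reduce to Corollary~\ref{cor:toroidal.rhok-rho-cS} (or, equivalently, to Proposition~\ref{prop:toroidal.rhok-rohxim}), since the hypothesis $\rho(k)=0$ for all $k\in\Z^n$ makes the restricted sequence trivially Schwartz-class. The entire argument is a specialization, so almost no new work is required.

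First I would observe that the zero sequence $(\rho(k))_{k\in\Z^n}=0$ satisfies, for every $m\in\R$ and every pair of multi-orders $\alpha,\beta\in\N_0^n$, the estimate
\begin{equation*}
\bigl\|\delta^\alpha \Delta^\beta \rho(k)\bigr\|=0\leq C_{\alpha\beta}(1+|k|)^{m-|\beta|}\qquad\forall k\in\Z^n,
\end{equation*}
with any constant $C_{\alpha\beta}>0$ (because $\delta^\alpha$ and $\Delta^\beta$ annihilate the zero sequence, and $\rho(k+e_{i_1}+\cdots+e_{i_j})=0$ for every point in $\Z^n$). Thus $(\rho(k))_{k\in\Z^n}\in\stS^m(\Z^n;\cA_\theta)$ for every $m\in\R$; equivalently, by Remark~\ref{rmk:toroidal.symbols-intersection}, $(\rho(k))_{k\in\Z^n}\in\cS(\Z^n;\cA_\theta)$.

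Next, I would fix an arbitrary $m\in\R$ and apply Proposition~\ref{prop:toroidal.rhok-rohxim}: since $\rho(\xi)\in\stS^{+\infty}(\R^n;\cA_\theta)$ by hypothesis and the sequence $(\rho(k))_{k\in\Z^n}$ belongs to $\stS^m(\Z^n;\cA_\theta)$ by the previous step, we conclude $\rho(\xi)\in\stS^m(\R^n;\cA_\theta)$. Because $m\in\R$ was arbitrary, we obtain
\begin{equation*}
\rho(\xi)\in\bigcap_{m\in\R}\stS^m(\R^n;\cA_\theta)=\cS(\R^n;\cA_\theta),
\end{equation*}
where the last equality is Remark~\ref{rmk:Symbols.amplitudes-intersection}. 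Alternatively, one may invoke Corollary~\ref{cor:toroidal.rhok-rho-cS} directly in a single step, skipping the intermediate fixed-$m$ formulation.

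There is no genuine obstacle in this proof: all the analytic content lives in Proposition~\ref{prop:toroidal.rhok-rohxim} (and, underneath it, Lemma~\ref{lem:toroidal.finite-diff-partial-derivatives} together with Peetre's inequality), which have already been established earlier in the section. The present corollary is purely a specialization to the vanishing case, and the only thing to verify is the trivial observation that the zero sequence satisfies the toroidal symbol estimates of every order.
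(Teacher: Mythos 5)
Your proof is correct and follows the paper's own route: the paper obtains this corollary by specializing Corollary~\ref{cor:toroidal.rhok-rho-cS} to the case $\rho=0$ on $\Z^n$, which is exactly the one-line alternative you mention at the end. Your "fixed $m$, then intersect" formulation is just an unfolding of that same argument and adds no new content.
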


\subsection{From toroidal symbols to standard symbols} 
As mentioned above, it follows from Proposition~\ref{prop:PsiDOs.Prhou-equation} and Corollary~\ref{cor:toroidal.SR-SZ-m} that any standard \psido\ is a toroidal \psido. We shall now establish that, conversely, any toroidal \psido\ is a standard \psido, and so we do not get new operators by considering toroidal \psidos. The bulk of the proof is showing that any toroidal symbol seen as a function on $\Z^n$ can be extended into a standard symbol defined on all $\R^n$. When $\theta=0$ this result is part 
of~\cite[Theorem~4.5.3]{RT:Birkhauser10}. Our approach is a mere elaboration of the arguments of~\cite{RT:Birkhauser10}. The result is also mentioned in~\cite{LNP:TAMS16}. We shall however give a detailed account for reader's convenience.

\begin{lemma}[{\cite[Lemma 4.5.1]{RT:Birkhauser10}}] \label{lem:toroidal.phi}There exists a function $\phi(\xi)\in \cS(\Rn)$ such that
\begin{enumerate}
\item[(i)] $\phi(0)=1$ and $\phi(k)=0$ for all $k\in \Z^n\setminus 0$. 

\item[(ii)] For every multi-order $\alpha$, there is $\phi_\alpha(\xi)\in \cS(\Rn)$ such that $\partial_\xi^\alpha \phi(\xi) =\overline{\Delta}^\alpha \phi_\alpha(\xi)$. 
\end{enumerate}
\end{lemma}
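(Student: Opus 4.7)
\medskip

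\noindent\textbf{Proof proposal.}
The plan is to reduce to dimension one by taking tensor products and then to construct a 1D building block $\psi \in \cS(\R)$ satisfying $\psi(0)=1$, $\psi(k)=0$ for $k\in\Z\setminus 0$, and having $\hat\psi$ supported in $(-2\pi,2\pi)$. The compact Fourier support is the non-obvious ingredient and is what makes property (ii) work via a Fourier-multiplier argument. Setting $\phi(\xi):=\prod_{j=1}^n\psi(\xi_j)$ gives (i) immediately, and reduces (ii) to a symbol computation on the Fourier side.

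First I would construct $\psi$. Pick $\chi\in C_c^{\infty}(\R)$ with $\chi\geq 0$, $\supp\chi\subset[-3\pi/2,3\pi/2]$ and $\chi>0$ on $[-\pi,\pi]$. Then $\Sigma(x):=\sum_{m\in\Z}\chi(x+2\pi m)$ is smooth, $2\pi$-periodic and strictly positive, so
\begin{equation*}
\hat\psi(x):=\frac{\chi(x)}{\Sigma(x)}\in C_c^{\infty}(\R),\qquad \supp\hat\psi\subset\left[-\tfrac{3\pi}{2},\tfrac{3\pi}{2}\right]\subset(-2\pi,2\pi),
\end{equation*}
and by construction $\sum_{m\in\Z}\hat\psi(x+2\pi m)=1$ identically. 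Let $\psi\in\cS(\R)$ be the inverse Fourier transform of $\hat\psi$. By Poisson summation,
\begin{equation*}
\sum_{k\in\Z}\psi(k)\,e^{-ik\xi}=\sum_{m\in\Z}\hat\psi(\xi+2\pi m)=1,
\end{equation*}
and uniqueness of Fourier coefficients gives $\psi(k)=\delta_{k,0}$. This proves~(i) for $\phi(\xi)=\prod_j\psi(\xi_j)$.

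For (ii), I would define $\phi_\alpha$ via its Fourier transform
\begin{equation*}
\hat\phi_\alpha(x):=\prod_{j=1}^n F(x_j)^{\alpha_j}\,\hat\phi(x),\qquad F(y):=\frac{iy}{1-e^{-iy}}.
\end{equation*}
The function $F$ is holomorphic at $y=0$ (with $F(0)=1$) and has simple poles exactly at $y\in 2\pi(\Z\setminus 0)$. Since $\hat\phi=\prod_j\hat\psi(x_j)$ has support contained in $(-2\pi,2\pi)^n$, the product $\hat\phi_\alpha$ is smooth with compact support; hence $\phi_\alpha\in\cS(\R^n)$. The backward-difference operator $\overline{\Delta}_j$ has Fourier symbol $1-e^{-ix_j}$ (since $\widehat{\overline{\Delta}_j f}(x)=(1-e^{-ix_j})\hat f(x)$), so
\begin{equation*}
\widehat{\overline{\Delta}^\alpha\phi_\alpha}(x)=\prod_{j=1}^n(1-e^{-ix_j})^{\alpha_j}\hat\phi_\alpha(x)=(ix)^\alpha\hat\phi(x)=\widehat{\partial_\xi^\alpha\phi}(x),
\end{equation*}
and Fourier inversion gives $\partial_\xi^\alpha\phi=\overline{\Delta}^\alpha\phi_\alpha$.

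The main conceptual obstacle is identifying the correct requirement on $\hat\phi$: the formal inverse of $\overline{\Delta}^\alpha$ is the Fourier multiplier $\prod_j F(x_j)^{\alpha_j}$, whose singularities on $2\pi\Z^n\setminus 0$ force $\hat\phi$ to vanish there to infinite order. Compact support in $(-2\pi,2\pi)^n$ is the cleanest way to ensure this, and remarkably the same compact-support condition, via Poisson summation, is precisely what lets one arrange $\psi(k)=\delta_{k,0}$ by normalizing a bump function by its periodization. Once these two requirements are recognized as compatible, the construction and the verification are routine.
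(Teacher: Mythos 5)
Your proof is correct and takes essentially the same approach as the cited source (Ruzhansky--Turunen, Lemma~4.5.1, which the paper attributes to Yves Meyer); the paper does not reproduce the proof, but its remark following the lemma describes the same recipe, namely taking $\phi$ to be the Fourier transform of a smooth bump supported in $(-2\pi,2\pi)^n$ whose $2\pi\Z^n$-periodization is constant, so that (i) is Poisson summation and (ii) amounts to dividing by $\prod_j(1-e^{-ix_j})^{\alpha_j}$ on the Fourier side, which is legitimate because $\hat\phi$ vanishes outside $(-2\pi,2\pi)^n$ and $ix_j/(1-e^{-ix_j})$ extends smoothly across $x_j=0$. Your $\chi/\Sigma$ normalization of the one-dimensional building block is a minor variant of the paper's explicit choice of an even $\theta_1$ with $\theta_1(t)+\theta_1(2\pi-t)=(2\pi)^{-1}$; both enforce the same constant-periodization property.
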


\begin{remark}
 The idea of this lemma is due to Yves Meyer (see~\cite[page 4]{Da:Springer92}).
\end{remark}

\begin{remark}
Our convention for the Fourier transform differs from that in~\cite{RT:Birkhauser10}. Thus, we get a function $\phi(\xi)$ satisfying the conditions of the lemma as the Fourier transform 
$\phi(\xi)=\hat{\theta}(\xi)$, where $\theta(x)=\theta_1(x_1) \cdots \theta_1(x_n)$ and $\theta_1(t)$ is a smooth even function on $\R$ with support in $(-2\pi, 2\pi)$ such that $\theta_1(t)+\theta_1(2\pi-t)=(2\pi)^{-1}$ on $[0,2\pi]$. 
\end{remark}

We will also need the following summation by parts formula. 

\begin{lemma}
 Let $(u_k)_{k\in \Z^n} \in \cS(\Z^n; \cA_\theta)$ and $(\rho_k)_{k\in \Z^n} \in \stS^m(\Z^n; \cA_\theta)$, $m\in \R$. Then, for all $\alpha \in \N_0^n$, we have 
\begin{equation}
\sum_{k\in \Z^n}(\overline{\Delta}^\alpha u_k) \rho_k = (-1)^{|\alpha|} \sum_{k\in \Z^n}  u_k  \Delta^\alpha\rho_k.
\label{eq:toroidal.sum-parts} 
\end{equation}
\end{lemma}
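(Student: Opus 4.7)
The plan is a standard summation-by-parts argument, with the main work being a careful justification of the reindexing.

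First I would check that both sides are well-defined as absolutely convergent series in $\cA_\theta$. Since $(u_k)\in\cS(\Z^n;\cA_\theta)$ and $(\rho_k)\in\stS^m(\Z^n;\cA_\theta)$, for every multi-order $\beta$ and every $N\in\N_0$ there is a constant $C_{N\beta}>0$ with $\|\delta^\beta u_k\|\leq C_{N\beta}(1+|k|)^{-N}$ and $\|\delta^\beta\rho_k\|\leq C_\beta(1+|k|)^m$. Using the Leibniz rule~\eqref{eq:NCtori-uv-Leibniz}, these bounds imply that $\sum_{k\in\Z^n}\|\delta^\beta(u_k\rho_k)\|<\infty$ for every $\beta$, so the series $\sum u_k\rho_k$ converges in the Fr\'echet topology of $\cA_\theta$. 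The same estimates, combined with the fact that $(\rho_{k+e_i})_{k\in\Z^n}\in\stS^m(\Z^n;\cA_\theta)$ (which follows from the very definition of the class) and $(u_{k-e_i})_{k\in\Z^n}\in\cS(\Z^n;\cA_\theta)$ (which follows from Peetre's inequality, Lemma~\ref{lem:Peetre}, applied to the rapid decay estimates), ensure absolute convergence of every reindexed series appearing below.

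Next I would reduce to the case $|\alpha|=1$ by induction on $|\alpha|$. As the operators $\Delta_1,\ldots,\Delta_n$ pairwise commute and the operators $\overline{\Delta}_1,\ldots,\overline{\Delta}_n$ pairwise commute, it is enough to verify that for any $(u_k)\in\cS(\Z^n;\cA_\theta)$, $(\rho_k)\in\stS^m(\Z^n;\cA_\theta)$ and any $i\in\{1,\ldots,n\}$,
\begin{equation*}
\sum_{k\in\Z^n}(\overline{\Delta}_iu_k)\rho_k=-\sum_{k\in\Z^n}u_k\Delta_i\rho_k;
\end{equation*}
iterating this identity and using that $\cS(\Z^n;\cA_\theta)$ and $\stS^m(\Z^n;\cA_\theta)$ are preserved by the respective difference operators yields the case of general $\alpha$, picking up the factor $(-1)^{|\alpha|}$.

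For the base case I would simply write
\begin{equation*}
\sum_{k\in\Z^n}(\overline{\Delta}_iu_k)\rho_k=\sum_{k\in\Z^n}u_k\rho_k-\sum_{k\in\Z^n}u_{k-e_i}\rho_k,
\end{equation*}
which is legitimate by the absolute convergence established in the first step, and then perform the change of summation index $k\mapsto k+e_i$ in the second sum to obtain $\sum_k u_k\rho_{k+e_i}$. Combining the two sums gives $-\sum_k u_k(\rho_{k+e_i}-\rho_k)=-\sum_k u_k\Delta_i\rho_k$, as required. The main obstacle is purely bookkeeping: ensuring that the termwise reindexing is legal in the Fr\'echet topology of $\cA_\theta$, which reduces to checking absolute convergence of the reindexed series in each seminorm $\|\delta^\beta(\cdot)\|$. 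Since this follows at once from the Schwartz-class decay of $(u_k)$ together with the polynomial growth of $(\rho_k)$, no serious analytic difficulty arises.
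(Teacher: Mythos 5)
Your proof is correct and follows the same route as the paper's: telescope $\overline{\Delta}_i u_k = u_k - u_{k-e_i}$, reindex $k\mapsto k+e_i$ in the second sum, and iterate by induction on $|\alpha|$. You simply spell out the absolute-convergence justification for the reindexing more explicitly than the paper, which just refers to the scalar case in~\cite{RT:Birkhauser10}.
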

\begin{proof}
The proof is the same as in the scalar case (see~\cite[Lemma 3.3.10]{RT:Birkhauser10}).  For $i=1,\ldots, n$, we have 
\begin{equation*}
 \sum_{k\in \Z^n}  (\overline{\Delta}_i u_k) \rho_k  =   \sum_{k\in \Z^n} (u_k-u_{k-e_i})  \rho_k =  \sum_{k\in \Z^n} u_k \rho_k -  \sum_{k\in \Z^n} u_{k-e_i}\rho_k.  
\end{equation*}
The change of index $k\rightarrow k+e_i$ gives
\begin{equation*}
 \sum_{k\in \Z^n}  (\overline{\Delta}_i u_k) \rho_k  =  \sum_{k\in \Z^n} u_k \rho_k -  \sum_{k\in \Z^n} u_{k}\rho_{k+e_i}= - \sum_{k\in \Z^n} u_k \Delta_i\rho_k
\end{equation*}
This proves the result when $|\alpha|=1$. The general result follows by induction. 
\end{proof}

Given some toroidal symbol $(\rho_k)_{k\in \Z^n}\in \stS^m(\Z^n; \cA_\theta)$, $m\in \R$, set 
\begin{equation}
\tilde{\rho}(\xi) = \sum_{k\in \Z^n} \phi(\xi-k) \rho_k, \qquad \xi \in \R^n, 
 \label{eq:toroidal.trho} 
\end{equation}
where $\phi(\xi)\in \cS(\R^n)$ satisfies the properties (i)--(ii) of Lemma~\ref{lem:toroidal.phi}.  Note that the above series converges pointwise, since, for each $\xi\in \R^n$, this is the sum of a series with general term in $\cS(\Z^n;\cA_\theta)$. In addition, given any $k\in \Z^n$, thanks to the property~(i) of Lemma~\ref{lem:toroidal.phi} we have
 \begin{equation}
 \tilde{\rho}(k)= \sum_{\ell \in \Z^n} \phi(k-\ell) \rho_{\ell}=  \sum_{\ell \in \Z^n} \delta_{\ell,k} \rho_{\ell}=\rho_k.
 \label{eq:toroidal.trhoZn} 
\end{equation}
 
 \begin{lemma}[see also~\cite{LNP:TAMS16, RT:Birkhauser10}] \label{lem:toroidal.trho-symbol}
The map $\tilde{\rho}(\xi)$ defined by~(\ref{eq:toroidal.trho}) is a symbol in $\stS^m(\R^n; \cA_\theta)$. 
\end{lemma}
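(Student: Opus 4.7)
The plan is to establish the symbol estimates for $\tilde\rho(\xi)$ by first justifying termwise differentiation, then trading $\xi$-derivatives of $\phi$ for forward differences of $\rho_k$ via property~(ii) and the summation-by-parts identity~(\ref{eq:toroidal.sum-parts}), and finally applying Peetre's inequality together with the rapid decay of $\phi_\alpha$.

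First I would verify that the series~(\ref{eq:toroidal.trho}) converges in $\cA_\theta$ and can be differentiated termwise. For any multi-orders $\alpha,\beta\in\N_0^n$ and any $N\in \N_0$, the rapid decay of $\phi$ yields a constant $C_N$ with $|\partial^\alpha\phi(\xi-k)|\le C_N(1+|\xi-k|)^{-N}$, while the toroidal-symbol estimate on $(\delta^\beta\rho_k)$ gives $\|\delta^\beta\rho_k\|\le C(1+|k|)^m$. Thus
\begin{equation*}
\sum_{k\in\Z^n}\left\|(\partial_\xi^\alpha\phi)(\xi-k)\,\delta^\beta\rho_k\right\|\le CC_N\sum_{k\in\Z^n}(1+|\xi-k|)^{-N}(1+|k|)^m,
\end{equation*}
and for $N$ large enough the right-hand side is finite and locally bounded in $\xi$. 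Since $\cA_\theta$ is a Fr\'echet space and $\delta^\beta$ is continuous, I can pass $\delta^\beta$ and $\partial_\xi^\alpha$ under the sum, obtaining
\begin{equation*}
\delta^\beta\partial_\xi^\alpha\tilde\rho(\xi)=\sum_{k\in\Z^n}(\partial_\xi^\alpha\phi)(\xi-k)\,\delta^\beta\rho_k.
\end{equation*}

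Next comes the key step. Using property~(ii) of Lemma~\ref{lem:toroidal.phi} we have $(\partial_\xi^\alpha\phi)(\eta)=(\overline{\Delta}^\alpha\phi_\alpha)(\eta)$. Writing $\eta=\xi-k$ and noting that the backward difference in $\eta$ corresponds, up to a sign $(-1)^{|\alpha|}$, to the forward difference in the index $k$, we get
\begin{equation*}
(\partial_\xi^\alpha\phi)(\xi-k)=(-1)^{|\alpha|}\Delta^\alpha_k\bigl[\phi_\alpha(\xi-k)\bigr].
\end{equation*}
The absolute summability just established lets me apply the summation-by-parts identity (in the form $\sum(\Delta^\alpha u_k)\rho_k=(-1)^{|\alpha|}\sum u_k\overline{\Delta}^\alpha\rho_k$, proved exactly as in~(\ref{eq:toroidal.sum-parts})) with $u_k=\phi_\alpha(\xi-k)$ and the sequence $\delta^\beta\rho_k$, yielding
\begin{equation*}
\delta^\beta\partial_\xi^\alpha\tilde\rho(\xi)=\sum_{k\in\Z^n}\phi_\alpha(\xi-k)\,\overline{\Delta}^\alpha\delta^\beta\rho_k.
\end{equation*}
This is the crucial gain in decay: each $\overline{\Delta}^\alpha\delta^\beta\rho_k$ is now $\op{O}\bigl((1+|k|)^{m-|\alpha|}\bigr)$ since $(\rho_k)\in\stS^m(\Z^n;\cA_\theta)$.

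Finally, I estimate the resulting sum. Choose $C$ so that $\|\overline{\Delta}^\alpha\delta^\beta\rho_k\|\le C(1+|k|)^{m-|\alpha|}$ for all $k$. By Peetre's inequality~(\ref{eq:Peetre}),
\begin{equation*}
(1+|k|)^{m-|\alpha|}\le(1+|\xi-k|)^{|m-|\alpha||}(1+|\xi|)^{m-|\alpha|},
\end{equation*}
and since $\phi_\alpha\in\cS(\R^n)$ the sum $\sum_{k\in\Z^n}|\phi_\alpha(\xi-k)|(1+|\xi-k|)^{|m-|\alpha||}$ is bounded independently of $\xi$ (it is dominated by $\sup_{\eta}|\phi_\alpha(\eta)|(1+|\eta|)^{|m-|\alpha||+n+1}$ times $\sum_k(1+|\xi-k|)^{-n-1}$, a uniformly convergent lattice sum). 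Combining these yields
\begin{equation*}
\left\|\delta^\beta\partial_\xi^\alpha\tilde\rho(\xi)\right\|\le C_{\alpha\beta}(1+|\xi|)^{m-|\alpha|},
\end{equation*}
which is exactly the estimate defining membership in $\stS^m(\R^n;\cA_\theta)$. The main obstacle is the sharp $|\alpha|$-loss in the $\xi$-derivatives: a naive estimate from the termwise formula $\sum(\partial_\xi^\alpha\phi)(\xi-k)\rho_k$ would only yield the growth $(1+|\xi|)^m$, and recovering the correct order is exactly what forces the use of property~(ii) of $\phi$ together with the summation-by-parts transfer of the derivative onto $\rho_k$.
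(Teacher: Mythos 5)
Your proposal follows essentially the same route as the paper: justify termwise differentiation of the series, invoke property~(ii) of Lemma~\ref{lem:toroidal.phi} together with the summation-by-parts identity to trade the $\xi$-derivative of $\phi$ for a difference operator acting on $\rho_k$, and conclude with Peetre's inequality and the uniform bound on the lattice periodization of $\phi_\alpha$. The one step that deserves an explicit line is your assertion that $\overline{\Delta}^\alpha\delta^\beta\rho_k=\op{O}\bigl((1+|k|)^{m-|\alpha|}\bigr)$ ``since $(\rho_k)\in\stS^m(\Z^n;\cA_\theta)$'': the class $\stS^m(\Z^n;\cA_\theta)$ is defined via \emph{forward} differences $\Delta^\alpha$, so this is not literally the defining estimate; you should note that $\overline{\Delta}^\alpha\rho_k=\Delta^\alpha\rho_{k-\alpha}$ and then apply Peetre's inequality to pass from $(1+|k-\alpha|)^{m-|\alpha|}$ to $(1+|k|)^{m-|\alpha|}$. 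This point arises precisely because you carefully tracked the sign and direction change when converting the backward difference of the function $\phi_\alpha$ at $\eta=\xi-k$ into a difference of the index sequence $k\mapsto\phi_\alpha(\xi-k)$ (which flips backward to forward); the paper's display instead lands directly on $\Delta^\beta\rho_k$ and so does not need the remark, but either variant gives the required decay and the two proofs are otherwise identical.
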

\begin{proof}
Let us first show that $\tilde{\rho}(\xi)\in C^\infty(\R^n; \cA_\theta)$.  Let $\alpha,\beta\in \N_0^n$. By the property~(ii) of Lemma~\ref{lem:toroidal.phi} there is $\phi_\beta(\xi)\in \cS(\R^n)$ such that 
\begin{equation}
 \delta^\alpha \partial_\xi^\beta \big( \phi(\xi-k) \rho_k\big)  = (\partial_\xi^\beta \phi)(\xi-k) \delta^\alpha \rho_k \\
  = (\overline{\Delta}^\beta \phi_\beta)(\xi-k)\delta^\alpha \rho_k. 
  \label{eq:toroidal.delta-part-phirho}
\end{equation}

As $\overline{\Delta}^\beta \phi_\beta\in \cS(\R^n)$ given any $N>0$ there is a constant $C_{N\beta}>0$ such that
\begin{equation*}
 |\overline{\Delta}^\beta \phi_\beta (\eta)|\leq C_{N\beta} (1+|\eta|)^{-N} \qquad \forall \eta \in \R^n. 
\end{equation*}
Combining this with Peetre's inequality~(\ref{eq:Peetre}) we see that, for all $\xi \in \R^n$ and $k\in \Z^n$, we have 
\begin{equation*}
 \big|(\overline{\Delta}^\beta \phi_\beta)(\xi-k)\big| \leq C_{N\beta} (1+|\xi-k|)^{-N} \leq C_{N\beta} (1+|\xi|)^{N} (1+|k|)^{-N}. 
\end{equation*}
In addition, the fact that $(\rho_k)_{k\in \Z^n}\in \stS^m(\Z^n; \cA_\theta)$ implies that $ \|\delta^\alpha \rho_k\|=\op{O}(|k|^m)$ as $|k|\rightarrow \infty$. Combining this with~(\ref{eq:toroidal.delta-part-phirho}) we deduce there is a constant $C_{N\alpha \beta}>0$ such that, for all $\xi \in \R^n$ and $k\in \Z^n$, we have
\begin{equation*}
\big\|  \delta^\alpha \partial_\xi^\beta \big( \phi(\xi-k) \rho_k\big) \big\| = \big| (\overline{\Delta}^\beta \phi_\beta)(\xi-k)\big| \big\|\delta^\alpha \rho_k\big\| \leq 
C_{N\alpha\beta} (1+|\xi|)^N (1+|k|)^{m-N}. 
\end{equation*}
If we choose $N$ large enough then we see that the series $\sum  \delta^\alpha \partial_\xi^\beta ( \phi(\xi-k) \rho_k)$ converges normally with respect to the norm $\|\cdot\|$ and uniformly on bounded sets in $\R^n$. It then follows that the series $\sum  \phi(\xi-k) \rho_k$ converges in $C^\infty(\R^n;\cA_\theta)$, and so its sum $\tilde{\rho}(\xi)$ is contained in $C^\infty(\R^n; \cA_\theta)$. 

The convergence in $C^\infty(\R^n;\cA_\theta)$ allows us to term-wise differentiate the series  $\sum  \phi(\xi-k) \rho_k$. Thus, by using~(\ref{eq:toroidal.delta-part-phirho}) and the summation by parts~(\ref{eq:toroidal.sum-parts}) we see that  $\delta^\alpha \partial_\xi^\beta \tilde{\rho}(\xi) $ is equal to
\begin{equation}
  \sum_{k\in \Z^n} \delta^\alpha \partial_\xi^\beta \big( \phi(\xi-k) \rho_k\big) 
 =  \sum_{k\in \Z^n} (\overline{\Delta}^\beta \phi_\beta)(\xi-k)\delta^\alpha \rho_k=  \sum_{k\in \Z^n}(-1)^{|\beta|} \phi_\beta(\xi-k)\delta^\alpha \Delta^\beta \rho_k. 
 \label{eq:toroidal.delta-part-trho}
 \end{equation}
As $(\rho_k)_{k\in \Z^n}\in \stS^m(\Z^n;\cA_\theta)$, by using Peetre's inequality~(\ref{eq:Peetre}) once again we see there is a constant $C_{\alpha\beta}>0$ such that, for all $\xi \in \R^n$ and $k \in \Z^n$, we have 
\begin{equation*}
 \big\| \delta^\alpha \Delta^\beta \rho_k\big\|  \leq C_{\alpha\beta}(1+|k|)^{m-|\beta|} \leq  C_{\alpha\beta}(1+|\xi-k|)^{|m-|\beta||}(1+|\xi|)^{m-|\beta|}. 
\end{equation*}
 Combining this with~(\ref{eq:toroidal.delta-part-trho}) we get
\begin{equation*}
\big\|  \delta^\alpha \partial_\xi^\beta \tilde{\rho}(\xi)\big\| \leq \sum_{k\in \Z^n} \big|  \phi_\beta(\xi-k)\big| \big\| \delta^\alpha\Delta^\beta \rho_k\big\|  \leq C_{\alpha\beta} 
\sum_{k\in \Z^n} \big|  \phi_\beta(\xi-k)\big| (1+|\xi-k|)^{|m-|\beta||}(1+|\xi|)^{m-|\beta|}.  
\end{equation*}
Note that $\sum_{k\in \Z^n} \big|  \phi_\beta(\xi-k)\big| (1+|\xi-k|)^{|m-|\beta||}$ is the periodization of the rapid decay continuous function 
$\phi(\eta)(1+|\eta|)^{|m-|\beta||}$, 
and so this is a bounded function. Thus,  there is a constant $C_{\alpha\beta}>0$ such that
\begin{equation*}
\big\|  \delta^\alpha \partial_\xi^\beta \tilde{\rho}(\xi)\big\| \leq C_{\alpha \beta} (1+|\xi|)^{m-|\beta|} \qquad \forall \xi\in \R^n. 
\end{equation*}
This shows that $\tilde{\rho}(\xi)$ is a symbol in $\stS^m(\R^n;\cA_\theta)$. The proof is complete. 
\end{proof}

\begin{remark}
The construction of $\tilde{\rho}(\xi)$ is given in the proof of Theorem~4.5.3 of~\cite{RT:Birkhauser10} in the case $\theta=0$.  This construction is extended to the case $\theta\neq 0$ in~\cite{LNP:TAMS16} (see also~\cite{GJP:MAMS17}). However, we note that in~\cite{GJP:MAMS17, LNP:TAMS16, RT:Birkhauser10} there are no justifications of the term-wise differentiation of the series $\sum \phi(\xi-k)\rho_k$. This justification is necessary in order to perform the key step~(\ref{eq:toroidal.delta-part-trho}). The proof above fixes this oversight. 
\end{remark}

\begin{remark}\label{rmk:toroidal.continuity-extension}
 The extension map $(\rho_k)_{k\in \Z^n}\rightarrow \tilde{\rho}(\xi)$ is a continuous linear map from $\stS^m(\Z^n; \cA_\theta)$ to $\stS^m(\R^n; \cA_\theta)$. This can be deduced from a close examination of the arguments of the proof above. Alternatively, given any $\xi_0\in \R^n$, the map 
 $(\rho_k)_{k\in \Z^n}\rightarrow \tilde{\rho}(\xi_0)$ is a continuous linear map from $\stS^m(\Z^n; \cA_\theta)$ to $\cA_\theta$. This allows us to show that the graph of the extension map is a closed set of $\stS^m(\Z^n; \cA_\theta)\times \stS^m(\R^n; \cA_\theta)$, and hence we have a continuous linear map by the closed graph theorem. \end{remark}

We are now in a position to prove the following result. 

\begin{proposition}[\cite{LNP:TAMS16, RT:Birkhauser10}] \label{prop:toroidal.extension-toroidal}
 Let $(\rho_k)_{k\in \Z^n}$ be a toroidal symbol in $\stS^m(\Z^n; \cA_\theta)$. Then there is a standard symbol $\tilde{\rho}(\xi)\in \stS^m(\R^n;\cA_\theta)$ such that 
 $\tilde{\rho}(k)=\rho_k$ for all $k\in \Z^n$. Moreover, this symbol is unique modulo $\cS(\R^n; \cA_\theta)$. 
\end{proposition}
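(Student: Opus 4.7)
The plan is that this proposition is essentially already handled by the pieces built up just before its statement, so the proof should be short and almost bookkeeping. I would split it cleanly into existence and uniqueness.

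For existence, I would simply invoke the explicit construction from Lemma~\ref{lem:toroidal.trho-symbol}. That lemma produces, for any $(\rho_k)_{k\in\Z^n}\in\stS^m(\Z^n;\cA_\theta)$, the map
\begin{equation*}
\tilde{\rho}(\xi)=\sum_{k\in\Z^n}\phi(\xi-k)\rho_k,
\end{equation*}
shows that $\tilde{\rho}(\xi)\in\stS^m(\R^n;\cA_\theta)$, and the identity~(\ref{eq:toroidal.trhoZn}) records that $\tilde{\rho}(k)=\rho_k$ for every $k\in\Z^n$, using the interpolation property (i) of the Meyer-type function $\phi$ in Lemma~\ref{lem:toroidal.phi}. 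So existence is immediate.

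For uniqueness, suppose $\tilde{\rho}_1(\xi)$ and $\tilde{\rho}_2(\xi)$ are two symbols in $\stS^m(\R^n;\cA_\theta)$ whose restrictions to $\Z^n$ coincide with the given toroidal symbol. Then the difference $\sigma(\xi):=\tilde{\rho}_1(\xi)-\tilde{\rho}_2(\xi)$ is a symbol in $\stS^m(\R^n;\cA_\theta)\subset\stS^{+\infty}(\R^n;\cA_\theta)$ vanishing identically on $\Z^n$. Corollary~\ref{cor:toroidal.rhok0-cS} then forces $\sigma(\xi)\in\cS(\R^n;\cA_\theta)$, which is exactly the claimed uniqueness modulo $\cS(\R^n;\cA_\theta)$.

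There is no real obstacle here: all the substance is in Lemma~\ref{lem:toroidal.trho-symbol} (which handles the delicate convergence and semi-norm estimates for $\tilde\rho$) and in Corollary~\ref{cor:toroidal.rhok0-cS} (which itself rests on Proposition~\ref{prop:toroidal.rhok-rohxim}). The proposition I am asked to prove is just the packaging of these two results into a statement about extending toroidal symbols to standard ones. If anything is worth saying beyond the invocations above, it is perhaps a remark that the extension map $(\rho_k)\mapsto\tilde{\rho}$ is continuous from $\stS^m(\Z^n;\cA_\theta)$ into $\stS^m(\R^n;\cA_\theta)$, as noted in Remark~\ref{rmk:toroidal.continuity-extension}, but this is not part of the statement and need not be reproved here.
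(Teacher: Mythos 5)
Your proof is correct and follows exactly the same route as the paper: existence via the construction of $\tilde{\rho}$ in Lemma~\ref{lem:toroidal.trho-symbol} together with the interpolation identity~(\ref{eq:toroidal.trhoZn}), and uniqueness by applying Corollary~\ref{cor:toroidal.rhok0-cS} to the difference of two extensions. You merely spell out the uniqueness step a bit more explicitly than the paper's one-line justification.
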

\begin{proof}
 The existence follows from~(\ref{eq:toroidal.trhoZn}) and Lemma~\ref{lem:toroidal.trho-symbol}. The unicity is a consequence of Corollary~\ref{cor:toroidal.rhok0-cS}.\end{proof}

In the special of symbols of order~$-\infty$ we actually have the following statement. 

\begin{proposition}\label{prop:toroidal.extension-cS}
  Let $(\rho_k)_{k\in \Z^n}\in \cS(\Z^n; \cA_\theta)$. Then there is $\tilde{\rho}(\xi)\in \cS(\R^n;\cA_\theta)$ such that 
 $\tilde{\rho}(k)=\rho_k$ for all $k\in \Z^n$.
\end{proposition}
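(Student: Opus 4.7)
The plan is to reuse the explicit extension $\tilde{\rho}(\xi)=\sum_{k\in \Z^n}\phi(\xi-k)\rho_k$ from~(\ref{eq:toroidal.trho}), where $\phi$ is the Schwartz function from Lemma~\ref{lem:toroidal.phi}, and observe that this construction automatically lands in $\cS(\Rn;\cA_\theta)$ when the input lies in $\cS(\Z^n;\cA_\theta)$. The basic idea is that everything factors through the chain of inclusions: $\cS(\Z^n;\cA_\theta)=\bigcap_{m\in\R}\stS^m(\Z^n;\cA_\theta)$ by Remark~\ref{rmk:toroidal.symbols-intersection}, the construction $(\rho_k)\mapsto \tilde{\rho}(\xi)$ maps $\stS^m(\Z^n;\cA_\theta)$ into $\stS^m(\Rn;\cA_\theta)$ for each $m\in\R$ by Lemma~\ref{lem:toroidal.trho-symbol}, and $\bigcap_{m\in\R}\stS^m(\Rn;\cA_\theta)=\cS(\Rn;\cA_\theta)$ by Remark~\ref{rmk:Symbols.amplitudes-intersection}.

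Concretely, I would first note that since $(\rho_k)_{k\in \Z^n}\in\cS(\Z^n;\cA_\theta)$ is contained in $\stS^m(\Z^n;\cA_\theta)$ for \emph{every} $m\in\R$, applying Lemma~\ref{lem:toroidal.trho-symbol} for each such $m$ shows that $\tilde{\rho}(\xi)$ belongs to $\stS^m(\Rn;\cA_\theta)$ for every $m\in\R$. Taking the intersection then yields $\tilde{\rho}(\xi)\in\cS(\Rn;\cA_\theta)$. The interpolation property $\tilde{\rho}(k)=\rho_k$ is already established in~(\ref{eq:toroidal.trhoZn}) as a consequence of property~(i) of Lemma~\ref{lem:toroidal.phi}.

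There is no real obstacle here, since the heavy lifting was carried out in the proof of Lemma~\ref{lem:toroidal.trho-symbol}: the key bound
\begin{equation*}
\|\delta^\alpha\partial_\xi^\beta \tilde{\rho}(\xi)\|\leq \sum_{k\in \Z^n}|\phi_\beta(\xi-k)|\,\|\delta^\alpha \Delta^\beta \rho_k\|
\end{equation*}
combined with the rapid decay of $(\delta^\alpha \Delta^\beta \rho_k)_{k\in \Z^n}\in\cS(\Z^n;\cA_\theta)$ and Peetre's inequality~(\ref{eq:Peetre}) in the form $(1+|k|)^{-N}\leq (1+|\xi|)^{-N}(1+|\xi-k|)^N$ already packages the decay in $\xi$ against the decay in $k$; summing via the periodization of the rapid-decay function $|\phi_\beta(\eta)|(1+|\eta|)^N$ gives the Schwartz bound directly, which is exactly what the abstract inclusion argument above records. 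Hence the proof will only take a few lines, amounting to invoking Lemma~\ref{lem:toroidal.trho-symbol} uniformly in $m$ and then combining Remark~\ref{rmk:toroidal.symbols-intersection} with Remark~\ref{rmk:Symbols.amplitudes-intersection}.
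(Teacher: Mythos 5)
Your proposal is correct and takes essentially the same route as the paper: it fixes the same explicit extension $\tilde{\rho}(\xi)=\sum_{k}\phi(\xi-k)\rho_k$, uses~(\ref{eq:toroidal.trhoZn}) for the interpolation property, and deduces $\tilde{\rho}\in\cS(\Rn;\cA_\theta)$ by applying Lemma~\ref{lem:toroidal.trho-symbol} for every $m\in\R$ together with the two intersection identities from Remark~\ref{rmk:toroidal.symbols-intersection} and Remark~\ref{rmk:Symbols.amplitudes-intersection}.
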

\begin{proof}
Let $\tilde{\rho}(\xi)$ be the symbol defined by~(\ref{eq:toroidal.trho}). We know by~(\ref{eq:toroidal.trhoZn}) that $\tilde{\rho}(k)=\rho_k$ for all $k\in \Z^n$. Moreover, 
as $(\rho_k)_{k\in \Z^n}$ is in $\cS(\Z^n; \cA_\theta)$,  by Remark~\ref{rmk:toroidal.symbols-intersection} it is contained in each space 
$\stS^m(\Z^n; \cA_\theta)$, $m\in \R$. Therefore, by Lemma~\ref{lem:toroidal.trho-symbol} the symbol $\tilde{\rho}(\xi)$ is contained in $\bigcap \stS^m(\R^n; \cA_\theta)= \cS(\R^n;\cA_\theta)$. This proves the result. 
\end{proof}

\begin{remark}
 By arguing as in Remark~\ref{rmk:toroidal.continuity-extension} it can be shown that the graph of the map $\cS(\Z^n;\cA_\theta)\ni (\rho_k)\rightarrow \tilde{\rho}(\xi)\in \cS(\R^n;\cA_\theta)$ is closed in $\cS(\Z^n;\cA_\theta)\times \cS(\R^n;\cA_\theta)$, and hence we get a continuous linear map by the closed graph theorem. 
\end{remark}

When $\theta=0$ it is shown in~\cite{RT:Birkhauser10} that the classes of toroidal and standard \psidos\ on $\T^n$ agree. We are now ready to obtain the analogue of this result when $\theta\neq 0$. Namely, we have the following statement. 

\begin{proposition}\label{prop:toroidal.toroidal=standard}
An operator on $\cA_\theta$ is the (toroidal) \psido\ associated with a symbol in $\stS^m(\Z^n; \cA_\theta)$ if and only if it is a (standard) \psido\ associated with a symbol in $\stS^m(\R^n; \cA_\theta)$. 
\end{proposition}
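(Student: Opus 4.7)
The plan is to establish both implications as essentially immediate consequences of previously established results, namely Proposition~\ref{prop:PsiDOs.Prhou-equation} (the Fourier series formula for $P_\rho$), Corollary~\ref{cor:toroidal.SR-SZ-m} (restriction of a standard symbol to $\Z^n$ is a toroidal symbol) and Proposition~\ref{prop:toroidal.extension-toroidal} (extension of a toroidal symbol to a standard one). No new calculation is really needed; the content of the statement is just the combination of these three results.

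First I would handle the direction \emph{standard $\Rightarrow$ toroidal}. Suppose $P=P_\rho$ with $\rho(\xi)\in\stS^m(\R^n;\cA_\theta)$. By Proposition~\ref{prop:PsiDOs.Prhou-equation}, for every $u=\sum u_kU^k\in\cA_\theta$ we have $P_\rho u=\sum u_k\rho(k)U^k$. By Corollary~\ref{cor:toroidal.SR-SZ-m} the sequence $(\rho(k))_{k\in\Z^n}$ belongs to $\stS^m(\Z^n;\cA_\theta)$. Thus $P$ coincides with the toroidal \psido\ associated with the toroidal symbol $(\rho(k))_{k\in\Z^n}\in\stS^m(\Z^n;\cA_\theta)$.

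For the converse direction \emph{toroidal $\Rightarrow$ standard}, suppose $P$ is the toroidal \psido\ associated with a toroidal symbol $(\rho_k)_{k\in\Z^n}\in\stS^m(\Z^n;\cA_\theta)$, so that $Pu=\sum u_k\rho_kU^k$ for every $u=\sum u_kU^k\in\cA_\theta$. By Proposition~\ref{prop:toroidal.extension-toroidal} there is a standard symbol $\tilde\rho(\xi)\in\stS^m(\R^n;\cA_\theta)$ whose restriction to $\Z^n$ is $(\rho_k)_{k\in\Z^n}$, i.e.\ $\tilde\rho(k)=\rho_k$ for all $k\in\Z^n$. Applying Proposition~\ref{prop:PsiDOs.Prhou-equation} to $\tilde\rho$ we get $P_{\tilde\rho}u=\sum u_k\tilde\rho(k)U^k=\sum u_k\rho_kU^k=Pu$ for every $u\in\cA_\theta$. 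Hence $P=P_{\tilde\rho}$ is a standard \psido\ associated with a symbol in $\stS^m(\R^n;\cA_\theta)$, which completes the proof.

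There is no real obstacle here: all the substantive work was done in Section~\ref{section:Symbols} (Fourier series formula), and in Lemma~\ref{lem:toroidal.finite-diff-partial-derivatives}, Corollary~\ref{cor:toroidal.SR-SZ-m} and Lemma~\ref{lem:toroidal.trho-symbol} (the passage between standard and toroidal symbols via difference/derivative comparison and the Meyer-type extension). One may note that the two directions are, modulo the technical results above, simply dual to one another through the formula $P_\rho u=\sum u_k\rho(k)U^k$.
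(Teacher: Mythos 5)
Your proof is correct and follows exactly the same route as the paper: the forward direction is Proposition~\ref{prop:PsiDOs.Prhou-equation} together with Corollary~\ref{cor:toroidal.SR-SZ-m}, and the converse combines Proposition~\ref{prop:toroidal.extension-toroidal} with Proposition~\ref{prop:PsiDOs.Prhou-equation}. The paper simply compresses the forward direction into a reference to the remark after Corollary~\ref{cor:toroidal.SR-SZ-m}, which you have spelled out explicitly.
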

\begin{proof}
As mentioned above any standard \psido\ associated a symbol in $\stS^m(\R^n; \cA_\theta)$ is a (toroidal) \psido\ associated with a symbol in $\stS^m(\Z^n; \cA_\theta)$. 

Conversely, let $P:\cA_\theta \rightarrow \cA_\theta$ be the \psido\ associated with some toroidal symbol $(\rho_k)_{k\in \Z^n}\in \stS^m(\Z^n; \cA_\theta)$, $m\in \R$. By Proposition~\ref{prop:toroidal.extension-toroidal} there is a standard symbol $\tilde{\rho}(\xi)\in \stS^m(\R^n; \cA_\theta)$ such that $\tilde{\rho}(k)=\rho_k$ for all $k\in \Z^n$. Therefore, by using 
Proposition~\ref{prop:PsiDOs.Prhou-equation} we see that, for all $u=\sum u_k U^k$ in $\cA_\theta$, we have 
\begin{equation*}
  P u = \sum_{k\in \Z^n} u_k\rho_kU^k= \sum_{k\in \Z^n} u_k\tilde{\rho}(k)U^k =P_{\rho}u. 
\end{equation*}
This shows that $P=P_{\tilde{\rho}}$, and so $P$ is the \psido\ associated with a standard symbol of order $m$.  This completes the proof. 
\end{proof}

\subsection{Smoothing operators} As an application of the results of this section we shall now give a characterization of smoothing operators. 

As mentioned in Section~\ref{subsection:NCtori.Distributions} the inclusion of $\cA_\theta$ into $\cA_\theta'$ is dense. This leads us to the following notion of smoothing operators. 

\begin{definition}
 A linear operator $R:\cA_\theta \rightarrow \cA_\theta'$ is called \emph{smoothing} when it extends to a continuous linear operator $R:\cA_\theta'\rightarrow \cA_\theta$.
\end{definition}
 
We will need the following lemma. 

\begin{lemma} \label{lem:PsiDOs.aU-rapid-decaying}
 Let $(a_k)_{k\in \Z^n} \in \cS(\Z^n; \cA_\theta)$. Then the family $(a_kU^k)_{k\in \Z^n}$ is contained in $\cS(\Z^n; \cA_\theta)$. In particular, it is bounded in $\cA_\theta$. 
\end{lemma}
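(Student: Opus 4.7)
The plan is to bound every seminorm $\sup_{k\in \Z^n}(1+|k|)^N\|\delta^\alpha(a_kU^k)\|$ directly using the Leibniz rule and the identity $\delta^\beta(U^k)=k^\beta U^k$ from~(\ref{eq:NCtori.delta-U^k}), together with the fact that $U^k$ is a unitary, so $\|U^k\|=1$.

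First I would apply the Leibniz formula~(\ref{eq:NCtori-uv-Leibniz}) to expand
\begin{equation*}
\delta^\alpha(a_kU^k)=\sum_{\alpha'+\alpha''=\alpha}\binom{\alpha}{\alpha'}\delta^{\alpha'}(a_k)\,\delta^{\alpha''}(U^k)
=\sum_{\alpha'+\alpha''=\alpha}\binom{\alpha}{\alpha'}k^{\alpha''}\delta^{\alpha'}(a_k)U^k.
\end{equation*}
Taking norms and using $\|U^k\|=1$ together with $|k^{\alpha''}|\leq (1+|k|)^{|\alpha|-|\alpha'|}\leq (1+|k|)^{|\alpha|}$ gives
\begin{equation*}
\|\delta^\alpha(a_kU^k)\|\leq 2^{|\alpha|}(1+|k|)^{|\alpha|}\sup_{|\alpha'|\leq |\alpha|}\|\delta^{\alpha'}(a_k)\|.
\end{equation*}

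Next, since $(a_k)_{k\in\Z^n}\in\cS(\Z^n;\cA_\theta)$, for every $N\in\N_0$ and every $\alpha\in\N_0^n$ there is $C_{N,\alpha}>0$ such that $\|\delta^{\alpha'}(a_k)\|\leq C_{N+|\alpha|,\alpha}(1+|k|)^{-N-|\alpha|}$ for all $|\alpha'|\leq |\alpha|$ and all $k\in\Z^n$. Substituting this into the estimate above yields
\begin{equation*}
(1+|k|)^N\|\delta^\alpha(a_kU^k)\|\leq 2^{|\alpha|}C_{N+|\alpha|,\alpha},
\end{equation*}
which shows that $(a_kU^k)_{k\in\Z^n}\in\cS(\Z^n;\cA_\theta)$. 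Finally, the second assertion is immediate: any rapid-decay sequence in $\cA_\theta$ is in particular bounded with respect to each seminorm $u\mapsto \|\delta^\alpha(u)\|$, hence bounded in $\cA_\theta$.

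No step looks like a real obstacle here; the only thing to be a bit careful about is to uniformize the choice of constant across the multi-orders $\alpha'\leq \alpha$ appearing in the Leibniz expansion, which is handled by taking the supremum over this finite range when invoking the rapid-decay hypothesis on $(a_k)$.
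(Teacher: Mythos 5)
Your proof is correct and uses essentially the same approach as the paper: the Leibniz expansion of $\delta^\alpha(a_kU^k)$, the identity $\delta^\gamma(U^k)=k^\gamma U^k$, and $\|U^k\|=1$. The only difference is organizational: the paper first proves boundedness in $\cA_\theta$ and then bootstraps to rapid decay by applying that result to the shifted sequence $\big((1+|k|)^N a_k\big)_{k\in\Z^n}$, whereas you carry the decay exponent $N$ through the estimate directly; both are sound, and yours is perhaps slightly more explicit at the cost of one extra Peetre-type bookkeeping step.
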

\begin{proof}
 Given any multi-order $\alpha$, we have 
\begin{equation*}
 \delta^\alpha(a_k U^k) = \sum_{\beta +\gamma=\alpha} \binom\alpha\beta \delta^\beta(a_k) \delta^\gamma(U^k)= \sum_{\beta +\gamma=\alpha} \binom\alpha\beta  k^\gamma \delta^\beta(a_k) U^k.
\end{equation*}
As the families $(U^k)_{k\in \Z^n}$ and $(k^\gamma \delta^\beta(a_k))_{k\in \Z^n}$ are bounded with respect to the norm $\|\cdot \|$, we deduce that  
$ (\delta^\alpha(a_k U^k))_{k\in \Z^n}$ is bounded with respect to the norm $\|\cdot \|$ for all $\alpha \in \N_0^n$. This shows that the family $(a_kU^k)_{k\in \Z^n}$ is bounded in $\cA_\theta$. 
Applying this result to the family $((1+|k|)^{N}a_k)_{k\in \Z^n}$ shows that the family $((1+|k|)^Na_kU^k)_{k\in \Z^n}$ is bounded in $\cA_\theta$ for all $N\in \N_0$, and hence $(a_kU^k)_{k\in \Z^n}\in \cS(\Z^n; \cA_\theta)$. The lemma is proved. 
\end{proof}
 
We have the following characterization of smoothing operators.

\begin{proposition} \label{prop:PsiDos.smoothing-condition}
 Let $R:\cA_\theta \rightarrow \cA_\theta$ be a linear operator. Then the following are equivalent:
\begin{enumerate}
 \item[(i)] $R$ is a smoothing operator. 
 
 \item[(ii)] $R$ is a toroidal \psido\ associated with a symbol in $\cS(\Z^n; \cA_\theta)$. 
 
 \item[(iii)] $R$ is standard \psido\ associated with a symbol in $\cS(\R^n; \cA_\theta)$. 
\end{enumerate}
\end{proposition}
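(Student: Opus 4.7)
The plan is to verify the three equivalences via the reductions (iii)$\Leftrightarrow$(ii) and (ii)$\Leftrightarrow$(i). The equivalence of (ii) and (iii) is immediate from earlier results: if $R=P_\rho$ with $\rho(\xi)\in\cS(\R^n;\cA_\theta)$, then Corollary~\ref{cor:toroidal.rhok-rho-cS} gives $(\rho(k))_{k\in\Z^n}\in\cS(\Z^n;\cA_\theta)$, and Proposition~\ref{prop:PsiDOs.Prhou-equation} identifies $R$ with the toroidal \psido\ of this sequence. Conversely, given a toroidal Schwartz symbol $(\rho_k)$, Proposition~\ref{prop:toroidal.extension-cS} produces an extension $\tilde\rho(\xi)\in\cS(\R^n;\cA_\theta)$ with $\tilde\rho(k)=\rho_k$, and Proposition~\ref{prop:PsiDOs.Prhou-equation} shows $P_{\tilde\rho}$ agrees with the original toroidal \psido\ on every $U^k$, hence on all of $\cA_\theta$ by continuity and Fourier series decomposition.

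For (ii)$\Rightarrow$(i), I extend $R$ to $\cA_\theta'$ by setting $Rv:=\sum_k v_k\rho_k U^k$, where $v_k=\acou{v}{(U^k)^*}$ are the Fourier coefficients of $v\in\cA_\theta'$. Absolute convergence of this series in every semi-norm of $\cA_\theta$ follows from Lemma~\ref{lem:PsiDOs.aU-rapid-decaying}---which places $(\rho_k U^k)$ in $\cS(\Z^n;\cA_\theta)$---combined with the polynomial growth of $(v_k)\in\cS'(\Z^n)$ furnished by Proposition~\ref{prop:NCtori.distributions-Fourier-series}. To verify continuity with respect to the strong dual topology, I fix $\beta\in\N_0^n$, set $\sigma_k:=(1+|k|)^{n+1}\|\delta^\beta(\rho_k U^k)\|$---still rapidly decreasing---and consider $B_\beta:=\{\sigma_k(U^k)^*:k\in\Z^n\}$. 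Since $\delta^\gamma((U^k)^*)=(-k)^\gamma(U^k)^*$, one has $\|\delta^\gamma(\sigma_k(U^k)^*)\|\leq\sigma_k|k|^{|\gamma|}$, which decays faster than any polynomial; hence $B_\beta$ is bounded in $\cA_\theta$. Factoring out the summable weight $(1+|k|)^{-n-1}$ then yields
\begin{equation*}
\|\delta^\beta Rv\|\leq\sum_k|v_k|\,\|\delta^\beta(\rho_k U^k)\|\leq C\sup_k\sigma_k|v_k|=C\sup_{u\in B_\beta}|\acou{v}{u}|,
\end{equation*}
which is the desired continuity estimate.

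For (i)$\Rightarrow$(ii), I set $\rho_k:=R(U^k)(U^k)^*$, so that $R(U^k)=\rho_k U^k$. The task is to show $(\rho_k)\in\cS(\Z^n;\cA_\theta)$. A short trace computation gives $|\acou{U^k}{u}|=|u_{-k}|$ for $u\in\cA_\theta$, and by Proposition~\ref{prop:NCtori.condition-cAtheta} the Fourier coefficients of elements $u$ in any bounded set $B\subset\cA_\theta$ lie uniformly in the rapidly decreasing class; this forces $\sup_k(1+|k|)^N\sup_{u\in B}|\acou{U^k}{u}|<\infty$ for every $N$. Consequently, the set $\{(1+|k|)^N U^k:k\in\Z^n\}$ is bounded in $\cA_\theta'$ for each $N\geq 0$. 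Continuity of $R:\cA_\theta'\to\cA_\theta$ then delivers $\|\delta^\beta R(U^k)\|\leq C_{N,\beta}(1+|k|)^{-N}$ for all $N,\beta$. Combining with the Leibniz rule and the identity $\delta^\gamma((U^k)^*)=(-k)^\gamma(U^k)^*$ gives $\|\delta^\beta\rho_k\|=O((1+|k|)^{-N})$ for all $N,\beta$, as required. Finally, continuity of $R$ on $\cA_\theta$ and the $\cA_\theta$-convergence of each Fourier series $u=\sum u_kU^k$ give $Ru=\sum u_kR(U^k)=\sum u_k\rho_k U^k$, identifying $R$ with the toroidal \psido\ of $(\rho_k)$.

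The main obstacle is the continuity verification in (ii)$\Rightarrow$(i): to translate the crude bound $\sum_k|v_k|\|\delta^\beta(\rho_k U^k)\|$ into a supremum of the form $\sup_{u\in B_\beta}|\acou{v}{u}|$ with $B_\beta\subset\cA_\theta$ genuinely bounded, one must insert a summable weight and verify that the resulting weighted family of unitaries has all semi-norms $\|\delta^\gamma\cdot\|$ uniformly bounded---which succeeds precisely because $(\rho_k U^k)$ decays faster than any polynomial, compensating the polynomial growth of the semi-norms of the $U^k$.
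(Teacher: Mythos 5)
Your proof is correct and follows essentially the same approach as the paper's: the crucial estimates are identical, including the construction of the bounded set $B_\beta$ via the weight $(1+|k|)^{n+1}\|\delta^\beta(\rho_k U^k)\|(U^k)^*$ for the continuity of the extension to $\cA_\theta'$, and the boundedness of $\{(1+|k|)^N U^k\}$ in $\cA_\theta'$ for the converse. The only differences are cosmetic: you organize the implications as (ii)$\Leftrightarrow$(iii), (i)$\Leftrightarrow$(ii) while the paper closes the cycle (ii)$\Rightarrow$(iii)$\Rightarrow$(i)$\Rightarrow$(ii), and you spell out the Leibniz computation for $\delta^\beta\rho_k$ directly where the paper refers back to the argument of Lemma~\ref{lem:PsiDOs.aU-rapid-decaying}.
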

\begin{proof} 
It follows from Proposition~\ref{prop:PsiDOs.Prhou-equation} and Proposition~\ref{prop:toroidal.extension-cS} that (ii) implies (iii). Therefore, we only have to establish that (i) implies (ii) and (iii) implies (i). 

Let us first show that that (i) implies (ii). Suppose that $R:\cA_\theta'\rightarrow\cA_\theta$ is a continuous linear operator. Let $u=\sum_k u_k U^k\in \cA_\theta$. This Fourier series converges in $\cA_\theta$, and so it converges in $\cA_\theta'$. As $R$ is continuous, we get 
\begin{equation} \label{eq:PsiDOs.Ru}
 Ru= \sum_{k \in \Z^n} u_k R(U^k)= \sum_{k \in \Z^n} u_k \rho_k U^k,
\end{equation}
where the series converges in $\cA_\theta$ and we have set $\rho_k=R(U^k)(U^k)^{-1}=R(U^k)(U^k)^{*}$, $k \in \Z^n$. 

Bearing this in mind, for every $v\in \cA_\theta$, the family $(\langle U^k,v\rangle)_{k\in \Z^n}=(\overline{(v^*|U^k)})_{k\in \Z^n}$ is  in $\cS(\Z^n)$, and so, given any integer $N\geq 0$, the family $((1+|k|)^N\langle U^k,v\rangle)_{k\in \Z^n}$ is bounded. As $\cA_\theta$ is a Fr\'echet space, the Banach-Steinhaus theorem ensures us that the sequence $((1+|k|)^NU^k)_{k\in \Z^n}$ is bounded in $\cA_\theta'$. 
The continuity of the operator $R$ then implies that the sequence $((1+|k|)^NR(U^k))_{k\in \Z^n}$ is bounded in $\cA_\theta$ for every $N\in \N$, i.e., $(R(U^k))_{k\in \Z^n} \in \cS(\Z^n; \cA_\theta)$. By arguing along the same lines as that of the proof of Lemma~\ref{lem:PsiDOs.aU-rapid-decaying}, it can be shown that the sequence $(\rho_k)_{k\in \Z^n}=(R(U^k)(U^k)^*)_{k\in \Z^n}$ is contained in  $\cS(\Z^n; \cA_\theta)$. Together with~(\ref{eq:PsiDOs.Ru})  this shows that $R$ is a toroidal \psido\ assiociated with a symbol in $\cS(\Z^n; \cA_\theta)$. This proves that (i) implies that (ii). 

It remains to show that (iii) implies (i).   Suppose that $R=P_\rho$ with $\rho(\xi)\in\cS(\Rn;\cA_\theta)$. Given any $u=\sum u_kU^k$ in $\cA_\theta$, we know by~(\ref{eq:NCtori.distrb-innerproduct-eq}) that, for all $k\in \Z^n$, we have $u_k=\acoup{u}{U^k}=\acou{u}{(U^k)^*}=\acou{(U^k)^*}{u}$. Therefore, by  
using Proposition \ref{prop:PsiDOs.Prhou-equation} we get
\begin{equation*} 
Ru=P_\rho u = \sum_{k\in\Z^n}u_k\rho(k)U^k = \sum_{k\in\Z^n}\acou{u}{(U^k)^*} \rho(k)U^k,
\end{equation*}
where the above series converge in $\cA_\theta$. Thus, given any multi-order $\alpha$ we have
\begin{equation} \label{eq:PsiDOs.delta-Prhou}
\delta^\alpha\left( R u \right) = \sum_{k\in\Z^n} \acou{u}{(U^k)^*} \delta^\alpha (\rho(k)U^k). 
\end{equation}

As $\rho(\xi)\in \cS(\R^n;\cA_\theta)$ it follows from Lemma~\ref{lem:PsiDOs.aU-rapid-decaying} that the family $ (\delta^\alpha [\rho(k)U^k])_{k\in \Z^n}$ is in $\cS(\Z^n; \cA_\theta)$, and so the family $(\|\delta^\alpha [\rho(k)U^k]\|)_{k\in \Z^n}$ is in $\cS(\Z^n)\subset \cS(\Z^n; \cA_\theta)$. Using again Lemma~\ref{lem:PsiDOs.aU-rapid-decaying} we see that the family $(\|\delta^\alpha [\rho(k)U^k]\| (U^k)^*)_{k\in \Z^n}$ is contained in $\cS(\Z^n; \cA_\theta)$. Thus, the family $\cB_\alpha:=\left\{ (1+|k|)^{n+1} \|\delta^\alpha (\rho(k)U^k)\|(U^k)^*; k \in \Z^n\right\}$ is bounded in $\cA_\theta$. Bearing this in mind we observe that, for all $k \in \Z^n$, we have 
\begin{align*}
  \left| \acou{u}{(U^k)^*}\right| \left\|\delta^\alpha (\rho(k)U^k)\right\|   \leq (1+|k|)^{-(n+1)} 
  \sup_{v \in \cB_\alpha} |\acou{u}{v}| .
\end{align*}
Combining this with~(\ref{eq:PsiDOs.delta-Prhou}) we deduce that, for all $u \in \cA_\theta$, we have 
\begin{equation*}
 \left\|\delta^\alpha\left( R u \right) \right\|  \leq  \sum_{k\in\Z^n}\left| \acou{u}{(U^k)^*}\right| \left\|\delta^\alpha (\rho(k)U^k)\right\|  \leq C \sup_{v \in \cB_\alpha} |\acou{u}{v}| ,
 \end{equation*}
where we have set $C= \sum  (1+|k|)^{-(n+1)}$. As the families $\cB_\alpha$ are bounded this shows that $R$ satisfies on $\cA_\theta$ the semi-norm estimates required for the continuity of a linear operator from $\cA_\theta'$ to $\cA_\theta$. As $\cA_\theta$ is dense in $\cA_\theta'$ it then follows that $R$ uniquely extends to a continuous linear operator from $\cA_\theta'$ to $\cA_\theta$, i.e., $R$ is a smoothing operator.  This shows that~(iii) implies~(i). The proof is complete. 
\end{proof}

\appendix

\section{Some Basic Properties of $A_\theta$ and $\cA_\theta$} \label{appendix:cAtheta-basic-properties} 
In this appendix, for the reader's convenience, we include proofs of Proposition~\ref{prop:NCTori.GNS-representation}, Proposition~\ref{prop:NCtori.cAtheta-Frechet}, Proposition~\ref{prop:NCtori.condition-cAtheta} and Proposition~\ref{prop:NCtori.invertibility-cAtheta}. 

As in Section~\ref{section:NCtori} we denote by $\cA_\theta^0$ the subalgebra generated by the unitary operators $U_1, \ldots, U_n$. 

\begin{proof}[Proof of Proposition~\ref{prop:NCTori.GNS-representation}]
 The proof is an elementary instance of the GNS construction~(see, e.g., \cite{Ar:Springer81}).  Let $u,v\in A_\theta$. Using the inequality $u^*u \leq \|u\|^2$ and the positivity of the functional $w\rightarrow \tau(v^*wv)$ we get 
 \begin{equation*}
 \acoup{uv}{uv} = \tau\left(uvv^*u^*\right) = \tau\left(v^*u^*uv\right) \leq \tau\left( v^*\|u\|^2 v\right) = \|u\|^2 \acoup{v}{v}.
\end{equation*}
In particular, we see that $\|uv\|_0\leq \|u\| \|v\|_0$ for all $u,v\in \cA_\theta^0$. Combining this with the density of $\cA_\theta^0$ in $A_\theta$ and $\cH_\theta$ we deduce that the multiplication of $\cA_\theta^0$ uniquely extends to a continuous bilinear map $A_\theta\times \cH_\theta \rightarrow \cH_\theta$. This gives rise to a representation of the algebra $A_\theta$ by bounded operators on $\cH_\theta$. This representation is unital. In addition, let $u\in A_\theta$. For all $v,w\in \cA_\theta^0$ we have 
\begin{equation*}
 \acoup{u^*v}{w}=\tau(u^*vw^*)=\tau(vw^*u^*)=\acoup{v}{uw}. 
\end{equation*}
 It then follows that $\acoup{u^*v}{w}=\acoup{v}{uw}$ for all $v,w\in \cH_\theta$, i.e, the action $u^*$ on $\cH_\theta$ is the adjoint of the action of $u$. Therefore, we see that we have a unital $*$-representation of the $C^*$-algebra  $A_\theta$ in $\cH_\theta$. As unital $*$-representations of $C^*$-algebras are isometries we obtain~(\ref{eq:NCtori.nrom0<norm-u}). This proves the first part of Proposition~\ref{prop:NCTori.GNS-representation}. 
 
Let $u\in \cA_\theta^0$. Then~(\ref{eq:NCtori.nrom0<norm-u}) implies that
\begin{equation*}
 \|u\|_0 = \|u 1\|_0\leq \|u\|. 
\end{equation*}
 Combining this with the density of $\cA_\theta^0$ in $A_\theta$ we then deduce that the inclusion of $\cA_\theta^0$ into $\cH_\theta$ uniquely extends to a continuous map from $A_\theta$ to $\cH_\theta$. This map assigns to each $u\in A_\theta$ the sum of its Fourier series $\sum u_k U^k$ in $\cH_\theta$. To complete the proof it remains to show that this map is one-to-one. 
 
Let  $u\in A_\theta$ be such that $u_k=0$ for all $k\in \Z^n$. Thus, for all $k\in \Z^n$, we have 
\begin{equation*}
\tau\left(u^*U^k\right) = \acoup{U^k}{u} = \overline{\acoup{u}{U^k}} = \overline{u_k} = 0 .
\end{equation*}
By linearity it then follows that $\tau[u^*v]=0$ for all $v\in \cA_\theta^0$. Therefore, for all $v,w\in \cA_\theta^0$, we have 
\begin{equation*}
 \acoup{w}{uv}=\tau(wv^*u^*)=\tau(u^*wv^*)=0.
\end{equation*}
Combining this with the density of $\cA_\theta^0$ in $\cH_\theta$ we deduce that $\acoup{w}{uv}=0$ for all $v,w\in \cH_\theta$. Using~(\ref{eq:NCtori.nrom0<norm-u}) we then obtain
\begin{equation*}
 \|u\|= \sup_{\|v\|_0=1}\|uv\|_0 =  \sup_{\|v\|_0=1}\sup_{\|w\|_0=1}\left| \acoup{w}{uv}\right| =0.
\end{equation*}
It then follows that we have a one-to-one map from $A_\theta$ into $\cH_\theta$. The proof is complete. 
\end{proof}

In what follows we equip each subalgebra $A_\theta^{(N)}$, $N\geq 1$, with the norm, 
\begin{equation*}
 \NormN{u}= \sup_{|\beta|\leq N} \|\delta^\beta(u)\|, \qquad u\in A_\theta^{(N)}. 
\end{equation*}
These norms generate the topology of $\cA_\theta$. Moreover, it follows from~(\ref{eq:NCtori-uv-Leibniz})--(\ref{eq:NCtori-involution-delta-compatibility}) that we have 
\begin{gather}
 \NormN{uv} \leq 2^N \NormN{u} \NormN{v} ,\qquad u,v\in A^{(N)}_\theta,\label{eq:cAtheta-appendix.submultiplicative}\\
 \NormN{u^*}=\NormN{u}, \qquad u\in A^{(N)}_\theta. \label{eq:cAtheta-appendix.isometric-involution}
\end{gather}

\begin{lemma} \label{lem:cATheta-appendix.A^N-Banach}
 $A^{(N)}_\theta$ is a (unital) Banach $*$-algebra.
\end{lemma}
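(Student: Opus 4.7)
The three ingredients I need are: (i) the unital $*$-algebra structure, (ii) continuity of the algebra operations in the norm $\NormN{\cdot}$, and (iii) completeness of $A_\theta^{(N)}$ with respect to $\NormN{\cdot}$. The first two are essentially already in place, so the real work is completeness.

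For (i) and (ii): The unit $1$ lies in $A_\theta^{(N)}$ because $\alpha_s(1)=1$ is a constant map, and $\NormN{1}=1$ since $\delta^\beta(1)=0$ for $|\beta|\geq 1$. The identities~(\ref{eq:cAtheta-appendix.submultiplicative}) and~(\ref{eq:cAtheta-appendix.isometric-involution}) show that multiplication is jointly continuous (with operator norm bounded by $2^N$) and that the involution is isometric. Strictly speaking, $\NormN{\cdot}$ is submultiplicative only up to the factor $2^N$; this can be cured, if desired, by passing to the equivalent norm $u \mapsto \sup_{|\beta|\leq N}\frac{1}{\beta!}\|\delta^\beta u\|\cdot C$ for a suitable $C$, or simply by regarding $A_\theta^{(N)}$ as a Banach $*$-algebra in the standard generalized sense where the multiplication is continuous. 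Either way, no difficulty here.

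For (iii), the completeness, is the main point. Let $(u_\ell)_{\ell\geq 0}\subset A_\theta^{(N)}$ be Cauchy for $\NormN{\cdot}$. Then for every multi-order $\beta$ with $|\beta|\leq N$, the sequence $(\delta^\beta u_\ell)_{\ell\geq 0}$ is Cauchy in $A_\theta$, and hence converges in norm to some $v_\beta\in A_\theta$. Set $u:=v_0$. I need to show that $u\in A_\theta^{(N)}$ and $\delta^\beta u=v_\beta$ for all $|\beta|\leq N$. The crucial obstacle is that the operators $\delta^\beta$ are only defined on $A_\theta^{(|\beta|)}\subsetneq A_\theta$, and a priori we do not know that $u$ lies in the domain.

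The strategy is to show that $\delta^\beta$ is \emph{closed} as an unbounded operator on $A_\theta$, which passes derivatives to the limit. The cleanest route is the integral representation coming from~(\ref{eq:NCtori.derivation-groupaction}): for each $u_\ell\in A_\theta^{(1)}$, $j\in\{1,\ldots,n\}$ and $t\in\R$,
\begin{equation*}
\alpha_{te_j}(u_\ell) - u_\ell \;=\; i\int_0^t \alpha_{se_j}\!\left(\delta_j u_\ell\right)ds .
\end{equation*}
Since $\|\alpha_s\|=1$ (cf.~(\ref{eq:NCtori.cAtheta-groupaction})), the convergences $u_\ell\to u$ and $\delta_j u_\ell\to v_{e_j}$ in $A_\theta$ allow us to take $\ell\to\infty$ on both sides to obtain $\alpha_{te_j}(u)-u = i\int_0^t \alpha_{se_j}(v_{e_j})\,ds$. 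The right-hand side is $C^1$ in $t$ with derivative $i\alpha_{te_j}(v_{e_j})$, so $t\mapsto\alpha_{te_j}(u)$ is differentiable at $0$ with derivative $iv_{e_j}$; this gives $u\in A_\theta^{(1)}$ with $\delta_j u=v_{e_j}$. An induction on $|\beta|$ (applying the same argument to $\delta^{\beta'}u_\ell\to v_{\beta'}$ with $|\beta'|<|\beta|$) produces $u\in A_\theta^{(N)}$ together with $\delta^\beta u=v_\beta$ for every $|\beta|\leq N$. Since $\delta^\beta u_\ell\to\delta^\beta u$ in $A_\theta$ for each such $\beta$, we get $\NormN{u_\ell-u}\to 0$, proving completeness. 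Combined with (i) and (ii), this establishes that $A_\theta^{(N)}$ is a unital Banach $*$-algebra.
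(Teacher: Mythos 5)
Your proof is correct, but the completeness argument takes a genuinely different route from the paper's. The paper observes that $\NormN{u}=\sup_{s}\sup_{|\beta|\leq N}\|D_s^\beta\alpha_s(u)\|$, uses the periodicity $\alpha_{s+2\pi k}=\alpha_s$ to view $s\mapsto\alpha_s(u_\ell)$ as a Cauchy sequence in the Banach space $C^N(\T^n;A_\theta)$, extracts a limit $v(s)$ there, and then identifies $v(s)=\alpha_s(u)$ with $u=v(0)$. You instead prove that each generator $\delta_j$ is a closed operator on $A_\theta$ via the integral identity $\alpha_{te_j}(u_\ell)-u_\ell=i\int_0^t\alpha_{se_j}(\delta_j u_\ell)\,ds$, pass to the limit $\ell\to\infty$ using isometry of $\alpha_s$, and then induct on $|\beta|$. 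The paper's route is shorter because it outsources the work to the known completeness of $C^N(K;E)$ for $K$ compact and $E$ Banach; but it depends on the action being periodic. Your route is the one coming from the general theory of $C_0$-groups and their generators, and it would apply verbatim to an arbitrary $C^*$-dynamical system $(A,\R^n,\alpha)$, which is in fact the level of generality the authors allude to in the introduction. One spot worth tightening: from $\alpha_{te_j}(u)-u=i\int_0^t\alpha_{se_j}(v_{e_j})\,ds$ you get differentiability of $t\mapsto\alpha_{te_j}(u)$ at every $t$, but to conclude $u\in A_\theta^{(1)}$ you need $s\mapsto\alpha_s(u)$ to be $C^1$ on all of $\R^n$. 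This follows by applying $\alpha_{s_0}$ (which commutes with the Riemann integral by Proposition~\ref{prop:LCS.Phi-Riemann integral}) to get $\alpha_{s_0+te_j}(u)-\alpha_{s_0}(u)=i\int_0^t\alpha_{s_0+\sigma e_j}(v_{e_j})\,d\sigma$, whence $\partial_{s_j}\alpha_s(u)=i\alpha_s(v_{e_j})$ everywhere, continuous in $s$ by Proposition~\ref{prop:NCtori.continuity-action}; you should make this step explicit. Your side remark about the factor $2^N$ in~(\ref{eq:cAtheta-appendix.submultiplicative}) is fair, but the paper silently adopts the convention that a Banach algebra has jointly continuous (not necessarily contractive) multiplication, so this is a non-issue for them.
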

\begin{proof}
 It follows from~(\ref{eq:cAtheta-appendix.submultiplicative})--(\ref{eq:cAtheta-appendix.isometric-involution}) that we have a continuous product and an isometric involution on $A_\theta^{(N)}$. Therefore, we only need to check that $\NormN{\cdot}$ is  a Banach space norm. Let $(u_\ell)_{\ell\geq 0}$ be a Cauchy sequence in $A_\theta^{(N)}$. This yields a sequence $(\alpha_s(u_\ell))_{\ell \geq 0}$ in the Banach algebra $C^N(\T^n; A_\theta)$ such that
\begin{align*}
 \sup_{s\in \R^n}\sup_{|\beta|\leq N} \left\| D_s^\beta \left( \alpha_s(u_{\ell'}) -\alpha_s(u_\ell)\right) \right\|  = 
 \sup_{|\beta|\leq N}\sup_{s\in \R^n}\left\|\alpha_s\left[ \delta^\beta(u_{\ell '}-u_\ell)\right] \right\|   = \NormN{u_{\ell '}-u_\ell}. 
\end{align*}
Thus, we obtain a Cauchy sequence in $C^N(\T^n; A_\theta)$, and so there is $v(s)\in C^N(\T^n; A_\theta)$ such that $\alpha_s(u_\ell)\rightarrow v(s)$ in $ C^N(\T^n; A_\theta)$. In particular, if we set $u=v(0)$, then $u_\ell=\alpha_s(u_\ell)|_{s=0}\rightarrow u$ in $A_\theta$. Therefore, $\alpha_s(u_\ell)\rightarrow \alpha_s(u)$ in $A_\theta$ for every $s\in \R^n$. This implies that $v(s)=\alpha_s(u)$ for all $s\in \R^n$. Incidentally, $\alpha_s(u)=v(s)\in C^N(\R^n; A_\theta)$, and so $u\in A_\theta^{(N)}$. More generally, as in~(\ref{eq:NCtori.convergence-u_l}), for every $\beta\in \N_0^n$, $|\beta|\leq N$, we have 
\begin{equation*}
 \|\delta^\beta(u_\ell -u)\| = \left\| D_s^\beta \left( \alpha_s(u_{\ell}) -\alpha_s(u)\right)|_{s=0} \right\|  =  
 \left\| D_s^\beta \left( \alpha_s(u_{\ell}) -v(s)\right)|_{s=0} \right\| \longrightarrow 0. 
\end{equation*}
Therefore, we see that $\NormN{u_\ell-u} \rightarrow 0$. This shows that $\NormN{\cdot}$ is  a Banach space norm. The proof is complete. 
\end{proof}

We are now in a position to prove Proposition~\ref{prop:NCtori.cAtheta-Frechet}. 

\begin{proof}[Proof of Proposition~\ref{prop:NCtori.cAtheta-Frechet}] 
As the topology of $\cA_\theta$ is generated by the norms $\NormN{\cdot}$, $N\geq 1$,  it follows from Lemma~\ref{lem:cATheta-appendix.A^N-Banach} that the product and involution of $\cA_\theta$ are continuous. 
Moreover, any Cauchy sequence in $\cA_\theta$ is a Cauchy sequence in  each of the Banach spaces $A_\theta^{(N)}$. Thus, it converges with respect to the all norms $\NormN{\cdot}$, and so it converges in $\cA_\theta$. Therefore, we see that $\cA_\theta$ is a Fr\'echet $*$-algebra. 

Given any $\beta\in \N_0^n$, the continuity of $\delta^\beta:\cA_\theta \rightarrow \cA_\theta$ and the continuity of the action of $\R^n$ on $A_\theta$ at least imply that $(s,u)\rightarrow \alpha_s[\delta^\beta(u)]$ is a continuous map from $\R^n\times \cA_\theta$ to $A_\theta$. Thus, given any multi-order $\beta$, as $(t,v)$ approaches $(s,u)$ in $\R^n\times \cA_\theta$ we have 
\begin{equation*}
 \left\|  \delta^\beta\left( \alpha_t(v)-\alpha_{s}(u)\right)\right\|  = \left\| \alpha_t\left[\delta^\beta(v)\right]-  \alpha_{s}\left[\delta^\beta(u)\right]\right\|  \longrightarrow 0.
\end{equation*}
It then follows that $\alpha_t(v)$ converges to $\alpha_{s}(u)$ in $\cA_\theta$ as $(t,v)$ approaches $(s,u)$ in $\R^n\times \cA_\theta$. This shows that $(s,u)\rightarrow \alpha_s(u)$ is a continuous map from $\R^n\times \cA_\theta$ to $\cA_\theta$. 

Given any $u\in A_\theta^{(1)}$ and $s,h\in \R^n$, set 
 $\Delta_h \alpha_s(u) =\alpha_{s+h}(u)-\alpha_s(u) -\sum_{j=1}^n ih_j\alpha_s\left[\delta_j(u)\right]$. As any $C^1$-map is differentiable (\cf\ Appendix~\ref{app:LCS-diff}), 
we see that $\|\Delta_h \alpha_s(u)\|=\op{o}(|h|)$ as $h\rightarrow 0$. Suppose that $u\in \cA_\theta$, and let $\beta \in \N_0^n$. Note that $\delta^\beta[ \Delta_h \alpha_s(u)]= \Delta_h \alpha_s[\delta^\beta(u)]$. As $\delta^\beta(u)\in A_\theta^{(1)}$ we see that, for all $\beta\in \N_0^n$, we have $\left\| \delta^\beta\left[ \Delta_h \alpha_s(u)\right]\right\| = \left\| \Delta_h \alpha_s\left[\delta^\beta(u)\right]\right\|  =\op{o}(|h|)$ as $h\rightarrow 0$. That is, $\Delta_h \alpha_s(u)=\op{o}(|h|)$ in $\cA_\theta$. It then follows that $s\rightarrow \alpha_s(u)$ is a differentiable map from $\R^n$ to $\cA_\theta$, and we have 
\begin{equation*}
D_{s_j}\alpha_s(u) = \alpha_s\left[\delta_j(u)\right], \qquad j=1,\ldots, n. 
\end{equation*}
Combining this with the continuity of the action of  $\R^n$ on $\cA_\theta$ shows that  $\alpha_s(u)\in C^1(\R^n;\cA_\theta)$. 
An induction then shows that $\alpha_s(u)\in C^N(\R^n;\cA_\theta)$ for every $N\geq 1$, and we have 
\begin{equation*}
 D_{s}^\beta \alpha_s(u) = \alpha_s\left[\delta^\beta (u)\right], \qquad |\beta|\leq N. 
\end{equation*}
It then follows that $\alpha_s(u)\in C^\infty(\R^n; \cA_\theta)$. This completes the proof of Proposition~\ref{prop:NCtori.cAtheta-Frechet}. 
\end{proof}

Let us now prove Proposition~\ref{prop:NCtori.condition-cAtheta}. 

\begin{proof}[Proof of Proposition~\ref{prop:NCtori.condition-cAtheta}] Let $u\in \cH_\theta$ have Fourier series $\sum u_k U^k$. Suppose that $(u_k)_{k \in \Z^n}\in \cS(\Z^n)$. Then, for all $\beta\in \N_0^n$, we have 
\begin{equation}\label{eq:cAtheta-appendix.Fourier-series-estimates}
 \sum_{k\in \Z^n} \left\| \delta^\beta(u_kU^k)\right\| =  \sum_{k\in \Z^n} |k^\beta u_k|\left\| U^k\right\| =  \sum_{k\in \Z^n} |k^\beta u_k|<\infty. 
\end{equation}
This implies that the Fourier series $\sum u_k U^k$ is normally convergent in each of the Banach spaces $A_\theta^{(N)}$. Therefore, it converges in each of those spaces, and so it converges in $\cA_\theta$. Note that its sum must be $u$ since Proposition~\ref{prop:NCTori.GNS-representation} implies that we have a continuous inclusion of $\cA_\theta$ into $\cH_\theta$, and hence $u\in\cA_\theta$. 

Conversely, suppose that $u\in \cA_\theta$. Let $k\in \Z^n$. Using~(\ref{eq:NCtori.nrom0<norm-u}) we get
\begin{equation} \label{eq:cAtheta.u_k-estimates}
|u_k|= \left| \acoup{u}{U^k}\right| \leq \| u\|_0 \| U^k\|_0  \leq \| u\|. 
\end{equation}
Moreover, using~(\ref{eq:NCtori.integration-by-parts}) we see that, for $j=1, \ldots, n$, we have
\begin{equation*}
 \acoup{\delta_j(u)}{U^k}= \tau\left[\delta_j(u) (U^k)^*\right]=- \tau\left[u \delta_j\left((U^k)^*\right)\right]. 
\end{equation*}
It follows from~(\ref{eq:NCtori.derivation-involution}) and~(\ref{eq:NCtori.delta-U^k}) that $ \delta_j((U^k)^*)= -(\delta_j(U^k))^*=-k_j (U^k)^*$. Thus, 
\begin{equation*}
 \acoup{\delta_j(u)}{U^k}=k_j\tau\left[u (U^k)^*\right]=k_j \acoup{u}{U^k}=k_ju_k. 
\end{equation*}
 An induction then shows that, for all $\beta\in \N_0^n$, we have $\acoup{\delta^\beta(u)}{U^k}=k^\beta u_k$. Combining this with~(\ref{eq:cAtheta.u_k-estimates}) we deduce that, for all $\beta\in \N_0^n$ and $k\in \Z^n$, we have 
 \begin{equation}
 |k^\beta u_k| = |\acoup{\delta^\beta(u)}{U^k}| \leq \|\delta^\beta(u)\|. 
 \label{eq:cAtheta-appendix.Fourier-series-estimates2}
\end{equation}
It then follows that  $(u_k)_{k \in \Z^n}\in \cS(\Z^n)$. Therefore, we see that $u$ belongs to $\cA_\theta$ if and only if the sequence $(u_k)_{k \in \Z^n}$ is contained in $\cS(\Z^n)$. 

It follows from all this that we have a linear isomorphism $\Phi: (u_k)\rightarrow \sum u_k U^k$ from $\cS(\Z^n)$ onto $\cA_\theta$. Given any $\beta \in \N_0^n$ it follows from~(\ref{eq:cAtheta-appendix.Fourier-series-estimates}) that, if $u=\sum u_k U^k$ with $(u_k)\in \cS(\Z^n)$, then we have 
\begin{equation*}
  \left\| \delta^\beta(u)\right\| \leq \sum_{k\in \Z^n} \left\| \delta^\beta(u_kU^k)\right\|=  \sum_{k\in \Z^n} |k^\beta u_k|. 
\end{equation*}
As $(a_k)\rightarrow  \sum |k^\beta a_k|$ is a continuous semi-norm on $\cS(\Z^n)$ we see that $\Phi$ is continuous. The continuity of $\Phi^{-1}$ is an immediate consequence of~(\ref{eq:cAtheta-appendix.Fourier-series-estimates2}). Therefore, we see that $\Phi$ is a linear homeomorphism. The proof of Proposition~\ref{prop:NCtori.condition-cAtheta} is complete.  
\end{proof}

Finally, we  prove Proposition~\ref{prop:NCtori.invertibility-cAtheta}. 

\begin{proof}[Proof of Proposition~\ref{prop:NCtori.invertibility-cAtheta}] 
 If  $u\in \cA_\theta$ is invertible in $A_\theta$, then $\alpha_s(u^{-1})=\alpha_s(u)^{-1}$ is a smooth map from $\R^n$ to $A_\theta$, and so $u^{-1}\in \cA_\theta$. Therefore, we see that $A_\theta^{-1} \cap \cA_\theta =\cA_\theta^{-1}$. As $A_\theta^{-1}$ is an open set of $A_\theta$ and the inclusion of $\cA_\theta$ into $A_\theta$ is continuous, it then follows that $\cA_\theta^{-1}$ is an open set of $\cA_\theta$.
 
Given any $N\geq 1$, the inverse map of the invertible group of $A_\theta^{(N)}$ is continuous, since $A_\theta^{(N)}$ is a Banach algebra. 
This implies that the inverse map of $\cA_\theta^{-1}$ is continuous with respect to each of the norms $\NormN{\cdot }$, $N\geq 1$. As these norms generate the topology of $\cA_\theta$, we obtain the continuity with respect to the $\cA_\theta$-topology. The proof is complete. 
 \end{proof}

\section{Integration in Locally Convex Spaces} \label{app:LCS-int}
In this appendix, we review the integration of maps with values in locally convex spaces. 

\subsection{Riemann integration}\label{subsec:Riemann} 
There is no major difficulty to extend Riemann's integral to maps with values in locally convex spaces (see, e.g., \cite{FJ:JDE68, Ha:BAMS82}). In what follows we assume that $E$ is a (Hausdorff) locally convex space, and we let $I$ be a closed bounded cube in $\R^d$, $d\geq 1$. 

A \emph{step map} $f:I\rightarrow E$ is of the form, 
\begin{equation}
 f(t)= \sum_{1\leq j \leq m} \mathds{1}_{I_j}(t)\xi_j, \qquad t\in I,
 \label{eq:LCS.step-maps}
\end{equation}
where $\xi_1, \ldots, \xi_m$ are elements of $E$ and $I_1, \ldots, I_m$ are mutually disjoint cubes such that $I=\bigcup I_j$. We denote by $\cR_0(I; E)$ the vector space of step maps. In addition, we endow the space of maps $f:I\rightarrow E$ with the semi-norms, 
\begin{equation}
 f \longrightarrow \sup_{t\in I} p\left[ f(t)\right],
 \label{eq:LCS.semi-norms-regulated-maps}
\end{equation}
where $p$ ranges over continuous semi-norms on $E$. 

\begin{definition}
 $\cR(I;E)$ is the closure of $\cR_0(I; E)$ with respect to the topology defined by the semi-norms~(\ref{eq:LCS.semi-norms-regulated-maps}). The maps in $\cR(I;E)$ are called Riemann-integrable maps. 
\end{definition}

\begin{remark}
Suppose that $f:I\rightarrow E$ is a Riemann-integrable map. Then
\begin{itemize}
 \item[-] For every continuous semi-norm $p$ on $E$, the function $p\circ f:I\rightarrow \R$ is Riemann-integrable (i.e., it is the uniform limit of step functions). 
 
 \item[-] Given any continuous $\R$-linear map $\Phi$ from $E$ to some locally convex space $F$, the map $\Phi\circ f:I\rightarrow F$ is Riemann-integrable. 
\end{itemize}
\end{remark}

As $I$ is a compact cube, any continuous map $f:I\rightarrow E$ is uniformly continuous. This fact allows us to prove the following result. 

\begin{proposition}
 Every continuous map $f:I\rightarrow E$ is Riemann-integrable. 
\end{proposition}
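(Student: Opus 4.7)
The plan is to use uniform continuity of $f$ on the compact cube $I$ to construct step-map approximations converging to $f$ in the topology defined by the semi-norms~(\ref{eq:LCS.semi-norms-regulated-maps}). By definition of $\cR(I;E)$ as the closure of $\cR_0(I;E)$, it suffices to prove that, for every continuous semi-norm $p$ on $E$ and every $\epsilon>0$, there exists $g\in\cR_0(I;E)$ such that $\sup_{t\in I}p[f(t)-g(t)]<\epsilon$.

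Fix such a $p$ and $\epsilon>0$. The first step is to establish uniform continuity of $f$ on $I$ with respect to $p$: there exists $\delta>0$ such that $p[f(s)-f(t)]<\epsilon$ whenever $s,t\in I$ satisfy $|s-t|<\delta$. This is a standard Heine--Cantor argument: for each $t\in I$, continuity of $f$ at $t$ yields $\delta_t>0$ with $p[f(s)-f(t)]<\epsilon/2$ for $s\in B(t,\delta_t)\cap I$; compactness of $I$ provides a finite subcover by balls $B(t_i,\delta_{t_i}/2)$, $i=1,\ldots,N$, and then $\delta:=\tfrac{1}{2}\min_i \delta_{t_i}$ works by the triangle inequality.

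The second step is the construction of the step-map approximation. Subdivide $I$ into finitely many pairwise disjoint (half-open) subcubes $I_1,\ldots,I_m$ of diameter less than $\delta$ with $I=\bigcup_{j=1}^m I_j$, pick $t_j\in I_j$ for each $j$, and set
\begin{equation*}
g(t)=\sum_{j=1}^m \mathds{1}_{I_j}(t)\,f(t_j),\qquad t\in I.
\end{equation*}
Then $g\in\cR_0(I;E)$. For any $t\in I$ there is a unique $j$ with $t\in I_j$, so $|t-t_j|<\delta$ and hence $p[f(t)-g(t)]=p[f(t)-f(t_j)]<\epsilon$. Taking the supremum over $t\in I$ gives the required estimate.

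The only step requiring any care is the Heine--Cantor argument in the first paragraph, since it must be phrased relative to a single continuous semi-norm $p$ rather than a norm; this presents no serious obstacle because the proof only uses the triangle inequality for $p$ and the topology of $I$, not any metric structure on $E$. The Hausdorff hypothesis on $E$ plays no role here, and completeness is not needed since the approximants are exhibited explicitly.
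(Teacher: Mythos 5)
Your proof is correct and follows exactly the route the paper indicates (the sentence preceding the proposition names uniform continuity on the compact cube as the key ingredient): you make the approximation scheme explicit by forming the step map on a fine subdivision with values sampled from $f$, and you bound the error semi-norm by semi-norm via the Heine--Cantor argument. The only tacit step is the reduction to approximating with respect to one semi-norm at a time, which is legitimate because the family of all continuous semi-norms on $E$ is directed (finite maxima of semi-norms are semi-norms), so basic neighborhoods in the closure topology can be taken with a single $p$; this is standard and does not constitute a gap.
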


If $f(t)=\sum_{j=1}^m  \xi_j\mathds{1}_{I_j}(t)$ is a step map, then we define its integral by 
\begin{equation}
 \int_I f(t)dt= \sum_{1\leq j \leq m} \op{Vol}(I_j) \xi_j.
 \label{eq:LCS.Riemann integral-step-map}
\end{equation}
Note this does not depend on the representation~(\ref{eq:LCS.step-maps}) of $f$. In addition, for every continuous semi-norm $p$ on $E$, we have 
\begin{equation*}
 p\left( \int_I f(t)dt\right) \leq \sum_{1\leq j \leq m} \op{Vol}(I_j) p(\xi_j)=\op{Vol}(I) \sup_{t\in I} p\left[ f(t)\right]. 
\end{equation*}
This shows that the above notion of integral defines a linear map from $\cR_0(I;E)$ to $E$. As $\cR_0(I;E)$ is dense in $\cR(I;E)$ we arrive at the following result. 

\begin{proposition}\label{prop:LCS.Riemann integral}
 The integral~(\ref{eq:LCS.Riemann integral-step-map}) uniquely extends to a continuous linear map $f\rightarrow \int_I f(t)dt$ from $\cR(I;E)$ to $E$. 
\end{proposition}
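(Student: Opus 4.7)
The plan is a standard density-extension argument. The continuity estimate displayed just before the proposition,
\[
 p\left( \int_I f(t)\,dt\right) \le \op{Vol}(I) \sup_{t\in I} p\bigl[f(t)\bigr],
\]
already exhibits $\int_I : \cR_0(I;E)\to E$ as a (uniformly) continuous linear map for every continuous semi-norm $p$ on $E$. Since $\cR_0(I;E)$ is dense in $\cR(I;E)$ by construction, the extension should come from passage to the limit on this dense subspace.

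Before extending, I would first check that (\ref{eq:LCS.Riemann integral-step-map}) gives a well-defined linear map on $\cR_0(I;E)$: if $f$ has two representations $\sum \xi_j \mathds{1}_{I_j} = \sum \eta_k \mathds{1}_{J_k}$, then the common refinement $\{I_j \cap J_k\}$ forces $\xi_j=\eta_k$ on each non-degenerate cell, and additivity of $\op{Vol}$ makes both $\sum_j \op{Vol}(I_j)\xi_j$ and $\sum_k \op{Vol}(J_k)\eta_k$ equal to $\sum_{j,k} \op{Vol}(I_j\cap J_k)\xi_j$; linearity is obtained similarly by reducing two step maps to a common partition. For the extension, given $f \in \cR(I;E)$, I would pick a net $(f_\alpha)\subset \cR_0(I;E)$ with $\sup_t p\bigl[f_\alpha(t)-f(t)\bigr]\to 0$ for every continuous semi-norm $p$. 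Applying the estimate to differences $f_\alpha - f_\beta$ shows that $\bigl(\int_I f_\alpha\,dt\bigr)_\alpha$ is a Cauchy net in $E$, and I would define $\int_I f\,dt$ as its limit. The limit is independent of the approximating net (apply the estimate to $f_\alpha - g_\beta$ for two such nets), the estimate passes to the limit so continuity of the extension is preserved, and uniqueness of any continuous linear extension is forced by density together with the Hausdorff property of $E$.

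The main obstacle is a completeness issue: the Cauchy net of partial integrals must actually converge in $E$, which requires a form of completeness beyond Hausdorff plus locally convex. This is handled implicitly by restricting to quasi-complete (or at least sequentially complete) target spaces, which is the standing setting of the rest of this appendix and in particular covers the Fr\'echet space $\cA_\theta$ and its relatives that will be integrated in the body of the paper.
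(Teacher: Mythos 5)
Your argument is correct and is essentially the one the paper intends: the paper defines $\cR(I;E)$ as the closure of the step maps in the uniform topology, establishes the estimate $p\bigl(\int_I f\,dt\bigr)\leq \op{Vol}(I)\sup_{t\in I}p[f(t)]$ on $\cR_0(I;E)$, and then simply asserts the proposition ``as $\cR_0(I;E)$ is dense in $\cR(I;E)$.'' Your write-up fleshes this out in the standard way: well-definedness on step maps via a common refinement (with a careful remark that only the non-degenerate cells contribute), Cauchy-net extension by density, independence of the approximating net, preservation of the estimate, and uniqueness from density plus the Hausdorff separation of $E$.

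The one substantive issue you raise---that convergence of the Cauchy net of integrals requires a completeness hypothesis beyond Hausdorff local convexity---is a genuine and correct observation, and worth dwelling on for a moment. Note, however, that the paper's subsection on Riemann integration only assumes $E$ is a Hausdorff locally convex space; the quasi-complete Suslin hypothesis is formally introduced only in the next subsection on Lebesgue integration. So the completeness assumption is not literally part of the ``standing setting'' where Proposition~\ref{prop:LCS.Riemann integral} is stated; it is an unstated hypothesis the paper leans on silently. Your argument correctly identifies what is needed (quasi-completeness suffices, since the Cauchy net of integrals is bounded by the estimate), and indeed every space actually integrated into in the body of the paper ($\cA_\theta$, $\cA_\theta'$, $\cL(\cA_\theta)$) is quasi-complete, so no harm results---but you have put your finger on a hypothesis the paper should have made explicit in this subsection.
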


We mention the following properties of the Riemann integral. 

\begin{proposition}\label{prop:LCS.properties-Riemann-int}
 Let $f:I\rightarrow E$ be a Riemann-integrable map. 
 \begin{enumerate}
    \item For every continuous semi-norm $p$ on $E$, we have 
             \begin{equation}
                    p\left( \int_I f(t) dt\right) \leq \int_I p\left[ f(t)\right] dt.
                    \label{eq:LCS.semi-norm-Riemann integral}
             \end{equation}
             
     \item Let $\Phi$ be a continuous $\R$-linear map from  $E$ to some locally convex space $F$. Then we have 
           \begin{equation}
                      \Phi\left( \int_I f(t)dt\right) =  \int_I \Phi \circ f(t)dt. 
                       \label{eq:LCS.Phi-Riemann integral}
           \end{equation}
 \end{enumerate}
\end{proposition}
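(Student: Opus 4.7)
The plan is to verify both assertions first for step maps by direct computation, then extend to all Riemann-integrable maps by density and continuity, using the fact, noted just before the proposition, that if $f\in\cR(I;E)$ then $p\circ f\in\cR(I;\R)$ for any continuous seminorm $p$ on $E$, and $\Phi\circ f\in\cR(I;F)$ for any continuous $\R$-linear $\Phi:E\to F$.

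For part~(1), I would start with a step map $f(t)=\sum_{j=1}^m \mathds{1}_{I_j}(t)\xi_j$. Using the definition~(\ref{eq:LCS.Riemann integral-step-map}), the seminorm inequality, and the fact that $p\circ f$ is itself the step function $\sum \mathds{1}_{I_j}p(\xi_j)$, both sides of~(\ref{eq:LCS.semi-norm-Riemann integral}) are equal to $\sum_j \op{Vol}(I_j)p(\xi_j)$ up to the triangle inequality on the left, which yields the estimate. For a general $f\in\cR(I;E)$, choose a sequence $(f_n)\subset\cR_0(I;E)$ converging to $f$ in the topology defined by the seminorms~(\ref{eq:LCS.semi-norms-regulated-maps}). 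By Proposition~\ref{prop:LCS.Riemann integral}, $\int_I f_n(t)dt\to\int_I f(t)dt$ in $E$, and hence $p(\int_I f_n(t)dt)\to p(\int_I f(t)dt)$. Since $|p(f_n(t))-p(f(t))|\leq p(f_n(t)-f(t))$, the functions $p\circ f_n$ converge uniformly on $I$ to $p\circ f$, and thus $\int_I p[f_n(t)]dt\to \int_I p[f(t)]dt$. Passing to the limit in the step-map inequality gives~(\ref{eq:LCS.semi-norm-Riemann integral}).

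For part~(2), I would again start with a step map $f(t)=\sum_{j=1}^m \mathds{1}_{I_j}(t)\xi_j$. Then $\Phi\circ f(t)=\sum_{j=1}^m \mathds{1}_{I_j}(t)\Phi(\xi_j)$ is a step map in $F$, and the $\R$-linearity of $\Phi$ gives
\begin{equation*}
 \Phi\left(\int_I f(t)dt\right)=\Phi\!\left(\sum_{j=1}^m\op{Vol}(I_j)\xi_j\right)=\sum_{j=1}^m\op{Vol}(I_j)\Phi(\xi_j)=\int_I \Phi\circ f(t)dt,
\end{equation*}
which establishes~(\ref{eq:LCS.Phi-Riemann integral}) for $f\in\cR_0(I;E)$. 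To extend to a general $f\in\cR(I;E)$, pick $(f_n)\subset\cR_0(I;E)$ converging to $f$ as above. By continuity of $\Phi$, for every continuous seminorm $q$ on $F$ there is a continuous seminorm $p$ on $E$ with $q(\Phi(\xi))\leq p(\xi)$ for all $\xi\in E$; hence $\sup_{t\in I}q\bigl[\Phi\circ f_n(t)-\Phi\circ f(t)\bigr]\leq \sup_{t\in I}p\bigl[f_n(t)-f(t)\bigr]\to 0$, so $\Phi\circ f_n\to \Phi\circ f$ in $\cR(I;F)$. Applying Proposition~\ref{prop:LCS.Riemann integral} in $F$ gives $\int_I \Phi\circ f_n(t)dt\to \int_I \Phi\circ f(t)dt$, while the continuity of $\Phi$ together with the convergence $\int_I f_n(t)dt\to \int_I f(t)dt$ in $E$ yields $\Phi(\int_I f_n(t)dt)\to \Phi(\int_I f(t)dt)$. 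The step-map identity then passes to the limit.

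There is no real obstacle here beyond checking that the approximations behave correctly under $p$ and $\Phi$; the only point to be attentive to is that the seminorm family~(\ref{eq:LCS.semi-norms-regulated-maps}) on $\cR(I;E)$ is well adapted to both operations, which follows respectively from the reverse triangle inequality $|p(\xi)-p(\eta)|\leq p(\xi-\eta)$ and from the seminorm continuity estimate for $\Phi$.
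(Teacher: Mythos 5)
Your proof is correct and follows the same route as the paper: verify both identities directly for step maps and then extend by density of step maps in $\cR(I;E)$, using continuity of the integral from Proposition~\ref{prop:LCS.Riemann integral}. The paper's own proof is just a two-sentence compression of exactly this argument, so you have essentially filled in the details it leaves implicit.
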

\begin{proof}
It is straightforward to check~(\ref{eq:LCS.semi-norm-Riemann integral})--(\ref{eq:LCS.Phi-Riemann integral}) when $f$ is a step map. The extension of these results to all Riemann-integrable maps then follows by using the density of step maps among Riemann-integrable maps. 
\end{proof}

It follows from the Hahn-Banach theorem that $E'$ separates the points of $E$. Therefore, by specializing~(\ref{eq:LCS.Phi-Riemann integral}) to elements in $E'$ we obtain the following characterization of the Riemann integral. 

\begin{corollary}\label{cor:LCS.varphi-Riemann integral}
 Let $f:I\rightarrow E$ be a Riemann-integrable map. Then $\int_I f(t)dt$ is the unique element of $E$ such that
 \begin{equation*}
   \varphi\left( \int_I f(t)dt\right) =  \int_I \varphi \circ f(t)dt \qquad \forall \varphi \in E'. 
\end{equation*}
\end{corollary}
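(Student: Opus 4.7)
The plan is to read this corollary as a direct specialization of the second part of Proposition~\ref{prop:LCS.properties-Riemann-int}, plus a routine separation argument for the uniqueness. The statement has two content pieces: existence of an element with the stated property, and uniqueness of such an element. Both are essentially one-line consequences of machinery already in place.

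First, for existence, I would take the map $\Phi=\varphi$ for an arbitrary $\varphi\in E'$. Since $\varphi$ is by definition a continuous $\R$-linear (in fact $\C$-linear) map from $E$ into the scalar field (viewed as a locally convex space in its own right), the hypotheses of Proposition~\ref{prop:LCS.properties-Riemann-int}(2) are met. Applying the identity~(\ref{eq:LCS.Phi-Riemann integral}) to $\Phi=\varphi$ gives
\begin{equation*}
\varphi\left(\int_I f(t)\,dt\right)=\int_I \varphi\circ f(t)\,dt,
\end{equation*}
which is exactly the stated identity. Note that $\varphi\circ f\in\cR(I;\C)$ by the remark following the definition of $\cR(I;E)$, so the right-hand side makes sense as an ordinary Riemann integral.

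Second, for uniqueness, I would invoke the remark, recalled in the excerpt just before the corollary, that as a consequence of the Hahn--Banach theorem the topological dual $E'$ separates the points of the (Hausdorff) locally convex space $E$. Concretely, if $x,y\in E$ both satisfy $\varphi(x)=\int_I \varphi\circ f(t)\,dt=\varphi(y)$ for all $\varphi\in E'$, then $\varphi(x-y)=0$ for every $\varphi\in E'$, and point-separation forces $x-y=0$. Hence $\int_I f(t)\,dt$ is the unique element characterized by the identity.

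There is no real obstacle here, since both ingredients are supplied verbatim above; the corollary is essentially a repackaging of Proposition~\ref{prop:LCS.properties-Riemann-int}(2) combined with Hahn--Banach. The only small point one must verify implicitly is that Proposition~\ref{prop:LCS.properties-Riemann-int}(2) applies to $\C$-linear functionals and not only to $\R$-linear maps, which is immediate since any $\C$-linear map is in particular $\R$-linear.
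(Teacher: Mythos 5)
Your proposal is correct and matches the paper's own argument: the paper derives this corollary precisely by specializing the identity~(\ref{eq:LCS.Phi-Riemann integral}) to $\varphi\in E'$ and invoking the Hahn--Banach separation of points for uniqueness.
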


Let $C^0_c(\R^d;E)$ be the space of compactly supported continuous maps $f:\R^d \rightarrow E$. Given any map $f\in C^0_c(\R^d;E)$ its integral is defined by 
\begin{equation*}
 \int f(x) dx := \int_I f(x) dx,
\end{equation*}
where $I$ is any bounded closed cube whose interior contains the support of $f$. The value of $ \int_I f(x) dx$ is independent of the choice of $I$. Bearing this in mind we have the following change of variable formula. 

\begin{proposition}\label{prop:LCS.change-variable}
 Suppose that $\phi: \R^d \rightarrow \R^d$ is a $C^1$-diffeomorphism. Then, for every map $f\in C^0_c(\R^d;E)$, we have 
 \begin{equation*}
 \int f(x) dx = \int f\left[ \phi(x) \right] \left| \det\left[ \phi'(x)\right] \right| dx. 
\end{equation*}
\end{proposition}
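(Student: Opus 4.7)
The plan is to reduce the result to the classical scalar change of variable formula by exploiting the characterization of the $E$-valued Riemann integral via continuous linear forms (Corollary~\ref{cor:LCS.varphi-Riemann integral}). The key point is that elements of $E'$ separate points of $E$, so verifying an equality of integrals in $E$ amounts to verifying the equality after testing against every $\varphi\in E'$.

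First, I would check that both sides of the claimed equality make sense as $E$-valued integrals of compactly supported continuous maps. The left-hand side integrand $f$ is in $C^0_c(\R^d;E)$ by hypothesis. For the right-hand side, the map $g(x):=f[\phi(x)]|\det[\phi'(x)]|$ is continuous as the product of the continuous scalar function $x\mapsto |\det[\phi'(x)]|$ with the composition $f\circ\phi\in C^0(\R^d;E)$ (note that multiplication by a scalar is jointly continuous $\R\times E\to E$). Moreover, $\supp(g)\subset \phi^{-1}(\supp f)$, which is compact since $\phi$ is a homeomorphism and $\supp f$ is compact. Hence $g\in C^0_c(\R^d;E)$ and $\int g(x)dx$ is defined.

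Next, I would apply Corollary~\ref{cor:LCS.varphi-Riemann integral}. Fix $\varphi\in E'$. Using Proposition~\ref{prop:LCS.properties-Riemann-int}(ii) (or rather its immediate extension to the compactly supported setting, obtained by choosing a bounded closed cube $I$ containing the supports of both $f$ and $g$), we have
\begin{equation*}
\varphi\left(\int f(x)dx\right)=\int (\varphi\circ f)(x)dx,\qquad \varphi\left(\int g(x)dx\right)=\int (\varphi\circ g)(x)dx,
\end{equation*}
where on the right these are ordinary Riemann integrals of the compactly supported continuous scalar functions $\varphi\circ f$ and $\varphi\circ g=(\varphi\circ f)\circ\phi\cdot|\det\phi'|$. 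The classical (scalar) change of variable formula for $C^1$-diffeomorphisms of $\R^d$ then gives
\begin{equation*}
\int (\varphi\circ f)(x)dx=\int (\varphi\circ f)[\phi(x)]\,|\det[\phi'(x)]|dx=\int (\varphi\circ g)(x)dx.
\end{equation*}
As $\varphi\in E'$ was arbitrary and $E'$ separates points of $E$ by the Hahn–Banach theorem, the uniqueness statement of Corollary~\ref{cor:LCS.varphi-Riemann integral} yields $\int f(x)dx=\int g(x)dx$, which is the desired identity.

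There is no real obstacle here; the proof is essentially a bookkeeping exercise. The only mild points to be careful about are (a) confirming that $g$ is compactly supported and continuous so that its $E$-valued integral is defined, and (b) invoking Proposition~\ref{prop:LCS.properties-Riemann-int}(ii) over a common cube containing both supports so that the scalar change of variable formula can be applied to the resulting scalar Riemann integrals.
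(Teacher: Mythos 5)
Your proof is correct and follows essentially the same route as the paper's: check that $f\circ\phi\cdot|\det\phi'|$ is in $C^0_c(\R^d;E)$, test against $\varphi\in E'$ to reduce to the scalar change of variables formula, and conclude via Corollary~\ref{cor:LCS.varphi-Riemann integral}. The paper's version is a bit terser but the structure and the key ingredients are identical.
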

\begin{proof}
 Let $f\in  C^0_c(\R^d;E)$. As $\phi$ is a continuous proper map, the composition $f\circ \phi:\R^d \rightarrow E$ is continuous and has compact support. Let $\psi \in E'$. The function $\psi \circ f$ is continuous and has compact support. As $\phi$ is a $C^1$-diffeomorphism, we get 
 \begin{equation*}
 \int \psi\circ f (x) dx=  \int \psi \circ f\left[ \phi(x) \right] \left| \det\left[ \phi'(x)\right] \right| dx. 
\end{equation*}
Combining this with Corollary~\ref{cor:LCS.varphi-Riemann integral} gives the result.  
\end{proof}

\subsection{Lebesgue integration}\label{subsec:Lebesgue}
The integrals of Bochner~\cite{Bo:FM35} and Gel'fand-Pettis~\cite{Ge:CISMK, Pe:TAMS38} are natural extensions of Lebesgue's integral to maps with values in Banach spaces. We refer to~\cite{Gr:MAMS55} for an extension of Bochner's integral to maps with values in Fr\'echet spaces. Following Thomas~\cite{Th:TAMS75} a natural setting for the integration of maps with values in locally convex spaces is provided by quasi-complete Suslin locally convex spaces.

Recall that a locally convex space is \emph{quasi-complete} when all closed bounded sets are complete (i.e., every bounded Cauchy net is convergent). A \emph{Suslin locally convex space} is a Hausdorff locally convex space which is the image by a continuous map of a separable complete metric space. The following types of locally convex spaces are quasi-complete Suslin spaces (see~\cite{DeW:MSRSLC69, Sc:Tata73, Va:TLCS82}):
\begin{itemize}
  \item Separable Fr\'echet spaces (including nuclear Fr\'echet  spaces) and their weak duals. 
  
  \item  Inductive limits of sequences of separable Fr\'echet spaces and weak duals of such spaces. 
  
  \item Strong duals of separable Fr\'echet-Montel spaces and of inductive limits of sequences of such spaces.
  
   \item The space $\cL(E,F)$ equipped with the compact convergence topology (or any weaker topology), when $F$ is a separable Fr\'echet space and $
   E$ is a separable Fr\'echet space or a countable inductive limit of a sequence of such spaces. 
 
   \item The space $\cL(E,F)$ equipped with the bounded convergence topology (a.k.a.\ strong dual topology), when $F$ is a separable Fr\'echet space and $E$ is a separable Fr\'echet-Montel space or an inductive limit of a sequence of such spaces. 
 \end{itemize}
The most common examples of locally convex spaces are quasi-complete Suslin spaces. In particular, the smooth noncommutative torus $\cA_\theta$, its strong dual $\cA_\theta'$, and the space $\cL(\cA_\theta)$ are such spaces. 

\begin{remark}
 In~\cite{DeW:MSRSLC69} Suslin spaces are defined in terms of existence of a sieve. This is equivalent to the usual definition of a Suslin space. In fact, as every separable complete metric space admits a sieve (see~\cite[Appendix]{Do:Springer84}) and the image of a sieve by a continuous map is a sieve,  any Suslin space admits a sieve. Conversely, let  $\N^\N$ be the null Baire space equipped with its standard metric. This is a complete separable metric space (see, e.g.,~\cite[Appendix]{Do:Springer84}). Then the datum of a sieve on a topological space $E$ precisely allows us to construct a surjective continuous map from $\N^\N$ onto $E$, and so $E$ is a Suslin space (see~\cite[Appendix]{Do:Springer84}). 
\end{remark}

In what follows, we let $(X, \fS, \mu)$ be a $\sigma$-finite measured space, and we assume that $E$ is a quasi-complete Suslin locally convex space. In this setting a map $f:X\rightarrow E$ is measurable when $f^{-1}(B)\in \fS$ for every Borel set $B\subset E$.

\begin{proposition}[\cite{Th:TAMS75}]\label{prop:LCS.measurability} 
 A map $f:X\rightarrow E$ is measurable if and only if, for every $\varphi \in E'$, the function $\varphi \circ f:X\rightarrow \C$ is measurable. 
\end{proposition}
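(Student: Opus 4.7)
The forward direction is immediate. Every $\varphi\in E'$ is continuous, hence Borel measurable, so if $f:X\rightarrow E$ is Borel measurable then the composition $\varphi\circ f:X\rightarrow \C$ is measurable.

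For the converse, I would reduce matters to the following classical fact about Suslin locally convex spaces: the Borel $\sigma$-algebra $\cB(E)$ coincides with the $\sigma$-algebra $\sigma(E')$ generated by the continuous linear functionals (i.e., generated by the sets $\varphi^{-1}(U)$ with $\varphi\in E'$ and $U\subset\C$ open). Granted this coincidence, the argument is a routine generator argument: the family
\begin{equation*}
 \cC:=\left\{B\subset E;\ f^{-1}(B)\in\fS\right\}
\end{equation*}
is a $\sigma$-algebra on $E$ (because $\fS$ is), and the hypothesis that each $\varphi\circ f$ is measurable means that $f^{-1}(\varphi^{-1}(U))=(\varphi\circ f)^{-1}(U)\in\fS$ for all $\varphi\in E'$ and all open $U\subset\C$. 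Hence $\cC$ contains a generating family of $\sigma(E')=\cB(E)$, so $\cC\supset\cB(E)$, i.e.\ $f$ is Borel measurable.

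The main step is therefore to establish the coincidence $\cB(E)=\sigma(E')$ when $E$ is a Suslin locally convex space. The inclusion $\sigma(E')\subset\cB(E)$ is trivial by continuity of the elements of $E'$. For the reverse inclusion, the strategy is: (i) by Hahn-Banach, every closed convex subset of $E$ is an intersection of closed half-spaces, hence lies in $\sigma(E')$; (ii) every Suslin space is hereditarily Lindel\"of (as a continuous image of a separable metric space), so arbitrary intersections of closed half-spaces that describe such a closed convex set can be replaced by countable ones, which keeps us inside $\sigma(E')$; (iii) one then passes from closed convex sets to arbitrary Borel sets by using the Suslin structure on $E$ to represent Borel sets through countable operations applied to weakly closed generators. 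This is the content of the classical theorem of L.~Schwartz on Radon measures on Suslin spaces (see~\cite{Sc:Tata73}) and is the version of measurability used throughout~\cite{Th:TAMS75}.

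The main obstacle is the proof of the identity $\cB(E)=\sigma(E')$, which is genuinely non-trivial and relies essentially on the Suslin hypothesis (it fails for arbitrary locally convex spaces). In the present exposition I would import it as a black box from Schwartz's book, since redoing its proof would take us too far afield from the functional-analytic prerequisites needed for the rest of the paper.
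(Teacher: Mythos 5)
The paper supplies no in-text proof of this proposition; it is stated as a direct citation to \cite{Th:TAMS75}. Your overall route is the correct one: the forward implication is immediate, and the converse is exactly the theorem of L.\ Schwartz that $\cB(E)=\sigma(E')$ for a Suslin locally convex space $E$, which you rightly flag as the non-trivial input and correctly refer to \cite{Sc:Tata73}. The generator argument you give to deduce the proposition from this identity is routine and correct. Steps~(i) and~(ii) of your parenthetical sketch of the identity itself are also sound once one observes that a Suslin space is hereditarily Lindel\"of (it is the continuous image of a separable metric space, and continuous images inherit hereditary Lindel\"ofness), so the open complement of a closed convex set $C$ is covered by countably many of the open half-spaces furnished by Hahn--Banach separation, and $C$ is then a countable intersection of closed half-spaces, hence in $\sigma(E')$.

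Step~(iii), however, does not describe a coherent argument: closed convex sets do not generate $\cB(E)$ in any obvious $\sigma$-algebraic sense, and ``represent Borel sets through countable operations applied to weakly closed generators'' does not indicate a procedure. What actually closes the gap is a second application of hereditary Lindel\"ofness together with local convexity. The topology of $E$ has a basis of open convex sets (translates of a convex neighbourhood base of $0$); an open convex $U$ with $0\in U$ satisfies $U=\bigcup_{n\ge 1}\{p_U\le 1-1/n\}$ where $p_U$ is its Minkowski gauge, so $U$ is a countable union of closed convex sets and hence lies in $\sigma(E')$; and every open set, being Lindel\"of, is a countable union of such basic open convex sets and so also lies in $\sigma(E')$. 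This gives $\cB(E)\subset\sigma(E')$, the reverse inclusion being immediate from continuity. Since you explicitly import $\cB(E)=\sigma(E')$ as a black box, this is a defect in your sketch of that black box rather than a gap in the proof you offer; but as written, step~(iii) suggests the passage from closed convex sets to all Borel sets is routine, when it is precisely where a second dose of Lindel\"ofness (via the open convex basis) is needed.
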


When $E$ is metrizable it is well known that any pointwise limit of measurable maps from $X$ to $E$ is measurable. As a consequence of  Proposition~\ref{prop:LCS.measurability} this fact continues to hold for non-metrizable quasi-complete Suslin  locally convex spaces. 

\begin{corollary}\label{cor:LCS.measurability-pointwise-limit}
 Any pointwise limit of measurable maps from $X$ to $E$ is measurable. 
\end{corollary}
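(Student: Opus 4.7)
The plan is to deduce this directly from Proposition~\ref{prop:LCS.measurability}, which characterizes measurability of $E$-valued maps via composition with continuous linear forms. The point is to reduce the vector-valued statement to the well-known scalar version: a pointwise limit of complex-valued measurable functions is measurable.

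Concretely, let $(f_n)_{n\geq 0}$ be a sequence of measurable maps $f_n:X\to E$ converging pointwise to some map $f:X\to E$. To show $f$ is measurable, by Proposition~\ref{prop:LCS.measurability} it suffices to verify that $\varphi\circ f:X\to\C$ is measurable for every $\varphi\in E'$. Fix such a $\varphi$. Since each $f_n$ is measurable, Proposition~\ref{prop:LCS.measurability} gives that $\varphi\circ f_n:X\to\C$ is measurable for each $n$. Moreover, as $\varphi$ is continuous on $E$, the pointwise convergence $f_n(x)\to f(x)$ in $E$ yields $\varphi(f_n(x))\to\varphi(f(x))$ in $\C$ for every $x\in X$. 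Thus $\varphi\circ f$ is the pointwise limit of the sequence of measurable scalar functions $(\varphi\circ f_n)_{n\geq 0}$, and so it is measurable. Applying Proposition~\ref{prop:LCS.measurability} in the reverse direction then shows that $f$ itself is measurable.

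There is essentially no obstacle here: the whole content is the reduction to the scalar case via Proposition~\ref{prop:LCS.measurability}. The only subtle point to mention is that this proposition is what allows us to bypass metrizability of $E$ (which is typically invoked to prove that pointwise limits of measurable maps are measurable in the Banach or Fr\'echet setting). In the quasi-complete Suslin setting, Proposition~\ref{prop:LCS.measurability} plays exactly this role and makes the argument immediate.
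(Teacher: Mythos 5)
Your proof is correct and takes exactly the same route as the paper: test against $\varphi\in E'$, use the scalar version of the result for $\varphi\circ f_n\to\varphi\circ f$, and apply Proposition~\ref{prop:LCS.measurability} in both directions. The only difference is that you spell out the role of continuity of $\varphi$ and the remark about bypassing metrizability, which the paper leaves implicit.
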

\begin{proof}
 Let $(f_\ell)_{\ell \geq 0}$ be a sequence of measurable maps from $X$ to $E$ that converges pointwise to $f(x)$. For every $\varphi\in E'$ the function $\varphi\circ f(x)$ is the pointwise limit of the measurable functions $\varphi\circ f_\ell(x)$, $\ell\geq 0$, and hence it is measurable. Combining this with Proposition~\ref{prop:LCS.measurability} then shows that $f:X\rightarrow E$ is a measurable map. The proof is complete. 
\end{proof}

\begin{definition}\label{def:LCS.integrability}
 A map $f:X\rightarrow E$ is integrable if and only if, for every continuous semi-norm $p$ on $E$, we have 
 \begin{equation*}
 \int_X p\left[ f(x)\right] d\mu(x)<\infty. 
\end{equation*}
\end{definition}

\begin{remark}
 If $\mu(X)<\infty$, then every measurable map with bounded range is integrable. In particular, when $X$ is a compact (Hausdorff) topological space and $\mu$ is a (finite) Borel measure, then every continuous map $f:X\rightarrow E$ is integrable. 
\end{remark}

\begin{remark}
 When $E$ is nuclear, it can be shown that a measurable map $f:X\rightarrow E$ is integrable if and only if it is weakly integrable,  i.e., $\int_X |\varphi \circ f(x)|d\mu(x)<\infty$ for every $\varphi \in E'$ (see~\cite{Gr:MAMS55, Th:TAMS75}). 
\end{remark}

We shall denote by $L^1_\mu(X;E)$ the space of integrable maps $f:X\rightarrow E$ modulo the relation $f=g$ a.e.. We will often identify elements of $L^1_\mu(X;E)$ with their representatives. In addition, we equip $L^1_\mu (X; E)$ with the topology defined by the semi-norms, 
\begin{equation*}
 f \longrightarrow \int_X p\left[ f(x)\right] d\mu(x), 
\end{equation*}
where $p$ ranges over continuous semi-norms on $E$. This turns $L^1_\mu (X; E)$  into a Hausdorff locally convex space. 

The integrals of integrable maps  $f:X\rightarrow E$ is defined as follows. 

\begin{proposition}[\cite{Th:TAMS75}]\label{prop:LCS.Lebesgue-integral} 
Let $f:X\rightarrow E$ be an integrable map. Then there is a unique element $\int_X f(x) d\mu(x)\in E$ such that, for every $\varphi \in E'$, we have 
\begin{equation}
 \varphi \left[ \int_X f(x) d\mu (x) \right] = \int_X \varphi \circ f(x) d\mu (x). 
 \label{eq:LCS.int-vphi}
\end{equation}In addition, for every continuous semi-norm $p$ on $E$, we have 
\begin{equation}
 p\left[ \int_X f(x)d\mu (x)\right] \leq  \int_X p\left[ f(x)\right] d\mu(x). 
 \label{eq:LCS.semi-norm-integral}
\end{equation}
In particular, the integration $f \rightarrow \int_X f(x) d\mu(x)$ defines a continuous linear map from $L^1_\mu(X;E)$ to $E$.
\end{proposition}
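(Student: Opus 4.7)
The plan is as follows. For uniqueness, the Hahn--Banach theorem ensures that $E'$ separates the points of $E$, so any two elements of $E$ satisfying~(\ref{eq:LCS.int-vphi}) must coincide. For the well-definedness of the right-hand side of~(\ref{eq:LCS.int-vphi}), observe that given $\varphi\in E'$ the functional $p_\varphi(\xi):=|\varphi(\xi)|$ is a continuous semi-norm on $E$, so the integrability of $f$ yields $\varphi\circ f\in L^1_\mu(X)$. Hence $I_f:\varphi\mapsto\int_X\varphi\circ f\,d\mu$ is a well-defined linear form on $E'$, and the existence claim amounts to showing that $I_f$ is represented by an element of $E$ via the canonical pairing.

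For the existence I would proceed by approximation by simple maps followed by a quasi-completeness argument. First I would construct a sequence of simple maps
\begin{equation*}
f_n=\sum_{k=1}^{N_n}\mathds{1}_{A_{n,k}}\xi_{n,k},\qquad \xi_{n,k}\in E,\ A_{n,k}\in\fS,\ \mu(A_{n,k})<\infty,
\end{equation*}
such that $f_n\to f$ pointwise $\mu$-almost everywhere and $\int_X p[f_n-f]\,d\mu\to 0$ for every continuous semi-norm $p$ on $E$. For simple maps one sets $\int_X f_n\,d\mu:=\sum_k\mu(A_{n,k})\xi_{n,k}$; both~(\ref{eq:LCS.int-vphi}) and~(\ref{eq:LCS.semi-norm-integral}) are immediate at this level. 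The elementary estimate
\begin{equation*}
p\left(\int_X f_n\,d\mu-\int_X f_m\,d\mu\right)\leq\int_X p[f_n-f_m]\,d\mu
\end{equation*}
then shows that the sequence $(\int_X f_n\,d\mu)_n$ is Cauchy (and bounded) in $E$, and the quasi-completeness of $E$ provides a limit, which I take as the definition of $\int_X f\,d\mu$. The identity~(\ref{eq:LCS.int-vphi}) passes to the limit by applying $\varphi$ to both sides and using the scalar estimate $|\varphi\circ f_n-\varphi\circ f|\leq p_\varphi[f_n-f]$, and~(\ref{eq:LCS.semi-norm-integral}) passes to the limit by using $|p\circ f_n-p\circ f|\leq p[f_n-f]$. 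The continuity of $f\mapsto\int_X f\,d\mu$ from $L^1_\mu(X;E)$ to $E$ follows directly from~(\ref{eq:LCS.semi-norm-integral}).

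The principal obstacle will be obtaining the approximation by simple maps. For Banach or separable Fr\'echet targets this is the classical Bochner construction, but in the general quasi-complete Suslin setting one must exploit the Suslin property of $E$ in an essential way. Concretely, because $E$ is the continuous image of a Polish space, one can use a sieve to find a countable family of Borel sets that generates the Borel structure of $E$ up to a negligible set in a strong enough sense to approximate the measurable map $f$ (see Proposition~\ref{prop:LCS.measurability}) by countably-valued Borel maps; truncation based on the $\sigma$-finiteness of $\mu$ and the integrability hypothesis then converts these into genuinely simple maps. This is the content of Thomas's construction in~\cite{Th:TAMS75}, and it is precisely the step where both hypotheses on $E$ are genuinely used, quasi-completeness for the existence of the limit and the Suslin property for the existence of the approximants.
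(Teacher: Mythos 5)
Your high-level architecture is sound: uniqueness from Hahn--Banach separation, well-definedness of the right-hand side of~(\ref{eq:LCS.int-vphi}) from $|\varphi|$ being a continuous semi-norm, and the elementary passage to the limit of both~(\ref{eq:LCS.int-vphi}) and~(\ref{eq:LCS.semi-norm-integral}) once an approximating sequence is in hand. The continuity claim from~(\ref{eq:LCS.semi-norm-integral}) is also immediate. Note, though, that the paper gives no proof of this proposition at all; it is quoted directly from~\cite{Th:TAMS75}, so the comparison is to Thomas's approach rather than to an argument in the text.

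The gap is in the approximation step, and it is not the sort of gap that can be filled by appealing to the Suslin property in the way you describe. What your argument requires is convergence $\int_X p[f_n - f]\,d\mu \to 0$ for \emph{every} continuous semi-norm $p$ on $E$, i.e.\ convergence of simple maps to $f$ in $L^1_\mu(X;E)$. For metrizable $E$ (separable Fr\'echet) this is the classical Bochner/Grothendieck density theorem, and there the argument you outline goes through. But for a general quasi-complete Suslin locally convex space the semi-norms need not be countable, and pointwise a.e.\ approximation by countably-valued Borel maps --- which is all the sieve/Suslin argument and Proposition~\ref{prop:LCS.measurability} give you --- does not upgrade to simultaneous $L^1$ convergence in all semi-norms. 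This is precisely what the remark immediately following the proposition warns about: the simple-map approximation result is stated there only for separable Fr\'echet spaces, and separately for Suslin spaces with \emph{quasi-complete nuclear barrelled preduals}, which is a genuinely stronger hypothesis than quasi-complete Suslin. Thomas's own proof of the existence of $\int_X f\,d\mu$ in the general quasi-complete Suslin setting does not go through $L^1$-density of simple maps at all; rather, one pushes $\mu$ forward along $f$ to obtain a Radon measure on $E$ (here the Suslin property is the essential input), reduces to the existence of a barycenter for a Radon probability measure with finite first $p$-moments on a quasi-complete space, and realizes the barycenter in $E$ by a weak-compactness/bipolar argument applied to the closed absolutely convex hull of a suitable bounded set. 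Quasi-completeness enters to ensure that this hull is complete, hence that a net of averages converges in $E$ rather than merely in a completion. If you want to salvage your approach, either restrict the statement to the case actually used in the paper (where $E=\cA_\theta$ or $\cA_\theta'$ is a separable nuclear Fr\'echet space, or its strong dual, and the approximation is legitimately available), or replace the approximation step by the barycenter argument.
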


\begin{remark}
 When $X$ is a closed bounded cube $I\subset \R^d$ and $\mu$ is the Lebesgue measure on $I$, it follows from Corollary~\ref{cor:LCS.varphi-Riemann integral} and Corollary~\ref{cor:LCS.measurability-pointwise-limit} that every Riemann-integrable map $f:I\rightarrow E$ is integrable and its Riemann integral agrees with the integral provided by Proposition~\ref{prop:LCS.Lebesgue-integral}.   
\end{remark}

\begin{remark}
 Let $f:X\rightarrow E$ be an integrable map. It follows from~(\ref{eq:LCS.int-vphi}) that, for all $\varphi \in E'$, we have 
 \begin{equation*}
 \Re \varphi \left[ \int_X f(x) d\mu (x) \right] = \Re \left( \int_X \varphi \circ f(x) d\mu (x)\right) =  \int_X \Re \left[\varphi \circ f(x)\right] d\mu (x).  
\end{equation*}
Let $E'_\R$ be the space of continuous $\R$-linear forms on $E$. Any $\psi \in E'_\R$ is of the form $\psi =\Re \varphi$ for a unique $\varphi \in E'$ (see, e.g., \cite{Co:Springer90}). Therefore, we see that
\begin{equation}
 \psi \left[ \int_X f(x) d\mu (x) \right] = \int_X \psi \circ f(x) d\mu (x) \qquad \text{for all $\psi \in E'_\R$}. 
 \label{eq:LCS.int-psi} 
\end{equation}
\end{remark}

\begin{example}
 A map $f:X\rightarrow E$ is called \emph{simple}  when it takes the form $f=\sum_{j=1}^m \xi_j \mathds{1}_{A_j}$ with $\xi_j\in E$ and $A_j \in \fS$ such that $\mu(A_j)<\infty$. Such a map is integrable and we have
 \begin{equation*}
  \int_X f(x) d\mu(x)= \sum_{1\leq j \leq m} \mu(A_j) \xi_j.  
\end{equation*}
 \end{example}

\begin{remark}
 It was proved by Grothendieck~\cite{Gr:MAMS55} that, when $E$ is a separable Fr\'echet space, for every integrable map $f:X\rightarrow E$ there is a sequence of simple 
 maps $(f_\ell)_{\ell \geq 0}$ such that $f_\ell(x)\rightarrow f(x)$ a.e.\ and $\int_X f_\ell(x)d\mu(x) \rightarrow  \int_X f(x)d\mu(x)$. This result also holds for Suslin locally convex spaces with quasi-complete nuclear barrelled preduals (see~\cite{Th:TAMS75}). 
\end{remark}

\begin{proposition} \label{prop:LCS.Phi-integral}
Suppose that $\Phi:E\rightarrow F$ is a continuous $\R$-linear map from $E$ to some quasi-complete Suslin locally convex space $F$. Let $f:X\rightarrow E$ be an integrable map.  Then the map $\Phi \circ f:X\rightarrow F$ is integrable, and we have 
\begin{equation}
 \Phi \left( \int_X f(x) d\mu(x) \right) =  \int_X \Phi \circ f(x) d\mu(x). 
 \label{eq:LCS.Phi-Lebesgue-integral}
\end{equation}
\end{proposition}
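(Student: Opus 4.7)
The plan is to proceed in three steps, using the characterization of the integral via continuous linear forms provided by Proposition~\ref{prop:LCS.Lebesgue-integral} together with the extension~(\ref{eq:LCS.int-psi}) of that characterization to continuous $\R$-linear forms.

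First I would check measurability and integrability of $\Phi\circ f$. Since $\Phi:E\to F$ is continuous (hence Borel measurable) and the integrability of $f$ entails its measurability (so that the semi-norm estimates make sense), the composition $\Phi\circ f:X\to F$ is Borel measurable. For integrability, let $q$ be any continuous semi-norm on $F$. Because $\Phi$ is continuous and $\R$-linear, $q\circ\Phi$ is a continuous semi-norm on $E$. Hence
\begin{equation*}
\int_X q\left[\Phi\circ f(x)\right]d\mu(x) = \int_X (q\circ\Phi)\left[f(x)\right]d\mu(x) <\infty,
\end{equation*}
where finiteness follows from the integrability of $f$. Thus $\Phi\circ f$ is integrable and $\int_X\Phi\circ f\,d\mu$ makes sense in $F$.

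Next I would establish the identity~(\ref{eq:LCS.Phi-Lebesgue-integral}) by testing against the continuous dual $F'$. Fix $\psi\in F'$. The composition $\psi\circ\Phi:E\to\C$ is a continuous $\R$-linear form on $E$, i.e., $\psi\circ\Phi\in E'_\R$, since $\psi$ is continuous $\C$-linear (in particular continuous $\R$-linear) and $\Phi$ is continuous $\R$-linear. Applying the $\R$-linear version~(\ref{eq:LCS.int-psi}) of the defining property of the integral of $f$ yields
\begin{equation*}
(\psi\circ\Phi)\left(\int_X f(x)\,d\mu(x)\right) = \int_X (\psi\circ\Phi)\circ f(x)\,d\mu(x) = \int_X \psi\circ(\Phi\circ f)(x)\,d\mu(x).
\end{equation*}
On the other hand, the characterization~(\ref{eq:LCS.int-vphi}) applied to the integrable map $\Phi\circ f:X\to F$ gives
\begin{equation*}
\psi\left(\int_X \Phi\circ f(x)\,d\mu(x)\right) = \int_X \psi\circ(\Phi\circ f)(x)\,d\mu(x).
\end{equation*}
Comparing both identities shows that $\psi\bigl[\Phi(\int_X f\,d\mu)\bigr] = \psi\bigl[\int_X\Phi\circ f\,d\mu\bigr]$ for every $\psi\in F'$. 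Since $F'$ separates the points of $F$ by the Hahn-Banach theorem, the desired equality~(\ref{eq:LCS.Phi-Lebesgue-integral}) follows.

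The only mildly delicate point is the $\R$-linear vs.\ $\C$-linear distinction: the primary characterization~(\ref{eq:LCS.int-vphi}) is phrased for $\varphi\in E'$ (continuous $\C$-linear forms), whereas $\psi\circ\Phi$ is only $\R$-linear when $\Phi$ is merely $\R$-linear. This is precisely what~(\ref{eq:LCS.int-psi}) is designed to handle, so invoking it directly bypasses any need to decompose $\psi\circ\Phi$ into real and imaginary parts. No further obstacles are anticipated.
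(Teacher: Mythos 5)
Your argument follows the same route as the paper, but there is a small slip in the duality step. You test against $\psi\in F'$ (the complex dual) and assert that $\psi\circ\Phi\in E'_\R$. However, as defined in the paper, $E'_\R$ consists of \emph{real-valued} continuous $\R$-linear forms on $E$ (this is what makes the characterization $\psi=\Re\varphi$ with $\varphi\in E'$ work), whereas your $\psi\circ\Phi$ maps $E$ into $\C$, as you yourself write. So $\psi\circ\Phi\notin E'_\R$, and you cannot apply~(\ref{eq:LCS.int-psi}) to it directly.

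This is easy to repair in two ways. The cleaner fix, which is what the paper does, is to test against the real dual $F'_\R$ instead of $F'$: for $\psi\in F'_\R$ the composition $\psi\circ\Phi$ really is an $\R$-valued continuous $\R$-linear form, so $\psi\circ\Phi\in E'_\R$, and (\ref{eq:LCS.int-psi}) applies; since $F'_\R$ separates points of $F$ by the real Hahn--Banach theorem, this suffices. Alternatively, one can keep $\psi\in F'$ but decompose $\psi\circ\Phi=\Re(\psi\circ\Phi)+i\,\Im(\psi\circ\Phi)$, apply~(\ref{eq:LCS.int-psi}) to each part (both are in $E'_\R$), and recombine using linearity. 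Either patch closes the gap; the rest of your proof (measurability of $\Phi\circ f$, the semi-norm estimate $q\circ\Phi\le$ a continuous semi-norm on $E$ giving integrability, and the characterization~(\ref{eq:LCS.int-vphi}) for $\Phi\circ f$) is correct and matches the paper.
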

\begin{proof}
As $\Phi$ is continuous, the composition $\Phi \circ f:X\rightarrow F$ is a measurable map.   Let $q$ be a continuous semi-norm on $F$. As $\Phi$ is a continuous $\R$-linear map, there is a continuous semi-norm $p$ on $E$ such that $q[\Phi(\xi)]\leq p(\xi)$ for all $\xi\in E$. Combining this with the integrability of $f$ gives 
 \begin{equation*}
 \int_X q\left[ \Phi\circ f(x)\right] d\mu(x) \leq  \int_X p\left[f(x)\right] d\mu(x) <\infty.
\end{equation*}
It then follows that the map $\Phi\circ f:X\rightarrow F$ is integrable. 

Let $\psi \in F'_\R$. Then $\psi\circ \Phi \in E'_\R$, and so by using~(\ref{eq:LCS.int-psi}) we get 
\begin{equation*}
 \psi \circ \Phi  \left( \int_X f(x) d\mu(x) \right)=  \int_X \psi \left[ \Phi\circ f (x)\right] d\mu(x)= 
 \psi \left( \int_X \Phi\circ f(x) d\mu(x) \right). 
\end{equation*}
As $F'_\R$ separates the points of $F$ by the real version of the Hahn-Banach theorem, we deduce that 
$ \Phi ( \int_X f(x) d\mu(x)) =  \int_X \Phi \circ f(x) d\mu(x)$. The proof is complete. 
\end{proof}

We also have the following version of the dominated convergence theorem. 

\begin{proposition}\label{prop:LCS.DCT}
 Let $(f_\ell)_{\ell\geq 0}\subset L^1_\mu(X;E)$ be a sequence such that
 \begin{enumerate}
\item[(i)] $f_\ell(x) \rightarrow f(x)$ for almost every $x\in X$.  

\item[(ii)] For every continuous semi-norm $p$ on $E$, there is a function $g_p\in L^1_\mu(X)$ such that, for almost every $x\in X$, we have
\begin{equation*}
 p[f_\ell(x)]\leq g_p(x) \qquad \forall \ell \geq 0. 
\end{equation*}
\end{enumerate}
Then, we have
\begin{equation*}
f_\ell \longrightarrow f \ \text{in $L^1_\mu(X;E)$} \qquad  \text{and} \qquad  \int_X f_\ell(x) d\mu(x)\longrightarrow \int_X f(x) d\mu(x). 
\end{equation*}
\end{proposition}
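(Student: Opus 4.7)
My plan is to reduce the statement to the classical scalar dominated convergence theorem, exploiting the fact that the topology of $L^1_\mu(X;E)$ is generated by the semi-norms $f \mapsto \int_X p[f(x)] d\mu(x)$ as $p$ ranges over continuous semi-norms on $E$, together with the continuity of the integration map $L^1_\mu(X;E) \to E$ provided by Proposition~\ref{prop:LCS.Lebesgue-integral}.

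First I would verify that $f$ is itself an element of $L^1_\mu(X;E)$. Redefining $f$ on a $\mu$-null set (which does not affect anything that follows), I may assume $f_\ell(x)\to f(x)$ for every $x\in X$. Corollary~\ref{cor:LCS.measurability-pointwise-limit} then gives measurability of $f$. For any continuous semi-norm $p$ on $E$, continuity of $p$ yields $p[f(x)] = \lim_{\ell} p[f_\ell(x)] \leq g_p(x)$ a.e., so $\int_X p[f(x)] d\mu(x) \leq \int_X g_p(x) d\mu(x) < \infty$, establishing integrability of $f$ in the sense of Definition~\ref{def:LCS.integrability}.

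Next I would establish the convergence $f_\ell \to f$ in $L^1_\mu(X;E)$. Fix a continuous semi-norm $p$ on $E$. The difference $f_\ell - f : X \to E$ is measurable: for every $\varphi \in E'$, the scalar function $\varphi\circ(f_\ell - f) = \varphi\circ f_\ell - \varphi\circ f$ is measurable, and Proposition~\ref{prop:LCS.measurability} applies. It follows that the real-valued function $x\mapsto p[f_\ell(x) - f(x)]$ is measurable. The hypotheses and continuity of $p$ imply $p[f_\ell(x) - f(x)] \to 0$ a.e., while the sub-additivity of $p$ combined with (ii) and the bound $p[f(x)] \leq g_p(x)$ already obtained gives the domination
\begin{equation*}
p[f_\ell(x) - f(x)] \leq p[f_\ell(x)] + p[f(x)] \leq 2 g_p(x) \quad \text{a.e.},
\end{equation*}
with $2 g_p \in L^1_\mu(X)$. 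The classical scalar dominated convergence theorem then yields $\int_X p[f_\ell(x) - f(x)] d\mu(x) \to 0$. Since $p$ was arbitrary, this proves $f_\ell \to f$ in $L^1_\mu(X;E)$. Finally, the continuity of the integration map $L^1_\mu(X;E) \to E$ from Proposition~\ref{prop:LCS.Lebesgue-integral} immediately delivers $\int_X f_\ell(x) d\mu(x) \to \int_X f(x) d\mu(x)$ in $E$.

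The only delicate point is ensuring measurability of $f$ and of the differences $f_\ell - f$ in the non-metrizable setting, which is where the Suslin hypothesis enters (through Proposition~\ref{prop:LCS.measurability} and Corollary~\ref{cor:LCS.measurability-pointwise-limit}). Once measurability is secured, the argument proceeds exactly semi-norm by semi-norm as in the classical Bochner case, with no additional work required.
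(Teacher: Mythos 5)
Your proof is correct and follows essentially the same route as the paper: establish measurability of $f$ via Corollary~\ref{cor:LCS.measurability-pointwise-limit}, deduce $p[f]\leq g_p$ a.e.\ and hence $f\in L^1_\mu(X;E)$, apply the scalar dominated convergence theorem to $p[f_\ell-f]\leq 2g_p$ for each continuous semi-norm $p$, and finish with the continuity of the integration map from Proposition~\ref{prop:LCS.Lebesgue-integral}. The extra care you take spelling out the measurability of $f_\ell-f$ is a nice touch but does not change the substance of the argument.
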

\begin{proof}
It follows from (i) and Corollary~\ref{cor:LCS.measurability-pointwise-limit} that $f(x)$ is a measurable map. Let $p$ be a continuous semi-norm on $E$. Using (i)--(ii) and the continuity of $p$ we see that  $p[f(x)]\leq g_p(x)$ a.e., and so $\int_X p[f(x)]d\mu(x) \leq \int_X g_p(x) d\mu(x)<\infty$.
In addition, we have $p[f_\ell(x)-f(x)]\rightarrow 0$ a.e.\ and $p[f_\ell(x)-f(x)]\leq 2g_p(x)$ a.e.. Therefore, the dominated convergence theorem ensures us that $\int_X p[f_\ell(x)-f(x)]d\mu(x) \rightarrow 0$. This shows that $f(x)$ is in $L^1_\mu(X;E)$ and $f_\ell \rightarrow f$ in $L^1_\mu(X;E)$. The continuity of the integral on $L^1_\mu(X;E)$ then implies that $ \int_X f_\ell(x) d\mu(x)\rightarrow \int_X f(x) d\mu(x)$. The proof is complete. 
\end{proof}

Suppose that there are $\sigma$-finite measured spaces $(X_1,\fS_1, \mu_1)$ and $(X_2,\fS_2, \mu_2)$ such that $X=X_1\times X_2$ and $(\fS,\mu)$ is the product measure $(\fS_1\otimes \fS_2, \mu_1\otimes \mu_2)$ or is its completion when $(\fS_1, \mu_1)$ and $(\fS_2, \mu_2)$ are complete measures. We have the following version of Fubini's theorem. 

\begin{proposition}[\cite{Th:TAMS75}]\label{prop:LCS.Fubini}
 Suppose that $E$ is a separable Fr\'echet space. Let $f:X_1\times X_2 \rightarrow E$ be a $\mu$-integrable map. 
 \begin{enumerate}
  \item $f(x_1,\cdot) \in L^1_{\mu_2}(X_2; E)$ for almost every $x_1\in X_1$. 
      \item $f(\cdot, x_2) \in L^1_{\mu_1}(X_1; E)$ for almost every $x_2\in X_2$. 
  \item $\int_{X_2} f(x_1,x_2)d\mu_2(x_2)\in  L^1_{\mu_1}(X_1; E)$ and $\int_{X_1} f(x_1,x_2)d\mu_1(x_1)\in  L^1_{\mu_2}(X_2; E)$. 
  \item We have 
  \begin{align*}
\int_X f(x_1,x_2) d\mu(x_1,x_2)  & =  \int_{X_1} \left( \int_{X_2} f(x_1,x_2)d\mu_2(x_2)\right) d\mu_1(x_1)\\
&  = \int_{X_2} \left( \int_{X_1} f(x_1,x_2)d\mu_1(x_1)\right) d\mu_2(x_2).  
\end{align*}
\end{enumerate}
\end{proposition}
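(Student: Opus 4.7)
The plan is to reduce everything to the classical scalar Fubini theorem via Proposition~\ref{prop:LCS.Lebesgue-integral}, which characterizes the $E$-valued integral through its pairings with continuous linear functionals, together with the approximation of integrable maps by simple measurable maps mentioned in the Remark following Proposition~\ref{prop:LCS.Lebesgue-integral}.

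First I would fix a countable family of seminorms $(p_j)_{j\geq 1}$ generating the topology of $E$. Since $f$ is $\mu$-integrable, each $p_j\circ f$ is a non-negative $\mu$-integrable scalar function. Applying the classical Fubini theorem to each $p_j\circ f$ and taking a countable union of null sets produces a $\mu_1$-null set $N_1\subset X_1$ such that, for every $x_1\notin N_1$ and every $j$, the map $x_2\mapsto p_j[f(x_1,x_2)]$ lies in $L^1_{\mu_2}(X_2)$ and $x_1\mapsto\int_{X_2}p_j[f(x_1,x_2)]\,d\mu_2(x_2)$ lies in $L^1_{\mu_1}(X_1)$ with integral equal to $\int_X p_j\circ f\,d\mu$. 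A symmetric construction yields $N_2\subset X_2$.

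Next I would address measurability of the sections $f(x_1,\cdot):X_2\to E$. Invoking Grothendieck's approximation result, pick a sequence $(f_\ell)$ of $\fS$-measurable simple $E$-valued maps with $f_\ell\to f$ pointwise $\mu$-a.e. The level sets of each $f_\ell$ lie in $\fS=\fS_1\otimes\fS_2$ (or its completion), so their $x_1$-sections are $\fS_2$-measurable up to $\mu_2$-null sets for $\mu_1$-a.e.\ $x_1$; hence each section $f_\ell(x_1,\cdot)$ is a simple measurable $E$-valued map for $\mu_1$-a.e.\ $x_1$. Applying the scalar Fubini theorem to the indicator of the $\mu$-null exceptional set shows that for $\mu_1$-a.e.\ $x_1$ the convergence $f_\ell(x_1,x_2)\to f(x_1,x_2)$ holds for $\mu_2$-a.e.\ $x_2$, so Corollary~\ref{cor:LCS.measurability-pointwise-limit} forces $f(x_1,\cdot)$ to be measurable (after modification on a $\mu_2$-null set). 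Combined with Step~1, this establishes $f(x_1,\cdot)\in L^1_{\mu_2}(X_2;E)$ for every $x_1$ outside a $\mu_1$-null set, proving~(1); assertion~(2) follows by symmetry.

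Finally, set $F(x_1):=\int_{X_2}f(x_1,x_2)\,d\mu_2(x_2)$ where defined and zero elsewhere. For each $\varphi\in E'$, Proposition~\ref{prop:LCS.Lebesgue-integral} gives $\varphi\circ F(x_1)=\int_{X_2}\varphi\circ f(x_1,\cdot)\,d\mu_2$ for $\mu_1$-a.e.\ $x_1$, and the right-hand side is $\mu_1$-measurable by scalar Fubini applied to the measurable function $\varphi\circ f$. Thus $\varphi\circ F$ is measurable for every $\varphi\in E'$, and Proposition~\ref{prop:LCS.measurability} yields measurability of $F$. The seminorm estimate~(\ref{eq:LCS.semi-norm-integral}) gives $p_j\circ F(x_1)\leq\int_{X_2}p_j[f(x_1,\cdot)]\,d\mu_2$, and integrating this over $X_1$ using Step~1 shows $F\in L^1_{\mu_1}(X_1;E)$, proving~(3). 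For~(4), fix $\varphi\in E'$: two applications of Proposition~\ref{prop:LCS.Lebesgue-integral} together with the scalar Fubini theorem for $\varphi\circ f$ yield
\begin{equation*}
 \varphi\bigl(\textstyle\int_{X_1}F\,d\mu_1\bigr)=\int_{X_1}\varphi\circ F\,d\mu_1=\int_X\varphi\circ f\,d\mu=\varphi\bigl(\textstyle\int_X f\,d\mu\bigr),
\end{equation*}
and the Hahn-Banach theorem gives $\int_{X_1}F\,d\mu_1=\int_X f\,d\mu$; the other iterated identity is proven analogously. The main obstacle is the measurability step for the sections: transferring product measurability of $f$ to $\fS_2$-measurability of $f(x_1,\cdot)$ in the $E$-valued sense requires separability of $E$ in an essential way, either through Grothendieck's simple-function approximation as above, or through the fact that the Borel $\sigma$-algebra of a separable Fr\'echet space is generated by any countable family in $E'$ that separates points.
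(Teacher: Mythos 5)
The paper offers no proof of Proposition~\ref{prop:LCS.Fubini}; it simply cites \cite{Th:TAMS75}, so there is no in-paper argument to compare your proposal against. I therefore assess your proof on its own merits.

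Your argument is correct, and the two-stage strategy you use is the natural one for this kind of vector-valued Fubini statement: first transfer the integrability and the identities to the scalar level by pairing with $\varphi\in E'$ and invoking the defining property~(\ref{eq:LCS.int-vphi}) of the integral together with scalar Fubini/Tonelli (the Hahn--Banach theorem then upgrades the scalar identities to $E$-valued ones), and second deal with the only genuinely new issue, namely the $E$-valued measurability of the sections $f(x_1,\cdot)$. For the latter, the route through Grothendieck's simple-map approximation quoted in the remark after Proposition~\ref{prop:LCS.Lebesgue-integral} works, and you correctly flag that when $\fS$ is the completion of $\fS_1\otimes\fS_2$ the passage from $\fS$-measurability of level sets to $\fS_2$-measurability of their $x_1$-sections holds only for $\mu_1$-a.e.\ $x_1$ and requires completeness of $\mu_2$; the paper's standing hypothesis that $\fS$ is either the product $\sigma$-algebra or its completion with $(\fS_1,\mu_1)$ and $(\fS_2,\mu_2)$ complete takes care of exactly this point. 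Your alternative route---that the Borel $\sigma$-algebra of a separable Fr\'echet space coincides with the $\sigma$-algebra generated by a countable separating subset of $E'$, so that measurability of each $\varphi\circ f(x_1,\cdot)$ already suffices---is a slightly cleaner way to bypass Grothendieck's lemma, and combined with Proposition~\ref{prop:LCS.measurability} it reduces section measurability entirely to the scalar case. Either way the separability of $E$ enters essentially, which is precisely the hypothesis under which the paper states the result (the remark afterwards explains the variants for more general $E$). Two small stylistic points: in the non-completed case all sections of $\fS_1\otimes\fS_2$-measurable sets are measurable without any a.e.\ exception, so the a.e.\ caveat is only needed in the completed case; and the phrase ``after modification on a $\mu_2$-null set'' is a little misleading since $f(x_1,\cdot)$ is a fixed map---what you mean is that completeness of $\mu_2$ upgrades a.e.\ equality with a measurable map to measurability of $f(x_1,\cdot)$ itself.
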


\begin{remark}
Proposition~\ref{prop:LCS.Fubini} holds \emph{verbatim} for Suslin locally convex spaces with quasi-complete nuclear barrelled preduals (\emph{cf.}~\cite{Th:TAMS75}). It also holds for general quasi-complete Suslin locally convex spaces providing we further assume that the map $f$ is ``totally integrable" 
(see~\cite{Th:TAMS75} for the precise statement).
\end{remark}

\section{Differentiable Maps with Values in Locally Convex Spaces}\label{app:LCS-diff} 
In this appendix, we review differentiable maps on an Euclidean open set with values in locally convex spaces. This includes results about differentiation under the integral and a description of the Fourier transform in this setting.  

\subsection{Differentiation}  
In what follows we assume that $E$ is a (Hausdorff) locally convex space, and we let $U$ be an open subset of $\R^d$, $d\geq 1$. 

\begin{definition}
Given any open neighborhood $V$ of the origin in $\R^d$, we shall say that a map $\varepsilon:V\rightarrow E$ is $\op{o}(|h|)$ near $h=0$ when, for every continuous semi-norm $p$ on $E$, the function  $p(\varepsilon(h))$ is $\op{o}(|h|)$ near $h= 0$. 
\end{definition}

\begin{definition}
 We say that a map $f:U\rightarrow E$ is \emph{differentiable} at a given point $a\in U$ when there is a (continuous) $\R$-linear map $Df(a):\R^d \rightarrow E$ such that
 \begin{equation*}
 f(a+h)=f(a) + Df(a)h + \op{o}(|h|) \qquad \text{near $h=0$}. 
\end{equation*}
We say that $f$ is \emph{differentiable} on $U$ when it is differentiable at every point of $U$. 
\end{definition}
 
\begin{remark}
 If $f$ is differentiable at $a\in U$, then $f$ is continuous at the point $a$. 
\end{remark}

\begin{remark}
 Suppose that $\Phi:E\rightarrow F$ is a continuous $\R$-linear map from $E$ to some locally convex space $F$. If $f$ is differentiable at some point $a\in U$, then $\Phi \circ f$ is differentiable at $a$ as well, and we have $D(\Phi\circ f)(a)= \Phi\circ [ Df(a)]$. 
\end{remark}

In the following we let $(e_1,\ldots, e_d)$ be the canonical basis of $\R^d$. 

\begin{definition}
 For $j=1,\ldots, d$, the partial derivative with respect to $x_j$ at a given point $a\in U$ of a map $f:U\rightarrow E$  is defined by 
 \begin{equation*}
 \partial_{x_j} f(a)=\lim_{t\rightarrow 0} \frac{1}{t} \left( f(a+te_j) -f(a)\right),
\end{equation*}
whenever the limit exists. 
\end{definition}

\begin{remark}
 If $f$ is differentiable at $a \in U$, then all the partial derivatives $\partial_{x_1}f(a), \ldots, \partial_{x_d}f(a)$ exist and we have $\partial_{x_j}f(a)=Df(a)e_j$ for $j=1,\ldots, d$. 
\end{remark}

\begin{remark}\label{rmk:LCS.partial-derivatives-Phi}
 Suppose that $\Phi:E\rightarrow F$ is a continuous $\R$-linear map from $E$ to some locally convex space $F$. If $\partial_{x_j}f(a)$ exists, then $\partial_{x_j} (\Phi \circ f)(a)$ exists as well, and we have 
 \begin{equation}
 \partial_{x_j} (\Phi \circ f)(a)=\Phi[\partial_{x_j}f(a)]. 
 \label{eq:LCS.partial-derivatives-Phi}
\end{equation}
\end{remark}

\begin{definition}
 We say that a map $f:U\rightarrow E$ is $C^1$ when the partial derivatives $\partial_{x_1}f(x), \ldots, \partial_{x_d}f(x)$ exist for all $x\in U$ and define continuous maps from $U$ to $E$.
\end{definition}

\begin{lemma}\label{lem:LCS.C1-Phi}
 Assume that $f:U\rightarrow E$ is a $C^1$-map. Let $\Phi:E\rightarrow F$ be a continuous $\R$-linear map from $E$ to some locally convex space $F$. 
 Then $\Phi \circ f:U\rightarrow F$ is a $C^1$-map  and its partial derivatives are given by~(\ref{eq:LCS.partial-derivatives-Phi}). 
\end{lemma}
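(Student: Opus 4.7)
The plan is to reduce the lemma to the pointwise statement provided by Remark~\ref{rmk:LCS.partial-derivatives-Phi}, and then handle the continuity of the partial derivatives via the continuity of $\Phi$ and composition of continuous maps.

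First, I would fix $a\in U$ and $j\in\{1,\ldots,d\}$. Since $f$ is a $C^1$-map, by definition the partial derivative $\partial_{x_j}f(a)$ exists at $a$. Applying Remark~\ref{rmk:LCS.partial-derivatives-Phi} at the point $a$ then immediately yields that $\partial_{x_j}(\Phi\circ f)(a)$ exists and is given by the formula
\begin{equation*}
\partial_{x_j}(\Phi\circ f)(a) = \Phi\left[\partial_{x_j}f(a)\right].
\end{equation*}
Since $a\in U$ and $j$ were arbitrary, this establishes that all partial derivatives of $\Phi\circ f$ exist on $U$ and that~(\ref{eq:LCS.partial-derivatives-Phi}) holds pointwise on $U$.

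It then remains to check that each map $U\ni x\mapsto \partial_{x_j}(\Phi\circ f)(x) \in F$ is continuous. By the pointwise formula just established, this map is equal to the composition $\Phi\circ(\partial_{x_j}f)$. By assumption $f$ is $C^1$, so $\partial_{x_j}f:U\to E$ is continuous, and by hypothesis $\Phi:E\to F$ is continuous. Therefore $\Phi\circ(\partial_{x_j}f):U\to F$ is continuous as a composition of two continuous maps. This shows that $\Phi\circ f$ is a $C^1$-map, completing the proof.

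There is no real obstacle here: the content is entirely contained in Remark~\ref{rmk:LCS.partial-derivatives-Phi} together with the elementary fact that a composition of continuous maps is continuous. The only mild point to be careful about is that the lemma concerns $\R$-linear (not necessarily $\C$-linear) maps $\Phi$, but this is harmless since differentiability and the existence of partial derivatives are defined via limits in $E$, to which the $\R$-linearity and continuity of $\Phi$ suffice to pass.
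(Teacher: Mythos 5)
Your proof is correct and follows the same route as the paper: apply Remark~\ref{rmk:LCS.partial-derivatives-Phi} pointwise to get existence of the partial derivatives of $\Phi\circ f$ together with the formula~(\ref{eq:LCS.partial-derivatives-Phi}), then observe that each $\partial_{x_j}(\Phi\circ f)=\Phi\circ(\partial_{x_j}f)$ is continuous as a composition of continuous maps. There is nothing missing.
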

\begin{proof}
 It follows from Remark~\ref{rmk:LCS.partial-derivatives-Phi} that $\partial_{x_1}[\Phi \circ f](x), \ldots, \partial_{x_d}[\Phi \circ f](x)$ exist for all $x\in U$ and are given by~(\ref{eq:LCS.partial-derivatives-Phi}). As $\Phi$ and $\partial_{x_1}f, \ldots, \partial_{x_d}f$ are continuous maps, we further see that the partial derivatives of $\Phi \circ f$ are continuous, i.e., $\Phi \circ f$ is a $C^1$-map. 
\end{proof}
 
\begin{lemma}\label{lem:LCS.C1-differentiable}
 Suppose that $f:U\rightarrow E$ is a $C^1$-map. 
\begin{enumerate}
\item[(i)] Let $x\in U$ and $\delta>0$ be such that $B(x,\delta)\subset U$. Then, for $|h|<\delta$, we have 
\begin{equation}
 f(x+h)= f(x) + \sum_{1\leq j \leq d} h_j \int_0^1 \partial_{x_j} f(x+th)dt.
 \label{eq:LCS.1st-order-Taylor} 
\end{equation}
  
 \item[(ii)] $f$ is differentiable on $U$ and, for all $x\in U$, we have 
       \begin{equation}
             Df(x)h= \sum_{1\leq j \leq d} h_j \partial_{x_j}f(x), \qquad h\in \R^d.
             \label{eq:LCS.differential-partial} 
       \end{equation}
   In particular, $f$ is continuous on $U$. 
\end{enumerate}
\end{lemma}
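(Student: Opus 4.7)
\smallskip

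My plan is to deduce (i) from the scalar fundamental theorem of calculus by testing against continuous $\R$-linear forms, and then derive (ii) from (i) by estimating the remainder with semi-norms. Throughout I use that the continuous maps $t\mapsto \partial_{x_j}f(x+th)$ on $[0,1]$ are Riemann-integrable with values in $E$ (by the results of Appendix~\ref{app:LCS-int}) so that the integrals in the statement make sense.

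For (i), fix $x\in U$ and $|h|<\delta$. The key observation is that for every continuous $\R$-linear form $\psi \in E'_\R$, Lemma~\ref{lem:LCS.C1-Phi} ensures that $\psi\circ f:U\to\R$ is a scalar $C^1$-map with $\partial_{x_j}(\psi\circ f)=\psi\circ\partial_{x_j}f$. Applying the chain rule to the scalar $C^1$-function $t\mapsto \psi(f(x+th))$ on $[0,1]$ and the ordinary fundamental theorem of calculus yield
\begin{equation*}
\psi\left(f(x+h)-f(x)\right)=\sum_{j=1}^{d} h_j\int_0^1 \psi\bigl[\partial_{x_j}f(x+th)\bigr]dt.
\end{equation*}
Using property~(\ref{eq:LCS.Phi-Riemann integral}) of Proposition~\ref{prop:LCS.properties-Riemann-int} to pull $\psi$ inside each Riemann integral and then factor it out of the sum, this becomes
\begin{equation*}
\psi\left(f(x+h)-f(x)\right)= \psi\biggl(\sum_{j=1}^{d}h_j\int_0^1\partial_{x_j}f(x+th)dt\biggr).
\end{equation*}
Since $E'_\R$ separates the points of $E$ by the real form of the Hahn-Banach theorem, this gives~(\ref{eq:LCS.1st-order-Taylor}).

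For (ii), starting from~(\ref{eq:LCS.1st-order-Taylor}) and using $\int_0^1 dt =1$, we rewrite
\begin{equation*}
f(x+h)-f(x)-\sum_{j=1}^{d} h_j\partial_{x_j}f(x)= \sum_{j=1}^{d}h_j\int_0^1\bigl[\partial_{x_j}f(x+th)-\partial_{x_j}f(x)\bigr]dt.
\end{equation*}
Let $p$ be any continuous semi-norm on $E$. Applying~(\ref{eq:LCS.semi-norm-Riemann integral}) and the triangle inequality, we bound the semi-norm of the right-hand side by $\sum_j |h_j|\int_0^1 p\bigl(\partial_{x_j}f(x+th)-\partial_{x_j}f(x)\bigr)dt$. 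The continuity of each $\partial_{x_j}f$ at $x$ (which is the $C^1$-hypothesis) provides, for every $\varepsilon>0$, a radius $\delta_\varepsilon>0$ such that $p(\partial_{x_j}f(y)-\partial_{x_j}f(x))<\varepsilon$ whenever $|y-x|<\delta_\varepsilon$, uniformly in $j=1,\dots,d$; hence for $|h|<\delta_\varepsilon$ the remainder is bounded by $\varepsilon\sqrt{d}\,|h|$. This proves that $f(x+h)-f(x)-\sum_j h_j\partial_{x_j}f(x)=\op{o}(|h|)$, so $f$ is differentiable at $x$ with differential given by~(\ref{eq:LCS.differential-partial}), and the continuity of $f$ on $U$ is automatic.

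The only genuinely delicate step is the reduction to scalars in part~(i): one must carefully invoke both $E'_\R$-separation and the commutation property of Riemann integration with continuous linear forms, rather than trying to prove an intrinsic fundamental theorem of calculus in the locally convex setting. Once~(i) is in hand, part~(ii) is a routine semi-norm estimate.
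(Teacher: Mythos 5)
Your proof is correct and follows essentially the same route as the paper's: test against continuous linear forms via Lemma~\ref{lem:LCS.C1-Phi}, apply the scalar first-order Taylor/FTC formula, pull the form inside the Riemann integral using Proposition~\ref{prop:LCS.properties-Riemann-int}, and invoke point-separation; then derive~(ii) from~(i) by the semi-norm estimate~(\ref{eq:LCS.semi-norm-Riemann integral}) and continuity of the partial derivatives. The only cosmetic difference is that you test against $E'_\R$ while the paper uses $E'$, but both consist of continuous $\R$-linear forms that separate points, so the argument is identical in substance.
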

\begin{proof}
 Let $x\in U$ and $\delta>0$ be such that $B(x,\delta)\subset U$. In addition, let $\varphi \in E'$. By Lemma~\ref{lem:LCS.C1-Phi} the function $\varphi\circ f$ is $C^1$ and we have $\partial_{x_j} (\varphi \circ f)(x)=\varphi (\partial_{x_j}f(x))$ for all $x\in U$. Therefore, the Taylor formula at order~$1$ ensures us that for $|h|<\delta$ we have
 \begin{align*}
 \varphi\left[ f(x+h)\right] -  \varphi\left[ f(x)\right] & = \sum_{1\leq j \leq d} h_j \int_0^1 \varphi\left[\partial_{x_j} f(x+th)\right]dt \\
& =   \sum_{1\leq j \leq d} h_j  \varphi\left(  \int_0^1 \partial_{x_j} f(x+th)dt\right). 
\end{align*}
As $E'$ separates the points of $E$, we obtain~(\ref{eq:LCS.1st-order-Taylor}). 

It follows from~(\ref{eq:LCS.1st-order-Taylor}) that, for $|h|<\delta$, we have 
\begin{equation*}
 f(x+h)-f(x)-\sum_{1\leq j \leq d} h_j \partial_{x_j}f(x)= \sum_{1\leq j \leq d} h_j \int_0^1 \left[\partial_{x_j} f(x+th)- \partial_{x_j} f(x)\right]dt.
\end{equation*}
In order to prove that $f$ is differentiable at $x$ and its differential $Df(x)$ is given by~(\ref{eq:LCS.differential-partial}) it is enough to show that, for $j=1,\ldots, d$, we have 
\begin{equation}
 \lim_{h\rightarrow 0}  \int_0^1 \left[\partial_{x_j} f(x+th)- \partial_{x_j} f(x)\right]dt =0. 
 \label{eq:LCS.rate-of-change-integral}
\end{equation}
To see this let $p$ be a continuous semi-norm on $E$. In addition, let $\epsilon >0$. As $\partial_{x_j}f$ is continuous at $x$, there is $\delta'\in (0, \delta)$ such that $p(\partial_{x_j}f(x')-\partial_{x_j}f(x))\leq \epsilon$ whenever $|x'-x|\leq \delta'$. Combining this with~(\ref{eq:LCS.semi-norm-integral}) we deduce that, for $|h|\leq \delta'$, we have 
\begin{equation*}
 p\left( \int_0^1 \left[\partial_{x_j} f(x+th)- \partial_{x_j} f(x)\right]dt\right) \leq \int_0^1 p\left[\partial_{x_j} f(x+th)- \partial_{x_j} f(x)\right]dt\leq \epsilon. 
\end{equation*}
This proves~(\ref{eq:LCS.rate-of-change-integral}) and completes the proof. 
\end{proof}

\begin{definition}
 We say that  a map $f:U\rightarrow E$ is $C^N$, $N\geq 1$, when all the partial derivatives $\partial_{x_{j_1}} \cdots \partial_{x_{j_l}}f(x)$ of order~$\leq N$ exist at every point $x\in U$ and give rise to continuous maps from $U$ to $E$. 
\end{definition}

\begin{remark}
For a map $f:U\rightarrow E$ to be $C^N$ it is enough to assume the existence of all partial derivatives of order~$\leq N$ and require those of order~$N$ to be continuous. By Lemma~\ref{lem:LCS.C1-differentiable} this implies the continuity of the partial derivatives of order~$\leq N-1$.
\end{remark}

\begin{lemma}\label{lem:LCS.C2-partial-derivatives}
 Let $f:U\rightarrow E$ be a $C^N$-map, $N\geq 2$. Then, for every $(j_1, \ldots, j_N)\in \{1,\ldots, d\}^N$ and every permutation $(j_1', \ldots, j_N')$ of  $(j_1, \ldots, j_N)$
 we have 
 \begin{equation*}
 \partial_{x_{j_1'}} \cdots \partial_{x_{j_N'}}f=\partial_{x_{j_1}} \cdots \partial_{x_{j_N}}f.   
\end{equation*}
 \end{lemma}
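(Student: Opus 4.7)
The plan is to reduce everything to the scalar-valued Schwarz (Clairaut) theorem by composing with continuous linear forms, and then to handle an arbitrary permutation by decomposing it into transpositions of adjacent indices.

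First I would establish the base case $N=2$. Suppose $f:U\to E$ is $C^2$ and fix $i,j\in\{1,\dots,d\}$. Let $\varphi\in E'$. By Lemma~\ref{lem:LCS.C1-Phi} the map $\varphi\circ f$ is $C^1$ with $\partial_{x_j}(\varphi\circ f)=\varphi\circ\partial_{x_j}f$. Since $\partial_{x_j}f$ is itself $C^1$ (because $f$ is $C^2$), another application of Lemma~\ref{lem:LCS.C1-Phi} shows that $\varphi\circ\partial_{x_j}f=\partial_{x_j}(\varphi\circ f)$ is $C^1$, and hence $\varphi\circ f$ is $C^2$. Applying the same lemma once more yields
\begin{equation*}
\partial_{x_i}\partial_{x_j}(\varphi\circ f)=\varphi\circ\left(\partial_{x_i}\partial_{x_j}f\right),
\qquad \partial_{x_j}\partial_{x_i}(\varphi\circ f)=\varphi\circ\left(\partial_{x_j}\partial_{x_i}f\right).
\end{equation*}
The scalar-valued Schwarz theorem applied to the $C^2$ function $\varphi\circ f$ gives $\partial_{x_i}\partial_{x_j}(\varphi\circ f)=\partial_{x_j}\partial_{x_i}(\varphi\circ f)$. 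Therefore
\begin{equation*}
\varphi\left(\partial_{x_i}\partial_{x_j}f(x)-\partial_{x_j}\partial_{x_i}f(x)\right)=0 \qquad \forall \varphi\in E', \ \forall x\in U.
\end{equation*}
Since $E'$ separates points of $E$ by the Hahn-Banach theorem, we conclude $\partial_{x_i}\partial_{x_j}f=\partial_{x_j}\partial_{x_i}f$.

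Second, I would promote this to arbitrary $N$ by showing that any transposition of adjacent indices in $\partial_{x_{j_1}}\cdots\partial_{x_{j_N}}f$ can be performed. Fix $k\in\{1,\dots,N-1\}$ and set $g=\partial_{x_{j_{k+2}}}\cdots\partial_{x_{j_N}}f$. Since $f$ is $C^N$ and we have applied $N-k-1$ partial derivatives, the map $g$ is $C^{k+1}$, and in particular $C^2$ because $k\geq 1$. The case $N=2$ above applied to $g$ yields $\partial_{x_{j_k}}\partial_{x_{j_{k+1}}}g=\partial_{x_{j_{k+1}}}\partial_{x_{j_k}}g$, and applying $\partial_{x_{j_1}}\cdots\partial_{x_{j_{k-1}}}$ to both sides produces the desired equality for the transposition swapping positions $k$ and $k+1$.

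Finally, since the symmetric group $\fS_N$ is generated by transpositions of adjacent elements, any permutation $(j_1',\dots,j_N')$ of $(j_1,\dots,j_N)$ is reached by a finite sequence of such adjacent transpositions applied to $(j_1,\dots,j_N)$. Iterating the previous step proves the full claim. I do not foresee a serious obstacle here; the only subtlety is verifying that each intermediate map $g$ retains enough regularity for the base case to apply, which is precisely why the induction must be carried out transposition by transposition rather than swapping arbitrary positions in one stroke.
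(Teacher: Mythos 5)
Your proposal is correct and follows essentially the same route as the paper: reduce the base case $N=2$ to the scalar-valued Schwarz theorem by composing with $\varphi\in E'$ (using Lemma~\ref{lem:LCS.C1-Phi}), conclude via the fact that $E'$ separates points, then induct for $N\geq 3$. The paper leaves the induction as a one-liner, whereas you spell it out via adjacent transpositions and track the regularity of the intermediate maps, which is a welcome clarification but not a different argument.
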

\begin{proof}
Let $\varphi \in E'$. We know by Lemma~\ref{lem:LCS.C1-Phi} that $\varphi \circ f$ is a $C^1$-function and $\partial_{x_i}(\varphi \circ f)= \varphi\circ( \partial_{x_i}f)$ for $i=1,\ldots, d$. As $\partial_{x_i}f$ is a $C^1$-map this also implies that $\partial_{x_i}(\varphi \circ f)$ is $C^1$ and $\partial_{x_j} \partial_{x_i}(\varphi \circ f)= \partial_{x_j}\varphi\circ( \partial_{x_i}f)= 
 \varphi\circ(\partial_{x_j} \partial_{x_i}f)$. In particular, the function $\varphi \circ f$ is $C^2$. Thus, for all $x\in U$, we have 
 \begin{equation*}
 \varphi\circ\left[\partial_{x_j} \partial_{x_i}f(x)\right] = \partial_{x_j}\partial_{x_i}(\varphi \circ f)(x)= \partial_{x_i}\partial_{x_j}(\varphi \circ f)(x)=  \varphi\circ\left[\partial_{x_i} \partial_{x_j}f(x)\right].
\end{equation*}
 As $E'$ separates the points of $E$, we see that $\partial_{x_j} \partial_{x_i}f= \partial_{x_i}\partial_{x_j}f$ for $i,j=1,\ldots, d$. This proves the result for $N=2$. An induction then proves the result for $N\geq 3$. The proof is complete.
\end{proof}

In what follows, given any $C^N$-map $f:U\rightarrow E$  and any multi-order $\alpha \in \N_0^d$, $|\alpha|\leq N$, we set 
\begin{equation*}
\partial_x^\alpha f(x) = \partial_{x_1}^{\alpha_1} \cdots \partial_{x_d}^{\alpha_d} f(x), \qquad x\in U. 
\end{equation*}
We make the convention that $\partial_x^\alpha f(x)=f(x)$ when $\alpha =0$. By using Lemma~\ref{lem:LCS.C1-Phi} and arguing by induction we obtain the following statement. 

\begin{proposition}\label{prop:LCS.CN-Phi}
 Suppose that $\Phi: E\rightarrow F$ is a continuous $\R$-linear map from $E$ to some locally convex space $F$.  Let $f:U\rightarrow E$ be a $C^N$-map, $N\geq 1$. Then $\Phi \circ f$ is a $C^N$-map and, for every multi-order $\alpha \in \N_0^d$, $|\alpha|\leq N$, we have 
 \begin{equation}
 \partial_x^\alpha \left[ \Phi \circ f\right](x) = \Phi \left[ \partial_x^\alpha f(x)\right] \qquad \forall x\in U. 
 \label{eq:LCS.higher-partial-derivatives-Phi}
\end{equation}
\end{proposition}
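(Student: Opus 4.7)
My plan is to prove Proposition~\ref{prop:LCS.CN-Phi} by induction on $N$, using Lemma~\ref{lem:LCS.C1-Phi} as the base case and the composition structure of higher partial derivatives for the inductive step. The statement is exactly the iterated version of Lemma~\ref{lem:LCS.C1-Phi}, so all the real work has already been done; the remaining task is purely bookkeeping on multi-indices.

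For the base case $N=1$, the result is precisely Lemma~\ref{lem:LCS.C1-Phi}, which gives both that $\Phi \circ f$ is $C^1$ and that $\partial_{x_j}(\Phi \circ f)(x) = \Phi[\partial_{x_j} f(x)]$ for $j=1,\ldots,d$; this is formula~(\ref{eq:LCS.higher-partial-derivatives-Phi}) for $|\alpha|=1$, and it also handles $\alpha=0$ trivially since $\Phi \circ f$ is then just the value of the composition.

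For the inductive step, I would assume the proposition holds for some $N \geq 1$ and suppose $f:U \to E$ is $C^{N+1}$. Then for each $j \in \{1,\ldots,d\}$ the partial derivative $\partial_{x_j} f: U \to E$ is a $C^N$-map, so by the inductive hypothesis the composition $\Phi \circ (\partial_{x_j} f)$ is a $C^N$-map from $U$ to $F$ and, for every multi-order $\beta \in \N_0^d$ with $|\beta| \leq N$, one has $\partial_x^\beta[\Phi \circ \partial_{x_j} f](x) = \Phi[\partial_x^\beta \partial_{x_j} f(x)]$. On the other hand, Lemma~\ref{lem:LCS.C1-Phi} applied to $f$ (which is in particular $C^1$) ensures that $\Phi \circ f$ is $C^1$ with $\partial_{x_j}(\Phi \circ f) = \Phi \circ (\partial_{x_j} f)$. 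Combining these two facts shows that all partial derivatives of $\Phi \circ f$ of order $\leq N+1$ exist and that, for any multi-order $\alpha$ with $1 \leq |\alpha| \leq N+1$, choosing an index $j$ with $\alpha_j \geq 1$ and writing $\alpha = \beta + e_j$ with $|\beta| \leq N$, we obtain
\begin{equation*}
\partial_x^\alpha[\Phi \circ f](x) = \partial_x^\beta \partial_{x_j}[\Phi \circ f](x) = \partial_x^\beta[\Phi \circ \partial_{x_j} f](x) = \Phi[\partial_x^\beta \partial_{x_j} f(x)] = \Phi[\partial_x^\alpha f(x)].
\end{equation*}
Since $\partial_x^\alpha f$ is continuous (as $f$ is $C^{N+1}$) and $\Phi$ is continuous, each $\partial_x^\alpha[\Phi \circ f]$ is continuous on $U$, so $\Phi \circ f$ is $C^{N+1}$ and the identity~(\ref{eq:LCS.higher-partial-derivatives-Phi}) holds up to order $N+1$. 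This closes the induction.

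There is no real obstacle here; the only subtle point is making sure the order of partial derivatives can be freely rearranged when decomposing $\alpha = \beta + e_j$, but this is guaranteed by Lemma~\ref{lem:LCS.C2-partial-derivatives} applied to both $f$ (at stage $N+1$) and $\Phi \circ f$ (once we have shown it is $C^{N+1}$). The argument is therefore essentially a one-line induction once one uses Lemma~\ref{lem:LCS.C1-Phi} correctly at each step.
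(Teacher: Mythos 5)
Your proof takes exactly the route the paper intends: the paper offers no explicit argument beyond the one-line remark "By using Lemma~\ref{lem:LCS.C1-Phi} and arguing by induction we obtain the following statement," and your write-up fills in precisely that induction, correctly in substance.

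One small bookkeeping issue worth cleaning up: as written, the first and last equalities in your displayed chain, namely $\partial_x^\alpha[\Phi\circ f]=\partial_x^\beta\partial_{x_j}[\Phi\circ f]$ and $\partial_x^\beta\partial_{x_j}f=\partial_x^\alpha f$, rely on commuting partial derivatives, and you acknowledge that for $\Phi\circ f$ you would need Lemma~\ref{lem:LCS.C2-partial-derivatives} only "once we have shown it is $C^{N+1}$," which is circular at the point where the formula is being derived. The fix is both easy and makes the lemma unnecessary: since the paper defines $\partial_x^\alpha=\partial_{x_1}^{\alpha_1}\cdots\partial_{x_d}^{\alpha_d}$ as a \emph{fixed} composition, simply take $j$ to be the \emph{largest} index with $\alpha_j\geq 1$. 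Then $\alpha_i=0$ for all $i>j$, so $\partial_x^\alpha=\partial_x^{\alpha-e_j}\circ\partial_{x_j}$ holds identically as operators, for any map to which they apply, with no commutativity invoked. With that choice of $j$ you need Lemma~\ref{lem:LCS.C2-partial-derivatives} for neither $f$ nor $\Phi\circ f$ during the inductive step, and the argument closes without circularity.
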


By using Proposition~\ref{prop:LCS.CN-Phi}, the Taylor formula for $C^N$-functions and arguing as in the proof of Lemma~\ref{lem:LCS.C1-differentiable} we obtain the following version of Taylor's formula. 

\begin{proposition}\label{prop:LCS-Taylor}
 Let $f:U\rightarrow E$ be a $C^N$-map, $N\geq 1$. In addition, let $x\in U$ and $\delta>0$ be such that $B(x,\delta)\subset U$. Then, for $|h|<\delta$, we have 
 \begin{equation*}
 f(x+h)= \sum_{|\alpha|<N} \frac{h^\alpha}{\alpha!} \partial_x^\alpha f(x) + \sum_{|\alpha|=N} h^\alpha \int_0^1 (1-t)^{N-1} \partial_x^\alpha f(x+th)dt. 
\end{equation*}
\end{proposition}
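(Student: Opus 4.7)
The plan is to reduce the $E$-valued Taylor formula to the scalar Taylor formula with integral remainder by testing against continuous linear forms. Concretely, fix $x\in U$ and $\delta>0$ with $B(x,\delta)\subset U$, and let $h\in\R^d$ with $|h|<\delta$. For each $\varphi\in E'$ the scalar function $g:=\varphi\circ f$ is a $C^N$-map from $U$ to $\C$ by Proposition~\ref{prop:LCS.CN-Phi}, and its partial derivatives are given by $\partial_x^\alpha g(y)=\varphi[\partial_x^\alpha f(y)]$ for all $|\alpha|\leq N$ and $y\in U$. Applying the classical scalar Taylor formula with integral remainder to $g$ along the segment $[x,x+h]$ yields
\begin{equation*}
\varphi[f(x+h)]=\sum_{|\alpha|<N}\frac{h^\alpha}{\alpha!}\varphi[\partial_x^\alpha f(x)]+\sum_{|\alpha|=N}h^\alpha\int_0^1(1-t)^{N-1}\varphi[\partial_x^\alpha f(x+th)]dt.
\end{equation*}

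Next I would interpret the integrals on the right-hand side. For each multi-order $\alpha$ with $|\alpha|=N$, the map $t\mapsto(1-t)^{N-1}\partial_x^\alpha f(x+th)$ is continuous from $[0,1]$ to $E$ because $\partial_x^\alpha f:U\to E$ is continuous and $t\mapsto x+th$ is a continuous affine map with image in $B(x,\delta)\subset U$. Hence this map is Riemann-integrable in the sense of Section~\ref{subsec:Riemann} and its integral defines an element of $E$. By Proposition~\ref{prop:LCS.properties-Riemann-int}, the continuous linear form $\varphi$ commutes with this integral, so
\begin{equation*}
\int_0^1(1-t)^{N-1}\varphi[\partial_x^\alpha f(x+th)]dt=\varphi\!\left(\int_0^1(1-t)^{N-1}\partial_x^\alpha f(x+th)dt\right).
\end{equation*}

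Substituting this back, the scalar Taylor expansion rewrites as $\varphi$ applied to both sides of the desired identity. Since $E$ is Hausdorff, the Hahn-Banach theorem guarantees that $E'$ separates the points of $E$, so the two members of the equality must coincide in $E$. This establishes the formula for every $h$ with $|h|<\delta$.

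The argument is essentially a routine scalarization, following the same template already used in the proofs of Lemma~\ref{lem:LCS.C1-differentiable} and Lemma~\ref{lem:LCS.C2-partial-derivatives}. The only mildly delicate points are verifying that the continuous integrand genuinely yields a well-defined element of $E$ (handled by the existence of Riemann integrals of continuous maps into locally convex spaces) and that scalar evaluation commutes with the integral (handled by Proposition~\ref{prop:LCS.properties-Riemann-int}), so I do not anticipate any serious obstacle; the bookkeeping of the multi-index coefficients simply reproduces the classical scalar identity term by term.
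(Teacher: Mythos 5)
Your proof follows exactly the scalarization route that the paper's one-sentence sketch indicates: compose with $\varphi \in E'$ (Proposition~\ref{prop:LCS.CN-Phi}), apply the scalar Taylor formula, commute $\varphi$ past the Riemann integral (Proposition~\ref{prop:LCS.properties-Riemann-int}), and conclude via Hahn--Banach separation, just as in the proof of Lemma~\ref{lem:LCS.C1-differentiable}. That is the right strategy, and the auxiliary verifications you supply (continuity of the integrand, hence Riemann-integrability) are the ones that matter.

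There is, however, a coefficient you should not have glossed over. The multivariate scalar Taylor formula with integral remainder, obtained by applying the one-variable formula to $t\mapsto\varphi[f(x+th)]$ and expanding $\frac{d^k}{dt^k}$ via the multinomial theorem, reads
\begin{equation*}
\varphi[f(x+h)] = \sum_{|\alpha|<N} \frac{h^\alpha}{\alpha!}\,\varphi[\partial_x^\alpha f(x)] + \sum_{|\alpha|=N} \frac{N}{\alpha!}\, h^\alpha \int_0^1 (1-t)^{N-1}\,\varphi[\partial_x^\alpha f(x+th)]\,dt,
\end{equation*}
with a factor $N/\alpha!$ on the remainder that is not identically $1$ once $N\geq 2$: for instance $N/\alpha!=2$ when $N=2$, $\alpha=e_i+e_j$, $i\neq j$, and already in one variable one has $N/\alpha!=1/2$ for $N=3$, $\alpha=3$ (so the coefficient-free formula fails for $f(x)=x^3$, $x=0$, $h=1$). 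Thus the step ``applying the classical scalar Taylor formula \ldots\ yields'' the displayed remainder without $N/\alpha!$ is not correct: the scalar formula gives a different expression, and Hahn--Banach then produces a different vector identity from the one stated. The fix is trivial --- carry $N/\alpha!$ through --- and leaves the rest of the argument untouched. (This slip is already present in the paper's own statement of the proposition; it goes unnoticed there because the only instance invoked, in~(\ref{eq:toroidal.Taylor-order2}), has $N=2$ and $\alpha=2e_i$, for which $N/\alpha!=1$.) A complete proof should either insert the correct coefficient or restrict to the cases actually used.
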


\begin{definition}
 We say that a map $f:U\rightarrow E$ is $C^\infty$ (or smooth) when it is $C^N$ for all $N\geq 1$.
\end{definition}

As an immediate consequence of Proposition~\ref{prop:LCS.CN-Phi} we obtain the following result. 

\begin{proposition}\label{prop:LCS.smooth-Phi}
Suppose that $\Phi: E\rightarrow F$ is a continuous $\R$-linear map from $E$ to some locally convex space $F$.  Let $f:U\rightarrow E$ be a  smooth map.  Then $\Phi \circ f:U\rightarrow F$ is a  smooth map whose partial derivatives of any order are given by~(\ref{eq:LCS.higher-partial-derivatives-Phi}). 
\end{proposition}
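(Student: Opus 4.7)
The plan is to deduce this statement directly from Proposition~\ref{prop:LCS.CN-Phi}, since smoothness is defined as being $C^{N}$ for every $N\geq 1$. There is essentially no work beyond unpacking the definition.

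First, fix any integer $N\geq 1$. Since $f:U\rightarrow E$ is smooth, it is in particular $C^{N}$. Proposition~\ref{prop:LCS.CN-Phi} then applies to $\Phi$ and $f$, yielding that $\Phi\circ f:U\rightarrow F$ is a $C^{N}$-map, and that, for every multi-order $\alpha\in\N_{0}^{d}$ with $|\alpha|\leq N$,
\begin{equation*}
\partial_x^\alpha \left[ \Phi \circ f\right](x) = \Phi \left[ \partial_x^\alpha f(x)\right] \qquad \forall x\in U.
\end{equation*}
As $N\geq 1$ was arbitrary, $\Phi\circ f$ is $C^{N}$ for every $N$, and hence smooth. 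The formula above then holds for all multi-orders $\alpha\in\N_{0}^{d}$, since any given $\alpha$ is covered by taking $N\geq|\alpha|$. This completes the proof.

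There is no real obstacle here: all the substance (handling the $C^{1}$ case via Lemma~\ref{lem:LCS.C1-Phi}, and the inductive step up to $C^{N}$) has already been done in Proposition~\ref{prop:LCS.CN-Phi}. The present statement is merely the observation that the conclusion of that proposition is uniform in $N$, so the smoothness and the derivative formula pass to the $C^{\infty}$ category verbatim.
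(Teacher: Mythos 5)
Your proof is correct and matches the paper's approach exactly: the paper explicitly introduces this proposition ``As an immediate consequence of Proposition~\ref{prop:LCS.CN-Phi},'' which is precisely the observation you make by running the $C^N$ conclusion over all $N\geq 1$.
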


We also mention the following version of Leibniz's rule. 

\begin{proposition}\label{prop:LCS.Leibniz-rule}
 Suppose that $\Phi:E_1\times E_2 \rightarrow E$ is a (jointly) continuous $\R$-bilinear map. Let $f_1:U\rightarrow E_1$ and $f_2:U\rightarrow E_2$ be $C^\infty$-maps. Then $x\rightarrow \Phi[f_1(x),f_2(x)]$ is a $C^\infty$-map from $U$ to $E$. Moreover, for every multi-order $\alpha$, we have 
 \begin{equation}
 \partial_x^\alpha \Phi\left[ f_1(x),f_2(x)\right] = \sum_{\beta+\gamma=\alpha} \binom{\alpha}{\beta} 
 \Phi\left[\partial_x^\beta f_1(x), \partial_x^\gamma f_2(x)\right] \qquad \forall x\in U. 
 \label{eq:LCS.Leibniz-rule}
\end{equation}
\end{proposition}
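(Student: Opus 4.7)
The plan is to first prove the result in the case $|\alpha|=1$, i.e., that $g(x):=\Phi[f_1(x),f_2(x)]$ is $C^1$ with partial derivatives
\begin{equation*}
\partial_{x_j} g(x) = \Phi\left[\partial_{x_j}f_1(x), f_2(x)\right] + \Phi\left[f_1(x), \partial_{x_j}f_2(x)\right], \qquad j=1,\ldots,d,
\end{equation*}
and then to obtain the full formula~(\ref{eq:LCS.Leibniz-rule}) by induction on $|\alpha|$.

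For the $C^1$ step, I would fix $x\in U$ and $j\in\{1,\ldots,d\}$, and use the bilinear identity
\begin{equation*}
\Phi(a,b)-\Phi(a',b') = \Phi(a-a',b) + \Phi(a',b-b')
\end{equation*}
with $a=f_1(x+te_j)$, $a'=f_1(x)$, $b=f_2(x+te_j)$, $b'=f_2(x)$. Dividing by $t\neq 0$ small enough so that $x+te_j\in U$, the difference quotient $t^{-1}(g(x+te_j)-g(x))$ becomes
\begin{equation*}
\Phi\!\left[\frac{f_1(x+te_j)-f_1(x)}{t},\,f_2(x+te_j)\right] + \Phi\!\left[f_1(x),\,\frac{f_2(x+te_j)-f_2(x)}{t}\right].
\end{equation*}
As $t\to 0$, the first slots of the two terms converge to $\partial_{x_j}f_1(x)$ and $f_1(x)$ respectively, while the second slots converge to $f_2(x)$ (by continuity of $f_2$, Lemma~\ref{lem:LCS.C1-differentiable}) and to $\partial_{x_j}f_2(x)$. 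The joint continuity of $\Phi$---concretely: given a continuous semi-norm $r$ on $E$, pick continuous semi-norms $p_1$, $p_2$ with $r(\Phi(\xi_1,\xi_2))\leq p_1(\xi_1)p_2(\xi_2)$, and split $\Phi(u_n,v_n)-\Phi(u,v)=\Phi(u_n-u,v_n)+\Phi(u,v_n-v)$---lets us conclude that $\partial_{x_j}g(x)$ exists and equals the claimed expression. Continuity of $x\mapsto \partial_{x_j}g(x)$ follows from another direct application of the joint continuity of $\Phi$ combined with the continuity of $f_1,f_2,\partial_{x_j}f_1,\partial_{x_j}f_2$.

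The induction on $|\alpha|$ is then routine: assuming the formula~(\ref{eq:LCS.Leibniz-rule}) holds for all multi-orders of length $\leq N$, I would differentiate once more with respect to some $x_j$. By the $C^1$ step applied to each term $\Phi[\partial_x^\beta f_1,\partial_x^\gamma f_2]$ (which is legitimate because $\partial_x^\beta f_1$ and $\partial_x^\gamma f_2$ are themselves smooth), we get two new summands, and the standard Pascal identity $\binom{\alpha}{\beta}+\binom{\alpha}{\beta-e_j}=\binom{\alpha+e_j}{\beta}$ reorganizes the collection of terms into~(\ref{eq:LCS.Leibniz-rule}) for multi-orders of length $N+1$. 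The smoothness of $g$ then follows, since all partial derivatives of every order exist and are continuous (again using joint continuity of $\Phi$ applied to continuous maps).

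The main obstacle---which is only a mild one---is ensuring that the limit arguments go through using only joint continuity of $\Phi$, rather than assuming $\Phi$ is continuous with respect to some product of semi-norm estimates stronger than what abstract joint continuity provides. For bilinear maps this is automatic via the standard semi-norm characterization recalled above, and it is exactly what is needed to justify $\Phi(u_n,v_n)\to \Phi(u,v)$ for convergent sequences $u_n\to u$ in $E_1$ and $v_n\to v$ in $E_2$. All other steps are formal and parallel the classical scalar Leibniz rule.
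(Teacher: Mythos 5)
Your proposal is correct and takes essentially the same approach as the paper: the same decomposition $\Phi(a,b)-\Phi(a',b')=\Phi(a-a',b)+\Phi(a',b-b')$ applied to the difference quotient, the same joint-continuity limit argument, and the same induction on $|\alpha|$. You are slightly more explicit than the paper in spelling out the semi-norm characterization of joint continuity ($r(\Phi(\xi_1,\xi_2))\leq p_1(\xi_1)p_2(\xi_2)$) and the Pascal-identity bookkeeping in the inductive step, but the underlying ideas are identical.
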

\begin{proof}
 Let $x\in U$. For $j=1,\ldots, d$ and $t\neq 0$ small enough, we have 
 \begin{multline}
 \frac{1}{t} \biggl( \Phi\left[ f_1(x+te_j), f_2(x+te_j)\right] -  \Phi\left[ f_1(x), f_2(x)\right] \biggr)\\
 = \Phi\left(\frac{1}{t}\left[f_1(x+te_j)-f_1(x)\right], f_2(x+te_j)\right) +  \Phi\left( f_1(x), \frac1{t}\left[f_2(x+te_j)-f_2(x)\right]\right) .
 \label{eq:LCS.bilinear-Phi-rate-of-change}  
\end{multline}
By assumption $\Phi:E_1\times E_2 \rightarrow E$ is a continuous bilinear map and $\frac{1}{t}[f_\ell(x+te_j)-f_\ell(x)]$, $\ell =1,2$, converges to $\partial_{x_j}f_\ell(x)$ in $E_\ell$. Therefore, the r.h.s.\ of~(\ref{eq:LCS.bilinear-Phi-rate-of-change}) converges to $\Phi[\partial_{x_j}f_1(x), f_2(x)] + \Phi[f_1(x), \partial_{x_j}f_2(x)]$ in $E$ as $t\rightarrow 0$. Thus, for $j=1, \ldots, d$, we have 
\begin{equation*}
 \partial_{x_j} \Phi\left[ f_1(x),f_2(x)\right] =  \Phi\left[\partial_{x_j}f_1(x), f_2(x)\right] + \Phi\left[f_1(x), \partial_{x_j}f_2(x)\right]. 
\end{equation*}
This proves~(\ref{eq:LCS.Leibniz-rule}) when $|\alpha|=1$. Together with the continuity of $\Phi$ and of $f_1$ and $f_2$ and their partial derivatives this further ensures us that $x\rightarrow \Phi[f_1(x),f_2(x)]$ is a $C^1$-map from $U$ to $E$. An induction then shows that, for every integer $N\geq 1$, the map $x\rightarrow \Phi[f_1(x),f_2(x)]$ is a $C^N$-map and all its partial derivatives of order~$\leq N$ satisfy~(\ref{eq:LCS.Leibniz-rule}). Incidentally, this is a $C^\infty$-map. The proof is complete.  
\end{proof}

\begin{remark}
 As it follows from the above proof, if we only assume the maps $f_1$ and $f_2$ to be $C^N$, $N\geq 1$, then the map $x\rightarrow \Phi[f_1(x),f_2(x)]$ is $C^N$ and all its partial derivatives of order~$\leq N$ satisfy the Leibniz rule~(\ref{eq:LCS.Leibniz-rule}). 
\end{remark}

Specializing Proposition~\ref{prop:LCS.Leibniz-rule} to the scalar multiplication $\C\times E \rightarrow E$ yields the following statement. 

\begin{corollary}
Let $u:U\rightarrow \C$ and $f:U\rightarrow E$ be $C^\infty$-maps. Then $x\rightarrow u(x)f(x)$ is a $C^\infty$-map from $U$ to $E$, and, for every multi-order $\alpha$, we have 
 \begin{equation*}
\partial_x^\alpha\left[ u(x)f(x)\right] = \sum_{\beta+\gamma=\alpha} \binom{\alpha}{\beta} 
 \partial_x^\beta u(x) \partial_x^\gamma f(x) \qquad \forall x\in U. 
\end{equation*}
\end{corollary}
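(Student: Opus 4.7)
The plan is to derive this corollary as a direct application of Proposition~\ref{prop:LCS.Leibniz-rule}. First I would set $E_1 = \C$ and $E_2 = E$, and consider the scalar multiplication map $\Phi:\C \times E \to E$ defined by $\Phi(\lambda, v) = \lambda v$. This map is $\C$-bilinear, and in particular $\R$-bilinear.

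The essential point to verify is that $\Phi$ is \emph{jointly} continuous, since Proposition~\ref{prop:LCS.Leibniz-rule} requires joint continuity rather than separate continuity. But this is automatic: by definition a locally convex space $E$ is a topological vector space, and the joint continuity of scalar multiplication $\C \times E \to E$ is part of the axioms of a topological vector space. Concretely, for any continuous semi-norm $p$ on $E$ and any $(\lambda_0, v_0) \in \C\times E$, the identity
\begin{equation*}
 p(\lambda v - \lambda_0 v_0) \leq |\lambda - \lambda_0|\, p(v_0) + |\lambda|\, p(v-v_0)
\end{equation*}
together with the bound $|\lambda| \leq |\lambda_0| + 1$ for $|\lambda - \lambda_0| \leq 1$ shows that $p(\lambda v - \lambda_0 v_0)$ is arbitrarily small when $(\lambda,v)$ is close enough to $(\lambda_0,v_0)$.

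With joint continuity in hand, Proposition~\ref{prop:LCS.Leibniz-rule} applies directly to $u:U\to \C$ and $f:U\to E$, since $u$ and $f$ are both $C^\infty$ by hypothesis. It yields that $x \mapsto \Phi[u(x), f(x)] = u(x) f(x)$ is a $C^\infty$-map from $U$ to $E$, and that for every multi-order $\alpha$ and every $x\in U$,
\begin{equation*}
 \partial_x^\alpha [u(x) f(x)] = \sum_{\beta + \gamma = \alpha} \binom{\alpha}{\beta} \Phi[\partial_x^\beta u(x), \partial_x^\gamma f(x)] = \sum_{\beta + \gamma = \alpha} \binom{\alpha}{\beta} \partial_x^\beta u(x)\, \partial_x^\gamma f(x).
\end{equation*}
This is exactly the formula in the statement, so the corollary follows. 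There is no real obstacle here; the corollary is essentially a rewriting of Proposition~\ref{prop:LCS.Leibniz-rule} in the case where one of the factors is scalar-valued, and the only substantive point is the trivial observation that scalar multiplication on a topological vector space is jointly continuous.
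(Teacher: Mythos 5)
Your proof is correct and follows exactly the same route as the paper, which introduces the corollary with the phrase ``Specializing Proposition~\ref{prop:LCS.Leibniz-rule} to the scalar multiplication $\C\times E\rightarrow E$ yields the following statement.'' The only thing you add is an explicit verification that scalar multiplication is jointly continuous, which the paper takes for granted as a topological-vector-space axiom; that is a reasonable point to spell out and does not change the argument.
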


In the case of locally convex algebras we obtain the following result. 

\begin{corollary}\label{cor:LCS.Leibniz-rule-LCA}
 Suppose that $\cA$ is a locally convex algebra. Let $f_1:U\rightarrow \cA$ and $f_2:U\rightarrow \cA$ be $C^\infty$-maps. Then the pointwise product $x\rightarrow f_1(x)f_2(x)$ is a smooth map from $U$ to $\cA$ and, for every multi-order $\alpha$, we have
 \begin{equation*}
\partial_x^\alpha\left[ f_1(x)f_2(x)\right] = \sum_{\beta+\gamma=\alpha} \binom{\alpha}{\beta} 
 \partial_x^\beta f_1(x) \partial_x^\gamma f_2(x) \qquad \forall x\in U. 
\end{equation*}
\end{corollary}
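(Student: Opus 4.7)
\medskip

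\noindent\textbf{Proof plan.} The strategy is to recognize that this is an immediate application of Proposition~\ref{prop:LCS.Leibniz-rule} to the multiplication map of $\cA$. By the very definition of a locally convex algebra, the multiplication $m : \cA \times \cA \to \cA$ given by $m(a,b) = ab$ is a (jointly) continuous $\R$-bilinear map. Hence it fits the hypotheses of Proposition~\ref{prop:LCS.Leibniz-rule} with $E_1 = E_2 = E = \cA$ and $\Phi = m$.

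Given the smooth maps $f_1, f_2 : U \to \cA$, the pointwise product $x \mapsto f_1(x) f_2(x)$ is precisely the composition $x \mapsto m(f_1(x), f_2(x))$. First I would apply Proposition~\ref{prop:LCS.Leibniz-rule} to conclude that this composition is a smooth map from $U$ to $\cA$. Then I would read off the formula for $\partial_x^\alpha$ from~(\ref{eq:LCS.Leibniz-rule}): for every multi-order $\alpha$,
\begin{equation*}
\partial_x^\alpha\left[ f_1(x)f_2(x)\right] = \sum_{\beta+\gamma=\alpha} \binom{\alpha}{\beta} m\!\left[\partial_x^\beta f_1(x), \partial_x^\gamma f_2(x)\right] = \sum_{\beta+\gamma=\alpha} \binom{\alpha}{\beta} \partial_x^\beta f_1(x)\, \partial_x^\gamma f_2(x),
\end{equation*}
which is exactly the desired identity.

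There is essentially no obstacle here: the entire content of the corollary is the observation that multiplication in a locally convex algebra is a jointly continuous bilinear map, and thus the general Leibniz rule of Proposition~\ref{prop:LCS.Leibniz-rule} specializes to this setting. The only thing to be careful about is that ``locally convex algebra'' must be understood in the sense where multiplication is \emph{jointly} continuous (as opposed to only separately continuous); under this standard convention the result is immediate.
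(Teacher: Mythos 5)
Your proof is correct and matches the paper's intent: the corollary is stated without an explicit proof precisely because it is an immediate specialization of Proposition~\ref{prop:LCS.Leibniz-rule} to the multiplication map of $\cA$, just as you observe. Your remark about needing \emph{joint} (not merely separate) continuity of multiplication is apt — that is indeed the hypothesis required by Proposition~\ref{prop:LCS.Leibniz-rule} — though in the paper's setting this causes no friction, since the relevant algebra $\cA_\theta$ is Fr\'echet and joint continuity of its product is established in Proposition~\ref{prop:NCtori.cAtheta-Frechet}.
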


In what follows, we denote by $C^\infty(U;E)$ the space of smooth maps $f:U\rightarrow E$. We equip it with the locally convex topology generated by the semi-norms, 
\begin{equation}
 f\rightarrow \sup_{x\in K} p\left[\partial^\alpha_x f(x)\right],
 \label{eq:LCS.semi-norms-Cinfty}
\end{equation}
where $\alpha$ ranges over $\N_0^d$, $K$ ranges over compact sets of $U$, and $p$ ranges over continuous semi-norms on $E$. 

\begin{lemma}\label{lem:LCS.convergence-C1}
Suppose that $E$ is metrizable. Let $(f_\ell)_{\ell \geq 0}$ be a sequence of $C^1$-maps from $U$ to $E$ such that: 
 \begin{enumerate}
  \item[(i)] The maps $f_\ell(x)$ converge to $f(x)$ uniformly on compact sets of $U$. 
  
  \item[(ii)] For $j=1,\ldots, d$, the partial derivatives $\partial_{x_j} f_\ell(x)$ converge to $g_j(x)$ uniformly on compact sets of $U$.   
\end{enumerate}
Then the map $f:U\rightarrow E$ is $C^1$ and $\partial_{x_j} f=g_j$ for $j=1,\ldots, d$. 
\end{lemma}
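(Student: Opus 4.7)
My plan is to deduce $C^1$-ness of $f$ by passing to the limit in the fundamental theorem of calculus formula provided by Lemma~\ref{lem:LCS.C1-differentiable}(i), specialized to the variable $x_j$. Concretely, fix $x\in U$ and $\delta>0$ with $\overline{B}(x,\delta)\subset U$; then for $|t|<\delta$ and each $\ell\geq 0$, Lemma~\ref{lem:LCS.C1-differentiable}(i) applied in the $e_j$-direction gives
\begin{equation*}
f_\ell(x+te_j)-f_\ell(x)=t\int_0^1\partial_{x_j}f_\ell(x+ste_j)\,ds.
\end{equation*}
The left-hand side converges to $f(x+te_j)-f(x)$ by assumption~(i). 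The plan is to show that the right-hand side converges to $t\int_0^1 g_j(x+ste_j)\,ds$, yielding the integral representation
\begin{equation*}
f(x+te_j)-f(x)=t\int_0^1 g_j(x+ste_j)\,ds.
\end{equation*}

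For the convergence of the right-hand side I would use the Riemann integral estimate~(\ref{eq:LCS.semi-norm-Riemann integral}): for every continuous semi-norm $p$ on $E$,
\begin{equation*}
p\!\left(\int_0^1\!\bigl[\partial_{x_j}f_\ell(x+ste_j)-g_j(x+ste_j)\bigr]ds\right)\leq \sup_{s\in[0,1]}p\bigl[\partial_{x_j}f_\ell(x+ste_j)-g_j(x+ste_j)\bigr].
\end{equation*}
Since $\partial_{x_j}f_\ell\to g_j$ uniformly on the compact segment $\{x+ste_j:s\in[0,1]\}$, the right side tends to $0$. (Continuity of $g_j$ itself on $U$ follows from being a uniform-on-compacta limit of continuous maps, and the Riemann integrals at stake make sense because continuous maps on $[0,1]$ are Riemann-integrable by Proposition~\ref{prop:LCS.Riemann integral}; no completeness or Suslin hypothesis is needed here.)

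With the integral formula in hand, the existence of $\partial_{x_j}f(x)$ follows from dividing by $t$ and letting $t\to 0$: for any continuous semi-norm $p$,
\begin{equation*}
p\!\left(\tfrac{1}{t}\bigl[f(x+te_j)-f(x)\bigr]-g_j(x)\right)\leq \sup_{s\in[0,1]}p\bigl[g_j(x+ste_j)-g_j(x)\bigr],
\end{equation*}
which tends to $0$ by continuity of $g_j$ at $x$. This gives $\partial_{x_j}f(x)=g_j(x)$ for every $x\in U$ and every $j$. Since each $g_j$ is continuous on $U$, the partial derivatives of $f$ are continuous, so $f$ is $C^1$.

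The only delicate point is justifying that the Riemann integral of a uniform-on-compacta limit is the limit of the Riemann integrals in the locally convex setting, but this is immediate from~(\ref{eq:LCS.semi-norm-Riemann integral}) as indicated above; metrizability of $E$ plays no essential role in the argument beyond ensuring that the hypotheses (i) and (ii) have their customary sequential meaning.
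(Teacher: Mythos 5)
Your proof is correct and follows essentially the same strategy as the paper's: pass to the limit in the identity $f_\ell(x+te_j)-f_\ell(x)=t\int_0^1\partial_{x_j}f_\ell(x+ste_j)\,ds$ coming from Lemma~\ref{lem:LCS.C1-differentiable}(i), obtain $f(x+te_j)-f(x)=t\int_0^1 g_j(x+ste_j)\,ds$, and then differentiate. The one genuine (if small) difference is in the final step: the paper invokes metrizability of $E$ to deduce uniform continuity of $g_j$ on the compact ball $\overline{B}(x,\delta)$, whereas you observe that the segments $\{x+ste_j:s\in[0,1]\}$ collapse to the single point $x$ as $t\to 0$, so that ordinary continuity of $g_j$ at $x$ already forces $\sup_{s\in[0,1]}p\bigl[g_j(x+ste_j)-g_j(x)\bigr]\to 0$. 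This is slightly more economical and makes explicit that the metrizability hypothesis is not actually exercised in this argument. (The hypothesis is nonetheless kept in the statement because the appendix develops the theory for Fr\'echet spaces, and it costs nothing; one should also be aware that the Riemann integral on $\cR(I;E)$ tacitly requires enough completeness of $E$ for limits of Cauchy nets of step maps to exist, which metrizability alone does not give --- this is orthogonal to your observation but worth keeping in mind when you assert ``no completeness hypothesis is needed.'') Otherwise your use of the semi-norm bound~(\ref{eq:LCS.semi-norm-Riemann integral}) for both convergence steps, and your appeal to uniform-on-compacta convergence for the continuity of $g_j$, match the paper precisely.
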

\begin{proof}
 The above assumptions imply that $f, g_1,\ldots, g_d$ are continuous maps from $U$ to $E$. Let $x\in U$ and $\delta>0$ be such that $\overline{B}(x,\delta)\subset U$. We also let $(e_1, \ldots, e_d)$ be the canonical basis of $\R^d$. By Lemma~\ref{lem:LCS.C1-differentiable}, for $|t|\leq \delta$ and $j=1,\ldots, d$,  we have 
 \begin{equation*}
 f_\ell(x+te_j) -f_{\ell}(x) = t \int_0^{1} \partial_{x_j} f_\ell(x+ste_j)ds \qquad \text{for all $\ell\geq 0$}.  
\end{equation*}
 The assumption (i) ensures us that the l.h.s.~converges to $f(x+te_j)-f(x)$ in $E$ as $\ell \rightarrow \infty$. In addition, the assumption (ii) implies that the maps $[0,1]\ni s \rightarrow \partial_{x_j}f_\ell(x+ste_j)\in E$ converge uniformly to the map $s\rightarrow g_j(x+ste_j)$. Therefore, the continuity of the Riemann integral 
 (\emph{cf.}~Proposition~\ref{prop:LCS.Riemann integral}) ensures us that $\int_0^{1} \partial_{x_j} f_\ell(x+ste_j)dt$ converges to $\int_0^{1}g_j(x+ste_j)dt$ in $E$ as $\ell \rightarrow \infty$. It then follows that
 \begin{equation*}
f(x+te_j) -f(x) = t \int_0^{1}g_j(x+ste_j)dt. 
\end{equation*} 
 
 As $E$ is metrizable, the continuous map $g_j:U\rightarrow E$ is uniformly continuous on the compact set $\overline{B}(x,\delta)$. This implies that, as $t\rightarrow 0$, the continuous maps $[0,1]\ni s\rightarrow g_j(x+ste_j)\in E$ converge uniformly to the constant map $s\rightarrow g_j(x)$. Using the continuity of the Riemann integral once again we see that 
 \begin{equation*}
\lim_{t\rightarrow 0}\frac{1}{t} \left(f(x+te_j) -f(x)\right)= \lim_{t\rightarrow 0}  \int_0^{1}g_j(x+ste_j)ds =  \int_0^{1} g_j(x)ds = g_j(x). 
\end{equation*}
This is shows that, for all $x\in U$ and $j=1,\ldots, d$, the partial derivative $\partial_{x_j}f(x)$ exists and is equal to $g_j(x)$. As $g_1, \ldots, g_d$ are continuous maps we then conclude that $f$ is a $C^1$-map. The proof is complete. 
\end{proof}

\begin{proposition}
 Suppose that $E$ is a Fr\'echet space. Then $C^\infty(U;E)$ is a Fr\'echet space. 
\end{proposition}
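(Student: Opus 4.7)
The plan is to verify the two defining properties of a Fr\'echet space: metrizability and completeness.

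For metrizability, I would first observe that the family of seminorms~(\ref{eq:LCS.semi-norms-Cinfty}) generating the topology of $C^\infty(U;E)$ can be reduced to a countable subfamily. Indeed, since $E$ is a Fr\'echet space, its topology is generated by a countable family of continuous seminorms $(p_j)_{j\geq 0}$. Moreover, the open set $U\subset\R^d$ admits an exhaustion by an increasing sequence of compact sets $(K_\ell)_{\ell\geq 0}$ (with $K_\ell\subset \mathring K_{\ell+1}$ and $\bigcup K_\ell=U$). Any compact $K\subset U$ is contained in some $K_\ell$, so the seminorms $f\mapsto \sup_{x\in K_\ell} p_j[\partial_x^\alpha f(x)]$ indexed by $(\alpha,j,\ell)\in \N_0^d\times\N_0\times\N_0$ generate the same topology. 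As this family is countable, the topology of $C^\infty(U;E)$ is metrizable.

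For completeness, let $(f_n)_{n\geq 0}$ be a Cauchy sequence in $C^\infty(U;E)$. By the definition of the topology, for every multi-order $\alpha\in \N_0^d$, every compact $K\subset U$ and every continuous seminorm $p$ on $E$, the sequence $(\partial_x^\alpha f_n)_{n\geq 0}$ is uniformly Cauchy on $K$ with respect to $p$. Since $E$ is a complete metrizable space, pointwise limits $g_\alpha(x):=\lim_n \partial_x^\alpha f_n(x)$ exist for all $x\in U$, and standard arguments (using the seminorm-version of the Cauchy criterion together with the completeness of $E$) show that $\partial_x^\alpha f_n$ converges to $g_\alpha$ uniformly on every compact set of $U$. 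In particular, $f_n$ converges uniformly on compacts to $f:=g_0$, and each $g_\alpha$ is continuous as a locally uniform limit of continuous maps.

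The remaining step is to identify $g_\alpha$ with $\partial_x^\alpha f$ and conclude that $f$ is smooth; this is the only nontrivial part and it is handled by an induction on $|\alpha|$ using Lemma~\ref{lem:LCS.convergence-C1} (which crucially requires the metrizability of $E$). Namely, for $|\alpha|=1$, say $\alpha=e_j$, the maps $f_n$ are $C^1$, $f_n\to f$ locally uniformly and $\partial_{x_j} f_n\to g_{e_j}$ locally uniformly, so Lemma~\ref{lem:LCS.convergence-C1} gives that $f$ is $C^1$ with $\partial_{x_j} f=g_{e_j}$. The inductive step is identical: assuming $f$ is $C^N$ with $\partial_x^\alpha f=g_\alpha$ for $|\alpha|\leq N$, applying Lemma~\ref{lem:LCS.convergence-C1} to the sequence $(\partial_x^\alpha f_n)$ for each $|\alpha|=N$ yields that $g_\alpha$ is $C^1$ with partial derivatives $g_{\alpha+e_j}$, and hence $f$ is $C^{N+1}$ with $\partial_x^\beta f=g_\beta$ for all $|\beta|\leq N+1$. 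Therefore $f\in C^\infty(U;E)$, and by construction $f_n\to f$ with respect to every generating seminorm~(\ref{eq:LCS.semi-norms-Cinfty}), so $f_n\to f$ in $C^\infty(U;E)$. This establishes completeness and finishes the proof. The only subtle point is the iterated application of Lemma~\ref{lem:LCS.convergence-C1}, but the lemma is tailor-made for exactly this situation, so no real obstacle arises.
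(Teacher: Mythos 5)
Your proof is correct and follows essentially the same route as the paper's: identify that the topology is metrizable (countable generating family of semi-norms), observe that a Cauchy sequence in $C^\infty(U;E)$ gives Cauchy sequences of all partial derivatives in the locally uniform topology, use completeness of $E$ to obtain continuous limit maps $g_\alpha$, and then apply Lemma~\ref{lem:LCS.convergence-C1} inductively to show $f:=g_0$ is smooth with $\partial_x^\alpha f=g_\alpha$. The only stylistic difference is that you make the countability argument explicit via a compact exhaustion, whereas the paper asserts it; and the paper packages the locally uniform convergence step as completeness of the auxiliary space $C^0(U;E)$. Neither difference is substantive.
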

\begin{proof}
As $E$ is a Fr\'echet space, $C^\infty(U;E)$ is Hausdorff and its topology is generated by a countable family of semi-norms. It remains to show that $C^\infty(U;E)$ is complete. In what follows we equip the space $C^0(U;E)$ with the topology of uniform convergence on compact sets of $U$, i.e., the locally convex topology generated by the semi-norms~(\ref{eq:LCS.semi-norms-Cinfty}) with $\alpha =0$. As $E$ is a complete metrizable space, $C^0(U;E)$  is a complete metrizable space as well. 

Let $(f_\ell)_{\ell \geq 0}$ be a Cauchy sequence in  $C^\infty(U;E)$. For every multi-order $\alpha$, the sequence $(\partial_x^\alpha f_\ell)_{\ell\geq 0}$ is a Cauchy sequence in $C^0(U;E)$, and so there is a map $g_\alpha \in C^0(U;E)$ such that $\partial_\alpha f_\ell $ converges to $g_\alpha$ in $C^0(U;E)$. Using Lemma~\ref{lem:LCS.convergence-C1} we see that $g_\alpha:U\rightarrow E$ is a $C^1$-map and $\partial_{x_j}\partial_x^\alpha f_\ell$ converges to $\partial_{x_j} g_{\alpha}$ in $C^0(U;E)$ for $j=1,\ldots, d$. Thus, if we set $f:=g_0$, then an induction shows that, for every $N\geq 0$, the map $f:U\rightarrow E$ is $C^N$ and $\partial^\alpha_x f=g_\alpha$ for $|\alpha|\leq N$. It then follows that $f:U\rightarrow E$ is a $C^\infty$-map and, for all multi-orders $\alpha$, the map $\partial_x^\alpha f_\ell$ converges to $g_{\alpha}=\partial_x^\alpha f$ in $C^0(U;E)$. That is, $f_\ell$ converges to $f$ in $C^\infty(U;E)$. This establishes the completeness of $C^\infty(U;E)$. The proof is complete. 
\end{proof}

\begin{remark}
 If we further assume that $E$ is a Fr\'echet-Montel space, then it can be shown that $C^\infty(U;E)$ is a Fr\'echet-Montel space as well. 
\end{remark}

\begin{remark}
 Given $N\in \N$, let us denote by $C^N(U;E)$ the space of $C^N$-maps $f:U \rightarrow E$. We equip it with the topology generated by the semi-norms~(\ref{eq:LCS.semi-norms-Cinfty}) with $|\alpha|\leq N$. Then the above proof shows that $C^N(U;E)$ is a Fr\'echet space when $E$ is a Fr\'echet space. In particular, this is a Banach space when $E$ is a Banach space. 
\end{remark}

\subsection{Differentiation under the integral sign} \label{subec:diff-int}
In what follows we assume that $E$ is a quasi-complete Suslin locally convex space. We also let $U$ be an open set of $\R^d$, $d\geq 1$, and $(X,\fS, \mu)$ a $\sigma$-finite measured space. 

\begin{lemma}\label{lem:LCS.continuity-integrals}
 Let $(x,y)\rightarrow F(x,y)$ be a map from $U\times X$ to $E$ such that:
 \begin{enumerate}
\item[(i)] $F(x,y)$ is integrable with respect to $y$ and is continuous with respect to $x$. 

\item[(ii)]  For every continuous semi-norm $p$ on $E$, there is a function $g_p(y)\in L^1_\mu(X)$ such that 
\begin{equation*}
 p\left[ F(x,y)\right] \leq g_p(y) \qquad \forall (x,y)\in U\times X. 
\end{equation*}
\end{enumerate}
Then $x\rightarrow \int_X F(x,y)d\mu(y)$ is a continuous map from $U$ to $E$. 
\end{lemma}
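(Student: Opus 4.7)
The plan is to reduce the claim to the dominated convergence theorem of Proposition~\ref{prop:LCS.DCT}. Set $G(x) := \int_X F(x,y)\, d\mu(y)$ for $x \in U$; by hypothesis~(i), the integrand is integrable in $y$ for each $x$, so $G(x)$ is a well-defined element of $E$ by Proposition~\ref{prop:LCS.Lebesgue-integral}. Since $U \subset \R^d$ is metrizable, it suffices to verify sequential continuity: given $x_0 \in U$ and a sequence $(x_\ell)_{\ell \geq 0} \subset U$ with $x_\ell \to x_0$, we must show $G(x_\ell) \to G(x_0)$ in $E$.

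To apply Proposition~\ref{prop:LCS.DCT}, set $f_\ell(y) := F(x_\ell, y)$ and $f(y) := F(x_0, y)$. These are elements of $L^1_\mu(X;E)$ by hypothesis~(i). The continuity of $F$ in $x$ (also part of~(i)) gives $f_\ell(y) \to f(y)$ for every $y \in X$, which in particular holds almost everywhere. Moreover, hypothesis~(ii) supplies, for every continuous semi-norm $p$ on $E$, a function $g_p \in L^1_\mu(X)$ such that $p[f_\ell(y)] = p[F(x_\ell,y)] \leq g_p(y)$ for all $\ell \geq 0$ and all $y \in X$. Both hypotheses of Proposition~\ref{prop:LCS.DCT} are thus satisfied.

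Applying Proposition~\ref{prop:LCS.DCT} yields $\int_X f_\ell(y)\, d\mu(y) \to \int_X f(y)\, d\mu(y)$ in $E$, i.e., $G(x_\ell) \to G(x_0)$. This establishes the sequential continuity of $G$ at $x_0$, and since $x_0 \in U$ was arbitrary, $G$ is continuous on $U$.

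There is essentially no obstacle here: the proof is a textbook application of dominated convergence, and the only subtlety worth flagging is the appeal to metrizability of $U$ to pass from sequential continuity to continuity, together with the observation that the uniform domination in~(ii) is strong enough to feed the locally convex version of the dominated convergence theorem already established in the paper.
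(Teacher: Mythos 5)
Your proof is correct and follows the same route as the paper: reduce to sequential continuity and invoke Proposition~\ref{prop:LCS.DCT} with the domination supplied by hypothesis~(ii). The only cosmetic difference is that you make the appeal to metrizability of $U$ explicit, which the paper leaves implicit.
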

\begin{proof}
 Let $x\in U$ and $(x_\ell)_{\ell \geq 0}\subset U$ be a sequence converging to $x$. By (i) the maps $F(x_\ell,\cdot)$, $\ell\geq 0$, are integrable and converge pointwise to $F(x,\cdot)$. In addition, by (ii) for every continuous semi-norm $p$ on $E$, we have $p[F(x_\ell,y)]\leq g_p(y)$ for all $y\in X$ and $\ell\geq 0$. It then follows from Proposition~\ref{prop:LCS.DCT} that $\int_X F(x_\ell,y)d\mu(y) \rightarrow \int_X F(x,y)d\mu(y)$. This shows that the map $x\rightarrow \int_X F(x,y)d \mu(y)$ is continuous everywhere on $U$. The proof is complete. 
 \end{proof}

\begin{lemma}\label{lem:LCS.C1-integrals}
 Let $(x,y)\rightarrow F(x,y)$ be a map from $U\times X$ to $E$ such that:
 \begin{enumerate}
\item[(i)] $F(x,y)$ is integrable with respect to $y$ and is $C^1$ with respect to $x$. 

\item[(ii)]  For every continuous semi-norm $p$ on $E$, there is a function $g_{p}(y)\in L^1_\mu(X)$ such that, for $j=1, \ldots, d$, we have
\begin{equation*}
p\left[ \partial_{x_j}F(x,y)\right] \leq g_{p}(y) \qquad \forall (x,y)\in U\times X. 
\end{equation*}
\end{enumerate}
Then $x\rightarrow \int_X F(x,y)d\mu(y)$ is a $C^1$-map from $U$ to $E$, and, for $j=1, \ldots, d$, we have
\begin{equation*}
 \partial_{x_j} \int_X F(x,y)d\mu(y) = \int_X  \partial_{x_j} F(x,y)d\mu(y) \qquad \forall x\in U. 
\end{equation*}
\end{lemma}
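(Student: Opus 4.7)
The plan is to establish the existence of each partial derivative $\partial_{x_j}\int_X F(x,y)d\mu(y)$ by the standard difference-quotient argument, justified in the locally convex setting by the dominated convergence theorem (Proposition~\ref{prop:LCS.DCT}), and then to obtain the $C^1$-regularity by applying Lemma~\ref{lem:LCS.continuity-integrals} to the map $\partial_{x_j}F$.

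First, I would fix $x\in U$, pick $\delta>0$ with $\overline{B}(x,\delta)\subset U$, and let $j\in\{1,\dots,d\}$. For $0<|t|<\delta$, the map $y\mapsto F(x+te_j,y)$ is integrable by (i), so the difference quotient $G_t(y):=\frac{1}{t}[F(x+te_j,y)-F(x,y)]$ lies in $L^1_\mu(X;E)$. Applying Lemma~\ref{lem:LCS.C1-differentiable}(i) in the $x$-variable gives the representation
\begin{equation*}
G_t(y)=\int_0^1 \partial_{x_j}F(x+ste_j,y)\,ds,
\end{equation*}
so by the semi-norm estimate~(\ref{eq:LCS.semi-norm-Riemann integral}) and hypothesis (ii), for every continuous semi-norm $p$ on $E$ we have $p[G_t(y)]\leq g_p(y)$ uniformly in $t$. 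Moreover, as $t\to 0$, $G_t(y)\to\partial_{x_j}F(x,y)$ pointwise in $y$, since $F(\cdot,y)$ is $C^1$. By Corollary~\ref{cor:LCS.measurability-pointwise-limit}, $y\mapsto\partial_{x_j}F(x,y)$ is measurable, and by (ii) it is integrable.

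Next, invoking the dominated convergence theorem (Proposition~\ref{prop:LCS.DCT}) with dominating functions $g_p$, I would conclude that $G_t\to\partial_{x_j}F(x,\cdot)$ in $L^1_\mu(X;E)$. The continuity of the integral (Proposition~\ref{prop:LCS.Lebesgue-integral}) then yields
\begin{equation*}
\lim_{t\to 0}\frac{1}{t}\left[\int_X F(x+te_j,y)d\mu(y)-\int_X F(x,y)d\mu(y)\right]=\int_X \partial_{x_j}F(x,y)d\mu(y),
\end{equation*}
which is exactly the claimed formula for $\partial_{x_j}\int_X F(x,y)d\mu(y)$.

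Finally, to upgrade from the existence of partial derivatives to the $C^1$-property, I would verify that the map $(x,y)\mapsto \partial_{x_j}F(x,y)$ satisfies the hypotheses of Lemma~\ref{lem:LCS.continuity-integrals}: it is integrable in $y$ (just shown), continuous in $x$ because $F$ is $C^1$, and dominated in semi-norm by $g_p$ by~(ii). Lemma~\ref{lem:LCS.continuity-integrals} then shows that $x\mapsto\int_X \partial_{x_j}F(x,y)d\mu(y)$ is continuous on $U$, so all partial derivatives of $x\mapsto\int_X F(x,y)d\mu(y)$ exist and are continuous, which is the definition of a $C^1$-map. The only delicate point is the initial measurability/integrability of $\partial_{x_j}F(x,\cdot)$, but this is handled by the pointwise-limit argument combined with the domination hypothesis; everything else is a routine transcription of the classical proof into the locally convex framework, made available by Proposition~\ref{prop:LCS.DCT} and Lemma~\ref{lem:LCS.continuity-integrals}.
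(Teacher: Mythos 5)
Your proposal is correct and follows essentially the same route as the paper: the same integral representation of the difference quotient via Lemma~\ref{lem:LCS.C1-differentiable}, the same domination argument making Proposition~\ref{prop:LCS.DCT} applicable, and the same appeal to Lemma~\ref{lem:LCS.continuity-integrals} for the continuity of the partial derivatives. The only cosmetic difference is that the paper passes through an explicit sequence $t_\ell \to 0$ (since Proposition~\ref{prop:LCS.DCT} is stated for sequences), whereas you write the limit $t\to 0$ directly.
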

\begin{proof}
 Let $x\in U$ and $\delta>0$ be such that $B(x,\delta)\subset U$. We denote by $(e_1, \ldots, e_d)$ the canonical basis of $\R^d$. Given $j\in \{1, \ldots, d\}$,  for $0<|t|<\delta$ and $y\in X$, we set 
 \begin{equation*}
 G_j(t,y)= \frac{1}{t} \left( F(x+te_j,y)-F(x,y)\right).  
\end{equation*}
 As $F(x,y)$ is $C^1$ with respect to $x$, it follows from Lemma~\ref{lem:LCS.C1-differentiable} that we have
  \begin{equation*}
 G_j(t,y)= \int_0^1 \partial_{x_j} F(x+ste_j,y)ds. 
\end{equation*}
 Combining this with (ii) and~(\ref{eq:LCS.semi-norm-integral}) shows that, for every continuous semi-norm $p$ on $E$, we have 
 \begin{equation}
 p\left[G_j(t,y)\right] \leq  \int_0^1 p\left[\partial_{x_j} F(x+ste_j,y)\right]ds \leq g_p(y). 
 \label{eq:LCS.pGj-domination}
\end{equation}
 
Let $(t_\ell)_{\ell \geq 0}\subset (-\delta,0)\cup (0,\delta)$ be a sequence converging to $0$. It follows from (i) that the maps $G_j(t_\ell,\cdot)$ are integrable maps that converge pointwise to $\partial_{x_j}F(x,\cdot)$. Combining this with~(\ref{eq:LCS.pGj-domination}) allows us to apply Proposition~\ref{prop:LCS.DCT} to get
\begin{align*}
\lim_{\ell \rightarrow \infty} \frac{1}{t_\ell}\left ( \int_X F(x+t_\ell e_j,y) d\mu(y) - \int_X F(x,y) d\mu(y)\right) 
  & = \lim_{\ell \rightarrow \infty}  \int_X G_j(t_\ell,y)d\mu(y)\\ 
  & = \int_X \partial_{x_j} F(x,y)d\mu(y). 
\end{align*}
 This shows that $\partial_{x_j} \int_X F(x,y) d\mu(y)$ exists and is equal to $\int_X \partial_{x_j} F(x,y)d\mu(y)$. Furthermore, it follows from (i)--(ii) and Lemma~\ref{lem:LCS.continuity-integrals} that $x\rightarrow \int_X \partial_{x_j} F(x,y)d\mu(y)$ is a continuous map from $U$ to $E$. Therefore, we see that $x\rightarrow \int_X F(x,y)d\mu(y)$ is a $C^1$-map from $U$ to $E$. The proof is complete. 
\end{proof}

Lemma~\ref{lem:LCS.C1-integrals} and a straightforward induction lead us to the following result. 

\begin{proposition}\label{prop:LCS.smoothness-integrals}
Let $(x,y)\rightarrow F(x,y)$ be a map from $U\times X$ to $E$ such that:
 \begin{enumerate}
\item[(i)] $F(x,y)$ is integrable with respect to $y$ and is $C^\infty$ with respect to $x$. 

\item[(ii)]  For every continuous semi-norm $p$ on $E$ and multi-order $\alpha \in \N_0^d$, there is a function $g_{p, \alpha}(y)\in L^1_\mu(X)$ such that
\begin{equation*}
p \left[ \partial_{x}^\alpha F(x,y)\right] \leq g_{p, \alpha}(y) \qquad \forall (x,y)\in U\times X. 
\end{equation*}
\end{enumerate}
Then $x\rightarrow \int_X F(x,y)d\mu(y)$ is a $C^\infty$-map from $U$ to $E$, and, for every $\alpha \in \N_0^d$, we have
\begin{equation*}
 \partial_{x}^\alpha \int_X F(x,y)d\mu(y) = \int_X   \partial_{x}^\alpha F(x,y)d\mu(y) \qquad \forall x\in U. 
\end{equation*}
\end{proposition}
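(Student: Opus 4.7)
The plan is to deduce Proposition~\ref{prop:LCS.smoothness-integrals} from Lemma~\ref{lem:LCS.C1-integrals} by induction on the order of differentiation. For every integer $N\ge 0$, I want to prove the statement $\cP(N)$: under assumptions (i)--(ii), the map $F(x):=\int_X F(x,y)d\mu(y)$ is of class $C^N$ on $U$, and for every multi-order $\alpha\in\N_0^d$ with $|\alpha|\le N$ we have
\begin{equation*}
\partial_x^\alpha \int_X F(x,y)d\mu(y) = \int_X \partial_x^\alpha F(x,y)d\mu(y) \qquad \forall x\in U.
\end{equation*}
Taking the conjunction over all $N$ then gives the smoothness statement together with the formula for the derivatives.

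For the base case $N=0$, the formula is trivial and the continuity of the parametric integral is exactly Lemma~\ref{lem:LCS.continuity-integrals}, whose hypotheses are a weakening of (i)--(ii). For the inductive step, assume $\cP(N)$ holds and that $F(x,y)$ satisfies (i)--(ii). Fix a multi-order $\alpha$ with $|\alpha|=N$ and write $\alpha = \beta + e_j$ for some $j$. By the induction hypothesis, the map $G_\beta(x,y):=\partial_x^\beta F(x,y)$ satisfies: it is integrable with respect to $y$ (its $L^1$-norm is dominated by $g_{p,\beta}$ for any continuous semi-norm $p$, by (ii)), it is $C^1$ with respect to $x$ since $F$ is $C^\infty$ in $x$, and its partial derivatives $\partial_{x_j}G_\beta(x,y)=\partial_x^\alpha F(x,y)$ are dominated by $g_{p,\alpha}$ by (ii). Hence $G_\beta$ verifies the hypotheses of Lemma~\ref{lem:LCS.C1-integrals}, and that lemma yields
\begin{equation*}
\partial_{x_j}\int_X G_\beta(x,y)d\mu(y) = \int_X \partial_x^\alpha F(x,y)d\mu(y).
\end{equation*}
Combining with the induction hypothesis applied to $\partial_x^\beta$, we obtain the formula at order $|\alpha|=N+1$. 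Moreover, Lemma~\ref{lem:LCS.continuity-integrals} applied to $\partial_x^\alpha F$, whose hypotheses follow again from (i)--(ii) with $p\circ \partial_x^\alpha F\le g_{p,\alpha}$, shows that the resulting function of $x$ is continuous on $U$. Since this holds for every $\alpha$ of order $N+1$, we conclude that $x\mapsto \int_X F(x,y)d\mu(y)$ is of class $C^{N+1}$, establishing $\cP(N+1)$.

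The main obstacle, if any, is merely bookkeeping: checking that the domination hypothesis (ii) at each order yields the domination required to invoke Lemma~\ref{lem:LCS.C1-integrals} for the relevant parametric integral. No new analytic difficulty arises once the $C^1$ case is in hand, because Proposition~\ref{prop:LCS.DCT} and the quasi-complete Suslin setting are already built into Lemma~\ref{lem:LCS.C1-integrals} and Lemma~\ref{lem:LCS.continuity-integrals}. The induction is clean and the symmetry of partial derivatives (Lemma~\ref{lem:LCS.C2-partial-derivatives}) guarantees that the choice of decomposition $\alpha=\beta+e_j$ is immaterial.
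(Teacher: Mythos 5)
Your proposal is correct and follows exactly the argument the paper intends—the paper's own proof is simply ``Lemma~\ref{lem:LCS.C1-integrals} and a straightforward induction,'' which is what you carry out. There is a small indexing slip in your inductive step: to prove $\cP(N+1)$ you should take $|\alpha|=N+1$ and write $\alpha=\beta+e_j$ with $|\beta|=N$ (so the induction hypothesis applies to $\beta$ and Lemma~\ref{lem:LCS.C1-integrals} supplies the extra derivative), whereas you wrote $|\alpha|=N$ at the start of the step but later correctly refer to ``order $N+1$''; once that is fixed the argument is exactly right.
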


\subsection{Fourier transform and Schwartz's class}\label{subsec:Fourier-Schwartz}
Suppose that $E$ is a quasi-complete Suslin locally convex space. In what follows all notions of integrability are with respect to the Lebesgue measure on $\R^n$, $n\geq 1$. 

We define the Fourier transform of integrable $E$-valued maps as follows. 

\begin{definition}
 Let $f:\R^n \rightarrow E$ be an integrable map. Its \emph{Fourier transform} $\hat{f}:\R^n \rightarrow E$ is defined by
 \begin{equation*}
 \hat{f}(\xi) = \int e^{-i x\cdot \xi} f(x)dx, \qquad \xi \in \R^n. 
\end{equation*}
\end{definition}

\begin{remark}
 The Fourier transform $\hat{f}(\xi)$ is well defined since, for every continuous semi-norm $p$ on $E$, we have 
\begin{equation}
 \int p\left[ e^{-i x\cdot \xi} f(x)\right] dx =   \int p\left[  f(x)\right] dx <\infty. 
 \label{eq:LCS.integrability-Fourier}
\end{equation}
\end{remark}

\begin{remark}
 Suppose that $\Phi: E\rightarrow F$ is a continuous $\C$-linear map from $E$ to some quasi-complete Suslin locally convex space $F$. Let $f:\R^n \rightarrow E$ be an integrable map. We know by Proposition~\ref{prop:LCS.Phi-integral} that the map $\Phi \circ f: \R^n \rightarrow F$ is integrable. Moreover, by using~(\ref{eq:LCS.Phi-Lebesgue-integral}) we get 
 \begin{equation}
 \Phi \left[ \hat{f}(\xi)\right] = \int e^{-ix \cdot \xi} \Phi\left[ f(x)\right] dx = \left[ \Phi \circ f\right]^\wedge\!\! (\xi). 
 \label{eq:LCS.Phi-Fourier-transform}
\end{equation}
\end{remark}

Let us denote by $C^0_b(\R^n; E)$ the space of bounded continuous maps $f:\R^n \rightarrow E$.  We equip it with the locally convex topology generated by the semi-norms, 
\begin{equation*}
 f \longrightarrow \sup_{x\in \R^n} p\left[ f(x)\right], 
\end{equation*}
where $p$ ranges over continuous semi-norms on $E$. 

\begin{proposition}
 The Fourier transform gives rise to a continuous linear map from $L^1(\R^n;E)$ to $C^0_b(\R^n;E)$. 
\end{proposition}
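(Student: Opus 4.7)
The plan is to establish the statement by proving three things in sequence: (a) for every $f\in L^1(\R^n;E)$, the map $\hat f:\R^n\rightarrow E$ is continuous; (b) a basic semi-norm estimate $\sup_{\xi}p[\hat f(\xi)]\leq \int p[f(x)]dx$ for every continuous semi-norm $p$ on $E$; and (c) extract from (b) both the boundedness of $\hat f$ and the continuity of the linear map $f\mapsto \hat f$ between the stated locally convex spaces. The content is essentially bookkeeping on top of Proposition~\ref{prop:LCS.Lebesgue-integral} and Lemma~\ref{lem:LCS.continuity-integrals}.

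For (a), I would apply Lemma~\ref{lem:LCS.continuity-integrals} to the map $F(\xi,x):=e^{-ix\cdot\xi}f(x)$, regarded as a function from $\R^n\times\R^n$ into $E$, with $\xi$ playing the role of the external parameter and $x$ the role of the integration variable. Each section $F(\xi,\cdot)$ is integrable because any continuous semi-norm $p$ on $E$ satisfies $p[F(\xi,x)]=|e^{-ix\cdot\xi}|p[f(x)]=p[f(x)]$, which is integrable in $x$ by assumption on $f$. Continuity of $F(\cdot,x)$ at each fixed $x$ is immediate, since $\xi\mapsto e^{-ix\cdot\xi}$ is continuous and scalar multiplication on $E$ is separately continuous. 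The required domination is free of charge: $g_p(x):=p[f(x)]$ lies in $L^1(\R^n)$ and bounds $p[F(\xi,x)]$ uniformly in $\xi$. Hence Lemma~\ref{lem:LCS.continuity-integrals} delivers $\hat f\in C^0(\R^n;E)$.

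For (b), I invoke the semi-norm estimate~(\ref{eq:LCS.semi-norm-integral}) of Proposition~\ref{prop:LCS.Lebesgue-integral} directly: for every continuous semi-norm $p$ on $E$ and every $\xi\in\R^n$,
\begin{equation*}
p[\hat f(\xi)]= p\!\left(\int e^{-ix\cdot\xi}f(x)\,dx\right)\leq \int p[e^{-ix\cdot\xi}f(x)]\,dx = \int p[f(x)]\,dx,
\end{equation*}
where the last equality again uses homogeneity of the semi-norm with respect to complex scalars of modulus one. Taking the supremum over $\xi$ gives $\sup_{\xi\in\R^n}p[\hat f(\xi)]\leq \int p[f(x)]\,dx<\infty$, so $\hat f$ is bounded, hence belongs to $C^0_b(\R^n;E)$.

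For (c), linearity of $f\mapsto \hat f$ is obvious from linearity of the integral (Proposition~\ref{prop:LCS.Lebesgue-integral}), and the estimate just established reads exactly as the statement that each defining semi-norm $f\mapsto \sup_\xi p[\hat f(\xi)]$ of $C^0_b(\R^n;E)$ is dominated by the corresponding defining semi-norm $f\mapsto \int p[f(x)]\,dx$ of $L^1(\R^n;E)$ associated with the same continuous semi-norm $p$ on $E$. This is the standard criterion for continuity of a linear map between locally convex spaces, and completes the proof. I do not anticipate any real obstacle: the only point requiring care is the innocuous identity $p[e^{-ix\cdot\xi}f(x)]=p[f(x)]$, which relies on $p$ being a semi-norm on the complex vector space $E$ and allows the oscillatory phase to be discarded in the dominating estimate.
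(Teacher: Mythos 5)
Your proof is correct and follows essentially the same approach as the paper: both establish continuity of $\hat f$ via Lemma~\ref{lem:LCS.continuity-integrals} applied to $F(\xi,x)=e^{-ix\cdot\xi}f(x)$ with dominating function $p[f(x)]$, and both obtain boundedness of $\hat f$ and continuity of $f\mapsto\hat f$ from the semi-norm estimate~(\ref{eq:LCS.semi-norm-integral}). Your version is slightly more explicit about verifying the hypotheses of the lemma (continuity of each section $F(\cdot,x)$, the role of homogeneity of $p$ in discarding the phase), but the underlying argument is the same.
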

\begin{proof}
 Let $f:\R^n \rightarrow E$ be an integrable map. The map $\R^n \times \R^n \ni (\xi, x)\rightarrow e^{-ix\cdot \xi} f(x)\in E$ is continuous with respect to $\xi$ and integrable with respect to $x$. Moreover, given any continuous semi-norm $p$ on $E$, we have $p[e^{-ix\cdot \xi} f(x)]=p[f(x)]\in L^1(\R^n)$ for all $\xi \in \R^n$. 
 Therefore, it follows from Lemma~\ref{lem:LCS.continuity-integrals} that $\hat{f}:\xi \rightarrow \int e^{-i x\cdot \xi} f(x)dx$ is a continuous map from $\R^n$ to $E$. In addition, by using~(\ref{eq:LCS.semi-norm-integral}) and~(\ref{eq:LCS.integrability-Fourier}) we see that, for every continuous semi-norm $p$ on $E$, we have 
 \begin{equation}
\sup_{\xi \in \R^n}p\left[\hat{f}(\xi)\right] \leq \sup_{\xi \in \R^n}\int p\left[ e^{-ix\cdot \xi} f(x)\right]dx= \int p\left[f(x)\right]dx.
\label{eq:LCS.boundedness-Fourier}
\end{equation}
Therefore, we see that $\hat{f}\in C^0_b(\R^n;E)$ and depends continuously on $f \in L^1(\R^n; E)$. This proves the result.  
\end{proof}

\begin{definition}
 The Schwartz class $\cS(\R^n;E)$ consists of all smooth maps $f:\R^n \rightarrow E$ such that the maps $x^\alpha \partial_x^\beta f:\R^n\rightarrow E$ are bounded for all multi-orders $\alpha$ and $\beta$. 
\end{definition}

\begin{remark}
 The above definition of $\cS(\R^n;E)$ continues to makes sense when $E$ is any locally convex space. 
\end{remark}

We equip $\cS(\R^n;E)$  with the locally convex topology generated by the semi-norms, 
\begin{equation*}
 f \longrightarrow \sup_{x\in \R^n} p\left[ x^\alpha \partial_x^\beta f(x)\right], 
\end{equation*}
where $\alpha$ and $\beta$ range over $\N_0^n$ and $p$ ranges over all continuous semi-norms on $E$. 

\begin{proposition}\label{prop:LCS.Schwartz-Phi}
Suppose that $\Phi: E\rightarrow F$ is a continuous $\R$-linear map from $E$ to some locally convex space $F$. Then, for every $f\in \cS(\R^n; E)$, the composition $\Phi \circ f$ is in $\cS(\R^n; F)$. 
\end{proposition}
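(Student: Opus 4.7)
The plan is to verify the two defining conditions of the Schwartz class for $\Phi \circ f$: smoothness, and boundedness of all $x^\alpha \partial_x^\beta(\Phi\circ f)$ in $F$. Both reduce directly to properties already established earlier in the appendix.

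First, I will invoke Proposition~\ref{prop:LCS.smooth-Phi} to conclude that $\Phi \circ f$ is a smooth map from $\R^n$ to $F$, with
\begin{equation*}
 \partial_x^\beta(\Phi \circ f)(x) = \Phi\bigl[\partial_x^\beta f(x)\bigr] \qquad \forall x \in \R^n, \ \forall \beta \in \N_0^n.
\end{equation*}
Next, since $x^\alpha$ is a real scalar and $\Phi$ is $\R$-linear, I can pull the monomial inside $\Phi$:
\begin{equation*}
 x^\alpha \partial_x^\beta(\Phi \circ f)(x) = \Phi\bigl[x^\alpha \partial_x^\beta f(x)\bigr].
\end{equation*}

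The remaining step is to show this is a bounded map from $\R^n$ to $F$. By hypothesis $f \in \cS(\R^n; E)$, so the set $B_{\alpha\beta} := \{ x^\alpha \partial_x^\beta f(x) : x \in \R^n\}$ is a bounded subset of $E$. Since $\Phi : E \to F$ is a continuous $\R$-linear map, it maps bounded sets to bounded sets, hence $\Phi(B_{\alpha\beta})$ is bounded in $F$. Equivalently, for every continuous semi-norm $q$ on $F$, the composition $q \circ \Phi$ is a continuous $\R$-linear semi-norm on $E$, so there is a continuous semi-norm $p$ on $E$ with $q[\Phi(\xi)] \leq p(\xi)$ for all $\xi \in E$, and then
\begin{equation*}
 \sup_{x\in \R^n} q\bigl[x^\alpha \partial_x^\beta(\Phi \circ f)(x)\bigr] \leq \sup_{x\in \R^n} p\bigl[x^\alpha \partial_x^\beta f(x)\bigr] < \infty.
\end{equation*}
This shows $\Phi \circ f \in \cS(\R^n; F)$. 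No step here is genuinely an obstacle; the whole argument is a routine combination of Proposition~\ref{prop:LCS.smooth-Phi} with the elementary fact that continuous linear maps between locally convex spaces preserve boundedness. If one also wished to assert continuity of the induced map $\cS(\R^n;E) \to \cS(\R^n;F)$, this would follow from the same semi-norm estimate displayed above, but as only the set-theoretic statement is claimed, no further work is needed.
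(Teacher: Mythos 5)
Your argument is correct and follows essentially the same route as the paper: invoke Proposition~\ref{prop:LCS.smooth-Phi} for smoothness and the derivative formula, then observe that $\Phi$ maps the bounded set $B_{\alpha\beta}$ to a bounded set of $F$. The extra semi-norm display you include is just an explicit restatement of that boundedness; there is no substantive difference.
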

\begin{proof}
Let $f\in \cS(\R^n; E)$. It follows from Proposition~\ref{prop:LCS.smooth-Phi} that $\Phi \circ f$ is a smooth map from $\R^n$ to $F$ such that $\partial_x^\alpha [\Phi \circ f]= \Phi \circ \partial_x^\alpha f$ for all $\alpha \in \N_0^n$. Given $\alpha, \beta \in \N_0^n$, set $B_{\alpha \beta}=\{x^\alpha \partial_x^\beta f(x); \ x\in \R^n\}$. Then 
$\{x^\alpha \partial_x^\beta[\Phi\circ f](x); \ x\in \R^n\}=\Phi(B_{\alpha\beta})$. As $\Phi$ is a continuous map and $B_{\alpha\beta}$ is a bounded set of $E$, we deduce that 
the range of the map $x^\alpha \partial_x^\beta[\Phi\circ f]$ is bounded in $F$. It then follows that $\Phi \circ f$ is in $\cS(\R^n; F)$. The proof is complete. 
\end{proof}

\begin{remark}\label{rmk:LCS.xD-action-cS}
It is immediate from the definition of the topology of $\cS(\R^n; E)$ that the partial derivatives $\partial_{x_j}$, $j=1,\ldots, n$, induce continuous linear endomorphism on $\cS(\R^n; E)$. By using the Leibniz rule~(\ref{eq:LCS.Leibniz-rule}) it can be shown that the  multiplication operators by the coordinates $x_j$, $j=1,\ldots, n$, also induce continuous linear endomorphism on $\cS(\R^n; E)$. It then follows that, for all multi-orders $\alpha$ and $\beta$, the differential operator $x^\alpha \partial_x^\beta$ gives rise to a continuous linear endomorphism on $\cS(\R^n; E)$. 
\end{remark}

\begin{remark}
When $E$ is a Fr\'echet space (resp., Fr\'echet-Montel space) it can be shown that $\cS(\R^n; E)$ is a Fr\'echet  space (resp.,  Fr\'echet-Montel space). 
\end{remark}

\begin{proposition}
 The following holds. 
 \begin{enumerate}
 \item The Fourier transform induces a continuous endomorphism on $\cS(\R^n; E)$. 
 
 \item Let $f\in \cS(\R^n; E)$. Then, for every multi-order $\alpha$, we have 
           \begin{equation}
                 D_\xi^\alpha f =(-1)^{|\alpha|} \left[ x^\alpha f\right]^\wedge \qquad \text{and} \qquad \xi^\alpha \hat{f} = \left[ D_x^\alpha f\right]^\wedge.
                 \label{eq:LCS.derivative-Fourier-transform}
          \end{equation}
\end{enumerate}
\end{proposition}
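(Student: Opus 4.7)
The plan is to establish part (2) first and then derive part (1) by iterating the two derivative-exchange identities. Before starting, I record the continuous embedding $\cS(\R^n;E)\hookrightarrow L^1(\R^n;E)$: for $f\in\cS(\R^n;E)$ and any continuous semi-norm $p$ on $E$, the identity $(1+|x_1|+\cdots+|x_n|)^{n+1}=\sum_{|\alpha|\leq n+1}\binom{n+1}{\alpha}|x^\alpha|$ together with $p[x^\alpha f(x)]=|x^\alpha|p[f(x)]$ yields the pointwise bound
\begin{equation*}
(1+|x|)^{n+1}p[f(x)]\leq \sum_{|\alpha|\leq n+1}\binom{n+1}{\alpha}\sup_y p[y^\alpha f(y)],
\end{equation*}
which is finite by the Schwartz property. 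Hence $\int p[f(x)]dx$ is controlled by a finite sum of Schwartz semi-norms of $f$, so $f\in L^1(\R^n;E)$ and $\hat f$ is a well-defined element of $C^0_b(\R^n;E)$.

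To prove $D_\xi^\alpha\hat f=(-1)^{|\alpha|}[x^\alpha f]^\wedge$, I would apply Proposition~\ref{prop:LCS.smoothness-integrals} to $F(\xi,x):=e^{-ix\cdot\xi}f(x)$. The map is integrable in $x$ and smooth in $\xi$ with $\partial_\xi^\alpha F(\xi,x)=(-ix)^\alpha e^{-ix\cdot\xi}f(x)$, and for each continuous semi-norm $p$ the domination $p[\partial_\xi^\alpha F(\xi,x)]=|x^\alpha|p[f(x)]$ is integrable in $x$ uniformly in $\xi$ by the Schwartz decay applied to $x^\alpha f\in\cS(\R^n;E)$. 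The proposition then delivers the first identity together with the smoothness of $\hat f$.

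For the identity $\xi^\alpha\hat f=[D_x^\alpha f]^\wedge$, the plan is to bypass a direct vector-valued integration by parts by reducing to the scalar case via duality; this is the main subtlety of the proof. For any $\varphi\in E'$, Proposition~\ref{prop:LCS.Schwartz-Phi} ensures $\varphi\circ f\in\cS(\R^n)$, so the classical scalar identity gives $\xi^\alpha \widehat{\varphi\circ f}(\xi)=\widehat{D_x^\alpha(\varphi\circ f)}(\xi)$. Formula~(\ref{eq:LCS.Phi-Fourier-transform}) yields $\widehat{\varphi\circ f}=\varphi\circ\hat f$, while Proposition~\ref{prop:LCS.CN-Phi} gives $D_x^\alpha(\varphi\circ f)=\varphi\circ D_x^\alpha f$. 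Combining these with the $\C$-linearity of $\varphi$ produces
\begin{equation*}
\varphi\bigl(\xi^\alpha\hat f(\xi)\bigr)=\varphi\bigl([D_x^\alpha f]^\wedge(\xi)\bigr)\qquad\forall\varphi\in E',
\end{equation*}
and since $E'$ separates points of $E$ by the Hahn-Banach theorem, the identity follows.

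Assertion (1) then follows by iterating (2). For all multi-orders $\alpha,\beta$ one has
\begin{equation*}
\xi^\alpha D_\xi^\beta\hat f(\xi)=(-1)^{|\beta|}\xi^\alpha[x^\beta f]^\wedge(\xi)=(-1)^{|\beta|}[D_x^\alpha(x^\beta f)]^\wedge(\xi).
\end{equation*}
By Remark~\ref{rmk:LCS.xD-action-cS}, $g_{\alpha\beta}:=D_x^\alpha(x^\beta f)$ lies in $\cS(\R^n;E)$, and the operator $f\mapsto g_{\alpha\beta}$ is continuous on $\cS(\R^n;E)$. The estimate $\sup_\xi p[\hat h(\xi)]\leq\int p[h(x)]dx$ combined with the continuous embedding $\cS\hookrightarrow L^1$ recorded above then gives $\sup_\xi p[\xi^\alpha D_\xi^\beta\hat f(\xi)]$ in terms of finitely many Schwartz semi-norms of $f$. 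Smoothness of $\hat f$ is already provided by the first identity of (2), since each $D_\xi^\beta\hat f=(-1)^{|\beta|}[x^\beta f]^\wedge$ is continuous on $\R^n$. It follows that $\hat f\in\cS(\R^n;E)$ and that $f\mapsto\hat f$ is continuous on $\cS(\R^n;E)$.
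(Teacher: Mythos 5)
Your argument is correct and follows the same route as the paper's proof: the first identity in part (2) by differentiating under the integral sign via Proposition~\ref{prop:LCS.smoothness-integrals}, the second identity by reducing to the scalar Schwartz case through an arbitrary $\varphi\in E'$ and invoking the Hahn--Banach separation property, and then part (1) by combining the two identities with the $L^1$-type bound on the Fourier transform and the continuity of $D_x^\alpha x^\beta$ on $\cS(\R^n;E)$ from Remark~\ref{rmk:LCS.xD-action-cS}. Your only stylistic deviation is that you unpack the continuous inclusion $\cS(\R^n;E)\hookrightarrow L^1(\R^n;E)$ via a multinomial expansion of $(1+|x_1|+\cdots+|x_n|)^{n+1}$, whereas the paper simply records the semi-norm $q_p(f)=\sup_x(1+|x|)^{n+1}p[f(x)]$; the content is the same.
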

\begin{proof}
 Given any continuous semi-norm $p$ on $E$, we denote by $q_p$ the semi-norm on $\cS(\R^n;E)$ defined by 
 \begin{equation*}
 q_{p}(f) = \sup_{x\in \R^n} ( 1+|x|)^{n+1} p\left[ f(x)\right], \qquad f\in \cS(\R^n;E). 
\end{equation*}
This is a continuous semi-norm on $\cS(\R^n;E)$. Furthermore, by using~(\ref{eq:LCS.boundedness-Fourier}) we see that, for every $f\in \cS(\R^n;E)$,  we have
\begin{equation}
 \sup_{\xi \in \R^n} p\left[ \hat{f}(\xi)\right] \leq \int p\left[f(x)\right] dx \leq \int q_{p}(f)(1+|x|)^{-(n+1)}dx =C q_p(f), 
 \label{eq:LCS.estimate-sup-qp}
\end{equation}
where we have set $C=\int (1+|x|)^{-(n+1)}dx$. 

Let $f\in \cS(\R^n;E)$. The map $\R^n \times \R^n \ni (\xi,x)\rightarrow e^{-ix\cdot\xi} f(x)\in E$ is smooth with respect to $\xi$ and is integrable with respect to $x$. Moreover, given any multi-order $\alpha$ and any continuous semi-norm $p$ on $E$, we have 
\begin{equation*}
 p\left[ \partial_\xi^\alpha ( e^{-ix\cdot\xi} f(x))\right] = p\left[ x^\alpha f(x)\right] \leq q_p(x^\alpha f) \left(1+|x|\right)^{-(n+1)}. 
\end{equation*}
Therefore, it follows from Proposition~\ref{prop:LCS.smoothness-integrals} that the Fourier transform $\hat{f}: \xi \rightarrow \int e^{-i x\cdot \xi} f(x)dx$ is a  smooth map from $\R^n$ to $E$. Furthermore, for every multi-order $\alpha$, we have 
\begin{equation}
 D_\xi^\alpha \hat{f}(\xi) = \int D_\xi^\alpha \left[ e^{-ix\cdot\xi} f(x)\right] dx = (-1)^{|\alpha|} \int x^\alpha e^{-ix\cdot\xi} f(x)dx =  (-1)^{|\alpha|}\left[ x^\alpha f\right]^\wedge\!\!(\xi). 
 \label{eq:LCS.derivative-Fourier-transform2}
\end{equation}

Let $\alpha \in \N_0^n$ and $\varphi \in E'$. By Proposition~\ref{prop:LCS.smooth-Phi} and Proposition~\ref{prop:LCS.Schwartz-Phi} the function $\varphi\circ f$ is Schwartz-class and $D_x^\alpha [\varphi\circ f]=\varphi \circ D_x^\alpha f$. Therefore, by using~(\ref{eq:LCS.Phi-Fourier-transform}) we see that, for all $\xi\in \R^n$, we have
\begin{equation*}
 \varphi\left[ \left(D_x^\alpha f\right)^\wedge \!\! (\xi)\right] = \left[ \varphi \circ D^\alpha_x f\right]^\wedge \!\! (\xi)  = 
 \left[ D^\alpha_x\left(\varphi \circ  f\right)\right]^\wedge \!\! (\xi) = \xi^\alpha \left(\varphi \circ  f\right)^\wedge \!\! (\xi)= \varphi\left[ \xi^\alpha \hat{f}(\xi)\right]. 
\end{equation*}
As $E'$ separates the points of $E$, we then deduce that $\left[ D_x^\alpha f\right]^\wedge =\xi^\alpha \hat{f}$. Together with~(\ref{eq:LCS.derivative-Fourier-transform2}) this proves~(\ref{eq:LCS.derivative-Fourier-transform}). 

Combining~(\ref{eq:LCS.derivative-Fourier-transform}) and~(\ref{eq:LCS.estimate-sup-qp}) shows that, given any continuous semi-norm $p$ on $E$ and  multi-orders $\alpha$ and $\beta$, we have 
\begin{equation*}
\sup_{\xi\in \R^n} p\left[ \xi^\alpha D_\xi^\beta \hat{f}(\xi)\right] =  \sup_{\xi\in \R^n} p\left[ \left[D_x^\alpha (x^\beta f)\right]^\wedge \!\!(\xi)\right] \leq C q_p\left[ D_x^\alpha (x^\beta f)\right].  
\end{equation*}
This shows that $\hat{f}:\R^n \rightarrow E$ is a Schwartz-class map. Note that $f \rightarrow q_p\left[ D_x^\alpha (x^\beta f)\right]$ is a continuous semi-norm on $\cS(\R^n; E)$, since $q_p$ is a continuous semi-norm on $\cS(\R^n; E)$ and we know by Remark~\ref{rmk:LCS.xD-action-cS} that $ D_x^\alpha x^\beta:\cS(\R^n;E)\rightarrow \cS(\R^n;E)$ is a continuous linear endomorphism. It then follows that the Fourier transform induces a continuous linear endomorphism on $\cS(\R^n;E)$. The proof is complete. 
\end{proof}

If $f:\R^n \rightarrow E$ is an integrable map, then its \emph{inverse Fourier transform} $\check{f}:\R^n \rightarrow E$ is given by
\begin{equation*}
 \check{f}(x) = \int e^{ix\cdot \xi} f(\xi)\dbar \xi, \qquad x\in \R^n,
\end{equation*}
where we have set $\dbar \xi =(2\pi)^{-n}d\xi$. The inverse Fourier transform satisfies the same kind of properties as that of the Fourier transform. In particular, it induces a continuous linear endomorphism on $\cS(\R^n;E)$.

\begin{proposition}\label{prop:LCS.Fourier-inversion}
 For all $f \in \cS(\R^n;E)$, we have 
 \begin{equation*}
 \left[ \hat{f}\right]^\vee =f \qquad \text{and} \qquad \left[ \check{f}\right]^\wedge =f. 
\end{equation*}
In particular, the Fourier transform and the inverse Fourier transform induce on $\cS(\R^n;E)$ continuous linear isomorphisms that are inverses of each other. 
\end{proposition}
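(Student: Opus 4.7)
The plan is to reduce the vector-valued Fourier inversion formulas to the classical scalar Fourier inversion formula by composing with continuous $\C$-linear forms, and then invoke the Hahn-Banach theorem to separate points. Let $f\in \cS(\R^n;E)$ be fixed. By Proposition~\ref{prop:LCS.Schwartz-Phi}, for every $\varphi \in E'$, the composition $\varphi \circ f$ is a scalar-valued Schwartz-class function, so the classical Fourier inversion formula applies and yields
\begin{equation*}
\bigl[(\varphi \circ f)^\wedge\bigr]^\vee = \varphi \circ f.
\end{equation*}

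Next I would establish the commutation identities $\varphi\bigl[\hat{f}(\xi)\bigr] = (\varphi \circ f)^\wedge(\xi)$ and $\varphi\bigl[\check{g}(x)\bigr] = (\varphi \circ g)^\vee(x)$ for any integrable $\cA_\theta$-valued map $g$. The first is precisely~(\ref{eq:LCS.Phi-Fourier-transform}), applied with $\Phi = \varphi$. The second is the exact analogue for the inverse Fourier transform, proved by the same one-line argument using that $\varphi$ is $\C$-linear and continuous, so that by Proposition~\ref{prop:LCS.Phi-integral} it commutes with the Lebesgue integral defining $\check{g}$.

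Combining these two commutation identities with the scalar inversion formula, we get, for all $x\in \R^n$,
\begin{equation*}
\varphi\bigl[ [\hat{f}]^\vee(x) \bigr] = \bigl[ (\varphi \circ f)^\wedge\bigr]^\vee(x) = (\varphi\circ f)(x) = \varphi\bigl[f(x)\bigr].
\end{equation*}
Since $E'$ separates the points of $E$ by the Hahn-Banach theorem, this forces $[\hat{f}]^\vee(x) = f(x)$ pointwise, which is the first inversion identity. The second identity $[\check{f}]^\wedge = f$ follows by the symmetric argument, swapping the roles of the Fourier and inverse Fourier transforms. Finally, the last claim of the proposition is a free consequence: the proposition preceding this one (and its symmetric counterpart for $\check{\phantom{f}}$) establishes that the Fourier and inverse Fourier transforms are continuous linear endomorphisms of $\cS(\R^n;E)$, and the two inversion identities just proved show that they are two-sided inverses of one another; hence both are continuous linear isomorphisms of $\cS(\R^n;E)$.

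The argument involves no real obstacle: the only point deserving attention is the verification of the commutation identity $\varphi\circ \check{g} = (\varphi\circ g)^\vee$, which is not literally stated in the text but is an immediate transcription of~(\ref{eq:LCS.Phi-Fourier-transform}) and Proposition~\ref{prop:LCS.Phi-integral} with the phase $e^{-ix\cdot\xi}$ replaced by $e^{ix\cdot\xi}$. Everything else is a direct reduction to the classical scalar theory through point-separating functionals.
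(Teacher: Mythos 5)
Your proof is correct and follows essentially the same strategy as the paper's: reduce to the scalar case by composing with $\varphi\in E'$, invoke the classical Fourier inversion formula for the Schwartz function $\varphi\circ f$, commute $\varphi$ through both transforms via~(\ref{eq:LCS.Phi-Fourier-transform}) and its inverse-transform analogue, and conclude using that $E'$ separates points. The only addition over the paper's proof is that you spell out the ``in particular'' consequence about isomorphisms, which the paper leaves implicit.
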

\begin{proof}
 Let $f\in \cS(\R^n;E)$. Then $f$ and $\hat{f}$ are both integrable maps from $\R^n$ to $E$. In addition, let $\varphi \in E'$. We know by 
 Proposition~\ref{prop:LCS.Schwartz-Phi} that $\varphi \circ f$ is a Schwartz-class function. Therefore, by using~(\ref{eq:LCS.Phi-Fourier-transform}) and the usual Fourier inversion formula for Schwartz-class functions, we see that, for all $x\in \R^n$, we have
 \begin{equation*}
 \varphi\left[ \left(\hat{f}\right)^\vee\!\!(x)\right] =  \left[\varphi\circ \hat{ f}\right]^\vee \!\!(x) = \left[ \left(\varphi\circ f\right)^\wedge\right]^\vee \!\!(x) =\varphi\left[ f(x)\right].
\end{equation*}
 As $E'$ separates the points of $E$, we deduce that $\left(\hat{f}\right)^\vee=f$. The equality $\left(\check{f}\right)^\wedge=f$ is proved similarly. The proof is complete. 
\end{proof}

\begin{remark}
 All the above results on the Fourier transform hold \emph{verbatim} for Schwartz-class maps with values in locally convex spaces that are not quasi-complete Suslin spaces, providing we define the Fourier and inverse Fourier transforms as improper Riemann integrals. 
\end{remark}


\begin{thebibliography}{99}
\bibitem{AG:AMS07} Alinhac, S.; G\'erard, P.: \emph{Pseudo-differential operators and the Nash-Moser theorem}. Graduate Studies in Mathematics, vol. 82, American Mathematical Society, Providence, RI, 2007.

\bibitem{Ar:Springer81} Arveson, W.: \emph{An invitation to $C$*-algebras}. Springer-Verlag, 1981.

\bibitem{Ba:CRAS88} Baaj, S.: \emph{Calcul pseudo-diff\'erentiel et produits crois\'es de $C^*$-alg\'ebres. I, II}. 
C.\ R.\ Acad.\ Sc.\ Paris, s\'er.~I, \textbf{307} (1988), 581--586, 663--666.

\bibitem{BES:JMP94} Bellissard, J.; van Elst, A.; Schulz-Baldes, H.: 
\emph{The noncommutative geometry of the quantum Hall effect}. 
J.\ Math.\ Phys.\ \textbf{35} (1994), 5373--5451.

\bibitem{BLM:JOT13} Belmonte, F.;  Lein, M.; M\u{a}ntoiu, M.:  \emph{Magnetic twisted actions on general abelian $C^*$-algebras}. 
J.\  Operator Theory \textbf{69} (2013), 33--58. 

\bibitem{BM:LMP12} Bhuyain, T.A.; Marcolli, M.: \emph{The Ricci flow on noncommutative two-tori}. Lett.\ Math.\ Phys.\ \textbf{101} (2012), no.\ 2, 173--194. 

\bibitem{Bo:FM35} Bochner, S.: \emph{Integration von Funktionen deren Werte die Elemente eines Vektor-raumes sind}. Fund.\ Math.\ \textbf{20} (1935), 262--276.

\bibitem{Bo:InvM90} Bost, J.-B.: \emph{Principe d'Oka, K-th\'eorie et syst\`emes dynamiques non commutatifs}. Invent.\ Math.\
\textbf{101} (1990), 261--333.

\bibitem{BCR:RMP16} Bourne, C.; Carey, A.L.; Rennie, A.: \emph{A non-commutative framework to topological insulators}. 
Rev.\ Math.\ Phys.\ \textbf{28} (2016), no.\ 2, 1650004, 51 pp.. 

\bibitem{Co:CRAS80} Connes, A.: \emph{C*-alg\`ebres et g\'eom\'etrie differentielle}.  C.\ R.\ Acad.\ Sc.\ Paris, s\'er.~A, \textbf{290} (1980), 599--604.

\bibitem{Co:AdvM81} Connes, A.: \emph{An analogue of the Thom isomorphism for crossed products of a $C^*$-algebra by an
action of $\R$}. Adv.\ Math.\ \textbf{39} (1981), 31--55.

\bibitem{Co:Foliations82} Connes, A.:  \emph{A survey of foliations and operator algebras}. 
 Proc.\ Sympos.\ Pure Math., 38, pp.~521--628, Amer.\ Math.\ Soc., Providence, RI, 1982. 

 \bibitem{Co:IHES85} Connes, A.: \emph{Noncommutative differential geometry}. 
 Inst.\ Hautes \'Etudes Sci.\ Publ.\ Math.\ \textbf{62} (1985), 257--360.

\bibitem{Co:NCG} Connes, A.: \emph{Noncommutative geometry}. Academic Press,\ Inc.,\ San Diego, CA, 1994.

 \bibitem{CDS:JHEP98} Connes, A.; Douglas, M.R.; Schwarz, A.: \emph{Noncommutative geometry and matrix theory: compactification on tori}. J.\ High Energy Phys.\ (1998), no.\ 2, 
 Paper 3, 35 pp.. 
 
 \bibitem{CF:arXiv16} Connes, A.; Fathizadeh, F.: \emph{The term $a_4$ in the heat kernel expansion of noncommutative tori}. 
Preprint \texttt{arXiv:1611.09815}, 134 pp.. 

\bibitem{CM:JAMS14} Connes, A.; Moscovici, H.: \emph{Modular curvature for noncommutative two-tori}. J.\ Amer.\ Math.\ Soc.\ \textbf{27} (2014), no.\ 3, 639--684.

\bibitem{CT:Baltimore11} Connes, A.; Tretkoff, P.: \emph{The Gauss-Bonnet theorem for the noncommutative two torus}. Noncommutative geometry, arithmetic, and related topics, pp.\ 141--158, Johns Hopkins Univ.\ Press, Baltimore, MD, 2011.

\bibitem{Co:Springer90} Conway, J.: \emph{A course in functional analysis}. Graduate Texts in Mathematics, 96. Springer-Verlag, Berlin-Heidelberg-New York-Tokyo, 1990.

\bibitem{DS:JMP13} D\c{a}browski, L.; Sitarz, A.: \emph{Curved noncommutative torus and Gauss-Bonnet}. J.\ Math.\ Phys.\ \textbf{54} (2013), no.\ 1, 013518, 11 pp.. 

\bibitem{DS:SIGMA15} D\c{a}browski, L.; Sitarz, A.: \emph{An asymmetric noncommutative torus}. SIGMA \textbf{11} (2015), Paper 075, 11 pp.. 

\bibitem{Da:Springer92} David, G.: \emph{Wavelets and singular integrals on curves and surfaces}. Springer-Verlag, 1992.

\bibitem{DL:RMP11} De Nittis, G.; Lein, M.: \emph{Applications of magnetic $\Psi$DO techniques to space-adiabatic perturbation theory}. 
Rev.\ Math.\ Phys.\  \textbf{23} (2011), 233--260.

\bibitem{DeW:MSRSLC69} De Wilde, M.: \emph{R\'eseaux dans les espaces lin\'eaires \`a semi-normes}. 
M\'em.\ Soc.\ Roy.\ Sci.\ Li\`ege Coll.\ \textbf{18} (1969), no. 2, 144 pp.. 

\bibitem{DGK:arXiv18} Dong, R.; Ghorbanpour, A.;  Khalkhali, M.: \emph{The Ricci curvature for noncommutative three tori}. 
Preprint, \texttt{arXiv:1808.02977},  36 pp.. 

\bibitem{Do:Springer84} Doob, J.: \emph{Classical potential theory and its probabilistic counterpart}. Grundlehren
Math.\ Wiss.\ 262, Springer, Berlin, 1984.

\bibitem{EL:ActaM07} Elliott, G.;  Li, H.:  \emph{Morita equivalence of smooth noncommutative tori}. Acta Math.\ \textbf{199} (2007), 1--27.

\bibitem{EL:MathA08} Elliott, G.;  Li, H.:  \emph{Strong Morita equivalence of higher-dimensional noncommutative tori. II}. 
Math.\  Ann.\ \textbf{341} (2008), 825--844.

\bibitem{FJ:JDE68} Falb, P.L.; Jacobs, M.Q.: \emph{On differentials in locally convex spaces}.
J.\ Differential Equations \textbf{4} (1968),  444--459.

\bibitem{FGK:MPAG17} Fathi, A.; Ghorbanpour, M.; Khalkhali, M.:
\emph{The curvature of the determinant line bundle on the noncommutative two torus}. Math.\ Phys.\ Anal.\ Geom.\ \textbf{20} (2017), 1--20.

\bibitem{Fa:JMP15} Fathizadeh, F.: \emph{On the scalar curvature for the noncommutative four torus}. 
J.\ Math.\ Phys.\ \textbf{56} (2015), no.\ 6, 062303, 14 pp..

\bibitem{FG:SIGMA16} Fathizadeh, F.; Gabriel, O.:  \emph{On the Chern-Gauss-Bonnet theorem and conformally twisted spectral triples for $C^*$-dynamical systems}. 
SIGMA \textbf{12} (2016), Paper 016, 21 pp.. 

\bibitem{FK:JNCG12} Fathizadeh, F.; Khalkhali, M.: \emph{The Gauss-Bonnet theorem for noncommutative two tori with a general conformal structure}. J.\ Noncommut.\ Geom.\ \textbf{6} (2012), no.\ 3, 457--480.

\bibitem{FK:JNCG13} Fathizadeh, F.; Khalkhali, M.: \emph{Scalar curvature for the noncommutative two torus}. 
J.\ Noncommut.\ Geom.\ \textbf{7} (2013), no.\ 4, 1145--1183. 

\bibitem{FK:LMP13} Fathizadeh, F.; Khalkhali, M.: \emph{Weyl's law and Connes' trace theorem for noncommutative two tori}.  
Lett.\ Math.\ Phys.\ \textbf{103} (2013), 1--18.

\bibitem{FK:JNCG15} Fathizadeh, F.; Khalkhali, M.: \emph{Scalar curvature for noncommutative four-tori}. 
J.\ Noncommut.\ Geom.\ \textbf{9} (2015), no.\ 2, 473--503. 

\bibitem{FW:JPDOA11} Fathizadeh, F.; Wong, M.W.:  \emph{Noncommutative residues for pseudo-differential operators on the noncommutative two-torus}. J.\ Pseudo-Differ.\ Oper.\ Appl.\ \textbf{2} (2011), no.\ 3, 289--302.

\bibitem{FGK:arXiv16} Floricel, R.; Ghorbanpour, A.; Khalkhali, M.: \emph{The Ricci curvature in noncommutative geometry}. 
Preprint \texttt{arXiv:1612.06688}, 26 pp.. To appear in J.\ Noncommut.\ Geom..

\bibitem{Ge:CISMK} Gelfand, I.M.: \emph{Sur un lemme de la th\'eorie des espaces lin\'eaires}. Comm.\ Inst.\ Sci.\ Math.\ Kharkoff \textbf{13} (1936) 35--40.

\bibitem{GJP:MAMS17} Gonz\'alez-P\'erez, A.M.; Junge, M.; Parcet, J.: \emph{Singular integrals in quantum Euclidean spaces}. 
Preprint \texttt{arXiv:1705.01081}, 87 pp.. To appear in Mem.\ Amer.\ Math.\ Soc..

\bibitem{GVF:Birkh01} Gracia-Bond\'ia, J.M.; V\'arilly, J.C.; Figueroa, H.:
\emph{Elements of noncommutative geometry}. Birkh\"auser Boston, Boston, MA, 2001.

\bibitem{Gr:ActaM78} Green, P.: \emph{The local structure of twisted covariance algebras}. 
Acta Math.\ \textbf{140} (1978), 191--250.

\bibitem{Gr:MAMS55} Grothendieck, A.: \emph{Produits tensoriels topologiques et espaces nucl\'eaires}. Mem.\ Amer.\ Math.\
Soc., 16, Amer.\ Math.\ Soc.\, Providence, RI, 1955. 

\bibitem{HLP:Part2} Ha, H.; Lee, G.; Ponge, R.: \emph{Pseudodifferential calculus on noncommutative tori, II. Main Properties}. 
Preprint \texttt{arXiv:1803.03580v2}, 52 pp.. 

\bibitem{HP:Laplacian} Ha, H.; Ponge, R.:
\emph{Laplace-Beltrami operator on noncommutative tori}. In preparation.

\bibitem{Ha:BAMS82} Hamilton, R.S.: \emph{The inverse function theorem of Nash and Moser}. Bull.\ Trans.\ Amer.\ Math.\ Soc.\ \textbf{7} (1982), 65--222. 

\bibitem{Ho:Springer85} H\"ormander, L.:
\emph{The analysis of linear partial differential operators III}. Springer-Verlag, Berlin-Heidelberg-New York-Tokyo, 1985.

\bibitem{KM:JGP14} Khalkhali, M.; Moatadelro, A.:  \emph{A Riemann-Roch theorem for the noncommutative two torus}. J.\ Geom.\ Phys.\ \textbf{86} (2014), 19--30. 

\bibitem{KS:arXiv17} Khalkhali, M.; Sitarz, A.: \emph{Gauss-Bonnet for matrix conformally rescaled Dirac}. 
J.\ Math.\ Phys.\  \textbf{59}, no.\ 6, 063505 (2018), 10 pp.. 

\bibitem{LMR:RIMS10} Lein, M.; M\u{a}ntoiu, M.;  Richard, S.: \emph{Magnetic pseudodifferential operators with coefficients in $C^*$-algebras}. 
Publ.\ Res.\ Inst.\ Math.\ Sci.\ \textbf{46} (2010), 755--788. 

\bibitem{LM:GAFA16} Lesch, M.; Moscovici, H.: \emph{Modular curvature and Morita equivalence}. 
Geom.\ Funct.\ Anal.\ \textbf{26} (2016), no.\ 3, 818--873. 

\bibitem{LNP:TAMS16} L\'evy, C.; Neira Jim\'enez, C.; Paycha, S.:
\emph{The canonical trace and the noncommutative residue on the noncommutative torus}. Trans.\ Amer.\ Math.\ Soc.\ \textbf{368} (2016), 1051--1095.

\bibitem{Li:Crelle04} Li, H.:  \emph{Strong Morita equivalence of higher-dimensional noncommutative tori}. J.\ Reine
Angew.\ Math.\ \textbf{576} (2004), 167--180.

\bibitem{Liu:JGP17} Liu, Y.: \emph{Scalar curvature in conformal geometry of Connes-Landi noncommutative manifolds}. J.\ Geom.\ Phys.\ \textbf{121} (2017), 138--165. 

\bibitem{Liu:JNCG18} Liu, Y.: \emph{Modular curvature for toric noncommutative manifolds}. J.\ Noncommut.\ Geom.\ \textbf{12} (2018), 511--575.

\bibitem{Liu:ArXiv18a} Liu, Y.: \emph{Hypergeometric function and modular curvature I. Hypergeometric functions in heat coefficients}. Preprint, 
\texttt{arXiv:1810.09939}, 30 pp.. 

\bibitem{Liu:ArXiv18b} Liu, Y.: \emph{Hypergeometric function and modular curvature II. Connes-Moscovici functional relation after Lesch's work}. 
Preprint, \texttt{arXiv:1811.07967}, 29 pp.. 

\bibitem{MPR:Bucharest05} M\u{a}ntoiu, M.;  Purice, R.; Richard, S.: \emph{Twisted crossed products and magnetic pseudodifferential operators}. 
Theta Ser.\ Adv.\ Math., vol.\ 5, Theta, Bucharest 2005, pp.\ 137--172.

\bibitem{OPT:JOT80} Olesen, D.; Pedersen, G.K.; Takesaki, M.: \emph{Ergodic actions of compact
abelian groups}. J.\ Operator Theory 3 (1980), 237--269.

\bibitem{Pe:TAMS38} Pettis, J.: \emph{On integration in vector spaces}.  Trans.\ Amer.\ Math.\ Soc.\ \textbf{44} (1938), 277--304.

\bibitem{Po:PJM06} Polishchuk, A.: \emph{Analogues of the exponential map associated with complex structures on noncommutative two-tori}.
Pacific J.\ Math.\ \textbf{226} (2006), 153--178. 

\bibitem{PS:Springer16} Prodan, E.;  Schulz-Baldes, H.: 
\emph{Bulk and boundary invariants for complex topological insulators: from K-theory to physics}. 
Mathematical Physics Studies. Springer, 2016. 

\bibitem{Ri:PJM81} Rieffel, M.: \emph{$C^*$-algebras associated with irrational rotations}. Pacific J.\ Math.\ \textbf{93} (1981), 415--429.

\bibitem{Ri:CMP89} Rieffel, M.: \emph{Deformation quantization of Heisenberg manifolds}. Comm.\ Math.\ Phys.\ \textbf{122} (1989), 531--562.

\bibitem{Ri:CM90} Rieffel, M.: \emph{Noncommutative tori-A case study of non-commutative differentiable manifolds}.
Contemp.\ Math., 105, Amer.\ Math.\ Soc.\., Providence, RI, 1990, pp.~191--211.

\bibitem{Ri:MAMS93} Rieffel, M.: \emph{Deformation quantization for actions of $\R^d$}. Mem.\ Amer.\ Math.\ Soc.\ \textbf{106} (1993) no.\ 506, x+98 pp..

\bibitem{RS:IJM99} Rieffel, M.;  Schwarz, A.: \emph{Morita equivalence of multidimensional noncommutative tori}. 
Int.\ J.\ Math.\ \textbf{10} (1999), 289--299.

\bibitem{Ro:APDE08} Rosenberg, J.: \emph{Noncommutative variations on Laplace's equation}. Anal.\  PDE\ \textbf{1} (2008), 95--114.

\bibitem{Ro:SIGMA13} Rosenberg, J.: \emph{Levi-Civita's theorem for noncommutative tori}. SIGMA \textbf{9} (2013), 071, 9 pp..

\bibitem{RT:Birkhauser10} Ruzhansky, M.; Turunen, V.: \emph{Pseudo-differential operators and symmetries}. Birkh\"auser, Basel, 2010.

\bibitem{Sc:Tata73} Schwartz, L.: \emph{Radon measures on arbitrary topological spaces and cylindrical measures}. 
Tata Institute of Fundamental Research Studies in Mathematics, No.\ 6,  Oxford Univ.\ Press, 1973.

\bibitem{Sc:IJM04} Schweitzer, L.: \emph{Spectral invariance of dense subalgebras of operator algebras}. 
Int.\ J.\ Math.\ \textbf{4} (1993), 289--317.

\bibitem{SW:JHEP99} Seiberg, N.; Witten, E.: \emph{String theory and noncommutative geometry}. 
J.\ High Energy Phys.\ (1999), no.\ 9, Paper 32, 93 pp.. 

\bibitem{Sh:Springer01} Shubin, M.A.:  \emph{Pseudodifferential operators and spectral theory}, 2nd edition. Springer, Berlin, 2001.

\bibitem{Si:JPDOA14} Sitarz, A.:  \emph{Wodzicki residue and minimal operators on a noncommutative 4-dimensional torus}. 
J.\ Pseudo-Differ.\ Oper.\ Appl.\ \textbf{5} (2014), no.\ 3, 305--317. 

\bibitem{Sl:CMP72} Slawny, J.: \emph{On factor representations and the $C^*$-algebra of canonical commutation relations}.
Comm.\ Math.\ Phys.\ \textbf{24} (1972), 151--170.

\bibitem{Ta:JPCS18} Tao, J.: \emph{The theory of pseudo-differential operators on the noncommutative $n$-torus}.  J.\ Phys.: Conf.\ Ser.\ \textbf{965} 012042 (2018), 1--12.

\bibitem{Th:TAMS75} Thomas, G.E.F.: \emph{Integration of functions with values in locally convex Suslin spaces}. Trans.\ Amer.\ Math.\ Soc.\ \textbf{212} (1975), 61--81.

\bibitem{Tr:AP67} Tr\`eves, F.: \emph{Topological vector spaces, distributions and kernels}. Academic Press, New York-London, 1967. 

\bibitem{Va:TLCS82} Valdivia, M.: \emph{Topics in locally convex spaces}. Math.\ Stud., vol.\ 67, North-Holland, Amsterdam, 1982.
\end{thebibliography}
\end{document}